\documentclass{amsart}
\usepackage[margin=1.3in]{geometry}
\usepackage[utf8]{inputenc}
\usepackage{color}
\usepackage{tikz-cd}
\usepackage[hidelinks]{hyperref}
\usepackage{enumerate}
\usepackage{amssymb, amsthm, amsmath, bbm }
\usepackage{braket}
\usepackage{array} 
\usepackage[capitalise]{cleveref} 
\usepackage{diagbox}
\usepackage{multirow}
\usepackage{makecell} 
\usepackage{comment}
\newcolumntype{C}[1]{>{\centering\let\newline\\\arraybackslash\hspace{0pt}}m{#1}}

\theoremstyle{theorem}
\newtheorem{theorem}{Theorem}[section]
\newtheorem{corollary}[theorem]{Corollary} 
\newtheorem{lemma}[theorem]{Lemma}
\newtheorem{proposition}[theorem]{Proposition}

\usepackage{array} 
\usepackage{amssymb}

\numberwithin{equation}{section}

\theoremstyle{definition}

\newtheorem{example}[theorem]{Example}
\newtheorem{definition}[theorem]{Definition}
\newtheorem{remark}[theorem]{Remark}

\theoremstyle{definition}

\DeclareMathOperator{\Cone}{Cone}
\DeclareMathOperator{\Bl}{Bl}

\def\s{\sigma}

\def\cA{\mathcal A}\def\cB{\mathcal B}
\def\cF{\mathcal F}
\def\cI{\mathcal I}\def\cJ{\mathcal J}\def\cK{\mathcal K}\def\cL{\mathcal L}
\def\cM{\mathcal M}\def\cN{\mathcal N}\def\cO{\mathcal O}\def\cP{\mathcal P}
\def\cQ{\mathcal Q}

\def\cY{\mathcal Y}

\def\AA{\mathbb A}\def\CC{\mathbb C}
\def\GG{\mathbb G}

\def\NN{\mathbb N}\def\PP{\mathbb P}
\def\RR{\mathbb R}

\def\ZZ{\mathbb Z}

\def\fm{\mathfrak m}

\def\fs{\mathfrak s}\def\fu{\mathfrak u}

\newcommand{\GL}{\operatorname{GL}}
\newcommand{\PGL}{\operatorname{PGL}}
\newcommand{\Lie}{\operatorname{Lie}}

\newcommand{\gitq}{/\!/}
\newcommand{\hU}{\widehat{U}}
\newcommand{\hX}{\widehat{X}}
\newcommand{\Hom}{\operatorname{Hom}}
\newcommand{\Stab}{\operatorname{Stab}}

\newcommand{\Proj}{\operatorname{Proj}}
\newcommand{\Spec}{\operatorname{Spec}}
\newcommand{\Lc}{\mathcal{L}}

\newcommand{\Aut}{\operatorname{Aut}}

\newcommand{\conv}{\operatorname{conv}}

\DeclareMathOperator{\Sym}{Sym}

\newcommand{\rar}{\rightarrow}
\newcommand{\Gm}{\mathbb{G}_m}
\newcommand{\Ga}{\mathbb{G}_a}
\newcommand{\mc}{\mathcal}
\newcommand{\sli}{\operatorname{sl}}

\DeclareMathOperator{\rSpec}{\underline{Spec}}
\DeclareMathOperator{\rProj}{\underline{Proj}}

\title{Relative Geometric Invariant Theory: reductive and non-reductive}
\author{Eloise Hamilton, Victoria Hoskins and Joshua Jackson}
\date{}

\begin{document}

\begin{abstract} We construct good quotients for equivariant actions of group homomorphisms on morphisms of schemes. Using Geometric Invariant Theory, we obtain explicit open semistable loci in the source with Hilbert--Mumford descriptions admitting good quotients relative to a given good quotient of the target. In particular, we obtain quotients for reductive groups acting on projective-over-affine morphism. In the non-reductive case, we consider equivariant actions on affine morphisms which are \lq graded' by a multiplicative group and satisfy certain unipotent stabiliser assumptions. This recovers known results in projective non-reductive GIT as a special case, which also proves the Hilbert-Mumford criterion in that setting. As applications, we consider moduli of unstable objects, representations of quivers with multiplicities and jets.
\end{abstract}
	\maketitle

\setcounter{tocdepth}{1}
\tableofcontents

\section{Introduction}

The starting point for this paper is the observation that many moduli problems can be formulated in terms of an equivariant group action. Suppose we have an equivariant action of a homomorphism $\varphi \colon G \rightarrow H$ of algebraic groups on a morphism $f \colon X\rar Z$ of schemes. The central question of this paper is whether a good quotient of $X$ by $G$ can be constructed using Geometric Invariant Theory (GIT) in such a way that it admits a natural morphism to a good quotient of $Z$ by $H$; we call this a relative quotient. 
Our main results are constructions of relative GIT quotients for equivariant actions on projective-over-affine morphisms between $k$-schemes in both the reductive and non-reductive setting, with Hilbert--Mumford descriptions for their semistable loci. 

When both groups are reductive, our results provide a unified approach that generalises previous approaches to relative reductive GIT. For non-reductive group actions, our relative approach generalises (and recovers) all previously known results on the construction of non-reductive GIT  quotients. Our approach is conceptually simpler and better adapted to various applications. See $\S$\ref{sec relation with others} for more details on the relation between our work and previous results in these areas. 

Applications of our results include moduli of representations of a quiver with multiplicities \cite{HJV}, moduli of unstable objects and jet moduli (for an overview of applications, see $\S$\ref{sec applications}).

\subsection{Overview of the main results}

Let us sketch how we construct our relative quotients. For an equivariant action of $\varphi \colon G \rar H$ on a projective-over-affine morphism $f \colon X\rar Z$, we assume that we have a good quotient $Z'\rar W$ of an open set $Z' \subseteq Z$, which may be obtained via GIT, or otherwise. We assume that $H$ is reductive (see Remarks \ref{rmk base group can be nonred} and \ref{rmk why R reductive} for further discussion on this assumption).  By passing to an affine cone if necessary, we may assume that the map $f$ is affine. We construct our quotient locally over $W$ as the projective spectrum of semi-invariants, and glue. 

When $G$ is reductive, we obtain relative reductive GIT quotient as follows.

\begin{theorem}[Relative reductive GIT, {\cref{thm proj equivariant qnts}}] \label{reductive main theorem}
Suppose that a homomorphism $ G \to H$  of reductive groups over an algebraically closed field $k$ acts equivariantly on a projective-over-affine morphism $f \colon X  \to Z$ of $k$-schemes, and the $G$-action on $X$ is linearised with respect to a $f$-relatively ample line bundle $\Lc$. Assume that $q \colon Z' \to W$ is a good $H$-quotient  of an open subset $j \colon Z' \hookrightarrow Z$.   Then there are open (semi)stable sets $X^{s}(f;q,\Lc) \subseteq X^{ss}(f;q,\Lc) \subseteq X$ and a good $G$-quotient $$X^{ss}(f;q,\Lc) \to X \gitq^f_{\hspace{-2pt} q,\Lc \hspace{2pt}} G:= \rProj_{W} \bigoplus_{n \geq 0} (q_*j^*f_{\ast} \Lc^{n})^{G}$$ that is projective-over-affine over $W$ and restricts to a geometric quotient on the stable locus $X^{s}(f;q,\Lc)$. Moreover, this quotient has the following properties: 
\begin{enumerate}[(i)]
    \item  The sheaf of $\cO_W$-algebras $\bigoplus_{n \geq 0} (q_*j^*f_{\ast} \Lc^{n})^{G}$ is finitely generated, and the good quotient is induced by the inclusion of invariants $(q_*j^*f_{\ast} \Lc^{n})^{G} \subseteq q_{\ast} j^* f_{\ast} \Lc^{n}$ for each $n$;
    \item  The loci $X^{(s)s}(f;q,\Lc)$ admit explicit Hilbert--Mumford descriptions (see  \cref{eq HM descr red}).
\end{enumerate}
\end{theorem}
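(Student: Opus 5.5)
We reduce to an affine-local statement over $W$ and then glue. Since $q\colon Z'\to W$ is a good $H$-quotient, it is affine and $H$-invariant. So for an affine open $W_0=\Spec A\subseteq W$, the preimage $q^{-1}(W_0)=\Spec B$ is affine, with $B^H=A$. Because $f$ is projective-over-affine, its restriction $f^{-1}(q^{-1}(W_0))\to \Spec B$ is projective-over-affine. Write $X_0:=f^{-1}(q^{-1}(W_0))$ and $R:=\bigoplus_{n\ge0}H^0(X_0,\Lc^n)$, a graded $B$-algebra with a compatible $G$-action. Relative ampleness of $\Lc$ lets us replace $\Lc$ by a power so that $X_0$ is an open subset of $\Proj R$ over $\Spec B$ (the $f$-projective part is all of $\Proj$ over the affine part). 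The invariant algebra we need is
\[
\Gamma\bigl(W_0,\textstyle\bigoplus_n (q_*j^*f_*\Lc^n)^G\bigr)=R^G .
\]
We would define $X_0^{ss}$ as the union of the nonvanishing loci $X_\sigma$, over homogeneous $\sigma\in R^G_{+}$, such that $X_\sigma$ is affine. On the basic opens $(R_\sigma)_0$ the morphism $X_\sigma\to\Spec (R_\sigma)_0^G$ is an affine GIT quotient by the reductive group $G$, hence a good quotient. These local quotients glue to a good quotient $X_0^{ss}\to\Proj R^G$. The stable locus is defined in the usual Mumford way: closed orbits in $X_\sigma$ and finite stabilisers. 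On this locus the quotient is geometric.

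The key algebraic input, and what I expect to be the main obstacle, is the finite generation of $R^G$ over $A$. The subtlety is that $G$ acts on $B$ only through $\varphi$, and $B$ is acted on by $H$, not by $G$. The plan has three steps.
\begin{enumerate}[(a)]
\item Note that $R$ is finitely generated over $B$, since $X_0\to\Spec B$ is projective-over-affine and $\Lc$ is relatively ample. Moreover $B$ is finitely generated over $k$, since $Z$ is of finite type.
\item Hence $R$ is a finitely generated $k$-algebra with a rational action of the reductive group $G$. By Nagata's theorem (the reductive finite generation result, assumed available earlier in the paper), $R^G$ is finitely generated over $k$.
\item It follows that $R^G$ is finitely generated over $A=B^H\subseteq R^G$. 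Here one uses that $R^G_0=B^{\varphi(G)}\supseteq A$, and $A$ acts on $R^G$ because the $A$-action is $G$-invariant.
\end{enumerate}
This gives (i) locally. Quasi-coherence of $(q_*j^*f_*\Lc^n)^G$, and compatibility with localisation on $W$, follow because taking invariants of a reductive group commutes with flat base change $A\to A_a$. Hence the local $\Proj$s glue to $\rProj_W$, and the semistable loci glue to an open $X^{ss}(f;q,\Lc)$. Projective-over-affine over $W$ follows from $R^G$ being finitely generated with $\Proj R^G\to\Spec R^G_0$ projective and $R^G_0$ finite over $A$.

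For (ii), we reduce to the affine cone. Every point of $X^{ss}$ maps into $Z'$. The semistability of $x$ relative to $q$ can be tested on the affine $G$-scheme $\Spec R$ over $\Spec B$: $x$ is semistable if and only if the closure of $G\cdot\tilde x$ in the fibre over $\overline{H\cdot f(x)}\cap q^{-1}(q(f(x)))$ avoids the zero section. Applying the Hilbert--Mumford criterion for affine reductive actions (Kempf/Richardson) to $G$ acting on the cone, we obtain that $x$ is semistable iff $f(x)\in Z'$ and, for all one-parameter subgroups $\lambda$ of $G$ such that $\lim_{t\to0}\varphi(\lambda(t))f(x)$ exists in $q^{-1}(q(f(x)))$, the limit of $\lambda(t)x$ exists and has weight $\mu^{\Lc}(x,\lambda)\ge0$. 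Stability is the analogous statement with strict inequality.
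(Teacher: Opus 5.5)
Your construction is essentially the paper's. Working over affine opens $W_0\subseteq W$ is exactly the reduction to the fibrewise $G$-action on $q\circ f'$ (\cref{lem affine cone equivariant}), followed by Mumford's GIT on each patch. Your Hilbert--Mumford argument (Kempf's theorem for $G$ acting on the cone $\Spec R$, with the zero section as the closed invariant subset) is a direct version of the paper's route through $\widehat G=G\times\GG_m$, King's twisted affine criterion and \cref{lemma HN function in terms of affine cone}.

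The real problem is the criterion you finally write down, which is misquantified and false as stated. You demand that the limit of $\lambda(t)\cdot x$ \emph{exist} whenever $\lim_{t\to0}\varphi(\lambda(t))f(x)$ exists in $q^{-1}(q(f(x)))$. But existence downstairs does not imply existence upstairs, since $\rSpec_Z f_*\cO_X\to Z$ is only affine. For a counterexample, take $H$ trivial, $Z=W=\Spec k$, $q=\mathrm{Id}$, and $G=\GG_m$ acting on $X=\AA^2=\Spec k[a,b]$ with weights $(1,-1)$, with $\cL=\cO_X$ trivially linearised. The constant section $1$ gives $X^{ss}(f;q,\cL)=X$. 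Yet for $x=(1,1)$ and $\lambda=\mathrm{id}$ the downstairs limit trivially exists while $\lim_{t\to0}(t,t^{-1})$ does not, so your criterion declares $x$ unstable.

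What your Kempf argument actually yields is \eqref{eq HM descr red}. A 1-PS driving $\tilde x$ into the zero section of $\Spec R$ is exactly a $\lambda$ for which $\lim_{t\to0}\lambda(t)\cdot x$ exists in $X_0\subseteq X'=f^{-1}(Z')$ with $\mu^{\cL}(x,\lambda)<0$. Conversely, any such limit in $X'$ lies over $\overline{H\cdot f(x)}\cap Z'\subseteq q^{-1}(q(f(x)))\subseteq q^{-1}(W_0)$. So the quantifier must range over those $\lambda$ whose limit of $\lambda(t)\cdot x$ exists in $X'$, and similarly for stability with strict inequality.

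There are two smaller slips, both visible in the same example.
\begin{enumerate}
\item $R_0=H^0(X_0,\cO_{X_0})$ is generally larger than $B$, so $R^G_0\neq B^{\varphi(G)}$; here $R^G_0=k[ab]$ while $B^{\varphi(G)}=k$. You only need $f^*(A)\subseteq R^G_0$, which does hold.
\item $R^G_0$ is finitely generated but not finite over $A$ (again $k[ab]$ over $k$). Finite generation is all that projective-over-affine requires; finiteness would give projectivity over $W$, which fails in general.
\end{enumerate}
Finally, to match the statement you should check that your locally defined $X^{ss}$ coincides with the paper's definition via $H$-orbitwise closed opens $U\subseteq Z'$. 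This holds because such $U$ are $q$-saturated with $q(U)$ open, as in the proof of \cref{prop equiv twisted affine}.
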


We then turn our attention to a non-reductive group $G$ with an equivariant action 
\[ (G \twoheadrightarrow R) \curvearrowright (X \stackrel{f}{\rightarrow} Z)\]
on a morphism $f$ such that $R = G/U$ is the quotient of $G$ by the unipotent radical $U$ of $G$. As is usual in the non-reductive setting, we work in characteristic zero and assume that (i)  there is a multiplicative group $\GG_m$ that \lq grades' the equivariant action, and (ii) appropriate unipotent stabiliser assumptions hold. These assumptions are relative versions of the central ideas in non-reductive GIT developed by B\'{e}rczi--Doran--Hawes--Kirwan \cite{Berczi2016,Berczi2023, Berczi2024}.  The grading assumption (i) is given in Definition \ref{definition graded} and the unipotent stabiliser assumption (ii) involves a certain open subscheme $Z^\circ(U)$ where \lq the $U$-stabilisers are minimal' (see \cref{def Zcirc for Z arbitrary}). We construct our quotient in two stages. First, locally over $Z$, we use these assumptions to construct a geometric $U$-quotient via slices. As this quotient is affine over $Z$, we can apply \cref{reductive main theorem} to the resulting action of $G/U$ to obtain the $G$-quotient. Finally, we interpret the semistable locus by proving a non-reductive Hilbert--Mumford criterion in this relative setting (see also \cite{Jackson2026} for a proof).

Our second result concerns an action that is \emph{internally} graded by a central subgroup $\GG_m \subseteq R$.

\begin{theorem}[Relative internally graded non-reductive GIT, $\S$\ref{subsec:internallygradedactions}]\label{main theorem}
   Let $G = U \rtimes R$ be a linear algebraic group with unipotent radical $U$ over an algebraically closed field of characteristic zero. Suppose that $ G \twoheadrightarrow R$ acts equivariantly on an affine morphism $f \colon X \rightarrow Z$ of $k$-schemes and this action is internally graded. Assume that $q \colon Z' \rightarrow W$ is a good $R$-quotient of an open set $j \colon Z' \subseteq Z$ such that $Z' \subseteq Z^\circ(U) \subseteq Z$. Then for any character $\rho \colon G \rightarrow \GG_m$, there are open subsets $X^{(s)s}(f;q,\rho) \subseteq X$ and a good $G$-quotient relative to $W$
      \[ X^{ss}(f;q,\rho) \rightarrow X \gitq^f_{\hspace{-2pt} q,\rho \hspace{2pt}} G:= \rProj_{W} \bigoplus_{n \geq 0} (q_*j^*\hspace{-2pt}f_*\cO_X)^{G}_{\rho^n}  \] 
    that restricts to a geometric quotient on $X^{s}(f;q,\rho)$ with the following properties: 
   \begin{enumerate}[(i)]
        \item the sheaf of graded $\cO_W$-algebras $q_*j^*f_*\cO_{X}^{G,\rho}:=\bigoplus_{n \geq 0} (q_{\ast} j^{\ast} f_*\cO_X)^G_{\rho^n}$ is finitely generated, and the good quotient is induced by the inclusions of semi-invariants;
        \item  the loci $X^{(s)s}(f;q,\rho)$ admit explicit Hilbert--Mumford descriptions (see \cref{eq HM descr nonred}); 
       \item there is an equality $\rSpec_W (q_*j^*\hspace{-2pt}f_*\cO_X)^{G}=W$, and so $X \gitq^f_{\hspace{-2pt} q,\rho \hspace{2pt}} G$ is projective over $W$;
       \item the closed points of $X \gitq^{f}_{\hspace{-2pt} q, \rho \hspace{2pt}} G$ are in bijection with the closed $G$-orbits in $X^{ss}(f;q,\rho)$ or equivalently S-equivalence classes of orbits in $X^{ss}(f;q,\rho)$. Furthermore, an orbit is closed in $X^{ss}(f;q,\rho)$ if and only if it is closed under all flows along one-parameter subgroups of $G$.
   \end{enumerate}
\end{theorem}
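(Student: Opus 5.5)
The proof follows the two-stage strategy sketched in the introduction: first quotient by $U$ locally over $Z$, then apply \cref{reductive main theorem} to the residual $R = G/U$-action. Since everything is constructed as $\rProj_W$ of a sheaf of semi-invariants, we may work locally over $W$. Because $q$ is affine, this means working over an affine open $W_0 = \Spec A \subseteq W$ with preimage $Z_0 = q^{-1}(W_0)$ affine and $R$-invariant, and $X_0 = f^{-1}(Z_0)$ affine. The constructions are canonical in the ring of semi-invariants, so they glue.

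\textbf{Stage 1: the $U$-quotient.} The internal grading supplies a central $\GG_m \subseteq R$ acting on $X$ compatibly with its action on $Z$. Its limit points define a ``$\GG_m$-fixed'' stratum $Z_{\min}$ and its attracting open set. The hypothesis $Z' \subseteq Z^\circ(U)$ says the $U$-stabilisers are of minimal, constant dimension over $Z'$; in characteristic zero one reduces to the case where they are trivial. Using the grading, we construct local slices for the $U$-action over $Z'$, following the B\'erczi--Doran--Hawes--Kirwan argument (the weight-zero/limit map gives a $U$-equivariant trivialisation). From the slices we obtain a geometric $U$-quotient $X_0|_{Z'} \to X_0|_{Z'}/U$ that is affine over $Z'$, with $\cO$-algebra $(f_*\cO_X)^U$ finitely generated locally.

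The essential input is the classical ``$\hat U$-theorem'' argument: over the locus where $\GG_m$-limits lie in $Z^\circ(U)$, the $U$-invariants are finitely generated and separate orbits. I expect this to be the main obstacle, because one must run the argument relatively. Concretely, the $U$-sweep of the limit locus must be shown to be closed and isomorphic to $U \times$ (slice) uniformly over the base. My first attempt would be to use the $\GG_m$-grading to show that the exponential map of $\Lie U$ acting on the limit points is a closed immersion with positive weights. This reduces properness to a weight argument.

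\textbf{Stage 2: the $R$-quotient.} The morphism $X|_{Z'}/U \to Z'$ is affine and $R$-equivariant, and $R$ is reductive. Moreover $\rho$ is trivial on $U$, since $U$ is unipotent, so it descends to a character of $R$. Applying \cref{reductive main theorem} with the trivial line bundle twisted by $\rho$ gives a good $R$-quotient. Its algebra is
\[
\bigoplus_n \bigl(q_*((f_*\cO_X)^U)\bigr)^R_{\rho^n} = \bigoplus_n (q_*j^*f_*\cO_X)^G_{\rho^n},
\]
which proves finite generation in (i). We then define $X^{(s)s}(f;q,\rho)$ as the preimages of the reductive (semi)stable loci. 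The composite of a geometric $U$-quotient and a good $R$-quotient is a good $G$-quotient, and it is geometric on the stable part.

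\textbf{Properties (ii)--(iv).} For (ii), we combine the reductive Hilbert--Mumford description from \cref{reductive main theorem} with the slice description: on the $U$-quotient, $R$-one-parameter subgroups lift to one-parameter subgroups of $G$, and every one-parameter subgroup of $G$ is $U$-conjugate into $R$. Together these give the criterion over one-parameter subgroups of $G$.

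For (iii), degree-zero invariants pulled back from $W$ equal $\cO_W$. Each $U$-invariant on the $\GG_m$-graded piece has nonnegative weight and is determined by its limit over $Z$. Hence $(q_*j^*f_*\cO_X)^G = (q_*\cO_{Z'})^R = \cO_W$, and the Proj is projective over $W$.

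For (iv), we use the bijection between closed points and closed $R$-orbits from good quotients. The $U$-quotient is geometric, so closed $R$-orbits upstairs correspond to closed $G$-orbits. Closedness under one-parameter-subgroup flows then follows from the reductive Hilbert--Mumford/Kempf theory on $X/U$, transported by the lifting statement used in (ii).
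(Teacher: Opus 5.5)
Your architecture is the paper's: a geometric $U$-quotient of $X'=f^{-1}(Z')$, then relative $\rho$-twisted affine GIT for $R$ on $X'/U\to Z'$. Your argument for (iii) also matches, since $G$-invariants are invariant under the grading $\GG_m$ and hence pulled back from $Z'$. However, the two non-formal inputs are missing.

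\textbf{Stage 1: the $U$-quotient.} The hypothesis $Z'\subseteq Z^\circ(U)$ allows positive-dimensional stabilisers, and characteristic zero does not let you reduce to trivial ones. By \cref{def Zcirc for Z arbitrary}, the hypothesis says that for each layer of the $\GG_m$-weight filtration of $U$, the cokernel $\cQ_i$ of the infinitesimal coaction is locally free. For reduced $Z$ this means minimal stabiliser dimension for every $U^{(i)}$, not just for $U$. The paper (\cref{theorem U quotient with varying stabilisers}) uses this condition as follows.
\begin{enumerate}
\item For an abelian single-weight layer, $\fu^*\otimes\cO_X\twoheadrightarrow\cQ$ gives a morphism $X\to\mathrm{Gr}(\fu^*,d)$.
\item Over affine opens $Z_i$, with $X_i=f^{-1}(Z_i)=\Spec A_i$, one fixes a subgroup $U'_{\fs_i}$ complementary to the stabilisers. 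It is automatically $\GG_m$-stable since there is one weight, and it acts freely.
\item One quotients by $U'_{\fs_i}$ and checks $A_i^{U'_{\fs_i}}=A_i^U$, because $\cQ|_{X_i}\cong\fs_i^*\otimes\cO_{X_i}$ forces $U/U'_{\fs_i}$ to act trivially.
\item Finally one inducts over the filtration.
\end{enumerate}
Even in the free case, you leave the slice construction open. You flag the closed-immersion property of $U\times Z\to X$ as the main obstacle, but neither prove it nor convert it into slices. Moreover, this map is not injective once stabilisers are nontrivial. The paper's mechanism (\cref{prop quot graded Ga}) is different. The ideal $\bigoplus_{i<0}\cA_i\oplus D(\cA_{-l})$ is $\GG_a\rtimes\GG_m$-stable, so if it were proper it would vanish at a $\GG_a$-fixed point of $Z$. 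Hence $D\colon\cA_{-l}\to\cA_0=\cO_Z$ is surjective, which gives local slices, and \cref{thm:quotientsbyunip} iterates this. Note also that no $Z_{\min}$ or attracting open set is involved: $f$ itself is graded, so $X^{\GG_m}=Z$ and every $x$ flows to $f(x)$.

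\textbf{(ii) and the flow criterion in (iv).} Conjugating 1-PSs of $G$ into $R$ by elements of $U$ only gives the easy inclusion: $q_U^{-1}$ of the $R$-semistable locus of $X'/U$ lies in the Hilbert--Mumford $G$-semistable locus, because limits in $X'$ push forward along $q_U$. For the reverse inclusion you need the following. If $\lim_{t\to0}\lambda(t)\cdot q_U(x)$ exists in $X'/U$ for a 1-PS $\lambda$ of $R$, then $\lim_{t\to0}\lambda(t)\cdot ux$ exists in $X'$ for some $u\in U$. This is not automatic, since a limit in the base of an affine-space fibration need not lift through a given fibre point. Saying that ``$R$-one-parameter subgroups lift to $G$'' does not address it.

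The paper proves this in \cref{lemma existence of limits under quotients by unipotents with slices}. With a local slice $\iota$, writing $\iota(\lambda(t)\cdot y)=u_t\cdot(\lambda(t)\cdot\iota(y))$ defines a $U$-valued 1-cocycle on $\GG_m$. This cocycle is a coboundary, $u_t=u^{-1}(\lambda(t)\cdot u)$, since such cohomology with unipotent coefficients vanishes. Hence $u\cdot\iota(y)$ has the required limit. The same lemma is what transfers Kempf's closed-orbit criterion from $X'/U$ to $X'$ in (iv), so without it both claims remain unproved.
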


An important family of applications of \cref{main theorem} is to provide relative constructions of moduli spaces of unstable objects (such as objects of a fixed Harder--Narasimhan type in an abelian moduli problem such as sheaves or quiver representations), or more generally to give relative quotients of unstable strata appearing in instability stratifications of Hesselink, Kempf, Kirwan and Ness \cite{Hesselink,Kempf1978,KirwanThesis,Ness} for a reductive group $G$ acting on a projective-over-affine scheme. Under certain unipotent stabiliser assumptions, we construct quotients of these unstable strata  (see \cref{thm app unstable quotients}) and show that the non-reductive Hilbert--Mumford criterion can be used to formulate notions of (semi)stability for objects of fixed Harder--Narasimhan type (see $\S$\ref{sec moduli fixed HN type}).

As another application, we explain in $\S$\ref{sec jets} how to simultaneously generalise and simplify the construction of moduli spaces of jets of B\'{e}rczi and Kirwan \cite{Berczi2024} using our relative viewpoint.

Our third result concerns a non-reductive action that is \emph{externally} graded by $\GG_m \subseteq \Aut(G)$.

\begin{theorem}[Relative externally graded non-reductive GIT, $\S$\ref{sec ext graded}]\label{main theorem externally graded case}
Let $G = U \rtimes R$ be a linear algebraic group with unipotent radical $U$ over an algebraically closed field of characteristic zero. Suppose that $ G \twoheadrightarrow R$ acts equivariantly on an affine morphism $f \colon X  \rightarrow Z$ of $k$-schemes and this action is externally graded by $\GG_m \subseteq  \Aut(G)$. Assume that there is a good $R$-quotient $q \colon Z' \rightarrow W$ of an open set $j \colon Z' \hookrightarrow Z$ such that $Z' \subseteq Z^\circ \subseteq Z$. Then for any character $\rho \colon R \rightarrow \GG_m$, there are open subsets $X^{(s)s}(f;q,\rho)$ and a good quotient $X^{ss}(f;q,\rho) \rightarrow X \gitq^{f}_{\hspace{-2pt} q, \rho \hspace{2pt}} G$ which restricts to a geometric quotient on $X^{s}(f;q,\rho)$ with the following properties:
    \begin{enumerate}[(i)]
        \item the sheaf of graded $\cO_W$-algebras $q_*j^*f_*\cO_{X}^{G,\rho}:= \bigoplus_{n \geq 0} (q_*j^*f_*\cO_{X})^G_{\rho^n}$ is finitely generated, and $X \gitq^{f}_{\hspace{-2pt} q, \rho \hspace{2pt}} G = \rProj_W q_*j^*f_*\cO_{X}^{G,\rho}$;
        \item the structure morphism factors as $X \gitq^{f}_{\hspace{-2pt} q, \rho \hspace{2pt}} G \rightarrow X \gitq^{f}_{\hspace{-2pt} q \hspace{2pt}} G:= \rSpec q_*j^*f_*\cO_{X}^{G}  \rightarrow W$, where the first map is projective and the second map is affine;
        \item there is a relative projective completion $X \gitq^{f}_{\hspace{-2pt} q, \rho \hspace{2pt}} G \hookrightarrow \widetilde{X} \gitq^{\widetilde{f}}_{\hspace{-2pt} q, \widetilde{\rho} \hspace{2pt}} \widetilde{G}$ over $W$, where $\widetilde{G} := G \rtimes \GG_m$ acts on $\widetilde{X} := X \times \AA^1$ by the scaling $\GG_m$-action on $\AA^1$ and $\widetilde{\rho} = (\rho, m)$ for $m >\!> 0$, as well as an explicit boundary (see \cref{externally graded});
        \item the loci $X^{(s)s}(f;q,\rho)$ admit explicit Hilbert--Mumford descriptions. 
        \end{enumerate}
    \end{theorem}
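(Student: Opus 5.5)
The plan is to reduce the externally graded case to the internally graded one, Theorem \ref{main theorem}, by adjoining the grading torus. Set $\widetilde{G} := G \rtimes \GG_m$, where $\GG_m \subseteq \Aut(G)$ is the external grading, and let it act on $\widetilde{X} := X \times \AA^1$ covering the action of $\widetilde{R} := R \times \GG_m$ on $\widetilde{Z} := Z \times \AA^1$. The $\GG_m$ factor acts on $\AA^1$ by scaling. The unipotent radical of $\widetilde{G}$ is still $U$, and the new $\GG_m$ is central in $\widetilde{R}$. By the definition of an external grading, this new $\GG_m$ acts on $\Lie U$ with positive weights and satisfies the limit conditions on $X$ over $Z$. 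So $(\widetilde{G} \to \widetilde{R}) \curvearrowright (\widetilde{X} \to \widetilde{Z})$ should be internally graded.

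For the base, let $\widetilde{q} \colon Z' \times \AA^1 \to W$ be $q$ composed with the projection. It is a good $\widetilde{R}$-quotient, since $\AA^1 \gitq \GG_m = \Spec k$. The minimal-stabiliser condition on $Z'$ should also persist, giving $Z' \times \AA^1 \subseteq \widetilde{Z}^\circ(U)$. Applying Theorem \ref{main theorem} with $\widetilde{\rho} = (\rho, m)$ then yields a projective-over-$W$ quotient of $\widetilde{X}^{ss}$.

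Next I would compute the invariants. The $\widetilde{\rho}^{\,n}$-semi-invariants of $\widetilde{q}_*\widetilde{j}^*\widetilde{f}_*\cO_{\widetilde{X}}$ decompose as $\bigoplus_{d} (q_*j^*f_*\cO_X)^G_{\rho^n, \text{wt } nm-d}\, t^d$. I would identify the open locus $t \neq 0$, i.e. $X \times \GG_m$. There the $\GG_m$ factor acts freely, and one gets $X \times \GG_m \gitq \widetilde{G} = X \gitq G$ locally over $W$. The key claim is that for $m \gg 0$, depending on local generators over an affine cover of $W$, the $\widetilde{\rho}$-semistable points of $X \times \GG_m$ are exactly $X^{ss}(f;q,\rho) \times \GG_m$. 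I would take this as the definition of $X^{ss}(f;q,\rho)$, with $X^s$ defined similarly. Then the open subscheme $\{t \neq 0\}$ of the quotient is $\rProj_W \bigoplus_n (q_*j^*f_*\cO_X)^G_{\rho^n}$. Finite generation of this algebra, claim (i), follows by localising the finitely generated $\widetilde{\rho}$-algebra at $t$ and taking $\GG_m$-weight-zero parts, which preserves finite generation. This gives (i), the good and geometric quotient statements, and the embedding in (iii). The boundary is the locus $t = 0$, namely $(X \times \{0\}) \gitq \widetilde{G}$, where the $\GG_m$ now acts internally.

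For (ii), the degree-zero part $(q_*j^*f_*\cO_X)^G$ is an $\cO_W$-algebra. It is finitely generated because it is the weight-zero piece of the finitely generated algebra above, or alternatively by applying the $\widetilde{G}$-theory to $\widetilde{\rho} = (0, m)$. The standard $\Proj \to \Spec(\text{degree } 0)$ map gives the projective morphism, and the affine morphism to $W$ is clear. For (iv), I would take the Hilbert--Mumford description of $\widetilde{X}^{(s)s}$ from Theorem \ref{main theorem}(ii), restrict it to $t = 1$, and restrict to one-parameter subgroups of $\widetilde{G}$. I would then eliminate the $\GG_m$-component of the one-parameter subgroups using $m \gg 0$.

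The main obstacle is the uniformity of $m$. We must choose $m$ so that semistability on $X \times \GG_m$ is independent of $m$ and agrees with a description intrinsic to $X$, even though $W$ and the algebra are only locally finitely generated. I would first work over an affine cover of $W$, where finitely many generators make the required bound explicit via weights, and then check compatibility on overlaps, taking the maximum over a finite cover when $W$ is quasi-compact.
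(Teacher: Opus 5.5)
There is a genuine gap: your argument for finite generation of $\cB:=q_*j^*f_*\cO_X^{G,\rho}$, and for identifying the open piece $\{t\neq 0\}$ of $\widetilde{X}\gitq^{\widetilde{f}}_{q,\widetilde{\rho}}\widetilde{G}$ with $\rProj_W\cB$, is circular.

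To fix $m$ uniformly you need, locally over $W$ with $X=\Spec A$, a finite set $S$ of $\GG_m$-homogeneous generators of $A^{G,\rho}$ over $A^G$. You then take $m>M:=\max_{s\in S}\mathrm{wt}_{\GG_m}(s)/\mathrm{wt}_\rho(s)$. So finite generation of $\cB$ must be known before the comparison. You instead deduce it afterwards from the $\widetilde{\rho}$-algebra $\widetilde{\cB}$.

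That deduction also fails on its own terms:
\begin{itemize}
\item The coordinate $t$ is $G$-invariant, so no power of $t$ is a $\widetilde{\rho}^{\,n}$-semi-invariant with $n>0$ (unless $\rho^n$ is trivial). Hence there is no localisation of $\widetilde{\cB}$ at $t$.
\item Setting $t=1$ maps $\widetilde{\cB}_n$ onto only $\bigoplus_{w\le nm}(A^G_{\rho^n})_w$.
\item In the externally graded case $A^G$ is not concentrated in grading weight zero. In the $2\times 2$ matrix example of \S\ref{sec ext graded}, $\Delta/a_{11}^2$ has weight one and $X\gitq_q G\to W$ is an $\AA^1$-bundle. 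So for each fixed $m$ the image above is a proper subalgebra of $A^{G,\rho}$.
\end{itemize}
Your $\widetilde{\rho}=(0,m)$ trick does prove $A^G$ is finitely generated, because there $t\mapsto 1$ is onto $A^G$. There is no analogous surjection onto the twisted algebra. Relatedly, declaring $X^{ss}(f;q,\rho)$ to be the slice $t=1$ of $\widetilde{X}^{ss}$ makes it depend a priori on $m$. The theorem concerns the locus of \cref{definition reductive GIT quotients}, and showing the two agree is the real content.

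The missing ingredient is \cref{lemma ext grading forget Gm}: construct the quotient directly, without $\widetilde{X}$. Use the external $\GG_m$ only to build the geometric $U$-quotient $X'/U\to Z'$. This is affine and of finite type by \cref{theorem U quotient with varying stabilisers}, since $Z'\subseteq Z^\circ$. Then take the reductive $\rho$-twisted affine quotient of the $R$-action on $X'/U\to Z'$ relative to $q$. This gives (i), (ii) and the good quotient at once, and it supplies $S$.

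For $m>M$ the comparison is then short. If $s\in S$ has $\rho$-weight $n$, grading weight $w$, and $s(x)\neq 0$, then $s\,t^{nm-w}$ is a $\widetilde{\rho}^{\,n}$-semi-invariant not vanishing at $(x,c)$ for $c\neq 0$. Conversely, every coefficient $h_i$ of a $\widetilde{\rho}^{\,n}$-semi-invariant $h=\sum_i h_i t^i$ lies in $A^G_{\rho^n}$. Hence $X\cap\widetilde{X}^{ss}=X^{ss}(f;q,\rho)$, the two good quotients agree by universality, and (iii) and (iv) follow from \cref{externally graded}.

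Separately, your set-up must use the base $Z$, not $Z\times\AA^1$. Over $Z\times\AA^1$ the grading $\GG_m$ does not act fibrewise: $\widetilde{X}^{\GG_m}=Z\times\{0\}$ and $\lim_{\lambda\to 0}\lambda\cdot(x,a)=(f(x),0)\neq(f(x),a)$. So the action is not internally graded in the sense of \cref{definition graded}. Take $\widetilde{f}\colon X\times\AA^1\to Z$ with the new $\GG_m\subseteq\widetilde{R}$ acting trivially on $Z$; then $q$ itself is the good $\widetilde{R}$-quotient.
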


A key application of this result is to construct moduli spaces of representations of a quiver with multiplicities \cite{HJV}. A representation of a quiver with multiplicities is given by a free module over a truncated polynomial ring $k[\epsilon]/\epsilon^{m_i}$ for each vertex $i$ in the quiver (where the multiplicities specify the order $m_i$) and appropriate linear maps between these for each arrow. There is a forgetful map from an affine representation space for a quiver with multiplicities to a representation space for the quiver (without multiplicities), which is equivariant with respect to the natural groups acting by change of basis.  In  \cite{HJV}, an external grading for this action is found and  \cref{main theorem externally graded case} is applied to construct moduli spaces of semistable representations of a quiver with multiplicities that are projective-over-affine over King's moduli spaces of semistable representations of the quiver without multiplicities \cite{King1994} (see the summary in $\S$\ref{sec quivers with mult}).

Finally, we also apply our techniques to the situation of an absolute action of a graded non-reductive group $G = U \rtimes R$, rather than an equivariant action. If the grading $\GG_m$-action admits an open Bia{\l}ynicki-Birula stratum, then there is an equivariant action of $G \twoheadrightarrow R$ on the affine retraction morphism from this stratum to its fixed locus given by flowing under the $\GG_m$-action. In this case, we can apply \cref{main theorem} to construct a quotient of an open subscheme (see \cref{Main thm on quotienting BB strata}), and in particular, we recover the results of \cite{Berczi2016} using our relative approach (see the new proof of \cref{thm Uhat} using our relative approach). 

\subsection{Relation with other works}\label{sec relation with others} 
\cref{reductive main theorem} generalises several previous approaches to relative reductive GIT \cite{Mumford1994,Seshadri1977, Reichstein1989,Hu1996,Grulbrandsen2015, Schmitt2017, Schmitt2021}. Mumford introduces group actions and quotients in a relative framework and works with a group scheme over a general base in \cite[Chapter 0]{Mumford1994}, although in later chapters he works over a field. Seshadri extends many of Mumford results to the case of a reductive group scheme $G/S$ over a locally Noetherian base scheme acting on a scheme $X/S$, and establishes a Hilbert--Mumford criterion for a projective scheme over an affine base scheme \cite{Seshadri1977}. If $S$ is of finite type over a universal Japanese ring, then Seshadri shows that the quotient is of finite type over the base. Alper also considers a relative framework in his work on good and adequate moduli spaces \cite{Alper,AlperAMS}, which extends ideas of reductive GIT to the world of algebraic stacks. This should be considered an extension of Seshadri's relative approach, however: in all of this work there is no additional action or stacky structure on the base. In our terminology (\S \ref{subsec:equivariant and fibrewise}), they are fibrewise rather than equivariant actions. 

For equivariant actions, in the case where $G = H$ acts on a morphism, Reichstein constructs relative good quotients and establishes a Hilbert--Mumford criterion under the assumption that $X$ and $Z$ are projective varieties over an algebraically closed field. Hu considers equivariant actions of a surjective homomorphism of reductive groups on morphisms of quasi-projective varieties \cite{Hu1996}, but \cite{Schmitt2017} shows that the Hilbert--Mumford criterion claimed in \cite{Hu1996} is false in this level of generality. Finally, in the case where $G = H$ is linearly reductive and acts equivariantly on a projective morphism $f \colon X \rightarrow Z$ to a Noetherian affine scheme over an arbitrary field, a relative quotient to the affine GIT quotient $q \colon X \rightarrow X \gitq G$ is constructed in \cite{Grulbrandsen2015}, and they establish a relative Hilbert--Mumford criterion. Thus, special cases of Theorem \ref{reductive main theorem} have been established piecemeal in the literature, but to the best of the authors' knowledge, relative GIT, even for the classical case of reductive groups, had not been systematically treated prior to this paper.

Our other main results, \cref{main theorem,main theorem externally graded case}, generalise existing approaches to non-reductive GIT in \cite{Berczi2016,Jackson2021,Hoskins2021,Berczi2023, Berczi2024} to the relative setting. Aside from its conceptual simplicity, our approach has three major advantages. Firstly, it does not require projectivity. Up until this paper, all previous work in  non-reductive GIT required a projective set-up. This made the proofs significantly harder, since at every stage in the process one had to show that one still had a projective scheme to work with. Moreover, many interesting problems do not only involve projective schemes, and in this case one had no choice but to embed (often somewhat artificially) the relevant schemes into projective ones, with the usual cost that nothing better than a quasi-projective quotient could be obtained. As an example, take the construction of moduli of representations of quivers with multiplicities, which is naturally an affine problem. The approach of embedding into projective space was explored by the authors in \cite{HHJ25}, but with T. Vernet in \cite{HJV} we were able to improve these results substantially using our relative approach. A second advantage of our approach is that, as we use twisted affine GIT on each local patch, we overcome some technicalities in \cite{Berczi2016,Jackson2021,Hoskins2021,Berczi2023, Berczi2024}\ related to blow-up procedures and finding appropriately small so-called \emph{well-adapted} perturbations of linearisations. Thirdly, our relative approach makes proving the Hilbert-Mumford criterion a great deal easier.

Alper's ideas on good moduli spaces (i.e.\ stacky versions of reductive GIT quotients) have been extended by David Rydh to give stacky generalisations to certain aspects of non-reductive GIT, where also a relative approach plays an important r\^{o}le. Some of the results of NRGIT have been generalised to stacks by Modin \cite{Modin}, and Cooper and Modin are preparing a paper where they obtain stacky analogues of our results and new applications to moduli of principal bundles \cite{CooperModin}. 

\subsection*{Conventions} 
Throughout $k$ is an algebraically closed field, which is assumed to be of characteristic zero from $\S$\ref{sec:rel quotients for unipotent actions} onwards. By a scheme we mean a separated scheme of finite type over $k$, and by a variety, we mean a reduced scheme of finite type over $k$, which is not necessarily irreducible.

\subsection*{Acknowledgments} 
The authors would like to thank the Isaac Newton Institute for Mathematical Sciences, Cambridge, where much of this research was carried out during the programme \emph{New equivariant methods in algebraic and differential geometry}; this work was supported by EPSRC grant EP/R014604/1. We would also like to thank David Rydh for insightful discussions on non-reductive GIT, and the original idea to work in the relative setting.

\section{Preliminaries on group actions on morphisms}

In this section we let $f \colon X \rar Z$ be a morphism of schemes and $\varphi \colon G \rar H$ be a homomorphism of linear algebraic groups, 
and we define equivariant and fibrewise actions of $\varphi$ on $f$ in $\S$\ref{subsec:equivariant and fibrewise}. We describe properties of such actions in the case where $f$ is affine in $\S$\ref{subsec:fibrewise actions affine} and projective-over-affine in $\S$\ref{subsec:fibrewise actions on poa}. We introduce the notion of a graded equivariant action in $\S$\ref{subsec:gradedactions}, which plays an important r\^{o}le in (relative) non-reductive GIT quotients. Finally in $\S$\ref{subsec:relative quotients}, we explain what we mean by a relative quotient, and in  $\S$\ref{subsec: def rel GIT reductive} we give the basic definitions concerning relative GIT quotients.

\subsection{Equivariant and fibrewise actions} \label{subsec:equivariant and fibrewise}
Let us introduce the central notions of this paper.

\begin{definition}[Equivariant and fibrewise actions]
 Let $\varphi \colon G \to H$ be a homomorphism of linear algebraic groups and $f \colon  X \to Z$  be a morphism of schemes. 
 \begin{enumerate}
     \item An \emph{equivariant action of $\varphi$ on $f$} is a pair of actions $(\sigma_X \colon G \times X \rightarrow X, \sigma_Z \colon H \times Z \rightarrow Z)$ such that the following diagram commutes: 
    \begin{center}  \begin{tikzcd}
G \times X \arrow[r,"\sigma_X"] \ar[d, "\varphi\times f" ] & X \ar[d, "f"] \\
H \times Z  \arrow[r,"\sigma_Z"] & Z. 
\end{tikzcd} 
\end{center} 
\item A \emph{fibrewise action of $G$ on $f$} is an equivariant action of $\varphi \colon G \to \{e\}$ on $f$.
 \end{enumerate}
\end{definition}

Equivalently, a fibrewise $G$-action on $f$ is an action of $G$ on $X$ such that $f$ is $G$-invariant.

One can produce fibrewise actions from equivariant actions, as per the following remark.

\begin{remark}[Fibrewise actions from equivariant actions] \label{lem fibrewise actions from eqvnt actions}
    Suppose that a group homomorphism $\varphi \colon G \rar H$ acts equivariantly on a morphism of schemes $f \colon X \rar Z$. For any normal subgroup $H' \subseteq  H$ acting trivially on $Z$, there is a fibrewise action of $\ker(G \rightarrow H \rightarrow H/H')$ on $f$. Moreover, $G$ acts fibrewise on $f$ if and only if $H$ acts trivially on $Z$. 
\end{remark}

Given a fibrewise $G$-action on $f \colon X \to Z$ and a $G$-equivariant sheaf $\mathcal{F}$ on $X$, there is an induced $G$-action on $f_*\cF(U) =\cF(f^{-1}(U))$ for any open $U \subseteq Z$. We define invariant subsheaves as follows.

\begin{definition}[Invariants and semi-invariants under fibrewise action]\label{definition induced action on pushfoward}
    Suppose that $G$ acts fibrewise on $f \colon X \rightarrow Z$ and $\cF$ is a $G$-equivariant sheaf on $X$. For a character $\rho$ of $G$, we let $\cF_{\rho}$ denote the sheaf $\cF$ with $G$-equivariant structure twisting by $\rho$. Then:
    \begin{enumerate}[(i)]
    \item the \emph{subsheaf of $G$-invariants} is the subsheaf $(f_{\ast} \cF)^G$ of $f_{\ast} \cF$ of $\cO_Z$-modules defined by $(f_*\cF)^G(U):= (f_*\cF(U))^G$ for any open $U \subseteq Z$;
    \item if $\cF$ is a sheaf of $\cO_X$-algebras, the \emph{graded sheaf of $\rho$-twisted semi-invariants} is defined by $(f_*\cF)^{G,\rho}:= \bigoplus_{n \geq 0} (f_*\cF_{\rho^n})^G.$ We refer to sections of $(f_{\ast} \mathcal{F}_{\rho^n})^G$ as \emph{semi-invariants of weight $\rho^n$} (or $\rho$ semi-invariants of weight $n$).
    \end{enumerate} 
\end{definition}

We will only work with semi-invariants when $\mathcal{F} = \mathcal{O}_X$. A section $\sigma \in f_*\cO_X (U)$ over an open $U \subseteq Z$ is a semi-invariant of weight $\rho^n$ if $\sigma(g \cdot x) = \rho^n(g) \sigma(x)$ for all $g \in G$ and $x \in f^{-1}(U)$.

\begin{remark}\label{rmk cannot define invariant sections in equiv case}
If $\varphi \colon G \to H$ acts equivariantly on $f \colon X \to Z$, then for a $G$-equivariant sheaf $\cF$ on $X$ we cannot in general define an invariant subsheaf of $f_{\ast} \cF$, as $f^{-1}(U)$ need not be $G$-invariant for an open set $U \subseteq Z$. However, we can define $(f_{\ast} \cF (U))^G$ for a $H$-invariant open set $U \subseteq Z$, as $f^{-1}(U)$ is $G$-invariant. 
\end{remark}

\subsection{Fibrewise actions on affine morphisms}  \label{subsec:fibrewise actions affine}

We now suppose $f \colon X \rar Z$ is an affine morphism. Recall that for a fixed base $Z$, the relative spectrum functor $\rSpec_Z $ gives an equivalence of categories between sheaves of quasi-coherent $\mathcal{O}_Z$-algebras and affine morphisms of schemes $f \colon X \to Z$, whose inverse is given by $X \mapsto f_* \mc{O}_X$ \cite[Lem.\ 29.11.5]{stacks-project}.

\subsubsection{Correspondence between fibrewise actions and co-actions} In the absolute case, there is a well-known correspondence between $G$-actions on an affine scheme $X$ and co-actions of the associated Hopf algebra $\mathcal{O}_G(G)$ on $\cO(X)$. This extends to the relative setting as follows.

\begin{definition}[Co-actions] \label{defcoaction}
A \emph{co-action} of Hopf algebra $\cO_G(G)$ of a linear algebraic group $G$ on a sheaf of $\cO_Z$-algebras $\cA$ is a morphism $\sigma^{\ast} \colon \cA \to \cA \otimes_{k} \mathcal{O}_G(G)$ of sheaves of $\cO_Z$-algebras satisfying the co-associativity and co-unitality axioms, where the $\mathcal{O}_Z$-module structure on the tensor product is given by multiplication on the left. 
\end{definition}

\begin{proposition}[Relating fibrewise actions and co-actions, {\cite[Exp.\ I, Prop.\ 4.7.2]{SGA3}}] \label{equivalence}
For an affine morphism $f \colon X \to Z$ and a linear algebraic group $G$, there is an equivalence of categories between fibrewise $G$-actions on $f$ and co-actions of $\mathcal{O}_G(G)$ on $\cA = f_* \cO_X$. 
\end{proposition}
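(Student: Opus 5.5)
The plan is to reduce to the absolute affine correspondence over an affine open cover of $Z$ and then glue, using that $\rSpec_Z$ is an equivalence between quasi-coherent $\cO_Z$-algebras and affine $Z$-schemes. A fibrewise $G$-action on $f$ is an action $\sigma_X \colon G \times X \to X$ with $f\circ \sigma_X = f \circ \mathrm{pr}_2$. In particular it is a morphism of $Z$-schemes $G\times X \to X$, where $G \times X$ is regarded as a $Z$-scheme via $f \circ \mathrm{pr}_2$. Since $G = \Spec \cO_G(G)$ is affine over $k$, the composite $G \times X \to X \to Z$ is affine. Moreover there is a canonical identification
\[
(f\circ \mathrm{pr}_2)_* \cO_{G\times X} \cong f_*\cO_X \otimes_k \cO_G(G),
\]
which one checks on affine opens $V=\Spec B \subseteq Z$: there $f^{-1}(V)=\Spec A$ and $G \times f^{-1}(V) = \Spec(A \otimes_k \cO_G(G))$. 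The $\cO_Z$-algebra structure comes from the left factor, matching \cref{defcoaction}.

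The key steps are as follows.

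\textbf{Step 1.} By the equivalence $\rSpec_Z$, $Z$-morphisms $G\times X \to X$ correspond bijectively to morphisms of sheaves of $\cO_Z$-algebras $\sigma^* \colon \cA \to \cA \otimes_k \cO_G(G)$, where $\cA = f_*\cO_X$.

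\textbf{Step 2.} The action axioms translate into the coaction axioms. Associativity, $\sigma_X\circ(\mathrm{id}_G\times\sigma_X)=\sigma_X\circ(m\times \mathrm{id}_X)$, is an equality of $Z$-morphisms $G\times G\times X\to X$, all of them affine over $Z$. Applying the same identification $(G\times G\times X\to Z)_*\cO \cong \cA\otimes_k\cO_G(G)\otimes_k\cO_G(G)$, this becomes co-associativity. Similarly, the unit axiom $\sigma_X\circ(e\times\mathrm{id}_X)=\mathrm{id}_X$ becomes co-unitality. Faithfulness of the equivalence gives both directions.

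\textbf{Step 3.} Morphisms match. $G$-equivariant $Z$-morphisms between affine $Z$-schemes correspond to $\cO_Z$-algebra maps that are compatible with the coactions. This again follows by applying the equivalence to the commutative square expressing equivariance.

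The main obstacle is the identification of pushforwards of products in Step 1: one has to verify that the functoriality of $\rSpec$ is compatible with the base change $-\times_k G$. Concretely, one needs $\rSpec_Z(\cA\otimes_k \cO_G(G)) \cong G\times X$ naturally. I would handle this locally, since over affine $V\subseteq Z$ it is the standard isomorphism $\Spec(A\otimes_k \cO_G(G)) \cong \Spec A\times_k G$, and then glue using naturality in $V$. Everything else is formal, and is exactly the argument of \cite[Exp.\ I, Prop.\ 4.7.2]{SGA3} specialised to the constant group scheme $G_Z$, noting that a fibrewise action of $G$ over $k$ is the same as an action of $G\times_k Z$ over $Z$.
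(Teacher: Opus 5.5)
Your proposal is correct. It is the standard argument behind the SGA3 result, which the paper simply cites without proof: identify $G\times X$ with $\rSpec_Z(\cA\otimes_k\cO_G(G))$ compatibly with the maps $\mathrm{id}_G\times\sigma_X$, $m\times\mathrm{id}_X$ and $e\times\mathrm{id}_X$, then transport the action axioms and equivariance through the anti-equivalence $\rSpec_Z$. The only point worth stating explicitly is that $U\mapsto\cA(U)\otimes_k\cO_G(G)$ is already a quasi-coherent sheaf because $Z$ is noetherian; otherwise one sheafifies, and this does not affect the argument.
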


\subsubsection{Fibrewise $\GG_m$- and $\GG_a$-actions}
In the relative affine setting, we review how fibrewise actions of these 1-dimensional groups can be equivalently characterised using their co-actions, which generalises the well-known descriptions of these group actions in the absolute affine setting.

\begin{proposition}[Fibrewise $\GG_m$-actions, {\cite[Exp.\ I Prop.\ 4.7.3]{SGA3}}] \label{gmdictionary}
    The functor $\cA \mapsto \rSpec_Z \cA$ induces a contravariant equivalence from the category of quasi-coherent $\ZZ$-graded $\cO_Z$-algebras to the category of fibrewise $\GG_m$-actions on affine $Z$-schemes. 
\end{proposition}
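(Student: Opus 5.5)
The plan is to deduce the statement from \cref{equivalence}, the correspondence between fibrewise actions and co-actions, specialised to $G = \GG_m$, and then to identify co-actions of $\cO(\GG_m) = k[t,t^{-1}]$ with $\ZZ$-gradings. The equivalence between quasi-coherent $\cO_Z$-algebras and affine $Z$-schemes already holds at the level of objects and morphisms via $\rSpec_Z$. So it suffices to show that, for a quasi-coherent $\cO_Z$-algebra $\cA$, co-actions $\sigma^* \colon \cA \to \cA \otimes_k k[t,t^{-1}]$ correspond bijectively and functorially to $\ZZ$-gradings $\cA = \bigoplus_{n \in \ZZ} \cA_n$ by quasi-coherent $\cO_Z$-submodules with $\cA_n \cA_m \subseteq \cA_{n+m}$.

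First, given a grading, we define $\sigma^*(a) = a \otimes t^n$ for local sections $a \in \cA_n$ and extend additively. This is $\cO_Z$-linear because each $\cA_n$ is an $\cO_Z$-submodule. It is multiplicative because the grading is multiplicative, and co-associativity and co-unitality are immediate from $\Delta(t)=t\otimes t$ and $\epsilon(t)=1$.

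Conversely, given a co-action, we work on an affine open $V \subseteq Z$. Quasi-coherence gives $\cA(V)\otimes_k k[t,t^{-1}] = \bigoplus_n \cA(V) t^n$, so we can write $\sigma^*(a) = \sum_n p_n(a)\, t^n$ with only finitely many $p_n(a)$ nonzero. Co-associativity gives $p_m p_n = \delta_{mn} p_n$, and co-unitality gives $\sum_n p_n = \mathrm{id}$. Hence the $p_n$ are orthogonal idempotent $\cO_Z(V)$-linear projections, and setting $\cA_n(V) := p_n(\cA(V))$ yields a direct-sum decomposition. Multiplicativity of $\sigma^*$ shows that $\cA_n\cA_m \subseteq \cA_{n+m}$. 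The main point requiring care is that these constructions glue and yield quasi-coherent summands. This follows because $p_n$ is a morphism of sheaves, being a component of the sheaf map $\sigma^*$ composed with the coefficient extraction $\cA\otimes_k k[t,t^{-1}] \to \cA$, which is itself a morphism of quasi-coherent sheaves. Its image $\cA_n$ is therefore quasi-coherent, since it is the kernel of $\mathrm{id}-p_n$.

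The two constructions are mutually inverse. Morphisms match as well: a morphism of algebras commutes with the co-actions if and only if it commutes with every $p_n$, which is exactly the condition that it preserves degrees. Composing with \cref{equivalence} and the contravariance of $\rSpec_Z$ then gives the asserted contravariant equivalence. I expect no serious obstacle here. The only subtle step is the quasi-coherence and gluing of the graded pieces, handled as above, which is exactly the content of \cite[Exp.\ I Prop.\ 4.7.3]{SGA3}.
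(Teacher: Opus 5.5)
Your proof is correct. The paper gives no argument of its own beyond citing SGA3, Exp.~I, Prop.~4.7.3, and your reduction to \cref{equivalence}, followed by identifying $k[t,t^{-1}]$-co-actions with $\ZZ$-gradings through the orthogonal idempotents $p_n$ (including the degree-preservation check on morphisms), is exactly the standard comodule--grading dictionary behind that reference. The one step to state precisely is that the identification of $(\cA\otimes_k k[t,t^{-1}])(V)$ with $\bigoplus_n \cA(V)\,t^n$, which makes each $\sigma^*(a)$ a finite sum, uses quasi-coherence on the affine (hence quasi-compact) open $V$.
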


In the absolute setting, $\GG_a$-actions on an affine scheme $X = \Spec A$ over a field $k$ of characteristic zero are in one-to-one correspondence with locally nilpotent derivations of $A$ (see \cite[Section 1.5]{Freudenburg2017} for details). We now formulate an analogous notion in the relative affine setting.

\begin{definition}[Derivations of sheaves of algebras] Given a sheaf of $\mathcal{O}_Z$-algebras $\cA$ on a scheme $Z$, a \emph{derivation of $\cA$ relative to $Z$} is a map of sheaves of $\cO_Z$-modules $D \colon \cA \rightarrow \cA $ satisfying the Leibniz rule, i.e.\ such that for all open sets $U \subseteq Z$ and $g,h \in \cA(U)$ we have \[D(gh) = gD(h) + hD(g).\] A derivation $D$ is \emph{locally nilpotent} if for every open set $U\subseteq Z$ and every $f \in \cA(U)$, there exists some $n \in \NN_{>0}$ such that $D^n(f)=0$. \end{definition}

\begin{proposition}[Fibrewise $\GG_a$-actions] \label{corresp} Let $Z$ be a scheme over a field $k$ of characteristic zero. There is a one-to-one correspondence between fibrewise $\GG_a$-actions on affine $Z$-schemes and  locally nilpotent derivations of sheaves of $\cO_Z$-algebras relative to $Z$.  Moreover, if $D \colon \cA \rightarrow \cA$ is the derivation corresponding to a fibrewise $\GG_a$-action on $\rSpec_Z \cA$, then $\cA^{\GG_a} = \ker (D)$.  
\end{proposition}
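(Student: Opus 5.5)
The plan is to reduce to the absolute affine correspondence by working locally on $Z$ and gluing. By \cref{equivalence}, a fibrewise $\GG_a$-action on an affine $Z$-scheme $X=\rSpec_Z\cA$ is the same as a co-action $\sigma^*\colon \cA \to \cA\otimes_k \cO(\GG_a) = \cA[t]$ of $\cO_Z$-algebras satisfying co-associativity and co-unitality. So it suffices to set up a bijection between such co-actions and locally nilpotent $\cO_Z$-linear derivations of $\cA$, compatibly with restriction to opens.

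Given a co-action, on each open $U\subseteq Z$ write $\sigma^*(a)=\sum_{i\ge 0} D_i(a)\,t^i$ for $a\in\cA(U)$; this sum is finite. Set $D:=D_1$. Since $\sigma^*$ is a morphism of sheaves of $\cO_Z$-algebras, each $D_i\colon\cA\to\cA$ is an $\cO_Z$-linear map of sheaves. Multiplicativity of $\sigma^*$ gives the Leibniz rule for $D$, and co-unitality gives $D_0=\mathrm{id}$. Co-associativity, i.e.\ $\sigma^*(a)(s+t)=\sum_i D_i(\sigma^*(a))(s)\,t^i$, gives $D_iD_j=\binom{i+j}{i}D_{i+j}$. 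Hence $D_n=D^n/n!$, which uses characteristic zero. Local nilpotence then follows because $D^n(a)=n!\,D_n(a)=0$ for $n$ large, the polynomial $\sigma^*(a)$ having finite degree. Conversely, given a locally nilpotent derivation $D$, define $\sigma^*(a):=\exp(tD)(a)=\sum_n \frac{D^n(a)}{n!}t^n$. This is a finite sum by local nilpotence, and it is $\cO_Z$-linear and compatible with restrictions since $D$ is a map of sheaves. It is multiplicative by the Leibniz rule (the binomial formula for $D^n(gh)$), and co-associative and co-unital since $\exp((s+t)D)=\exp(sD)\exp(tD)$ and $\exp(0\cdot D)=\mathrm{id}$. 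The two constructions are clearly mutually inverse, because $D$ is recovered as the $t$-coefficient of $\exp(tD)$. This gives the one-to-one correspondence.

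For the invariants, recall that $\cA^{\GG_a}(U)=\cA(U)^{\GG_a}$, and a section $a$ is invariant iff $\sigma^*(a)=a\otimes 1$. By the formula above this holds iff $D^n(a)=0$ for all $n\ge1$, which is equivalent to $D(a)=0$. Hence $\cA^{\GG_a}=\ker D$ as subsheaves of $\cA$.

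The only delicate points are bookkeeping ones. First, one must check that the pointwise definition of invariance in \cref{definition induced action on pushfoward} agrees with the co-action condition on sections; this holds by \cref{equivalence}, applied on the affine pieces $f^{-1}(U)$ for $U$ affine. Second, the nilpotence exponent of $D$ may depend on the section, so no uniform bound is needed, and the argument is purely sectionwise. I expect the co-associativity computation $D_iD_j=\binom{i+j}{i}D_{i+j}$ to be the main step, but it is the standard absolute computation (as in \cite[Section 1.5]{Freudenburg2017}) carried out on each $\cA(U)$.
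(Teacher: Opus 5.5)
Your proposal is correct and follows the paper's route: reduce via \cref{equivalence} to $\cO_Z$-linear co-actions $\cA \to \cA[t]$, take $D$ to be the $t$-coefficient (the derivative at $t=0$), recover the co-action as $\exp(tD)$, and read off $\cA^{\GG_a}=\ker D$. The only difference is presentational. The paper builds these maps on an open (affine) cover and glues them using uniqueness of the absolute correspondence, whereas you note the formulas are natural in $U$, so no gluing is needed (this uses $(\cA\otimes_k k[t])(U)=\cA(U)[t]$, valid since $Z$ is Noetherian). You also spell out the absolute verification $D_iD_j=\binom{i+j}{i}D_{i+j}$, which the paper instead cites from Freudenburg.
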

\begin{proof}
By \cref{equivalence}, a fibrewise $\GG_a$-action on an affine morphism $f \colon X \to Z$ is equivalent to a co-action $\sigma^{\ast} \colon \cA \to \cA[w]$ which is $\cO_Z$-linear. Given an open set $U \subseteq Z$, there is an action $\sigma_U$ of $\GG_a$ on $X_U := \Spec A_U$ where $A_U : = \mathcal{A}(U)$, with co-action $\sigma_U^{\ast} \colon \cA_U \to \cA_U[w]$. The associated locally nilpotent derivation $D_U \colon A_U \to A_U$ is given by $h \mapsto \frac{d}{dw} \sigma_U^{\ast}(h)$. Choose an open cover $\{U_i\}_{i \in I}$ of $Z$. Then the restrictions of $D_{U_i}$ and $D_{U_j}$ to $U_i \cap U_j$ must coincide, since they represent locally nilpotent derivations associated to the same $\GG_a$-action on $U_i \cap U_j$. Therefore the locally nilpotent derivations $D_{U_i}$ of $A_i$ glue to give a locally nilpotent derivation of $\cA$ relative to $Z$.

Conversely, suppose that $D \colon \mathcal{A} \to \mathcal{A}$ is a locally nilpotent derivation on $\mathcal{A}$ relative to $Z$, and choose an open affine cover $\{U_i\}_{i \in I}$ of $Z$. Then for each affine $U= U_i$ we can define an $\cO_Z$-linear co-action $ \sigma_U^{\ast} \colon \mathcal{A}_U \to \mathcal{A}_U[w]$ given by $ \sigma_U^{\ast}(h) = e^{w D_U(h)}.$ These co-actions agree on $U_i \cap U_j$ since the associated locally nilpotent derivations agree on $U_i \cap U_j$ and the absolute correspondence is one-to-one. Hence the co-actions $\sigma_{U_i}^{\ast}$ glue to give an $\cO_Z$-linear co-action $ \sigma^{\ast} \colon \mathcal{A} \to \mathcal{A}[w]$, and by \cref{equivalence} this data determines a unique fibrewise $\GG_a$-action on $X : = \rSpec_Z \cA \to Z$. 

These constructions are inverse to each other, as this is true on an open cover of $Z$. The final claim follows as in the absolute case (for example, see \cite[Section 1.5]{Freudenburg2017}).
\end{proof}

\subsubsection{Relative Reynolds operators for fibrewise actions of linearly reductive groups}\label{subsec: Rel Reynolds}

\begin{definition}
    For a fibrewise action of a linear algebraic group $G$  on an affine morphism $f \colon X = \rSpec_Z \cA \rightarrow Z$, a \emph{relative Reynolds operator} is an $\cO_Z$-linear projection $R \colon \cA \rightarrow \cA^G$ that is $\cA^G$-linear (i.e.\ for all open subsets $U \subseteq Z$ and $a \in \cA^G(U)$ and $b \in \cA(U)$, we have $R(ab) =a R(b)$).
\end{definition}

For $Z = \Spec k$ and $X = \Spec A$, the above notion coincides with the classical notion. 

\begin{example}
    For a finite group $G$ acting fibrewise on an affine morphism $f \colon X = \rSpec_Z \cA \rightarrow Z$, there is a relative Reynolds operator given by averaging over $G$, provided $|G|$ is invertible in $k$.
\end{example}

In the absolute setting of an action of a linear reductive group $G$ on an affine scheme $\Spec A$ (not necessarily of finite type), there is a unique Reynolds operator $R \colon A \rightarrow A^G$ which is a $k$-linear projection onto $A^G$ which is also $A^G$-linear. This is because the action on $A$ is \emph{rational} (that is, every $a \in A$ is contained in a $G$-invariant finite dimensional $k$-vector subspace of $A$). 

\begin{lemma}\cite[\S1.1]{Mumford1994}\label{lemma abs Reynolds op}
Let $G$ be a linearly reductive group acting rationally on a $k$-algebra $A$. Then there is a unique $k$-linear Reynolds operator $R \colon A \rightarrow A^G$. 
\end{lemma}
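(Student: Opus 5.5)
The plan is to build $R$ by decomposing $A$ into isotypic components. Rationality means every $a\in A$ lies in a finite-dimensional $G$-stable subspace $V_a$, so $A$ is the directed union of its finite-dimensional $G$-subrepresentations. Linear reductivity makes each finite-dimensional representation $V$ a direct sum of irreducibles. Hence $V = V^G \oplus V_G$ canonically, where $V_G$ is the sum of the nontrivial isotypic components. Equivalently, $V_G$ is the span of $\{g\cdot v - v\}$. It is also the unique $G$-stable complement of $V^G$: a $G$-stable complement $C$ has no trivial summand, since $C^G\subseteq V^G\cap C=0$, and so $C$ must be the sum of the nontrivial isotypic components.

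\textbf{Existence.} For $a\in A$, choose a finite-dimensional $G$-stable $V\ni a$ and let $R(a)$ be the $V^G$-component of $a$. This is independent of $V$: given $V\subseteq V'$, the decomposition of $V'$ restricts to that of $V$, because isotypic components are functorial, so $V^G = V\cap V'^G$ and $V_G = V\cap V'_G$. Any two choices are contained in $V+V'$, which is again finite-dimensional and $G$-stable. So $R$ is a well-defined $k$-linear projection onto $A^G$. Its kernel $A_G=\bigcup V_G$ is a $G$-stable complement of $A^G$.

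\textbf{$A^G$-linearity.} Fix $b\in A^G$. Multiplication $m_b\colon A\to A$ is $G$-equivariant. It therefore maps each finite-dimensional $V$ equivariantly onto $bV$, which is again finite-dimensional and $G$-stable. Equivariant maps preserve isotypic decompositions, so $m_b(V^G)\subseteq A^G$ and $m_b(V_G)\subseteq (bV)_G\subseteq A_G$. Writing $a=R(a)+(a-R(a))$ then gives $R(ba)=bR(a)$.

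\textbf{Uniqueness.} This is the main point requiring care. Let $R'$ be another $k$-linear projection onto $A^G$ that is $A^G$-linear. I must show $R'$ kills $A_G$, and this needs $G$-equivariance in some form. I would take uniqueness in Mumford's sense, where a Reynolds operator is required to be $G$-invariant, i.e. $R'(g\cdot a)=R'(a)$; this is part of the standard notion and I will assume it is intended. Under that assumption, $R'(g v - v)=0$, and these elements span $V_G$ for each finite-dimensional $V$. Hence $R'$ vanishes on $A_G$ and agrees with $R$. If equivariance were genuinely dropped, $A^G$-linearity alone would not force uniqueness: for $A=k[x]$ with $\GG_m$ acting, $A^G=k$ and any linear functional gives a projection. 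So the equivariance condition is what makes the statement true, and the proof should state it explicitly.
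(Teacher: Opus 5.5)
Your proposal is correct and is the standard argument behind the paper's citation of Mumford (the paper itself gives no proof beyond that reference and the remark that rationality is the key point). Rationality reduces everything to finite-dimensional $G$-stable subspaces $V$, complete reducibility gives the canonical splitting $V=V^G\oplus V_G$, and $A^G$-linearity follows because multiplication by an invariant is $G$-equivariant.

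Your caveat about uniqueness is also well founded. With the paper's definition (a $k$-linear, $A^G$-linear projection onto $A^G$, with no invariance requirement), uniqueness genuinely fails: for example, evaluation at $x=1$ on $k[x]$, with $\GG_m$ acting with weight one, is such a projection. So the classical condition $R(g\cdot a)=R(a)$ must be imposed, and it is this invariant operator whose uniqueness the paper's subsequent gluing argument relies on.
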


In the fibrewise setting, we obtain a relative Reynolds operator by gluing.

\begin{proposition}
 For a fibrewise action of a linearly reductive group $G$ on an affine morphism $f \colon X  \rightarrow Z$, there is a relative Reynolds operator.
\end{proposition}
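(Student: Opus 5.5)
The plan is to build the operator locally over an affine open cover of $Z$ and glue, using the uniqueness in \cref{lemma abs Reynolds op}.

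First, fix an affine open $U \subseteq Z$ and set $A_U := \cA(U) = \cO_X(f^{-1}(U))$. Since $f$ is affine, $f^{-1}(U) = \Spec A_U$ is affine, and the fibrewise action restricts to a $G$-action on it. By \cref{equivalence}, this action is given by the co-action $\sigma_U^{\ast} \colon A_U \to A_U \otimes_k \cO_G(G)$. A co-action of this form is automatically rational: each $a \in A_U$ has finitely many terms in $\sigma_U^\ast(a) = \sum a_i \otimes h_i$, so $a$ lies in the finite-dimensional $G$-stable span of the $a_i$. This holds without any finite-type hypothesis on $A_U$. \cref{lemma abs Reynolds op} then gives a unique $k$-linear Reynolds operator $R_U \colon A_U \to A_U^G$, which is $A_U^G$-linear.

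Next, $R_U$ is $\cO_Z(U)$-linear. The $G$-action is $\cO_Z(U)$-linear, so $\cO_Z(U)$ maps into $A_U^G$, and $A_U^G$-linearity therefore gives $\cO_Z(U)$-linearity.

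The main point is compatibility with restriction, for which I would use uniqueness. For affine opens $V \subseteq U$, let $r \colon A_U \to A_V$ be the restriction map. It is $G$-equivariant, so it maps $A_U^G$ into $A_V^G$. I want to show $r \circ R_U = R_V \circ r$. For this I use the standard description of $R$ as the projection of each finite-dimensional $G$-stable subspace $W$ onto $W^G$, killing the unique complementary $G$-stable subspace $W_G$. The image $r(W)$ is $G$-stable and finite-dimensional. The equivariant map $r$ sends $W^G$ into $r(W)^G$ and $W_G$ into a subrepresentation with no trivial summand, that is, into $r(W)_G$. Hence the two operators agree on $r(W)$, and so $r\circ R_U = R_V\circ r$.

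Finally, since affine opens form a base of the topology, the $R_U$ define a morphism of sheaves $\cA \to \cA^G$ on that base. It extends uniquely to all opens, because $\cA(U) = \varprojlim \cA(V)$ over affine $V \subseteq U$ and likewise for $\cA^G$. Projection, $\cO_Z$-linearity and $\cA^G$-linearity can all be checked on affine opens, so the result is a relative Reynolds operator. The only delicate step is the restriction compatibility, which rests on the naturality of the isotypic decomposition.
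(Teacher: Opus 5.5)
Your proposal is correct and follows the same route as the paper: you take the unique Reynolds operator over each affine open from \cref{lemma abs Reynolds op}, deduce $\cO_Z(U)$-linearity from $\cO_Z(U)\subseteq A_U^G$, and glue along the base of affine opens. Your explicit verification of $r\circ R_U = R_V\circ r$, via naturality of the isotypic decomposition, spells out what the paper compresses into ``by the uniqueness of the Reynolds operator, one can glue these sections.''
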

\begin{proof}
We can base change $f$ along an open affine scheme $U= \Spec B \subseteq Z$ to obtain a morphism $f_U \colon f^{-1}(U) = \Spec A \rightarrow U =\Spec B$ of affine schemes, and as $G$ acts trivially on $U$, the homomorphism $f_U^* \colon B \rightarrow A$ has image in $A^G$. Since $G$ acts rationally on the $k$-algebra $A$, there is a $k$-linear Reynolds operator $R_U \colon A \rightarrow A^G$ which is $A^G$-linear by Lemma \ref{lemma abs Reynolds op}. By the above observation that $f_U^*(B) \subseteq A^G$ and the fact that $R_U$ is $A^G$-linear, we conclude that $R_U$ is $B$-linear. 
    
To glue these local Reynolds operators to give a morphism of sheaves $R \colon \cA \rightarrow \cA^G$ where $\cA = f_* \cO_X$, we need to define $R_U \colon \cA(U) \rightarrow \cA^G(U)$ over any (not necessarily affine) open set $U \subseteq Z$. For this, we take a cover $U = \cup_i U_i$ by open affines and then for $a \in \cA(U)$ define $R_U(a)$ by gluing $R_{U_i}(a|_{U_i}) \in \cA^G(U_i)$, where the Reynolds operators $R_{U_i}$ are defined above. Indeed by the uniqueness of the Reynolds operator, one can glue these sections and the maps $\cA(U) \rightarrow \cA^G(U)$ are compatible with restrictions for $V \hookrightarrow U$. This morphism of sheaves is $\cO_Z$-linear, as by the same gluing argument it suffices to check this when $U$ is affine, which we did above.
\end{proof}

The above proposition actually holds without any finite type assumptions on $f$, $X$ or $Z$, but for the next proposition it is essential (as in the absolute case).

\begin{proposition}
     Suppose a linear algebraic group $G$ acts fibrewise on an affine morphism $f \colon X = \rSpec_Z \cA \rightarrow Z$ and has a relative Reynolds operator. Then $\rSpec_Z \cA^G \rightarrow Z$ is of finite type. 
\end{proposition}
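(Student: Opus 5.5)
Since being of finite type is local on the target, I will reduce to the case $Z = \Spec B$ affine, with $B$ a finitely generated $k$-algebra. First I check that forming invariants commutes with restricting to open affines, so that $\cA^G(U) = \cA(U)^G$ for $U \subseteq Z$ affine. This holds because $\cA^G$ is defined sectionwise. I also need that the relative Reynolds operator restricts to an $\cA(U)^G$-linear, hence $B$-linear, projection $R_U \colon A \to A^G$, where $A = \cA(U)$. Since $f$ is affine and of finite type (by the paper's conventions), $A$ is a finitely generated $B$-algebra, hence a finitely generated $k$-algebra, and in particular Noetherian. It then suffices to prove that $A^G$ is a finitely generated $B$-algebra; then $\rSpec_Z \cA^G$ is covered by the finite type affines $\Spec A^G$ over $U$.

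For the affine statement I follow Hilbert's classical argument. The key step is to show that $A^G$ is Noetherian. Let $I \subseteq A^G$ be an ideal. I will show that $IA \cap A^G = I$ using the Reynolds operator: if $a = \sum_i x_i a_i$ with $x_i \in I$, $a_i \in A$, and $a \in A^G$, then
\[
a = R(a) = \sum_i x_i R(a_i) \in I.
\]
Here I use that $R$ is $A^G$-linear and is a projection onto $A^G$. Hence ascending chains in $A^G$ inject into ascending chains of ideals in the Noetherian ring $A$, so $A^G$ is Noetherian.

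Finite generation over $B$ needs a grading or a further argument, and this is the main obstacle. In the absolute case one uses the grading on a polynomial ring; here $A$ carries no grading. I would instead use that $A$ is finitely generated over the Noetherian ring $A^G$ via Artin--Tate style reasoning. A cleaner route, which I would try first, is to present $A$ as a quotient of a polynomial ring $B[x_1,\dots,x_n]$. I would choose the generators to span a finite-dimensional $G$-stable subspace $V$, which is possible by rationality of the action. The surjection $\Sym_B(B \otimes V) \to A$ is then $G$-equivariant. Inside it I would filter by degree and apply the graded Hilbert argument to the image of the degree filtration.

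If that becomes messy, the fallback is the standard lemma used in this situation. Let $A^G \supseteq B$ be Noetherian, with $A$ finitely generated over $B$ and $R$ a $B$-linear Reynolds operator. Take generators $a_1,\dots,a_n$ of $A$ over $B$, and let $J \subseteq A^G$ be the ideal generated by the invariants of the form $R(\text{monomials})$ of positive degree. I would show that finitely many generators of $J$ generate $A^G$ as a $B$-algebra, by induction on monomial degree. Applying $R$ to expressions of monomials in the $a_i$ keeps everything inside the $B$-subalgebra generated by finitely many $R(m)$, because $R$ is $A^G$-linear. I expect this induction to be the delicate step, since the lack of grading on $A$ must be handled by working with the degree filtration from the chosen presentation.
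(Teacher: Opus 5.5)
Your reduction to $Z=\Spec B$ and your proof that $A^G$ is Noetherian via $IA\cap A^G=I$ are correct and agree with the paper. The gap is exactly the step you flag: getting from ``$A^G$ Noetherian'' to ``$A^G$ finitely generated over $B$''.

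\textbf{Your fallback fails as stated.} Hilbert's induction needs a degree that strictly decreases. From $R(m)\in J$ you get $R(m)=\sum_j c_j g_j$ with $c_j\in A^G$. Only in a graded ring can you replace $c_j$ by its homogeneous component of degree $\deg m-\deg g_j<\deg m$. In $A$ there is no grading, so nothing controls the $c_j$. The $A^G$-linearity of $R$ does not help either: $R(mm')$ is unrelated to $R(m)R(m')$ when $m,m'$ are not invariant.

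A concrete failure: take $G$ trivial, $R=\mathrm{id}$, $B=k$ and $A=k[t,t^{-1}]$ with generators $a_1=t$, $a_2=t^{-1}$. The monomial $a_1a_2$ has positive degree and $R(a_1a_2)=1$, so $J=A^G$ is generated by the single element $1$, which generates only $k$.

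\textbf{Your ``cleaner route'' also stops short.} The image in $A$ of the degree filtration of $\Sym_B(B\otimes V)$ is only a filtration, and you never relate invariants of $A$ to invariants of a graded ring.

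\textbf{The missing idea} is to transfer the problem to a graded ring, which is what the paper does following the absolute argument. Take $V\subseteq A$ a finite-dimensional $G$-stable generating subspace, and let $\pi\colon P=\Sym V\to A$ be the induced equivariant surjection. One shows $\pi(P^G)=A^G$, so it suffices that $P^G$ is finitely generated. There Hilbert's argument works: $G$ preserves the grading of $P$, $P^G$ is Noetherian via the Reynolds operator on $P$, and a Noetherian positively graded ring is finitely generated over its degree-zero part.

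The surjectivity $\pi(P^G)=A^G$ follows from $R_A\circ\pi=\pi\circ R_P$. This identity holds for the canonical Reynolds operators of a linearly reductive group, which is the setting of this subsection. It is not a formal consequence of having some $A^G$-linear projection on $A$, so you must explicitly invoke it.

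Alternatively, your filtration route can be completed if you know that $R$ preserves the degree filtration $F_\bullet A$. Then $\mathrm{gr}\,R$ is a graded Reynolds operator $\mathrm{gr}\,A\to\mathrm{gr}\,A^G$. Hence $\mathrm{gr}\,A^G$ is Noetherian, so finitely generated over $B$, and lifts of homogeneous generators generate $A^G$.

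Either way, you need to state and use some compatibility of $R$ with a grading or filtration. Noetherianity of $A^G$ together with $A^G$-linearity of $R$ does not by itself drive the induction you sketch.
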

\begin{proof}
The proof is obtained from the proof in the absolute setting. By assumption, $f$ is affine and of finite type, so for every open affine scheme $ U = \Spec B \subseteq Z$ with $V=f^{-1}(U)=\Spec A$, the induced algebra homomorphism $f^*|_V\colon B = \cO_Z(U) \rightarrow A = \cO_X(V) = \cA(U)$ is of finite type, i.e.\ we have $B[x_1,\dots,x_n] \twoheadrightarrow A$. We want to prove that the map $B \rightarrow A^G$ is of finite type. In fact, as $Z$ is of finite type, $B$ and thus also $A$ are finite type $k$-algebras, and it suffices to show the same is true for $A^G$. Here we will use the fact that we have a Reynolds operator $R \colon A \rightarrow A^G$ (although we only use $k$-linearity, it is actually $B$-linear) and apply the absolute argument (see for example, \cite[Theorem 3.4]{Newstead1978}): one first uses $R$ to show that $A$ being Noetherian implies $A^G$ is Noetherian, and then after reducing to the case where $A$ is a polynomial ring (which also uses $R$), one can show the finitely many generators of the ideal in $A^G$ generated by $G$-invariant polynomials of positive degree actually generate $A^G$ as an algebra. 
\end{proof}

\subsection{Fibrewise actions on projective-over-affine morphisms} \label{subsec:fibrewise actions on poa}

We begin by recalling some preliminaries on projective-over-affine morphisms. 

\begin{definition} \label{projoveraffine} A morphism $f \colon X \to Z$ of schemes is \emph{projective-over-affine} if $X = \rProj_Z \cA$ for some sheaf $\cA$ of non-negatively graded\footnote{If $\cA$ is non-positively graded, we reverse the sign of the grading to define $\rProj_Z \cA$.} $\cO_Z$-algebras such that:  \begin{enumerate}[(i)]
    \item the $\cO_Z$-algebra structure is given by a map of sheaves of rings $\cO_Z \hookrightarrow \cA_0$;
    \item $\cA$ is generated by $\cA_1$ over $\cA_0$; \label{secondcond}
    \item $\cA_1$ is of finite type as an $\cA_0$-module.
\end{enumerate}  
We say $f$ is \emph{projective} if in addition $\cA_0$ is a finite $\cO_Z$-module. 

\end{definition}

A projective-over-affine morphism factorises as $ f \colon X:=\rProj_Z \cA \to Y:=\rSpec_{Z}\cA_0 \rightarrow Z$,
where the first map is projective and the second is affine. Moreover, the scheme $X = \rProj_Z \cA$ admits an invertible sheaf $\cO_X(1)$, relatively ample over both $Y$ and $Z$. Since $\cA$ is generated in degree $1$, we can describe this invertible sheaf in two different ways. First, by \cref{projoveraffine} (ii) the map \[\bigoplus_{n\geq 0}\Sym^n_{\cA_0} \cA_1 \rar \cA \] is surjective, and so yields a closed immersion $ X\hookrightarrow \PP^m_Y$ of $Y$-schemes such that $\cO_X(1)$ is the pullback of $\cO_{\PP^m_Y}(1)$. Second, we can obtain $\cO_X(1)$ from the graded module $\cA(1)$, equal to $\cA$ with grading shifted by $1$, by gluing: for a covering of $Z$ by open affines $U$, we glue together the sheaves on $X_U = f^{-1}(U)$ associated to the graded modules $\cA(U)(1)$. In fact, we have invertible sheaves $\cO_X(n)$ defined analogously and $f_*\cO_X(n) = \cA_n$ as in the classical case.

In the absolute case, a choice of ample line bundle $\Lc$ on a projective scheme $X$ determines a homogeneous coordinate ring for $X$ and a corresponding affine cone. The same is true in the relative setting. To simplify notation, given $n \in \mathbb{N}_{\geq 0}$ we let $\Lc^n := \Lc^{\otimes n}$.

\begin{definition}\label{def affine cone relative to L}
For an $f$-ample line bundle $\cL$ on a projective-over-affine morphism $f \colon X \rar Z$, the \emph{affine cone relative to $\Lc$} is 
\begin{equation}\label{affine cone with respect to L} \widehat{f}_{\cL} \colon \widehat{X}_{\cL} := \rSpec_Z \cA(\cL) \rightarrow Z 
\end{equation} where $\cA(\cL) := \bigoplus_{n \geq 0} f_*\cL^{n}$. We let $\pi_{\cL} \colon \widehat{X}_{\cL} \dashrightarrow X = \rProj_Z \cA(\cL)$ denote the natural projection. 

If moreover we have a fibrewise $G$-action on $f$ and $\Lc$ is $G$-equivariant, the associated \emph{graded subalgebra of invariants} is \[ \cA(\cL)^G := \bigoplus_{n \geq 0} (f_*\cL^{n})^G \subseteq \cA(\cL). \]
\end{definition}

Since $\cL$ is $f$-ample, for any section $\sigma \in f_*\cL(U) = \cL(f^{-1}(U))$ over an open set $U \subseteq Z$, we note that the non-vanishing locus $f^{-1}(U)_\sigma$ is affine over $U$. Indeed, using a power of $\cL$, there is a closed immersion $X \hookrightarrow \PP^m_Y$ where $Y = \rSpec_Z f_*\cO_X$ and $\cO_{\PP^m_Y}(1)$ pulls back to a positive power $\cL^{N}$ of $\cL$. If $h \colon \PP^m_Y \rightarrow Z$ denotes the structure map, the non-vanishing locus of a section of $h_*\cO_{\PP^m_Y}(1)(U)$ is affine over $U$, and as $\sigma^N$ is obtained as the pullback of a section of $\cO_{\PP^m_Y}(1)$, we conclude $f^{-1}(U)_{\sigma^N} = f^{-1}(U)_\sigma$ is also affine over $U$.

\begin{remark}\
\begin{enumerate}
\item If $X = \rProj_Z \cA$ and $\cL = \cO_X(1)$, we have $\cA(\cL) = \cA$. If $X = \PP^n_Z$, so $f$ is projective and $Y = Z$, then $\widehat{X}_{\cO_X(1)} = \AA^{n+1}_Z$ and $\pi_{\cO_X(1)}$ is defined outside of the zero section.
    \item If $f$ is affine and $\cL = \cO_X$, then $X = Y$ and $\widehat{X}_{\cL} = \AA^1_X$, so $\pi_{\cL}$ is defined on all of $X$ (this perspective is useful when using a twisted affine set-up, see \cref{rmk equiv descr tw aff quot} below).
\end{enumerate}
In general, $\widehat{X}_{\cL}$ is affine over $Y$ (and $Z$) and the domain of $\pi_{\cL}$ contains $\widehat{X}_{\cL} \setminus Y$.
\end{remark}

Given a $G$-equivariant structure on $\cL$, in the absolute case there is an induced co-action of $\mathcal{O}_G(G)$ on $\bigoplus_{i \geq 0} H^0(X,\mathcal{L}^{i})$, see \cite[Chap 1 \S 3.]{Mumford1994}. This easily generalises to the relative setting, so that a $G$-equivariant structure on $\cL$ determines a co-action of $\mathcal{O}_G(G)$ on $\cA(\mathcal{L})$ in the sense of \cref{defcoaction}, and in turn a fibrewise $G$-action on $\widehat{f}_{\Lc}$ by \cref{equivalence}.

\subsection{Graded equivariant actions} \label{subsec:gradedactions}  

In this section we introduce the notion of a graded equivariant action, which will be used to construct relative non-reductive GIT quotients in $\S$\ref{sec:rel quotients for unipotent actions} and $\S$\ref{sec:relative GIT non-reductive}. 
The grading concerns a subgroup $\GG_m \subseteq  \Aut(G)$ which can be inner (called an \emph{internal} grading) or outer (called an \emph{external} grading).

\begin{definition}[Graded morphisms and actions]\label{definition graded}\
\begin{enumerate}[(1)]
    
    \item A morphism $f \colon X \rightarrow Z$ of schemes is 
    \emph{graded} by a fibrewise $\GG_m$-action on $f$ if $X^{\GG_m} \cong Z$ as $Z$-schemes and for all $x \in X$, the limit $t \cdot x$ as $t \rightarrow 0$ exists and equals $f(x)$.     
    
    \item An equivariant action of a homomorphism $\varphi \colon G \rightarrow H$ on $f \colon X \rightarrow Z$ is \emph{internally graded} by a $\GG_m \subseteq  G$ acting by conjugation on $G$ if this $\GG_m$ grades $\varphi$ and $f$.
    \item\label{def ext graded} An equivariant action of a homomorphism $\varphi \colon G \rightarrow H$ on $f \colon X \rightarrow Z$ is \emph{externally  graded} if there is a $\GG_m$ acting fibrewise on both $\varphi$ and $f$ that grades $\varphi$ and $f$.
    \item A linear algebraic group $G$ is \emph{graded} by a $\GG_m \subseteq G$ acting by conjugation if the quotient by the unipotent radical $G \rightarrow G/U$ is \emph{graded} by this $\GG_m$. We say $G$ is \emph{externally graded} by a $\GG_m$ acting on $G$ if the $\GG_m$-action on $G$ preserves $U$ and $G \rightarrow G/U$ is graded by this $\GG_m$. 
     \end{enumerate}
\end{definition}

We note the following properties of graded actions. 

\begin{remark}\label{remark graded}\
\begin{enumerate}[(i)]
   
   \item \label{classicaldef} Recall that a fibrewise $\GG_m$-action on an affine morphism $f \colon X = \rSpec_Z \cA \to Z$ is equivalent to a grading $\bigoplus_{n \in \mathbb{Z}} \cA_n$ on the sheaf of $\cO_Z$-algebras $\cA$. Our definition of $f$ being graded is equivalent (up to sign convention) to the standard definition of a cone, namely that $\cA$ is non-positively graded and $\cA_0$ is a finite type $ \cO_Z$-module (see \cite[\href{https://stacks.math.columbia.edu/tag/062P}{Tag 062P}]{stacks-project}). 
    \item If $f \colon X \rightarrow Z$ is graded by $\GG_m$, then work of Bia{\l}ynicki-Birula shows that the morphism $f$ is affine under our assumptions on $X$; indeed as $X$ is separated, the $\GG_m$-action on $X$ is \'{e}tale locally linearisable and so we can apply \cite[Theorem A (ii)]{Richarz2019}.
        \item \label{equivalent definition of graded} A linear algebraic group $G= U \rtimes R$ with unipotent radical $U$ is graded by $\GG_m \subseteq G$ acting by conjugation if and only if $\GG_m$ is central in $R$ and the action of $\GG_m$ on the Lie algebra of $U$ has strictly positive weights. Hence, \cref{definition graded} (4) agrees with the usual definition of $G$ being graded by $\GG_m$, as given for example in \cite{Berczi2016}. 
        \item For an equivariant action of $\varphi \colon G \to H$ on $f \colon X \to Z$ that is externally graded, there is an (internally) graded equivariant action of $\widetilde{\varphi} \colon \widetilde{G} : = G \rtimes \GG_m \twoheadrightarrow \widetilde{R} : = R \times \GG_m$ on $f$ and also on $\widetilde{f} \colon \widetilde{X}:=X \times \mathbb{A}^1 \to Z$, where $G$ acts trivially on $\mathbb{A}^1_Z$ and $\GG_m$ acts by scaling. \label{fromexternaltointernal}
\end{enumerate}
\end{remark}

\begin{example}[Examples of graded equivariant actions] \label{examples of graded actions} \
\begin{enumerate}
    \item\label{BBexample} Let $\GG_m$ act on a scheme $X$. For any connected component $Z_i$ of the fixed point locus $X^{\GG_m}$, the retraction map $f_i: X_i^+ \to Z_i$ from the corresponding Bia{\l}ynicki-Birula stratum $X_i^+$ is affine, and the fibrewise $\GG_m$-action on $f_i$ is graded; we will revisit this in $\S$\ref{sec:quotients by non-reductive absolute}.
    \item Let $P:= P_\lambda \subseteq G$ be a parabolic subgroup of a reductive group $G$ associated to a 1-PS $\lambda$ of $G$ as in \cref{def:instab strat} (3). Let $L$ denote the Levi subgroup of $P$ and $\varphi \colon P \to L$ the natural quotient map. Then the equivariant action of $\varphi$ on $\operatorname{Lie} P \to \operatorname{Lie} L$ is graded by $\lambda(\GG_m)$.  
\end{enumerate}
   
\end{example}

The grading condition on $\varphi \colon G \to H$ imposes restrictions on $G$ and $H$ as follows.

\begin{lemma} \label{reductive H} 
 If $\varphi \colon G \to H$ is graded by $\GG_m \subseteq  G$ acting by conjugation, then $G \cong \ker \varphi \rtimes H$. Moreover, $\ker \varphi $ is unipotent, so that $H$ is reductive if and only if $\ker \varphi $ is the unipotent radical of $G$. In particular, if $G$ and $H$ are both reductive, then $G \cong H$. 
\end{lemma}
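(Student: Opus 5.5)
Let $\lambda \colon \GG_m \to G$ denote the grading subgroup, acting on $G$ by conjugation and on $H$ via $\varphi$, i.e.\ $t\cdot h = \varphi(\lambda(t))\, h\, \varphi(\lambda(t))^{-1}$. By \cref{definition graded}(1), $\varphi$ being graded means two things. First, the fixed locus $G^{\GG_m} = C_G(\lambda)$ maps isomorphically onto $H$ via $\varphi$. Second, for every $g \in G$ the limit $\lim_{t\to 0} \lambda(t) g \lambda(t)^{-1}$ exists and equals $\varphi(g)$, viewed inside $G$ through the identification $H \cong C_G(\lambda)$.

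I would first extract the splitting. Let $L := C_G(\lambda)$ be the centraliser, which is a closed subgroup. The restriction $\varphi|_L \colon L \to H$ is an isomorphism, because the identification $X^{\GG_m} \cong Z$ as $Z$-schemes is induced by $f = \varphi$ restricted to the fixed locus. Its inverse $s \colon H \to L \subseteq G$ is then a group-theoretic section of $\varphi$, and in particular $\varphi$ is surjective. The standard argument now gives $G \cong \ker\varphi \rtimes H$: the multiplication map $\ker\varphi \rtimes s(H) \to G$, $(u,h) \mapsto u\, s(h)$, is an isomorphism of varieties with inverse $g \mapsto (g\, s(\varphi(g))^{-1}, \varphi(g))$, and it is a group homomorphism because $\ker\varphi$ is normal.

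Next I would show that $N := \ker\varphi$ is unipotent. For $u \in N$ we have $\lim_{t\to 0}\lambda(t)u\lambda(t)^{-1} = s(\varphi(u)) = e$, so conjugation by $\lambda$ contracts all of $N$ to the identity. Taking Lie algebras, every $\lambda$-weight on $\Lie N$ must be strictly positive, since a weight-zero vector would give a nontrivial element of $N \cap L = \{e\}$. I expect the main obstacle to be upgrading this to unipotence of $N$ itself, including its component group. My first approach would be to take a Jordan decomposition $u = u_s u_u$ inside $N$. Conjugation by $\lambda(t)$ preserves the decomposition, so the semisimple part $u_s$ also satisfies $\lambda(t)u_s\lambda(t)^{-1} \to e$. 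But the conjugacy class of a semisimple element is closed, so its closure cannot contain $e$ unless $u_s = e$. Hence every element of $N$ is unipotent, and $N$ is a unipotent group. This argument also handles disconnectedness: a finite-order element is semisimple, so it is forced to be trivial as well.

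Finally, I would deduce the equivalence. Since $G \cong N \rtimes H$ with $N$ a unipotent normal subgroup, $N$ is contained in the unipotent radical $R_u(G)$. Moreover $R_u(G)/N$ is a normal unipotent subgroup of $G/N \cong H$. If $H$ is reductive, this quotient is trivial, so $N = R_u(G)$. Conversely, if $N = R_u(G)$, then $H \cong G/R_u(G)$ is reductive. For the last statement, suppose $G$ and $H$ are both reductive. Then $N = R_u(G)$ is trivial, and so $\varphi$ is an isomorphism $G \cong H$.
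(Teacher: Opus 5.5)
Your proof is correct. The splitting, via the section $s=(\varphi|_{C_G(\lambda)})^{-1}$ coming from the identification of the fixed locus with $H$, matches the paper, as does the final comparison with $R_u(G)$; you just write both out in more detail.

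The genuine difference is how you show that $N=\ker\varphi$ is unipotent.
\begin{itemize}
\item The paper chooses a faithful representation $G\subseteq \GL_r$ in which $\lambda$ is diagonal with decreasing weights. Then $\lambda(t)g\lambda(t)^{-1}$ has entries $t^{w_i-w_j}g_{ij}$, and an element contracted to $e$ is visibly block upper unitriangular.
\item You argue intrinsically, using the Jordan decomposition and the closedness of semisimple conjugacy classes.
\end{itemize}
The paper's route is more elementary and explicit. Yours avoids coordinates but relies on heavier structural facts.

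Two small points in your version. First, the limit $\lambda(t)u_s\lambda(t)^{-1}\to e$ does not follow from compatibility of conjugation with the Jordan decomposition, since $g\mapsto g_s$ is not continuous. It follows simply because $u_s\in N$ and all of $N$ is contracted to $e$. Second, it is cleanest to invoke closedness of the $\GL_r$-conjugacy class of the semisimple element $u_s$.

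You can in fact drop the Jordan decomposition entirely. The characteristic polynomial of $\lambda(t)u\lambda(t)^{-1}$ is independent of $t$, so it equals that of the limit $e$, namely $(x-1)^r$. Hence every $u\in N$ is unipotent.

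In either proof, connectedness of $N$ comes for free: $t\mapsto\lambda(t)u\lambda(t)^{-1}$ extends over $t=0$ to a curve in $N$ joining $u$ to $e$. This is what is needed for $N\subseteq R_u(G)$ in positive characteristic.
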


\begin{proof}
  
By \cref{remark graded}~\eqref{classicaldef}, there is a section of $\varphi$, which exhibits $G \cong \ker \varphi \rtimes H$. For the unipotence of the kernel, we can choose coordinates so that $\Gm \subseteq  G \subseteq  \GL_r$ is diagonal with decreasing weights. Then the kernel of $\varphi$ consists of unipotent matrices, so it is a normal unipotent group, hence contained in the unipotent radical of $G$, and equals the unipotent of radical of $G$ precisely when $H$ is reductive.  
\end{proof}

An important property of graded equivariant actions is the relationship that they impose on invariants on $X$ and on $Z$ for suitable equivariant line bundles. 

\begin{proposition}  \label{properties of graded actions and linearisations}    
Suppose that there is an internally graded equivariant action of $\varphi\colon G \to H$ on an affine morphism $f \colon X \to Z$. Let $\Lc$ denote an $H$-equivariant line bundle on $Z$ whose restricted equivariant structure for the grading $\GG_m$ is trivial. Then there is an isomorphism of invariants $$\bigoplus_{r \geq 0} H^0(X, f^*\mathcal{L}^{r})^G \cong \bigoplus_{r \geq 0} H^0(Z, \mathcal{L}^{r})^H.$$ 
Moreover, if these invariant rings are finitely generated, then we have an isomorphism
\[  X \gitq_{\hspace{-2pt} f^{\ast} \Lc \hspace{2pt}} G := \Proj \oplus_{r \geq 0} H^0(X, f^*\mathcal{L}^{r})^G \cong Z \gitq_{\hspace{-2pt} \cL \hspace{2pt}} H := \Proj \oplus_{r \geq 0} H^0(Z, \mathcal{L}^{r})^H. \]
In the special case where $Z$ is affine and $\Lc = \mathcal{O}_Z$, we have $\mathcal{O}(X)^G \cong \mathcal{O}(Z)^H$. 
\end{proposition}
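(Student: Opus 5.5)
The natural map is pullback $f^* \colon H^0(Z,\Lc^r) \to H^0(X, f^*\Lc^r)$. For $s$ an $H$-invariant section, $f^*s$ is $G$-invariant because the actions are equivariant and the linearisation of $f^*\Lc$ is pulled back along $\varphi$. I will show that pullback is an isomorphism onto the $G$-invariants. Injectivity holds because $f$ has a section. The grading gives $X^{\GG_m}\cong Z$, so the inclusion $\iota\colon Z \cong X^{\GG_m} \hookrightarrow X$ satisfies $f\circ\iota = \mathrm{id}_Z$. Hence $\iota^* f^* = \mathrm{id}$, so $f^*$ is injective and has the left inverse $\iota^*$ on sections of $\Lc^r$, using $\iota^*f^*\Lc \cong \Lc$.

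For surjectivity onto $G$-invariants, I take a $G$-invariant section $t$ of $f^*\Lc^r$ and aim to show $t = f^*\iota^* t$, and then that $\iota^*t$ is $H$-invariant. The key step uses the grading. Since $\GG_m \subseteq G$, the section $t$ is $\GG_m$-invariant. Because $\GG_m$ acts trivially on the equivariant structure of $\Lc$ and fibrewise on $f$, the bundle $f^*\Lc$ over $f^{-1}(V)$, for $V\subseteq Z$ open affine trivialising $\Lc$, is trivial with trivial $\GG_m$-linearisation. There $t$ becomes a $\GG_m$-invariant function on $f^{-1}(V) = \Spec \cA(V)$, that is, an element of degree zero. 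By Remark \ref{remark graded}(i), $\cA$ is non-positively graded with $\cA_0 = \cO_Z$; the latter holds since $X^{\GG_m}\cong Z$, i.e.\ $\cA/(\cA_{<0}) \cong \cO_Z$. So the degree-zero part is exactly $\cO_Z(V)$, and $t|_{f^{-1}(V)}$ is pulled back from $V$. Geometrically, $t$ is constant along $\GG_m$-orbits and hence equals its value at the limit $f(x)$. These local identifications glue because they are given canonically by $\iota^*$, so $t = f^*(\iota^* t)$. I expect this to be the main point needing care: matching the trivial $\GG_m$-linearisation with the local trivialisations so that the invariant sections are exactly the degree-zero parts.

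For $H$-invariance of $s := \iota^*t$, take $h\in H$ and use the splitting $G \cong \ker\varphi\rtimes H$ from Lemma \ref{reductive H} to lift $h$ to some $g$ with $\varphi(g)=h$. Then $f^*(h\cdot s) = g\cdot f^*s = g\cdot t = t = f^*s$ by equivariance, and injectivity of $f^*$ gives $h\cdot s = s$. This yields the isomorphism of graded rings, compatible with multiplication and grading. The statement about $\Proj$ follows immediately by applying $\Proj$ to isomorphic finitely generated graded rings. The affine case is the case $r=0$ with $\Lc=\cO_Z$, where $\GG_m$ trivially acts trivially on the structure.
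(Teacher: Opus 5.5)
Your proposal is correct and follows the same route as the paper's proof: $f^*$ is injective and preserves invariants, $G$-invariant sections are $\GG_m$-invariant, and $\GG_m$-invariant sections of $f^*\Lc^r$ are exactly pullbacks from $Z$, because the grading forces the degree-zero part of $\cA$ to equal $\cO_Z$. You also make explicit two steps the paper leaves implicit: injectivity via the section $\iota\colon Z\cong X^{\GG_m}\hookrightarrow X$, and $H$-invariance of the descended section $\iota^*t$, obtained by lifting along the splitting $G\cong\ker\varphi\rtimes H$ of Lemma \ref{reductive H}.
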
 
\begin{proof} It suffices to prove the statement for fixed $r$. The pullback on global sections gives an injection $f^*\colon H^0(Z, \mathcal{L}^{r}) \hookrightarrow   H^0(X, f^*\mathcal{L}^{r}).$ By the equivariance of $f$ and the definition of the pullback linearisation, this restricts to an injection on invariants $f^*\colon H^0(Z, \mathcal{L}^{r})^H \hookrightarrow   H^0(X, f^*\mathcal{L}^{r})^G.$ To show surjectivity, note that $H^0(X, f^*\mathcal{L}^{r})^G \subseteq  H^0(X, f^*\mathcal{L}^{r})^{\GG_m},$ i.e.\ the $G$-invariants are $\GG_m$-invariant, where this $\GG_m$ denotes the grading multiplicative subgroup. We claim that there is an isomorphism \begin{equation}\label{eq weight zero sections} f^*H^0(Z, \mathcal{L}^{r}) \cong H^0(X, f^*\mathcal{L}^{r})^{\GG_m}. \end{equation} Assuming this is the case, the required surjectivity follows. To prove \eqref{eq weight zero sections}, we note the forward inclusion holds, as the $\Gm$-equivariant structure on $\cL$ is trivial. Conversely, as the action is graded, any $\GG_m$-invariant section is determined by its behaviour on $Z$. 
\end{proof}

In this graded setting, when these invariant rings are finitely generated, the corresponding GIT quotients are collapsed, because the grading condition means every point in $x$ contains the point $f(x) \in Z$ in its orbit closure. To avoid this collapsing, one would like to remove the centre $Z$ of the cone $X$ and one way to do this is to perturb the linearisation $f^{\ast} \Lc$ on $X$ so that the perturbed semistable locus does not meet $Z$. These types of graded actions appear in the study of instability stratifications (see \cite[$\S\S$3.2-3.3]{HoskinsKirwan}, where for the same reason the categorical quotient of an unstable stratum identifies many orbits and a similar perturbation is desirable).

\subsection{Relative quotients} \label{subsec:relative quotients}
We specify the notion of a relative quotient, and explain the obstacles to the existence of relative quotients in $\S$\ref{relquotientsandGIT}, which guides our later approach to relative GIT.

\subsubsection{Defining relative quotients} \label{relquotientdef}

Given an equivariant action of $\varphi \colon G \to H$ on a morphism of schemes $f \colon X \to Z$, it is natural to define a quotient for this action to be a $G$-quotient of $X$, an $H$-quotient of $Z$ together with a morphism from the former to the latter.

\begin{definition}[Quotients by equivariant and fibrewise actions] \label{relativequotients}
    Suppose that a homomorphism of linear algebraic groups $\varphi \colon G \to H$ acts on a morphism of schemes $f \colon X \to Z$. A \emph{relative categorical quotient} for the action of $\varphi$ on $f$ is a categorical quotient $q_G: X \to Y$ for the action of $G$ on $X$ and a categorical quotient $q_H \colon Z \to W$ for the action of $H$ on $Z$ such that $f$ induces a well-defined map $\overline{f} \colon Y \to W$ making the following diagram commute: \begin{center}  \begin{tikzcd}
X \arrow[r,"f"] \ar[d, "q_G"] & Z \ar[d, "q_H"] \\
Y  \arrow[r,"\overline{f}"] & W.  
\end{tikzcd} 
\end{center} 
If $G$ acts fibrewise on $f$, we call this data a \emph{relative categorical quotient for the fibrewise action of $G$ on $f$}. We analogously define \emph{relative good} and \emph{relative geometric quotients} by replacing categorical by good and geometric respectively in the above definition.  
\end{definition}

To emphasise when we are not working relatively, we use the terms \emph{absolute categorical (or good or geometric) quotient} when $Z = \Spec k$; this coincides with the usual notions in \cite{Newstead1978}.

\subsubsection{Relative quotients via classical GIT} \label{relquotientsandGIT}

By \cref{relativequotients}, constructing a relative quotient for an equivariant action of $\varphi \colon G \to H$ on $f \colon X \to Z$ requires in particular constructing a $G$-quotient of $X$ and an $H$-quotient of $Z$. A powerful tool for constructing such quotients is GIT, which requires a linearisation of the action. Under favourable conditions (for example for reductive groups and ample linearisations), a choice of linearisation $\cL_Z$ for the $H$-action on $Z$ determines an open semistable locus $Z^{ss}(\cL_Z) \subseteq Z$ with an absolute good $H$-quotient $q_Z:Z^{ss}(\cL_Z) \rar Z \gitq_{\hspace{-2pt} \cL_Z \hspace{1pt}} H,$ and likewise a choice of linearisation $\cL_X$ for the $G$-action on $X$ determines an open semistable locus in $X$ with an absolute $G$-quotient $q_X \colon X^{ss}(\cL_X)\rar X\gitq_{\hspace{-2pt} \cL_X \hspace{1pt}}G$. In this setting, a relative quotient will exist provided that there is an inclusion \begin{equation}
    \label{f(Zss) < Xss}  X^{ss}(\cL_X) \subseteq f^{-1}(Z^{ss}(\cL_Z)) .\end{equation} 
Thus a reasonable approach for constructing a relative quotient is to start with an $H$-linearisation $\Lc$ on $Z$, and try to construct a $G$-linearisation on $X$ satisfying \eqref{f(Zss) < Xss}. This question has been addressed in the literature in a number of special cases. The case where $f$ is (quasi-)affine and $\Lc_Z$ is ample has been addressed by Mumford in \cite[Prop 1.18]{Mumford1994}. In this case, $\Lc_X := f^{\ast} \Lc_Z$ is ample if $\Lc_Z$ is, and in general this linearisation does not satisfy \eqref{f(Zss) < Xss}. Rather the opposite containment holds, as per the following result.

\begin{proposition} \label{prop mumford's result}
  Suppose that $\varphi \colon G \rightarrow H$ is a homomorphism of reductive groups acting equivariantly on an affine morphism $f \colon X \rightarrow Z$, and let $\cL_Z$ denote an ample $H$-linearisation on $Z$. Then there is an inclusion $X^{ss}(f^*\cL_Z) \supseteq f^{-1}(Z^{ss}(\cL_Z))$. 
\end{proposition}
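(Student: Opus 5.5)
We argue directly from the definition of semistability for a reductive group acting on a quasi-projective scheme with an ample linearisation (Mumford's definition): a point $z \in Z$ lies in $Z^{ss}(\cL_Z)$ if and only if there exist $r>0$ and an $H$-invariant section $\sigma \in H^0(Z,\cL_Z^{r})^H$ with $\sigma(z)\neq 0$ and $Z_\sigma$ affine. Let $x \in f^{-1}(Z^{ss}(\cL_Z))$, so that $z := f(x)$ is semistable, and fix such a section $\sigma$ for $z$.

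First, we produce an invariant section for $x$. The pullback $f^*\sigma \in H^0(X, f^*\cL_Z^{r})$ is $G$-invariant: by the equivariance of $f$ (the commutative square defining an equivariant action) and the definition of the pullback linearisation, $g\cdot f^*\sigma = f^*(\varphi(g)\cdot \sigma) = f^*\sigma$. This is the same observation used at the start of the proof of \cref{properties of graded actions and linearisations}. Moreover, $(f^*\sigma)(x)$ corresponds to $\sigma(f(x))\neq 0$ under the identification $(f^*\cL_Z)_x \cong (\cL_Z)_{f(x)}$, so $f^*\sigma$ does not vanish at $x$.

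Second, we check the affineness condition. The non-vanishing locus satisfies $X_{f^*\sigma} = f^{-1}(Z_\sigma)$. Since $Z_\sigma$ is affine and $f$ is an affine morphism, $f^{-1}(Z_\sigma)$ is affine. Hence $x \in X^{ss}(f^*\cL_Z)$. For this step to make sense, $f^*\cL_Z$ must be a legitimate (ample) linearisation; this holds since $f$ is affine and $\cL_Z$ is ample, as recalled above from \cite[Prop 1.18]{Mumford1994}, and in any case only the existence of the invariant section with affine non-vanishing locus is used.

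The only point needing care is the definitional one: we must use Mumford's definition of semistability, which requires $X_{f^*\sigma}$ to be affine, rather than the Hilbert--Mumford or projective-cone formulation. With that definition in place, the affineness of $f$ is exactly what makes the argument work, and the rest is formal.
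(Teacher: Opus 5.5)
Your proof is correct and matches the paper's argument essentially verbatim: you pull back an $H$-invariant section $\sigma$ whose affine non-vanishing locus contains $f(x)$, and use that $X_{f^*\sigma} = f^{-1}(Z_\sigma)$ is affine because $f$ is affine. The only addition is that you spell out the $G$-invariance of $f^*\sigma$ via equivariance, which the paper leaves implicit.
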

\begin{proof}
For $x \in f^{-1}(Z^{ss}(\cL_Z))$, as $z = f(x)$ is semistable with respect to $\cL_Z$, there is a section $\sigma \in H^0(Z, \cL_Z^{n})^H$ for some $n >0$ whose non-vanishing locus $Z_{\sigma}$ is affine and contains $z$. Since $f$ is affine, we have that $X_{f^*(\sigma)} = f^{-1}(Z_{\sigma})$ is also affine and contains $x$, proving that $x \in X^{ss}(f^*\cL_Z)$.
\end{proof}

Hence if $f$ is affine and $\Lc_Z$ is ample, then for the inclusion \eqref{f(Zss) < Xss} to hold we must have equality. However, the following example shows equality does not necessarily hold (see \cite[\S 5]{Mumford1994}). 

\begin{example}
    Let $G$ be a reductive group acting on a variety $Z$, with linearisation $\Lc$. Let $X := G \cdot x$ denote a closed and unstable orbit in $Z$, with trivial stabiliser, and let $f \colon X \rar Z$ denote  the inclusion. Then $X \cong G$ and $f^*\cL \cong \cO_G$, so that $X^{ss}(\cL) = X$, but $f^{-1}(Z^{ss}(\cL)) = Z^{ss}(\cL)\cap X  = \emptyset.$
\end{example}

In the special case where $f$ is affine and we have a \emph{graded} equivariant action, we do obtain an equality in \eqref{f(Zss) < Xss}; however, the relative quotient map is an isomorphism as the grading condition collapses the quotient of $X$ onto that of $Z$ (see the discussion after \cref{properties of graded actions and linearisations}). 

In the case where $f$ is projective, one has to perturb $f^{\ast} \Lc_Z$ for it to be ample. Reichstein studied the case of a projective morphism $f \colon X \rightarrow Z$ between projective varieties with an equivariant action of $G=H$ linearised using an ample line bundle $\Lc_Z$ on $Z$. In this case, given a $G$-linearised ample line bundle $\mathcal{K}$ on $X$, the $G$-linearisation $\mathcal{L}_X := (f^{\ast} \Lc_Z)^{n} \otimes \mathcal{K}$ is ample for $n$ sufficiently large, and \eqref{f(Zss) < Xss} holds for this linearisation by \cite[Thm.\ 2.1 (a)]{Reichstein1989}. Thus we obtain a relative quotient map in this setting. This result remains true for any surjective homomorphism $\varphi \colon G \to H$ (i.e.\ $G$ need not coincide with $H$) by \cite[Thm.\ 3.11]{Hu1996}; however the claimed statement that this remains true if $\mathcal{K}$ is only relatively ample and $X$ and $Z$ are only quasi-projective is false, as shown by a counterexample of Schmitt in which $Z$ is only quasi-projective \cite{Schmitt2017}.

While the above results show that in some special cases relative quotient maps for equivariant actions can be constructed between the absolute GIT quotients of $X$ and $Z$, the question of how to construct relative quotients in general remains. Our approach to `relative GIT' is to replace $X$ by $f^{-1}(Z^{ss}(\cL))$ to guarantee the existence of such a relative quotient map.

\subsection{Definition of the relative GIT quotient}\label{subsec: def rel GIT reductive}
In this section, we define (semi)stable loci and relative GIT quotients in the most general situation of an equivariant action of a homomorphism $\varphi \colon G \rightarrow H$ of linear algebraic groups on a projective-over-affine morphisms $f \colon X \rightarrow Z$. In this subsection, we do not a priori work with finite type schemes, as we will consider arbitrary invariant algebras. The rest of this paper will be devoted to proving this construction provides finite type relative good quotients in certain situations. 

The relative GIT quotient depends on a choice of an $f$-ample $G$-equivariant line bundle $\cL$ and a good $H$-quotient $q \colon Z' \rightarrow W$ of an open subset $Z' \subseteq Z$. We consider the base change
\begin{center}  \begin{tikzcd}
X' \arrow[r,"j_X"] \ar[d, "f'"] & X \ar[d, "f"] \\
Z'  \arrow[r,"j_Z"] & Z.  
\end{tikzcd} 
\end{center} 
As $q$ is affine, $q \circ f'$ is projective-over-affine and $j_X^*\cL$ is $(q \circ f')$-ample. As in $\S$\ref{subsec:fibrewise actions on poa}, we have \[ X' = f^{-1}(Z') =\rProj_W q_*\cA(j_X^*\cL) = \rProj_W q_*j_Z^* \cA(\cL) \quad \text{ where } \cA(\cL):= \bigoplus_{n \geq 0} f_* \cL^{n}. \]

The notion of semistability we introduce below involves finding non-vanishing $G$-invariant sections of $\cA(L)_n= f_* \cL^{n}$ for $n > 0$ over suitable open set $U \subseteq Z$. In the equivariant setting, we need $U$ to be $H$-invariant for there to be a $G$-action on $f_* \cL^{n}(U)$; see \cref{rmk cannot define invariant sections in equiv case}. Additionally in the equivariant setting, we will work with opens $U \subseteq Z'$ which are $H$-\emph{orbitwise closed} in the sense that $\overline{H \cdot u} \subseteq U$ for every $u \in U$. In particular, such open sets are saturated with respect to the $H$-quotient $q$, i.e.\ $U = q^{-1}(q(U))$ and the restriction $U \to q(U)$ to an open set $q(U) \subseteq W$. This enables us to translate a notion of semistability phrased in terms of open sets in $W$ to a notion about open orbitwise closed sets in $Z'$ (for example, as in the proof of \cref{prop equiv twisted affine}).

\begin{definition}\label{definition reductive GIT quotients}
For an equivariant action of a homomorphism $\varphi \colon G \rightarrow H$ of linear algebraic groups on a projective-over-affine morphisms $f \colon X \rightarrow Z$, the \emph{relative projective-over-affine GIT quotient} with respect to the $f$-relatively ample $G$-linearisation $\cL$ and the good $H$-quotient $q \colon Z' \rightarrow W$ of an open set $j \colon Z' \hookrightarrow Z$ is the rational $W$-morphism
\[ X' :=f^{-1}(Z') = \rProj_W q_*j_Z^*\cA(\cL) \dashrightarrow X \gitq^{f}_{\hspace{-2pt} q,\cL \hspace{2pt}} G := \rProj_W (q_{\ast}j_Z^* \cA(\cL))^G \]
given by the inclusion of the sheaf of invariants. We define the \emph{semistable locus in $X$ with respect to $(q,\cL)$} as
 \[ X^{ss}(f;q,\cL) : = \left\{ x \in X' \ \left| \ \begin{array}{c} \exists \text{ open $H$-orbitwise closed $U \subseteq Z'$ with $f(x) \in U$ and} \\ \text{$ \exists \: n>0$ and $ \sigma \in \cA(\cL)_{n}(U)^G=\cL^{n}(f^{-1}(U))^G$} \text{ with } \sigma(x) \neq 0 \end{array} \right\} \right. \]
 and the \emph{stable locus in $X$ with respect to $(q,\cL)$} as
 \[ X^{s}(f;q,\cL) : = \left\{ x \in X' \ \left| \ \begin{array}{c} \exists \text{ open $H$-orbitwise closed $U \subseteq Z'$ with $f(x) \in U$ and } \\ \exists \: n >0 \text{ and } \sigma \in \cA(\cL)_{n}(U)^G \text{ such that } \sigma(x) \neq 0,\\  \dim \Stab_G(x) = 0  \text{ and the $G$-action on } f^{-1}(U)_\sigma \text{ is closed}  \end{array} \right\}. \right.\]

If $q = \mathrm{Id_Z}$ or $\cL = \cO_X$, we omit this notation from the (semi)stable loci and relative GIT quotient; for example, we will write $ X \gitq^{f}_{\hspace{-2pt} \hspace{2pt}} G= X \gitq^{ f}_{\hspace{-2pt} \mathrm{Id}_X, \cO_X \hspace{2pt}} G$. If $\cL = (\cO_X)_\rho$ for a character $\rho \colon G \rightarrow \GG_m$, we write $ X \gitq^{f}_{\hspace{-2pt} q,\rho \hspace{2pt}} G= X \gitq^{ f}_{\hspace{-2pt} q, (\cO_X)_\rho \hspace{1pt}} G$ and similarly $X^{(s)s}(f;q,\rho)$.
\end{definition}

Note that for fibrewise actions, where $H = \{\mathrm{Id}\}$, this orbitwise closed condition disappears.

\begin{remark}[Description of the (semi)stable loci]\label{rmk relative ss loci union of absolute ss loci}\
\begin{enumerate}
    \item  The usual definition of the semistable locus \cite[Definition 1.7]{Mumford1994} also includes the condition that the non-vanishing locus of the invariant section is affine. For projective-over-affine morphisms with relatively ample line bundles as above, the nonvanishing locus of a section $\s \in \cA(\cL)_n(U)$ is automatically affine over $U$ as explained in the discussion following \cref{def affine cone relative to L},  so we do not include this condition in our definition.
    \item These (semi)stable loci can be described as a union of absolute (semi)stable loci for suitable open affine $G$-invariant subsets as follows. Let $W_\tau \subseteq W$ be a collection of open affine subsets which form a basis for the open sets in $W$. Then finitely many of the open $G$-invariant sets $X_\tau := (f\circ q)^{-1}(W_\tau)$ cover $X'=f^{-1}(Z')$. Let $X_\tau^{(s)s}(\cL|_{X_\tau})$ denote the GIT (semi)stable loci with respect to the $G$-linearisation $\cL|_{X_\tau}$, which is ample on $X_\tau$.
    
    Then
    \[ X^{ss}(f;q,\cL) = \bigcup_\tau X_\tau^{(s)s}(\cL|_{X_\tau}),\]
    as the sets $Z_\tau:=q^{-1}(W_\tau)$ form a basis for the open $H$-orbitwise closed subsets in $Z'$. 
\end{enumerate}
\end{remark}

Our goal is to prove, in certain situations, that the above semistable locus is the domain of definition of the rational map induced by the inclusion of the invariants, and the induced quotient morphism $X^{ss}(f;q,\cL) \rightarrow X \gitq^{f}_{\hspace{-2pt} q,\cL \hspace{2pt}} G$ is a good $G$-quotient and gives a relative quotient of finite type over the good $H$-quotient $W$ of $Z'$. This will not be possible in this level of generality, as problems already arise for non-reductive group actions in the absolute setting: rings of invariants may not be finitely generated, and even when they are, the induced quotient morphism may not be a good quotient (for example, see \cite[Section 3.1]{DoranKirwan} and \cite[Section 5.1]{Hoskins2023} for various examples).

 \section{Relative GIT for reductive group actions} \label{sec:relative GIT reductive}

Given an equivariant action of a homomorphism $\varphi \colon G \to H$ of reductive groups on a morphism of schemes $f \colon X \to Z$, the aim of this section is to construct a relative quotient using GIT under the assumption that $f$ is a projective-over-affine morphism using the constructions defined in $\S$\ref{subsec: def rel GIT reductive}. Our strategy is to first construct such quotients for fibrewise actions in $\S$\ref{subsec: fibrewise actions}, and then for equivariant actions in $\S$\ref{subsec: equivariant actions} by reducing to the fibrewise case. This reduction to the fibrewise case is achieved by assuming that we have a good $H$-quotient $q \colon Z \to W$ (obtained via GIT or otherwise), and considering the fibrewise $G$-action on the composition $q \circ f$. If we have a good $H$-quotient of only an open subset $Z' \subseteq Z$ instead, the same construction works if we replace $f$ by its restriction to $f^{-1}(Z') \to Z'$, which remains projective-over-affine. Finally $\S$\ref{subsec:comparison} compares our relative GIT quotients with classical (absolute) GIT quotients for reductive groups.

\subsection{Overview}

We apply $\S$\ref{subsec: def rel GIT reductive} in different cases by specifying $(f,\varphi)$ and $(q,\cL)$ as follows.
\begin{center}
\begin{tabular}{|cc|c|c|}
\hline
\multicolumn{2}{|c|}{}                                                                                                                    & Fibrewise ($\varphi \colon G \rightarrow  H = \{ e \}$)                                                     & Equivariant (any $\varphi \colon G \rightarrow  H$)                                                                           \\ \hline 
\multicolumn{1}{|c|}{\multirow{2}{*}{$f$ affine}} & \begin{tabular}[c]{@{}c@{}}Untwisted\\ (or twisted by $\rho = 0$)\end{tabular}         & \begin{tabular}[c]{@{}c@{}}$(q,\cL) =(\mathrm{Id}_Z,\cO_X)$\\ \emph{(untwisted) affine }($\S$\ref{subsec: fibrewise affine})\end{tabular}             & \begin{tabular}[c]{@{}c@{}}$(q,\cL) =(q,\cO_X)$\\ \emph{(untwisted) affine} ($\S$\ref{subseq: equiv affine})\end{tabular}      \\ \cline{2-4} 
\multicolumn{1}{|c|}{}                            & \begin{tabular}[c]{@{}c@{}}Twisted by\\ $\rho \colon G \rightarrow \GG_m$\end{tabular} & \begin{tabular}[c]{@{}c@{}}$(q,\cL)=(\mathrm{Id}_Z,(\cO_X)_\rho)$\\ $\rho$-\emph{twisted affine} ($\S$\ref{subsec: fibrewise twisted affine})\end{tabular} & \begin{tabular}[c]{@{}c@{}}$(q,\cL)=(q,(\cO_X)_\rho)$\\ $\rho$-\emph{twisted affine} ($\S$\ref{subseq: equiv affine})\end{tabular} \\ \hline
\multicolumn{2}{|c|}{\begin{tabular}[c]{@{}c@{}}$f$ projective-over-affine \\ $f$-ample $G$-line bundle $\cL$\end{tabular}}     & \begin{tabular}[c]{@{}c@{}}$(q,\cL) =(\mathrm{Id}_Z,\cL)$\\ \emph{projective-over-affine} ($\S$\ref{subsubsec:fibrewise poa})\end{tabular}         & \begin{tabular}[c]{@{}c@{}} any $(q,\cL)$\\ \emph{projective-over-affine} ($\S$\ref{subsec: equiv poa})\end{tabular}    \\ \hline
\end{tabular}
\end{center}

All entries are special cases of the bottom right entry (for example, if $f \colon X = \rSpec_Z \cA \rightarrow Z$ is affine, then we can write $X = \rProj_Z \cA[z]$ to consider $f$ as a projective-over-affine morphism). We will prove \cref{reductive main theorem}, which concerns the bottom right (and most general) entry, by incrementally progressing to this case: first we consider fibrewise actions and begin with $f$ affine, then $f$ projective-over-affine, and then we proceed to the case of equivariant actions.

\newpage
\subsection{Fibrewise actions}  \label{subsec: fibrewise actions} We treat affine and projective-over-affine morphisms consecutively.

\subsubsection{Affine quotients}\label{subsec: fibrewise affine} 

For a fibrewise action of a reductive group $G$ on an affine morphism $f\colon X \to Z$,  the \emph{relative affine GIT} quotient is defined by taking $(q,\cL) = (\mathrm{Id}_Z,\cO_X)$ as in \cref{definition reductive GIT quotients}, and denoted by $X \gitq^f G$.

\begin{proposition}[Relative affine GIT] \label{thm relative affine GIT}
For a fibrewise action of a reductive group $G$ on an affine morphism $f \colon X = \rSpec_Z \cA \rightarrow Z$, the following statements hold: 
\begin{enumerate}[(i)]
\item\label{rel aff i} $X \gitq^f G = \rSpec_Z \cA^G$ and $X^{ss}(f) = X$; 
\item The domain of definition of the relative affine GIT quotient $X \dashrightarrow X \gitq^f G$ is $X^{ss}(f) = X$;
\item\label{rel aff iii} The morphism $X \rightarrow X \gitq^f G$ is a good quotient and restricts to a geometric quotient on $X^{s}(f)$;
\item\label{rel aff iv}  $X \gitq^f G$ is affine and finite type over $Z$;
\item\label{rel aff v} The closed points in $X \gitq^f G$ are in bijection with the closed $G$-orbits in $X$, or equivalently with $S$-equivalence classes of $G$-orbits in $X$.
\end{enumerate}

\end{proposition}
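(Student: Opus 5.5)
The plan is to reduce every statement to classical affine GIT for reductive groups over an open affine cover of $Z$, and then glue using the relative Reynolds operator. For (i), take $(q,\cL)=(\mathrm{Id}_Z,\cO_X)$ in \cref{definition reductive GIT quotients}. Then $\cA(\cO_X)=\bigoplus_{n\ge0}\cA$, which is $\cA[z]$ with $z$ the degree one section $1$, and $G$ fixes $z$. Hence $\cA(\cO_X)^G=\cA^G[z]$, and so $\rProj_Z \cA^G[z]=\rSpec_Z\cA^G$. Since $H$ is trivial, every open subset is orbitwise closed. Taking $U=Z$, $n=1$ and $\sigma=z$, which is an invariant section vanishing nowhere, shows $X^{ss}(f)=X$. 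For (ii), the rational map to $\rProj_Z\cA^G[z]$ is given on the chart $D_+(z)$ by the morphism $X\to\rSpec_Z\cA^G$ corresponding to the inclusion $\cA^G\subseteq\cA$. Since $z$ generates the irrelevant ideal and is nowhere vanishing, the map is defined everywhere.

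For (iv), I would use that $G$ is reductive, hence linearly reductive in characteristic zero. By the proposition of \S\ref{subsec: Rel Reynolds} there is a relative Reynolds operator, and the subsequent proposition then gives that $\rSpec_Z\cA^G\to Z$ is of finite type. It is affine by construction. In positive characteristic $G$ is only geometrically reductive, and Reynolds operators are unavailable. There I would instead argue locally on an affine open $\Spec B\subseteq Z$, with $A=\cA(\Spec B)$ a finitely generated $k$-algebra. Nagata's theorem gives that $A^G$ is finitely generated over $k$, hence over $B$. Since taking invariants commutes with localisation at elements of $B$ (these are invariant), these local descriptions glue to the quasi-coherent sheaf $\cA^G$.

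For (iii) and (v), being a good or geometric quotient is local on the target, and so is the closed-point bijection. For an affine open $V=\Spec B\subseteq Z$, the preimage in $X\gitq^f G$ is $\Spec A^G$ and the preimage in $X$ is $\Spec A$. The morphism $\Spec A\to\Spec A^G$ is the classical affine GIT quotient, so it is good, restricts to a geometric quotient on the stable locus, and has closed points corresponding to closed orbits and $S$-equivalence classes. Closed points of $X\gitq^f G$ lie in such affine pieces, and closed orbits in $X$ lie in single fibres over $Z$ because the action is fibrewise.

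It remains to match $X^s(f)$ with the union of the local stable loci. This is where I expect the main care to be needed. One inclusion is the definition: given $U$ and $\sigma$ with the required properties, shrink $U$ to an affine open and note that $\sigma=hz^n$ with $h\in\cA^G(U)$. The converse is \cref{rmk relative ss loci union of absolute ss loci}(2), applied with the basis of affine opens of $Z$. The only subtlety is to check that the conditions of closed action on $f^{-1}(U)_\sigma$ and finite stabiliser are compatible with restricting to affine opens. This holds because $G$-saturated opens are pulled back from the quotient.
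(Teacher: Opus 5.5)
Your proposal is correct and follows essentially the same route as the paper: you identify $\cA(\cO_X)^G=\cA^G[z]$ to get $X\gitq^f G=\rSpec_Z\cA^G$, then reduce (ii)--(v) to classical affine GIT over affine opens of $Z$ and glue. You supply more detail than the paper does, namely the nowhere-vanishing invariant section $z$, finite generation via the relative Reynolds operator or Nagata's theorem, and the matching of stable loci via \cref{rmk relative ss loci union of absolute ss loci}, but the argument is the same.
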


\begin{proof}
    Part \eqref{rel aff i} holds as $X \gitq^f G := \rProj_Z \cA(\cO_X)^G = \rProj_Z \oplus_{n \geq 0} f_* \cO_X^G = \rSpec_Z f_*\cO_X^G = \rSpec_Z \cA^G$. For the remaining parts, we use that this construction is local over open affines in $Z$, and on each such open affine the relative affine GIT quotient is just the usual (absolute) affine GIT quotient. Thus $X^{ss}(f) = X$ is the domain of $X \dashrightarrow X \gitq^f G$ and this is a good (respectively geometric) $G$-quotient of the semistable (respectively stable) locus. We can check that $X \gitq^f G \rightarrow Z$ is of finite type over open affines in $Z$. Indeed, locally we are taking absolute GIT quotients given by finitely generated rings of invariants, as $G$ is reductive. Part \eqref{rel aff v} is a property of good quotients.
\end{proof}

We note that the first two statements hold even if $G$ is non-reductive, but \eqref{rel aff iv} may fail when $G$ is non-reductive, due to $\cA^G$ not being a finite type $\cO_Z$-algebra, and even when it is of finite type, the induced quotient in \eqref{rel aff iii} may fail to be a good quotient (see \cite[$\S$5.1]{Hoskins2023}).

\begin{remark}[Functoriality and independence of the relative affine GIT quotient] \label{deponf} 
Given a fibrewise $G$-action on an affine morphism $f \colon X \rightarrow Z$, we can precompose with a $G$-equivariant affine morphism of $G$-schemes $e \colon Y \rightarrow X$ and postcompose with any affine morphism $h \colon Z \rightarrow W$. The following commutative diagram relates the various relative quotients, where the middle vertical morphisms are induced by the universal property of the categorical quotient: 
\begin{center}  \begin{tikzcd}
Y  \ar[r] \ar[d, "e"]  & Y \gitq^{f \circ e} G  \ar[r] \ar[d] & Z \ar[d, "\parallel"] \\

X \ar[d, "\parallel"] \ar[r]  & X \gitq^{f} G \ar[d, "\wr"] \ar[r] & Z \ar[d, "h"] \\
X \ar[r] & X \gitq^{h \circ f} G  \ar[r] & W.
\end{tikzcd} 
\end{center}
In particular, the relative affine GIT quotient is functorial with respect to equivariant affine morphisms and as a $k$-scheme the relative affine GIT quotient is independent of the choice of $G$-invariant morphism $f$. The only difference lies in the fact that the two quotients are defined as schemes over potentially different bases. However, the relative affine GIT quotient cannot be defined without specifying an affine morphism $f \colon X \to Z$ and an invariant affine morphism need not exist for a given $G$-action on $X$ (e.g.\ for the transitive $\PGL(n+1)$-action on $\PP^n$, there is no such morphism).     
    
\end{remark}

\subsubsection{Twisted affine quotients}\label{subsec: fibrewise twisted affine} 

For a fibrewise action of a reductive group $G$ on an affine morphism $f \colon X \rightarrow Z$, we can use a character $\rho \colon G \rightarrow \GG_m$ to define a so-called relative `twisted' affine GIT quotient, by extending the absolute notion in \cite{King1994}. We define the \emph{relative $\rho$-twisted affine GIT} quotient by taking $(q,\cL) = (\mathrm{Id}_Z,(\cO_X)_\rho)$ as in \cref{definition reductive GIT quotients}, and denote it by $X \gitq^f_{\hspace{-2pt} \rho \hspace{2pt}} G$.

\begin{theorem} [Relative twisted affine GIT for fibrewise actions] \label{thm twisted affine}
For a fibrewise action of a reductive group $G$ on an affine morphism $f \colon X = \rSpec_Z \cA \rightarrow Z$, the relative twisted affine quotient with respect to a character $\rho \colon G \rightarrow \GG_m$ satisfies the following properties:
\begin{enumerate}[(i)]
\item\label{tw aff i} $X \gitq^f_{\hspace{-2pt} \rho \hspace{2pt}} G = \rProj_Z \cA^{G,\rho}$ for the sheaf $\cA^{G,\rho}$ of $\rho$-twisted semi-invariants (see \cref{definition induced action on pushfoward}); 
\item\label{tw aff ii}  The domain of definition of the relative affine GIT quotient $X \dashrightarrow X \gitq^f_{\hspace{-2pt} \rho \hspace{2pt}} G$ is $X^{ss}(f;\rho)$;
\item\label{tw aff iii} $X^{ss}(f;\rho) \rightarrow X \gitq^f_{\hspace{-2pt} \rho \hspace{2pt}} G$ is a good quotient and restricts to a geometric quotient on $X^{s}(f;\rho)$;
\item\label{tw aff iv}  There is a factorisation $X \gitq^f_{\hspace{-2pt} \rho \hspace{2pt}} G \rightarrow X \gitq^f G \rightarrow Z$, where the first morphism is projective and the second is affine. In particular $X \gitq^f_{\hspace{-2pt} \rho \hspace{2pt}} G$ is projective-over-affine and finite type over $Z$;
\item\label{tw aff v} The closed points of $X \gitq^f_{\hspace{-2pt} \rho \hspace{2pt}} G $ are in bijection with the closed $G$-orbits in $X^{ss}(f;\rho)$, or equivalently with S-equivalence classes of $G$-orbits in $X^{ss}(f;\rho)$.

\end{enumerate}
\end{theorem}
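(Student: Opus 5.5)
The plan is to reduce every statement to the absolute twisted affine GIT of King \cite{King1994}, working locally over open affines of $Z$, exactly as in the proof of \cref{thm relative affine GIT}, and then glue.

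Part \eqref{tw aff i} is definitional. With $\cL=(\cO_X)_\rho$ we have $f_*\cL^n=\cA_{\rho^n}$, so $\cA(\cL)^G=\bigoplus_{n\ge0}(f_*\cO_{X,\rho^n})^G=\cA^{G,\rho}$. Hence $X\gitq^f_\rho G=\rProj_Z\cA^{G,\rho}$ by \cref{definition reductive GIT quotients}.

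For the remaining parts, fix an open affine $U=\Spec B\subseteq Z$ with $f^{-1}(U)=\Spec A$. Over $U$ the sheaf $\cA^{G,\rho}$ restricts to $A^{G,\rho}=\bigoplus_n A_{\rho^n}^G$, and $(\cA^{G,\rho})_0=\cA^G$. Since $H=\{e\}$, the orbitwise-closed condition in the definition of $X^{ss}(f;\rho)$ is vacuous. Sections over arbitrary opens may be replaced by sections over basic affines contained in them. So $X^{ss}(f;\rho)\cap f^{-1}(U)$ is the absolute $\rho$-semistable locus $\Spec A^{ss}(\rho)$, and likewise for stability. This matches \cref{rmk relative ss loci union of absolute ss loci}(2).

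Next I finish the local statements and glue.

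\textbf{Finite generation.} $A^{G,\rho}$ is finitely generated. It is the $\GG_m$-invariant part of $(A[t])^{G}$ for the twisted action of $G$ on $\AA^1_X$, which is equivalently $(A\otimes k[t])^{G\times\GG_m}$ for a suitable action. Its finite generation follows from finite generation of invariants of reductive groups on finitely generated algebras.

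\textbf{Local quotients.} King's absolute theory then gives three facts over $U$:
\begin{itemize}
\item the domain of definition of $\Spec A\dashrightarrow\Proj A^{G,\rho}$ is $\Spec A^{ss}(\rho)$;
\item the induced map is a good quotient, and it is geometric on the stable locus;
\item $\Proj A^{G,\rho}\to\Spec A^G$ is projective and $\Spec A^G\to U$ is affine of finite type.
\end{itemize}
For the good-quotient claim, concretely: on $D_+(\sigma)$ the quotient is $\Spec$ of $(A_\sigma)^G=((A^{G,\rho})_{\sigma})_0$, so it is the affine GIT quotient of $f^{-1}(U)_\sigma$ (affine by the discussion after \cref{def affine cone relative to L}). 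One then applies \cref{thm relative affine GIT}.

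\textbf{Gluing.} All these constructions commute with restriction to smaller affines, since $\rProj$ and invariants are compatible with localisation on $Z$. They therefore glue, giving \eqref{tw aff ii}, \eqref{tw aff iii} and \eqref{tw aff iv}. Being a good quotient, and being projective, affine or of finite type, are all local on the base. For projectivity in \eqref{tw aff iv}, note that $\cA^{G,\rho}$ is a finite type $\cA^G$-algebra. After a Veronese it is generated in degree one (the paper's definition requires degree-one generation), so \cref{projoveraffine} applies.

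\textbf{Closed points.} Part \eqref{tw aff v} is a general property of good quotients: fibres over closed points contain a unique closed orbit.

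\textbf{Main obstacle.} I expect the hardest step to be the identification of the domain of definition of the rational map with $X^{ss}(f;\rho)$. In particular, one must check that sections of positive degree over non-affine opens of $Z$ produce no extra semistable points. I would handle this by restricting such a $\sigma$ to an affine neighbourhood of $f(x)$ inside $U$, using that invariance is preserved under restriction.
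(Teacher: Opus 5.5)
Your proposal is correct and follows the paper's proof: (i) is definitional, (ii) and (iii) are reduced over open affines of $Z$ to King's absolute twisted affine GIT, (iv) comes from the degree-zero part $\cA^G$ together with finite generation by reductivity, and (v) is a property of good quotients; your checks on sections over non-affine opens and the Veronese step are details the paper leaves implicit. One small slip is in the finite generation step: $A^{G,\rho}$ is not the $\GG_m$-invariant part of $A[t]^G$ (for $\GG_m$ scaling $t$ that is just $A^G$), but is $A[t]^G$ itself for the $G$-action twisting $t$ by $\rho^{-1}$, as in \cref{rmk equiv descr tw aff quot}, and it is this identification that gives finite generation.
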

\begin{proof}
    Part \eqref{tw aff i} follows by definition as $\cA((\cO_X)_\rho)^G  = \oplus_{n\geq 0}(f_*(\cO_X)_\rho^{n})^G = \cA^{G,\rho}$.

    For \eqref{tw aff ii} and \eqref{tw aff iii}, the construction of the quotient is local on open affines in $Z$, so we can assume $Z$ is affine. Then the result follows from the equivalent result in the absolute setting, see \cite{King1994}.

     For \eqref{tw aff iv}, we note that $X \gitq_{\hspace{-2pt} \rho \hspace{2pt}}^f G$ is projective over the relative spectrum of the degree zero part $\cA^G$ of the graded sheaf of algebras $\cA^{G,\rho}$. This is precisely $X \gitq^f G$, which is affine over $Z$. Since $G$ is reductive, the sheaf $\cA^{G,\rho}$ of $\cO_Z$-algebras is of finite type over $Z$, and thus so is $X \gitq_{\hspace{-2pt} \rho \hspace{2pt}}^f G$.

     Finally, \eqref{tw aff v} is a consequence of \eqref{tw aff iii}.
\end{proof}

\begin{remark} \label{projective over affine}
In particular, if $\rho = 0$ is the trivial character, the relative $\rho$-twisted GIT quotient coincides with the relative affine GIT quotient. Moreover, for any character $\rho$, we have
\[ X^s(f;0) \subseteq X^s(f;\rho) \subseteq X^{ss}(f;\rho) \subseteq X^{ss}(f;0) =X \]
by a standard VGIT argument (or by the Hilbert--Mumford criterion, see \cref{HM crit for rel twisted affine GIT} below).
\end{remark}

\begin{remark}[Equivalent description of twisted affine quotients] \label{rmk equiv descr tw aff quot}
As in the absolute case, the relative $\rho$-twisted affine GIT quotient can be described in a different way. 
Consider the fibrewise action of $G$ on $h\colon X \times_Z  \mathbb{A}^1_Z = \rSpec_Z \cA[w]  \to Z$ induced by the $G$-action $g \cdot (x,y) = (g \cdot x, \rho(g)^{-1} y)$. Then $\cA[w]^G \cong \bigoplus_{n \geq 0} \cA^G_{\rho^n}$ and the relative $\rho$-twisted affine GIT quotient map coincides with the rational map of $Z$-schemes $$X= \rSpec_Z \mathcal{A} = \rProj_Z \mathcal{A}[w] \dashrightarrow   \rProj_Z \mathcal{A}[w]^G$$ induced by the inclusion $\mathcal{A}[w]^G \to \mathcal{A}[w]$ of sheaves of graded $\mathcal{O}_Z$-algebras.

\end{remark}

The final ingredient of classical reductive GIT that we recover in this setting is the Hilbert--Mumford criterion, which relies on the following well-known fact about the formation of the GIT semistable locus commuting with $G$-equivariant closed immersions. In a (fibrewise) relative setting, Seshadri states that the formation of the reductive GIT semistable locus commutes with pullback along the base \cite[Proposition (7)]{Seshadri1977}, but since the proof is not given we outline the idea below (see \cite[Proposition 4.7]{Alper} for a stacky incarnation of this).

\begin{proposition}  In the setting of Theorem \ref{thm twisted affine}, the formation of the semistable locus $X^{ss}(f;\rho)$ commutes with base change along closed immersions $\iota \colon Z' \hookrightarrow Z$. That is, if $f'\colon X'\rar Z'$ denotes the corresponding base change of $f \colon X\rar Z$ along $\iota$, then the following diagram is Cartesian:  \begin{center}  \begin{tikzcd}
(X')^{ss}(f',\rho)  \ar[d, "f'"]  \ar[r] & X^{ss}(f;\rho) \ar[d, "f"] \\
Z'  \ar[r] & Z.  \end{tikzcd}
\end{center} In particular, the semistable locus $X^{ss}(f;\rho)$ is set-theoretically the union over $z \in Z$ of the classical semistable loci $f^{-1}(z)^{ss}(\rho)$ in each fibre. \end{proposition}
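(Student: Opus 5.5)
The plan is to reduce to the case where $Z=\Spec B$ is affine, and then to use linear reductivity (surjectivity of invariants) to lift semi-invariants from $Z'$ to $Z$. Semistability is local over $Z$: by \cref{rmk relative ss loci union of absolute ss loci} with $q=\mathrm{Id}_Z$ and $H$ trivial, $X^{ss}(f;\rho)$ is the union of the absolute semistable loci of $f^{-1}(U)$ over open affines $U\subseteq Z$. Base change along $\iota$ commutes with restriction to such $U$. So we may assume $Z=\Spec B$, $X=\Spec A$, $Z'=\Spec B/I$ and $X'=\Spec A/IA$. The claim then becomes $(X')^{ss}(\rho)=X^{ss}(\rho)\cap X'$.

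For the inclusion $\supseteq$: if $x\in X'$ and $\sigma\in A^G_{\rho^n}$ with $n>0$ and $\sigma(x)\neq 0$, then the image $\bar\sigma$ of $\sigma$ in $A/IA$ is still a semi-invariant of weight $\rho^n$. Since $\bar\sigma(x)=\sigma(x)\neq0$, we get $x\in (X')^{ss}(\rho)$.

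For the inclusion $\subseteq$, which is the key step, take $\bar\sigma\in (A/IA)^G_{\rho^n}$ with $\bar\sigma(x)\neq 0$. The $\rho^n$-isotypic functor $M\mapsto (M\otimes \rho^{-n})^G$ is exact on rational $G$-modules. When $\operatorname{char} k=0$, reductive means linearly reductive, and this exactness follows from the Reynolds operator of \cref{lemma abs Reynolds op} applied to the twisted module. Applying it to the $G$-equivariant surjection $A\twoheadrightarrow A/IA$ shows that $A^G_{\rho^n}\to (A/IA)^G_{\rho^n}$ is surjective, so $\bar\sigma$ lifts to some $\sigma\in A^G_{\rho^n}$ with $\sigma(x)\neq0$.

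This step is the main obstacle, because in positive characteristic $G$ is only geometrically reductive and invariants need not lift. In that case I would use the standard substitute: for $\bar\sigma$ as above there is $m>0$ such that $\bar\sigma^{m}$ lifts to a semi-invariant of weight $\rho^{nm}$. This is Nagata's lemma on integrality of $A^G\to (A/I)^G$ for geometrically reductive groups, applied to the graded algebra $A[w]^G$ of \cref{rmk equiv descr tw aff quot}, which is compatible with the quotient by $I$. Since $\bar\sigma^m(x)\neq0$, the lift is nonvanishing at $x$, which gives $x\in X^{ss}(f;\rho)$ as required.

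Since both inclusions hold over every affine $U$, the open subschemes glue, and the diagram is Cartesian because $(X')^{ss}$ is the open subscheme $X^{ss}\times_X X'$. For the last assertion, apply this to the reduced closed points $\iota_z\colon \Spec k(z)\hookrightarrow Z$. The fibre $f^{-1}(z)$ has $G$-action and character $\rho$, and its relative semistable locus over $\Spec k$ is the classical one. Hence $X^{ss}(f;\rho)\cap f^{-1}(z)=f^{-1}(z)^{ss}(\rho)$. Taking the union over all $z$ gives the set-theoretic description; points are closed because the schemes are of finite type over algebraically closed $k$, so constructible opens are determined by their closed points.
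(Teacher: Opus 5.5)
Your proof is correct and follows essentially the same route as the paper. Both arguments reduce to $Z=\Spec B$, $X=\Spec A$, and then lift a power of a $\rho$-semi-invariant from $A/IA$ to $A$. This is done by applying geometric reductivity (Mumford's Corollary 1.2, i.e.\ Nagata's lemma) to the graded invariant algebra $A[w]^G$ of \cref{rmk equiv descr tw aff quot}. The differences are minor: in characteristic zero you use linear reductivity to lift $\bar\sigma$ itself rather than a power, and you write out the easy inclusion and the fibrewise consequence, both of which the paper leaves implicit.
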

\begin{proof}
Since the statement is local on the base $Z$, we can assume that $Z = \Spec B$ and $X = \Spec A$. Write $Z' = \Spec B'$ and $X' = \Spec A'$. As the action is fibrewise,  $X' \subseteq X$ is a closed $G$-invariant subscheme. By the definition of the $\rho$-semistable locus in terms of the existence of non-vanishing $\rho$-twisted semi-invariants, it suffices to show for any $\sigma \in (A')^G_{\rho^r}$ with $r > 0$, there exists $n >0$ such that $\sigma^n$ extends to $\tilde{\sigma} \in A^G_{\rho^{rn}}$. By \cref{rmk equiv descr tw aff quot}, rather than working with $\rho$-twisted semi-invariants in $A'$ and $A$, we can instead work with genuine invariants in $A'[w]$ and $A[w]$. The claim follows as $G$ is (geometrically) reductive acting on an affine scheme $\Spec A[w]$ and $\Spec A'[w]$ is a closed $G$-invariant subscheme, so for any $\sigma \in A'[w]^G$, there exists $n > 0$ such that $\sigma^n$ extends to $\tilde{\sigma} \in A[w]^G$ by \cite[Corollary 1.2]{Mumford1994}.
\end{proof}

Since $G$ acts on each fibre of $f$ and the orbit closures are also contained in the fibres of $f$, the usual Hilbert--Mumford criterion can be extended to a relative version, using the above fact that the formation of the semistable locus commutes with base change. We let $\langle \rho,\lambda \rangle$ denote the pairing between characters $\rho \colon G \rightarrow \GG_m$ and co-characters $\lambda \colon \GG_m \rightarrow G$. By a 1-parameter subgroup (1-PS) of $G$, we mean a non-trivial co-character.

\begin{corollary}[Hilbert--Mumford criterion for relative twisted affine GIT] \label{HM crit for rel twisted affine GIT}
In the setting of \cref{thm twisted affine}, a point $x \in X$ lies in $X^{ss}(f;\rho)$ if and only if for all 1-PSs $\lambda \colon \GG_m \rightarrow G$ for which $\lim_{t \rightarrow 0} \lambda(t) \cdot x$ exists, we have $\langle \rho,\lambda \rangle \geq 0$, and this point is stable if this inequality is always strict. That is, \[ X^{(s)s}(f;\rho) = \{ x \in X \ | \langle \rho, \lambda \rangle (\geq)  0 \ \text{for all 1-PS $\lambda\colon \GG_m \to G$ such that $\lim_{t \rightarrow 0} \lambda(t) \cdot x$ exists}\}.\]
Moreover, a semistable orbit is stable if and only if it is full dimensional and closed in $X^{ss}(f;\rho)$. Finally, a semistable orbit is closed if and only if it is closed under flows along 1-PS of $G$.
\end{corollary}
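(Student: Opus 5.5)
The plan is to reduce to the absolute Hilbert--Mumford criterion for twisted affine GIT (King's criterion \cite{King1994}), applied fibre by fibre. The reduction uses the preceding proposition, which says that forming $X^{ss}(f;\rho)$ commutes with base change along closed immersions.

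\emph{Step 1: reduce to fibres.} Fix $x \in X$ with $z = f(x)$, a closed point of $Z$. Apply the preceding proposition to the closed immersion $\iota \colon \Spec k(z) \hookrightarrow Z$. This gives $x \in X^{ss}(f;\rho)$ if and only if $x \in f^{-1}(z)^{ss}(\rho)$, where the right-hand side is the absolute $\rho$-semistable locus for the $G$-action on the affine scheme $f^{-1}(z) = \Spec(\cA(U)\otimes_B k(z))$, for any affine $U = \Spec B \ni z$.

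\emph{Step 2: limits can be computed in the fibre.} Since the action is fibrewise, $f(\lambda(t)\cdot x) = z$ for all $t$. The fibre $f^{-1}(z)$ is closed in $X$, and $X$ is separated. Hence $\lim_{t\to 0}\lambda(t)\cdot x$ exists in $X$ if and only if it exists in $f^{-1}(z)$, and the limit lies in the fibre. King's criterion \cite{King1994} on $f^{-1}(z)$ then gives the semistability statement directly.

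\emph{Step 3: stability.} Here base change is less clean, because stability involves closedness of orbits in the semistable locus. Instead, I would work locally on an affine $U=\Spec B\subseteq Z$ containing $z$, with $f^{-1}(U)=\Spec A$, and use \cref{rmk equiv descr tw aff quot}. This replaces the $\rho$-twisted problem by the untwisted $G$-action on $\Spec A[w]$ with weight $\rho^{-1}$ on $w$. Then $x$ is $\rho$-(semi)stable if and only if the orbit of $(x,1)$ avoids the zero section $w=0$ in its closure (respectively, is closed with finite stabiliser). The Hilbert--Mumford criterion for affine reductive actions (Kempf/Richardson) says the orbit closure is detected by 1-PSs. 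Because the action is fibrewise, every such orbit closure lies inside $f^{-1}(z)\times\AA^1$. A 1-PS $\lambda$ with existing limit at $x$ moves $(x,1)$ to the limit of $(\lambda(t)x,\, t^{-\langle\rho,\lambda\rangle})$. So orbit closedness and finite stabilisers translate into the strict inequality $\langle\rho,\lambda\rangle>0$ for all $\lambda$ with an existing limit, matching the absolute argument in \cite{King1994}. The same local picture gives the remaining two claims. Stable orbits are exactly the full-dimensional semistable orbits that are closed in $X^{ss}(f;\rho)$; this is the standard characterisation, valid locally on the affine open $f^{-1}(U)_\sigma$ defined by a non-vanishing semi-invariant. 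A semistable orbit is closed if and only if it is closed under 1-PS flows: by the Hilbert--Mumford/Kempf theorem, any point of $\overline{G\cdot x}\cap X^{ss}(f;\rho)$ outside $G\cdot x$, in particular a point of the unique closed orbit, is reached as a limit $\lim\lambda(t)\cdot x$ within the affine $G$-invariant open $f^{-1}(U)_\sigma$.

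The main obstacle is Step 3's compatibility of the \emph{stable} locus and of orbit closedness with base change. Semistability is handled cleanly by the preceding proposition, but closedness of orbits in $X^{ss}(f;\rho)$ versus in $f^{-1}(z)^{ss}(\rho)$ needs a separate argument. I would resolve it by observing that $G$-orbit closures lie in fibres of $f$, so an orbit closure in $X^{ss}(f;\rho)$ coincides with its closure in $f^{-1}(z)\cap X^{ss}(f;\rho) = f^{-1}(z)^{ss}(\rho)$ (the equality by Step 1). Both conditions are then absolute statements on the fibre.
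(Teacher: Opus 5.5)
Your proposal is correct and follows essentially the paper's route: reduce to fibres using the base-change proposition for $X^{ss}(f;\rho)$, use that limits and orbit closures stay inside the fibres of $f$, and apply King's absolute criterion on each fibre. Your Step~3 just spells out the standard local argument in more detail. One overstatement there: Kempf's theorem only guarantees that some 1-PS limit of $x$ lands in the unique closed orbit, not that every point of $\overline{G\cdot x}\setminus G\cdot x$ is such a limit, but the closed-orbit case is all your argument uses.
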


From this description, we immediately see that this semistable locus is independent of $f$ as we noted for the quotient in \cref{deponf}. 

\begin{corollary}[Independence of the choice of invariant morphism]
 In the setting of \cref{thm twisted affine}, the $\rho$-twisted semistable locus in $X$ relative to $f$  and the relative twisted affine GIT quotient $X \gitq_{\hspace{-2pt} \rho }^f G$ as $k$-schemes are independent of the choice of $G$-invariant affine morphism $f \colon X \rightarrow Z$.
\end{corollary}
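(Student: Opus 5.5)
The plan is to read both claims off the Hilbert--Mumford description in \cref{HM crit for rel twisted affine GIT}, and then to identify the quotient through the gluing in \cref{rmk equiv descr tw aff quot} together with the functoriality of \cref{deponf}.

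\textbf{Semistable locus.} Let $f_1 \colon X \to Z_1$ and $f_2 \colon X \to Z_2$ be two $G$-invariant affine morphisms. By \cref{HM crit for rel twisted affine GIT}, for each $i$ the locus $X^{(s)s}(f_i;\rho)$ is the set of $x \in X$ with $\langle \rho,\lambda\rangle (\geq) 0$ for every 1-PS $\lambda$ of $G$ such that $\lim_{t\to 0}\lambda(t)\cdot x$ exists in $X$. This condition refers only to the $G$-action on $X$ and to $\rho$; $f_i$ does not enter. Hence the semistable and stable loci agree as subsets of $X$. Both are open subschemes of $X$, so they also agree as schemes.

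\textbf{Quotient.} The quotient is harder, since $\rProj_{Z_i}\cA_i^{G,\rho}$ is defined over different bases. The first step is to reduce to comparable morphisms. For two invariant affine morphisms $f_1,f_2$, the product $f=(f_1,f_2) \colon X \to Z_1\times Z_2$ is $G$-invariant. It is also affine, because it factors as the graph $X \to X\times Z_2$ (a closed immersion, as the $Z_i$ are separated) followed by $f_1\times \mathrm{id}$ (affine). Moreover $f_i = p_i\circ f$. Therefore it suffices to compare $f$ with $h\circ f$ for $h \colon Z\to W$ an arbitrary morphism, not necessarily affine.

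Next, to compare $f$ with $h\circ f$, I would work locally over $W$. For an open affine $V \subseteq W$, both constructions restricted to $(h\circ f)^{-1}(V)$ are computed by sections of $\cO_X$ over $G$-invariant opens of $X$. Alternatively, by \cref{rmk equiv descr tw aff quot} the question becomes one about genuine invariants of $\cO_{X\times\AA^1}$, and then the untwisted independence of \cref{deponf} can be applied.

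Concretely, on the semistable locus, which is the same for both by the first part, the quotient $X^{ss}(f;\rho)\to X\gitq^f_\rho G$ is a good quotient by \cref{thm twisted affine}. The same holds for $h\circ f$ when $h$ is affine. Good quotients are categorical and unique up to unique isomorphism, so the two quotients are isomorphic as $k$-schemes.

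\textbf{Main obstacle.} The hardest step is $h$ not affine, because then $h\circ f$ need not be affine and \cref{thm twisted affine} does not apply to it directly. My plan is to avoid needing $h$ affine. In the product reduction, each $f_i$ is already affine by hypothesis, so \cref{thm twisted affine} applies to $f$, $f_1$ and $f_2$ separately. Each of the three gives a good quotient of the same open set $X^{ss}(\rho)$, so all three are categorical quotients of that set and are therefore canonically isomorphic. The only remaining check is that good quotients of the same $G$-scheme coincide; this follows from the universal property.
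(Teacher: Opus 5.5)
Your proposal is correct and follows essentially the paper's route. The semistable and stable loci are read off from the intrinsic Hilbert--Mumford description in \cref{HM crit for rel twisted affine GIT}, and the quotient is identified through the universal property of good (hence categorical) quotients, as in \cref{deponf}. The product reduction via $(f_1,f_2)\colon X\to Z_1\times Z_2$ is a harmless detour: as you note at the end, applying \cref{thm twisted affine} directly to $f_1$ and $f_2$ already gives two good quotients of the same open set $X^{ss}(\rho)$.
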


The following simple example illustrates the construction of relative twisted affine GIT quotients, and demonstrates how the semistable loci and quotients depend on the choice of character.

\begin{example}[Variation of relative GIT for $\GG_m$] \label{example VGIT Gm}     For a fibrewise $\GG_m$-action on an affine morphism $f\colon X \to Z$, there is an induced $\ZZ$-grading of the $\mathcal{O}_Z$-algebra $\mathcal{A} = f_*\mathcal{O}_X$. Let $X^+$ (respectively $X^-$ and $X^0$) be the $Z$-subscheme of $X$ with ideal $( \mathcal{A}_i : i > 0 )$ (respectively $( \mathcal{A}_i : i < 0 )$ and $( \mathcal{A}_i : i  \neq 0 )$). Then we have morphisms given by taking the limit
    \[ p_{\pm} : X^{\pm} = \{ x \in X : \lim_{t \rightarrow 0} t^{\pm 1}x \: \mathrm{ exists} \} \longrightarrow X^0 = X^{\GG_m}.\]
    Any character of $\GG_m$ has the form $\rho (t) = t^n$ for $n \in \ZZ$ and, by the Hilbert--Mumford criterion, the semistable locus $X^{ss}(f;\rho)$ only depends on the sign of $\rho$ and is given by
    \[ X^{ss}(f;-) = X \setminus X^+ \quad \text{and} \quad X^{ss}(f;0)=X \quad \text{and} \quad X^{ss}(f;+) = X \setminus X^-. \]
    In particular, if $\mathcal{A}$ is graded in non-positive degrees with $\cA_0 = \cO_{Z}$, so that we have $Z = X^{\GG_m} = X^-$ and $X^+ = X$, then the semistable locus $X^{ss}(f;-)$ is empty, and the GIT quotient \[X^{ss}(f;0)=X \rightarrow X \gitq^f \GG_m = Z\] collapses orbits down to their limit points. In the other chamber, we obtain a geometric quotient  
    \[X^{ss}(f;+) = X \setminus Z \rightarrow X \gitq^f_{\hspace{-2pt} + \hspace{2pt}} \GG_m \cong \rProj_Z \bigoplus_{i \leq 0} \mathcal{A}_i.\]     
\end{example}

Finally, we give a class of examples of relative twisted affine GIT quotients, namely projective-over-affine morphisms. Indeed, just as a projective-over-affine scheme can be constructed as a twisted affine GIT quotient, a projective-over-affine morphism can be constructed as relative twisted affine GIT quotient for the scaling action on its affine cone (see \cref{def affine cone relative to L}). 

\begin{lemma}[Projective-over-affine morphisms as relative twisted affine GIT quotients] \label{lem affine cone}
    Let $f\colon X \rightarrow Z$ be a projective-over-affine morphism and $\cL$ be an $f$-ample line bundle. Then its affine cone $\widehat{f}_{\cL} \colon \widehat{X}_{\cL} \rightarrow Z$ admits a fibrewise scaling $\Gm$-action, and $f\colon  X\rar Z$ is the relative $\rho$-twisted affine GIT quotient for this $\GG_m$-action on $\widehat{f}_\cL$ with respect to the character $\rho = +1$. Moreover, the affine GIT quotient for the fibrewise $\GG_m$-action on $f$ coincides with the natural map $\widehat{X}_{\cL}^{\GG_m} \rar Z$.  
\end{lemma}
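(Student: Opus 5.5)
We unwind the definitions and compute the semi-invariants of the scaling action directly. Write $\cA := \cA(\cL) = \bigoplus_{n \geq 0} f_*\cL^{n}$, so that $\widehat{X}_{\cL} = \rSpec_Z \cA$. By \cref{gmdictionary}, the $\NN$-grading on $\cA$ is equivalent to a fibrewise $\GG_m$-action on $\widehat{f}_\cL$. We let $t$ act on $\cA_n$ with weight $n$, choosing the sign convention so that, for $\rho = +1$, the $\rho^m$-semi-invariants are exactly the sections of degree $m$; this is the scaling action.

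\textbf{Identifying the quotient.} By \cref{thm twisted affine}~\eqref{tw aff i}, the quotient is $\widehat{X}_\cL \gitq^{\widehat{f}_\cL}_{\hspace{-2pt} \rho \hspace{2pt}} \GG_m = \rProj_Z \cA^{\GG_m,\rho}$. Computing locally over an open affine $U \subseteq Z$, a section $\sigma \in \cA(U)$ satisfies $\sigma(t\cdot y) = t^m \sigma(y)$ exactly when $\sigma$ lies in the weight-$m$ graded piece. Hence
\[ (\cA_{\rho^m})^{\GG_m} = \cA_m = f_*\cL^{m} \quad \text{and} \quad \cA^{\GG_m,\rho} = \bigoplus_{m\ge 0} \cA_m = \cA \]
as graded $\cO_Z$-algebras. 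These identifications are compatible with restriction, so they glue to sheaves. Therefore the quotient is $\rProj_Z \cA(\cL)$. Since $\cL$ is $f$-ample, this is $X$, with its structure map $f$; this is the identification $X = \rProj_Z\cA(\cL)$ already recorded in \cref{def affine cone relative to L}. Under this isomorphism the quotient map is the projection $\pi_\cL$ of that definition, because both maps are induced by the same identity inclusion of graded algebras.

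\textbf{The untwisted quotient.} By \cref{thm relative affine GIT}~\eqref{rel aff i}, the affine quotient is $\rSpec_Z \cA^{\GG_m} = \rSpec_Z \cA_0$. We then identify this with $\widehat{X}_\cL^{\GG_m}$. The fixed locus is cut out by the ideal $(\cA_i : i\neq 0)$, as in \cref{example VGIT Gm}, and here that ideal is $\cA_{>0}$. So $\widehat{X}_\cL^{\GG_m} = \rSpec_Z \cA/\cA_{>0} = \rSpec_Z \cA_0$. Under this identification the map to $Z$ is the natural one. One should also say why the quotient map $\widehat{X}_\cL \to \rSpec_Z\cA_0$ coincides with the limit retraction, which is immediate from the inclusion $\cA_0 \subseteq \cA$.

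\textbf{Main obstacle.} The only point needing care is the sign and weight convention in the statement. "Scaling" must be matched with "$\rho = +1$" so that the semi-invariants are the positive-degree pieces and not the negative ones; with the other sign the quotient would be empty, as \cref{example VGIT Gm} shows. Beyond that, the argument reduces to the identifications above, checked on open affines and glued.
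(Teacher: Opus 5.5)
Your proposal is correct and follows the paper's proof. Both arguments identify the scaling action with the $\NN$-grading of $\cA(\cL)$ and note that for $\rho=+1$ the $\rho^m$-semi-invariants are exactly $\cA(\cL)_m$. Hence $\cA(\cL)^{\GG_m,\rho}=\cA(\cL)$ and $\rProj_Z$ recovers $X$, while the invariants are $\cA(\cL)_0$, giving $\rSpec_Z \cA(\cL)_0=\widehat{X}_{\cL}^{\GG_m}$. Your extra remarks (the fixed-locus ideal $\cA_{>0}$, matching the quotient map with $\pi_{\cL}$, and the sign convention) only make explicit what the paper leaves implicit.
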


\begin{proof}
    The scaling $\Gm$-action on $\widehat{f}_\cL$ corresponds to a grading  $\cA:=\cA(\cL) = \oplus_{i \geq 0} \cA_i$. For the character $\rho = +1$ of $\GG_m$, we have $\cA^{\GG_m,{\rho}} =\cA$, and the sheaf $\cA^{\GG_m}$ of invariants is $\cA_0$. Hence the relative $\rho$-twisted affine GIT quotient for the fibrewise $\GG_m$-action on $\widehat{f}_{\cL}$ coincides with $f$, and the relative affine GIT quotient coincides with $Y = \rSpec_Z \cA_0 = \widehat{X}_{\cL}^{\GG_m} \to Z$.     
\end{proof}

\subsubsection{Projective-over-affine quotients} \label{subsubsec:fibrewise poa}
For a reductive group $G$ acting fibrewise on a projective-over-affine morphism $f\colon X\rar Z$, we linearise the action using a $f$-relatively ample $G$-equivariant line bundle $\cL$. Since $\cL$ is ample, we have $X = \rProj_Z \cA(\cL)$, where $\cA(\cL) = \bigoplus_{n \geq 0} f_{\ast} \Lc^{n}$ is the graded sheaf of $\cO_Z$-algebras introduced in $\S$\ref{subsec:fibrewise actions on poa}. We define the \emph{relative projective-over-affine GIT quotient} for this fibrewise action on $f$ with respect to $\cL$ by taking $(q,\cL) = (\mathrm{Id}_Z,\cL)$ as in \cref{definition reductive GIT quotients}, and denote it by $X \gitq^f_{\hspace{-2pt} \cL \hspace{2pt}} G$. 

These quotients can be obtained as relative twisted affine GIT quotients of their affine cones.

\begin{proposition}[Relative projective-over-affine quotients as relative twisted affine quotients] \label{poaastwistedaff}
    For a fibrewise action of a reductive group $G$ on a projective-over-affine morphism $f \colon X \rightarrow Z$, there is an isomorphism of schemes over $Z$ 
    \[X\gitq^f_{\hspace{-2pt} \Lc \hspace{2pt}} G  \cong \widehat{X}_{\cL} \gitq^{\widehat{f}_{\cL}}_{\hspace{-2pt} \rho \hspace{2pt}} \widehat{G},\] where the latter is the $\rho$-twisted affine GIT quotient for the fibrewise action of $\widehat{G}:=G \times \GG_m$ on the affine cone $\widehat{f}_{\cL}$ as defined at \eqref{affine cone with respect to L}, where $\rho = (0,1)$. In fact, the rational quotient map $\hX_{\cL} \dashrightarrow \widehat{X}_{\cL} \gitq^{\widehat{f}_{\Lc}}_{\hspace{-2pt} \rho \hspace{2pt}} \widehat{G}$ coincides with the composition of the rational map $\pi_{\cL} \colon \hX_{\Lc} \dashrightarrow X$ and the rational quotient map $X \dashrightarrow X\gitq^f_{\hspace{-2pt} \Lc \hspace{2pt}} G.$ In particular, $\pi_{\cL}^{-1}(X^{(s)s}(f;\cL) )= \widehat{X}_{\cL}^{(s)s}(f;\rho)$.
\end{proposition}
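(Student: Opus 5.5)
The plan is to compare the two graded sheaves of algebras directly. We have $\widehat{X}_{\cL} = \rSpec_Z \cA(\cL)$ with $\cA(\cL) = \bigoplus_{n \geq 0} f_*\cL^n$. The $\GG_m$-factor of $\widehat{G}=G\times\GG_m$ acts by scaling, so $f_*\cL^n$ has $\GG_m$-weight $n$. The $G$-factor acts through the co-action induced by the $G$-equivariant structure on $\cL$, as in $\S$\ref{subsec:fibrewise actions on poa}. The two actions commute, so $\cA(\cL)$ is a sheaf of $\widehat{G}$-algebras.

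First I compute the $\rho$-twisted semi-invariants for $\rho=(0,1)$. A section $\sigma$ of $\cA(\cL)$ over an open $U\subseteq Z$ is a semi-invariant of weight $\rho^m$ exactly when two conditions hold: it is $G$-invariant, and it is homogeneous of scaling weight $m$, i.e.\ it lies in $f_*\cL^m(U)$. Hence
\[ (\cA(\cL))^{\widehat{G},\rho} = \bigoplus_{m\ge 0} (f_*\cL^m)^G = \cA(\cL)^G ,\]
and this identification respects the gradings. Combining \cref{thm twisted affine}\eqref{tw aff i} with \cref{definition reductive GIT quotients} gives
\[ \widehat{X}_{\cL}\gitq^{\widehat f_\cL}_{\rho}\widehat G = \rProj_Z \cA(\cL)^G = X\gitq^f_{\cL} G ,\]
which is an isomorphism over $Z$.

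Next I check that the rational maps agree. The map $\widehat{X}_\cL \dashrightarrow \rProj_Z \cA(\cL)^{\widehat G,\rho}$ comes from the graded inclusion $\cA(\cL)^G\subseteq \cA(\cL)$, regarded through \cref{rmk equiv descr tw aff quot}. The composite $\widehat X_\cL \dashrightarrow X=\rProj_Z\cA(\cL) \dashrightarrow \rProj_Z\cA(\cL)^G$ comes from the same inclusion composed with the identity of $\cA(\cL)$. Both can be checked locally on an open affine $U=\Spec B\subseteq Z$. On the basic open where a homogeneous invariant $\sigma\in \cA(\cL)_n(U)^G$ with $n>0$ does not vanish, both maps are given by the same ring maps on degree-zero localisations, $(A^G)_{(\sigma)}\to A_{(\sigma)}\to A_\sigma^{\GG_m\text{-wt }0}$. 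Uniqueness of gluing then gives equality on the overlap of the domains.

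The remaining step, and the main obstacle, is the equality of domains, $\pi_\cL^{-1}(X^{ss}(f;\cL))=\widehat X_\cL^{ss}(\widehat f_\cL;\rho)$, together with the stable version. For semistability I argue as follows. A point $\hat x$ is $\rho$-semistable exactly when some $\sigma\in(f_*\cL^n)^G(U)$ with $n>0$ satisfies $\sigma(\hat x)\neq 0$. In that case $\hat x$ lies off the vertex $Y=\rSpec_Z\cA_0$, so $\pi_\cL$ is defined at $\hat x$, and $\sigma$ does not vanish at $\pi_\cL(\hat x)$. Conversely, a nonvanishing invariant section at $x\in X$ is nonzero at every preimage of $x$ in the cone. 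For stability I use the following facts, each of which can be checked over affine opens of $Z$ and so reduces to the classical absolute statement:
\begin{enumerate}[(a)]
\item $\widehat{X}_\cL\setminus Y\to X$ is a $\GG_m$-torsor;
\item $\Stab_{\widehat G}(\hat x)$ is finite if and only if $\Stab_G(\pi_\cL(\hat x))$ is finite, because the $\GG_m$-factor acts freely off $Y$ and the $G$-stabiliser of a cone point lies in an extension of $\GG_m$ by the stabiliser downstairs;
\item closedness of the $\widehat G$-action on $(\widehat X_\cL)_\sigma$ is equivalent to closedness of the $G$-action on $X_\sigma$, since $(\widehat X_\cL)_\sigma\to X_\sigma$ is a $\GG_m$-torsor, and torsors preserve and reflect closedness of orbits.
\end{enumerate}
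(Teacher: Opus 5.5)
Your proposal is correct and is essentially the paper's argument. The paper just says the result follows from the definitions together with \cref{lem affine cone}, i.e.\ that $\pi_{\cL}$ is itself the $\rho=+1$ twisted quotient for the scaling $\GG_m$; you carry out exactly this unwinding in detail, identifying $\cA(\cL)^{\widehat{G},\rho}=\cA(\cL)^G$ and comparing the loci through the $\GG_m$-torsor $\widehat{X}_{\cL}\setminus Y\to X$. Two small slips: the last ring in your local chain should be $A_\sigma$ itself (the coordinate ring of the basic open in the cone), not its weight-zero part, and in (b) the cleanest statement is that the $\widehat{G}$-stabiliser of $\hat x$ projects isomorphically onto $\Stab_G(\pi_{\cL}(\hat x))$, rather than merely sitting in an extension.
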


\begin{proof}
The result follows directly from the definitions and \cref{lem affine cone}. 
\end{proof}

Before stating the main properties of this quotient, we introduce the Hilbert--Mumford weight to give a numerical description of the (semi)stable loci. 

\begin{definition}
    Let $f\colon X \rar Z$ be a projective-over-affine morphism with a fibrewise $G$-action linearised with respect to an $f$-ample line bundle $\cL$. For $x \in X $ and a 1-PS $\lambda \colon \GG_m \rar G$ such that $x_0 := \lim_{t\rar 0} \lambda(t) \cdot x$ exists, the \emph{Hilbert--Mumford weight} is $\mu^{\cL}(x,\lambda) := - \text{wt}_\lambda (\cL)_{x_0},$ where $\text{wt}_\lambda (\cL)_{x_0}$ is the weight of the $\Gm$-action on the fibre of $\cL$ at the fixed point $x_0$.
\end{definition}
We will give an equivalent characterisation of the Hilbert-Mumford weight in terms of the affine cone $\pi_{\cL} \colon \widehat{X}_{\cL} \dashrightarrow X$ (relative to $Z$) which has an equivariant action of $\widehat{G}:= G \times \GG_m \twoheadrightarrow G$. For this, we first recall that we have a factorisation  
\begin{equation}\label{eq factorisation of p-o-a}
f \colon X \stackrel{p}{\longrightarrow} Y := \rSpec_{Z} f_* \cO_X \stackrel{h}{\longrightarrow} Z
\end{equation}
where  $p$ is projective and $h$ is affine. Since $\widehat{X}_{\cL}$ is affine over $Y$ (and thus also over $Z$), the domain of definition of $\pi_{\cL}$ contains $\widehat{X}_{\cL} \setminus Y = \widehat{X}_{\cL}^{\GG_m-ss}(+)$. We will write 1-PSs of $\widehat{G}$ as pairs $\widehat{\lambda}=(\lambda,r)$ for $\lambda$ a 1-PS of $G$ and $r \in \ZZ$ corresponding to $t \mapsto t^r$.

\begin{lemma}\label{lemma HN function in terms of affine cone}
For a fibrewise action of a reductive group $G$ on a projective-over-affine morphism $f\colon X \rar Z$ linearised by an $f$-ample  line bundle $\Lc$, let $\pi_{\cL} \colon \widehat{X}_{\cL} \dashrightarrow X$ denote the affine cone (relative to $Z$). For a 1-PS $\widehat{\lambda}=(\lambda,r)$ of $\widehat{G}:= G \times \GG_m$ and points $x \in X$ and $\hat{x} \in \pi^{-1}_{\cL}(x) \subseteq \widehat{X}_{\cL}$, the following are equivalent.
\begin{enumerate}
    \item $\lim_{t \rightarrow 0} \lambda(t) \cdot x$ exists and $\mu^{\cL}(x,\lambda) = r$;
    \item $\lim_{t \rightarrow 0} \widehat{\lambda}(t) \cdot \hat{x} = \lim_{t \rightarrow 0} t^r \lambda(t) \cdot \hat{x}$ exists and does not lie in the zero section $Y \subset \widehat{X}_{\cL}$; 
    \item $r$ is the minimal integer such that $\lim_{t \rightarrow 0} t^r \lambda(t) \cdot \hat{x}$ exists. 
\end{enumerate}
\end{lemma}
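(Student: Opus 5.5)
The plan is to reduce everything to a local computation over an open affine of $Z$, and then to a single weight computation on the fibre of $\cL$. All three conditions are local over $Z$, since limits of $\GG_m$-orbits lie in the same fibre of $f$ and of $\widehat{f}_\cL$. So we may assume $Z=\Spec B$ and $Y=\Spec A_0$ affine, with $\widehat{X}_\cL=\Spec A$ for $A=\bigoplus_n A_n$ and $X=\Proj A$. The scaling $\GG_m$ acts on $A_n$ with weight $n$, and $G$ acts compatibly. Fix $\hat x\in\pi_\cL^{-1}(x)$, so $\hat x\notin Y$. Then $\hat x$ corresponds to a nonzero vector in the fibre $\cL^{-1}_x=\cL^\vee_x$; equivalently, $\widehat{X}_\cL\setminus Y$ is the total space of $\cL^{\vee}$ minus the zero section (or of $\cL$, depending on sign conventions, which we fix by requiring $\cO(1)=\cL$).

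\textbf{(1)$\Rightarrow$(2).} Suppose $x_0=\lim\lambda(t)x$ exists. Since $\pi_\cL$ is a $\GG_m$-torsor over $X$ on the locus $\widehat{X}_\cL\setminus Y$, the $\lambda$-orbit of $\hat x$ lies over the orbit of $x$. The fibre over $x_0$ is $\lambda$-stable, and $\lambda$ acts on it by the character $\mathrm{wt}_\lambda(\cL^\vee)_{x_0}=-\mathrm{wt}_\lambda(\cL)_{x_0}=\mu^\cL(x,\lambda)$, up to the sign convention; this is the sign to double check against the definition of $\mu$. Trivialise $\cL$ equivariantly near the closure of $\lambda(\GG_m)x$ by taking an open neighbourhood of $x_0$ in $X$ on which $\cL$ is trivial; the $\lambda$-orbit of $x$ eventually enters it as $t\to0$. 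In this trivialisation $\lambda(t)\hat x=(\lambda(t)x,\,c(t))$ with $c(t)\sim t^{-\mu}\cdot\mathrm{unit}$. Consequently $t^r\lambda(t)\hat x$ has a limit outside the zero section exactly when $r=\mu$, and the limit is $0\in Y$ when $r>\mu$. This gives (1)$\Rightarrow$(2).

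\textbf{(2)$\Rightarrow$(1).} If $\hat y=\lim t^r\lambda(t)\hat x$ exists and $\hat y\notin Y$, then $\pi_\cL$ is defined and continuous at $\hat y$. Since $\pi_\cL$ is $\GG_m$-invariant, $\lim\lambda(t)x=\pi_\cL(\hat y)$ exists. By the computation above, $r$ must then equal $\mu^\cL(x,\lambda)$.

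\textbf{(2)$\Leftrightarrow$(3).} For $r'\ge r$ the limit of $t^{r'}\lambda(t)\hat x$ still exists, because multiplying by $t^{r'-r}$ scales a convergent family to the cone point. For the converse direction, decompose $\hat x$ in terms of $\widehat\lambda$-weights on the coordinate ring. The limit of $t^r\lambda(t)\hat x$ exists iff every coordinate function of negative weight vanishes at $\hat x$, where the weight of a function in $A_n$ shifts by $rn$ under replacing $\lambda$ by $t^r\lambda$. At the minimal such $r$, some generator of positive degree $n>0$ has weight exactly $0$ and is nonvanishing at $\hat x$. Hence the limit has a nonzero positive-degree coordinate, so it is not in $Y$. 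Conversely, if the limit at $r$ avoids $Y$, then some $a\in A_n$ with $n>0$ has $\widehat\lambda$-weight $0$ and $a(\hat x)\ne0$. For $r-1$ its weight becomes $-n<0$, so that limit fails to exist and $r$ is minimal.

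The main obstacle is bookkeeping: pinning down the sign conventions (the weight of $\cL$ versus $\cL^\vee$, and the direction of the grading). The cleanest route is the coordinate-function description used in (2)$\Leftrightarrow$(3), which I would also use to redo (1)$\Leftrightarrow$(2) via a section $\sigma\in A_n$ nonvanishing at $x_0$.
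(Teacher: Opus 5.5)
You have a genuine gap in (3)$\Rightarrow$(2). Your (2)$\Rightarrow$(3) argument and the upward-closure observation are fine.

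You assert that at the minimal $r$ some generator of positive degree, nonvanishing at $\hat x$, has $\widehat{\lambda}$-weight exactly $0$. Minimality only produces a generator $a\in A_n$ with $a(\hat x)\neq 0$ whose weight $w+(r-1)n$ is negative while $w+rn\geq 0$. That is, $0\leq w+rn\leq n-1$, which forces $w+rn=0$ only when $n=1$. For an arbitrary $f$-ample $\cL$, the algebra $\cA(\cL)=\bigoplus_n f_*\cL^n$ need not be generated in degree one. So, as far as your argument goes, the limit at the minimal $r$ could lie in the zero section $Y$.

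What you really need is that $\max\{-w/n\}$, taken over weight vectors of degree $n>0$ nonvanishing at $\hat x$, is an integer. This does not follow formally from the coordinate description. It comes from $\cL$ being invertible at the limit point $x_0=\lim_{t\to 0}\lambda(t)\cdot x$, and you have not shown that $x_0$ exists. Your proposal never uses properness of $p\colon X\to Y=\rSpec_Z f_*\cO_X$, yet (3)$\Rightarrow$(1) contains exactly the claim that a limit in the cone forces a limit in $X$.

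The missing step is the one the paper starts with. If $\lim_{t\to 0}t^r\lambda(t)\cdot\hat x$ exists for some $r$, its image in $Y$ is $\lim_{t\to 0}\lambda(t)\cdot y$. Properness of $p$, via the valuative criterion applied to $t\mapsto\lambda(t)\cdot x$, then gives $x_0$. Your torsor computation now yields a limit off $Y$ at $r=\mu^{\cL}(x,\lambda)$. Upward closure rules out limits for $r<\mu$, since otherwise the limit at $\mu$ would lie in $Y$. Hence the minimal $r$ is $\mu$ and (2) follows.

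Alternatively, follow the paper: replace $\cL$ by a power so that $\widehat{X}_{\cL}\hookrightarrow\AA^{n+1}_Y$ with $\lambda$ diagonal on degree-one coordinates. Then $r=-\min\{w_i : x_i\neq 0\}$ and your coordinate argument works verbatim, though you must check that (3) is compatible with passing to powers.

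Two smaller points in (1)$\Rightarrow$(2).
\begin{enumerate}
\item The claim $c(t)\sim t^{-\mu}\cdot\mathrm{unit}$ needs a one-line cocycle argument. Write $\lambda(s)\cdot(z,c)=(\lambda(s)\cdot z,\alpha(s,z)c)$ in your trivialisation, so that $c(st)=\alpha(s,\lambda(t)\cdot x)\,c(t)$. Letting $t\to 0$ shows the order of $c$ at $0$ equals the $\lambda$-weight on the cone fibre over $x_0$.
\item That weight must be $-\mu$ to match the definition of $\mu^{\cL}$ and the paper's computation. So your sentence assigning the fibre the character $\mu$ has the wrong sign, as you suspected.
\end{enumerate}
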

\begin{proof}
Using the factorisation \eqref{eq factorisation of p-o-a} of $f$, we will write $ y = p(x)$ for the image of $x$ under $p$. Since $p$ is proper, we see that $\lim_{t \rightarrow 0} \lambda(t) \cdot x$ exists in $X$ if and only if $\lim_{t \rightarrow 0} \lambda(t) \cdot y$ exists in $Y$. Since the statements scale under replacing $\cL$ by a positive power, we can assume that $\cL$ induces an equivariant projective embedding $X \hookrightarrow \PP^n_{Y}$ and that $\widehat{X}:=\widehat{X}_{\cL} \hookrightarrow \AA^{n+1}_Y$. We can thus write $\hat{x} \in \widehat{X}$ as $x  = (x_0,\dots,x_n;y)$, where $(x_0,\dots, x_n) \in \widehat{X}_y \hookrightarrow \AA^{n+1}_k$. Furthermore, we can assume that we have chosen our coordinates on $\AA^{n+1}_Y$ so that $\lambda(t)$ is diagonalised in the sense that  
\[   \lambda(t) \cdot \hat{x} = (t^{w_0}x_0,\dots, t^{w_n} x_n; \lambda(t) \cdot y).\]
Hence for $r \in \ZZ$ and $\widehat{\lambda} = (\lambda,r)$, we have
\[ \widehat{\lambda}(t) \cdot \hat{x} = t^r \lambda(t) \cdot \hat{x} = (t^{r+w_1}x_0,\dots, t^{r+w_n} x_n; \lambda(t) \cdot y)\]
and the limit as $t \rightarrow 0$ exists if and only (i) the limit of $\lambda(t) \cdot y$ as $t \rightarrow 0$ exists and (ii) $r+w_i \geq 0$ for all $i$ with $x_i \neq 0$. Furthermore, this limit is non-zero in $\widehat{X} \hookrightarrow \AA^{n+1}_Y$ if and only if $r + w_i = 0$ for some $i$ with $x_i \neq 0$. If this limit exists and is non-zero for some $\widehat{\lambda} =(\lambda,r)$, we let $\hat{x}_0$ denote the limit and note that $r = - \min \{ w_i : x_i \neq 0 \}$. The non-zero coordinates of $\hat{x}_0$ are the $x_i$ with $w_i = -r$. Hence $\text{wt}_\lambda (\cL)_{x_0} = -r$ and
\begin{equation}
\mu^{\cL}(x,\lambda) = - \min \{ w_i : x_i \neq 0 \} = r.
\end{equation}
By the properness of $p$, we thus obtain the equivalence of the first and second claim. We also see the second claim is equivalent to the third claim: if for some other $s \in \ZZ$, the limit of $ t^s \lambda(t) \cdot \hat{x}$ exists as $t \rightarrow 0$, then $s + w_i \geq 0$ for all $i$ with $x_i \neq 0$, thus $s \geq  - \min\{w_i: x_i \neq 0  \} =:r $. In fact $r$ is the unique integer such that the limit of $ t^r \lambda(t) \cdot \hat{x}$ exists and is non-zero.
\end{proof}

In this projective-over-affine setting, we obtain a relative quotient with the following properties.

\begin{theorem}[Fibrewise relative projective-over-affine GIT] \label{prop proj fibrewise qnts}
  For a fibrewise action of a reductive group $G$ on a projective-over-affine morphism $f\colon X \rar Z$ linearised with respect to an $f$-ample $G$-equivariant line bundle $\Lc$, the following statements hold: 
  \begin{enumerate}[(i)]
      \item\label{item fibrewise poa i} $X\gitq^f_{\hspace{-2pt} \cL \hspace{2pt}} G = \rProj_Z \cA(\cL)^G$;
      \item\label{item fibrewise poa ii} The semistable locus $X^{ss}(f;\Lc)$ is the domain of definition of the rational map $ X \dashrightarrow X\gitq^f_{\hspace{-2pt} \cL \hspace{2pt}} G$;
      \item\label{item fibrewise poa iii} $X^{ss}(f;\cL) \rightarrow X \gitq^f_{\hspace{-2pt} \cL \hspace{2pt}} G$ is a good quotient and restricts to a geometric quotient on $X^{s}(f;\cL)$;
      \item\label{item fibrewise poa iv} The structure morphism $\overline{f} \colon X \gitq^f_{\hspace{-2pt} \cL \hspace{2pt}} G \rightarrow Z$ is projective-over-affine and of finite type. If $f$ is projective, then $\overline{f}$ is also projective;
      \item\label{item fibrewise poa v} The closed points of $X\gitq^f_{\hspace{-2pt} \cL \hspace{2pt}} G$ are in bijection with the closed orbits in $X^{ss}(f;\Lc)$, or equivalently with S-equivalence classes of orbits in $X^{ss}(f;\Lc)$;
   
      \item\label{item fibrewise poa HM} The (semi)stable loci admit Hilbert--Mumford descriptions as follows \[X^{(s)s}(f;\cL) = \{x \in X \mid \text{$\mu^{\cL}(\lambda,x) (\geq) 0$ for all 1-PSs $\lambda  \colon \Gm \rightarrow G$ such that $\lim_{t\rar 0} \lambda(t) \cdot x$  exists}\}.\]
  \end{enumerate}
  \end{theorem}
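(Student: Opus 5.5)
The plan is to reduce everything to the fibrewise relative twisted affine case via the affine cone. By \cref{poaastwistedaff}, $X\gitq^f_{\hspace{-2pt} \cL \hspace{2pt}} G \cong \widehat{X}_{\cL} \gitq^{\widehat{f}_{\cL}}_{\hspace{-2pt} \rho \hspace{2pt}} \widehat{G}$ with $\widehat{G} = G\times\GG_m$ reductive and $\rho=(0,1)$. Moreover, the quotient rational map factors as $\pi_\cL$ followed by $X \dashrightarrow X\gitq^f_{\hspace{-2pt} \cL \hspace{2pt}} G$, and $\pi_\cL^{-1}(X^{(s)s}(f;\cL)) = \widehat{X}_\cL^{(s)s}(\widehat f_\cL;\rho)$. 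Part (i) is immediate from the definitions, since $\cA(\cL)^{\widehat G,\rho} = \bigoplus_n (f_*\cL^n)^G = \cA(\cL)^G$.

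For (ii), (iii) and (v), I apply \cref{thm twisted affine} to $\widehat G$ acting fibrewise on $\widehat f_\cL$. Since the semistable locus in the cone misses the zero section $Y$, $\pi_\cL$ restricts to a morphism $\widehat{X}_\cL^{ss} \to X^{ss}(f;\cL)$. Locally over $Z$, this morphism is a geometric $\GG_m$-quotient; concretely, over $f^{-1}(U)_\sigma$ it is $\Spec$ of the degree-zero part of the localisation. The good $\widehat G$-quotient of $\widehat{X}^{ss}_\cL$ then factors through this $\GG_m$-quotient, and the standard argument (invariants of $G\times\GG_m$ equal $G$-invariants of $\GG_m$-invariants, and closed-set separation descends) shows that $X^{ss}(f;\cL)\to X\gitq^f_{\hspace{-2pt} \cL \hspace{2pt}} G$ is a good $G$-quotient. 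Alternatively, and more simply, all of this is local over affine opens of $Z$, where it is the classical projective-over-affine GIT statement over an affine base; I would take this route and glue, as in \cref{thm relative affine GIT}. The domain of definition statement follows because $x$ lies in the domain if and only if some invariant section $\sigma\in\cA(\cL)_n(U)^G$ with $n>0$ is nonzero at $x$, which is exactly the definition of $X^{ss}(f;\cL)$.

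For stability, I compare the definitions. The condition $\dim \Stab_G(x)=0$ together with closedness of orbits in $f^{-1}(U)_\sigma$ corresponds, on the cone, to $\dim\Stab_{\widehat G}(\hat x)=0$ and closed $\widehat G$-orbits in $\widehat f_\cL^{-1}(U)_\sigma$. This correspondence holds because the $\GG_m$-factor acts freely off the zero section up to finite stabilisers. Hence the geometric quotient statement and the closed-point bijection in (v) transfer. For (iv), \cref{thm twisted affine}(iv) gives that the quotient is projective-over-affine and of finite type over $Z$, factoring through $\rSpec_Z \cA(\cL)_0^G = Y\gitq^h G$. If $f$ is projective, then $\cA(\cL)_0$ is a finite $\cO_Z$-module, so its invariant subsheaf is also finite. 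Here I use that $\cO_Z$ is locally Noetherian, and that the Reynolds operator makes the invariants a direct summand. It follows that $\overline f$ is projective.

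The main obstacle is the Hilbert--Mumford description (vi). Here I combine \cref{HM crit for rel twisted affine GIT} for $\widehat G$ on the cone with \cref{lemma HN function in terms of affine cone}. A point $x$ is semistable if and only if some (equivalently any) lift $\hat x$ satisfies $\langle\rho,\widehat\lambda\rangle = r \ge 0$ for every $\widehat\lambda=(\lambda,r)$ for which $\lim t^r\lambda(t)\cdot\hat x$ exists. For a given $\lambda$, the set of such $r$ is nonempty precisely when $\lim\lambda(t)\cdot x$ exists; this uses properness of $p$. In that case the set is $\{r\ge \mu^\cL(x,\lambda)\}$ by part (3) of the lemma. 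So the condition becomes $\mu^\cL(x,\lambda)\ge 0$ for all such $\lambda$.

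Two care points remain. First, the 1-PSs $(1\text{-trivial }\lambda, r)$, that is $\lambda$ trivial, give $r\ge0$ automatically when the limit exists off the zero section. Second, for strict stability I must exclude the 1-PS $(0,r)$ with $r>0$, whose limit exists and lands at the zero section with $\langle\rho,\widehat\lambda\rangle>0$ anyway. Checking the stable case carefully, including this degenerate direction and finiteness of stabilisers modulo the $\GG_m$ factor, is where I expect the most bookkeeping.
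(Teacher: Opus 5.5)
Your proposal is correct and is essentially the paper's proof: (i)--(v) come from working locally over affine opens of $Z$, invoking classical GIT and finite generation for reductive groups, and (vi) comes from passing to the affine cone via \cref{poaastwistedaff} and combining \cref{HM crit for rel twisted affine GIT} with \cref{lemma HN function in terms of affine cone}, exactly as you do. For projectivity in (iv), your argument that $(\cA(\cL)_0)^G$ is finite because it is a subsheaf of a finite $\cO_Z$-module over a Noetherian base is cleaner than the paper's identification $\cA_0=(\cA_0)^G$, which can fail (e.g.\ for non-reduced fibres of $\rSpec_Z\cA_0\to Z$); note also that it needs no Reynolds operator, which would not exist in positive characteristic, still allowed in that section.
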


  \begin{proof}
To simplify notation, let $\cA:= \cA(\cL)$. 
Part \eqref{item fibrewise poa i} follows by definition. For \eqref{item fibrewise poa ii} and \eqref{item fibrewise poa iii}, the construction of the quotient is local on open affines in $Z$, so we can assume $Z$ is affine and the result then follows from the result in the absolute setting in \cite{Mumford1994}. For \eqref{item fibrewise poa iv}, we have that  $X \gitq_{\hspace{-2pt} \Lc \hspace{2pt}}^f G \to Z$ is projective-over-affine by \eqref{item fibrewise poa i}, and this map is of finite type as $G$ is reductive. If $f$ is projective, then by definition $\cA_0$ is a finite $ \mathcal{O}_Z$-module, and as $\cA_0 = (\cA_0)^G = (\cA^G)_0$, it follows that $\overline{f}$ is also projective. Part \eqref{item fibrewise poa v} holds as the quotient is good.

Let us refer to the right side of the equation in part \eqref{item fibrewise poa HM} as the Hilbert--Mumford (semi)stable locus in $X$ and denote it by $X^{\mathrm{HM}-(s)s}(f;\cL)$. We will prove this equality by using the isomorphism $X\gitq^f_{\hspace{-2pt} \Lc \hspace{2pt}} G  \cong \widehat{X}_{\cL} \gitq^{\widehat{f}_{\cL}}_{\hspace{-2pt} \rho \hspace{2pt}} \widehat{G}$ of \cref{poaastwistedaff} involving the affine cone $\widehat{f}_{\cL} \colon \widehat{X}:=\widehat{X}_{\cL} \rightarrow Z$, together with the relative twisted affine Hilbert--Mumford criterion, which we write as $\widehat{X}_{\cL}^{(s)s}(f;\rho) = \widehat{X}_{\cL}^{\mathrm{HM}-(s)s}(f;\rho)$, see \cref{HM crit for rel twisted affine GIT}. Let $\pi \colon \widehat{X} \dashrightarrow X $ denote the projection from the affine cone. Since $\pi^{-1}(X^{(s)s}(f;\cL)) = \widehat{X}^{(s)s}(f;\rho)$ by \cref{poaastwistedaff}, it suffices to show the corresponding equality relating the corresponding Hilbert--Mumford (semi)stable loci. In fact, we will just prove the claim for semistable loci, as a similar argument gives the claim for the equality of stable loci. 

As in \cref{lemma HN function in terms of affine cone} above, we fix $x \in X$ and $\hat{x} \in \pi^{-1}(x) \subset \widehat{X}$ and write 1-PSs of $\widehat{G}$ as pairs $\widehat{\lambda}=(\lambda,r)$ for $\lambda$ a 1-PS of $G$ and $r \in \ZZ$ corresponding to $t \mapsto t^r$. First suppose that $x \notin X^{\mathrm{HM}-(s)s}(f;\cL)$, so there is a 1-PS $\lambda \colon \GG_m \rightarrow G$ such that $\lim_{t \rightarrow 0} \lambda(t) \cdot x$ exists and $\mu:=\mu^{\cL}(x,\lambda) < 0$. Then for $\widehat{\lambda}:=(\lambda,\mu)$, we have $\lim_{t \rightarrow 0} \widehat{\lambda}(t) \cdot \hat{x}$ exists in $\pi^{-1}(X)$ by \cref{lemma HN function in terms of affine cone}. Since $\rho = (0,1)$, we have $\langle \rho, \widehat{\lambda} \rangle = \mu < 0$, which proves that $\hat{x} \notin \widehat{X}^{\mathrm{HM}-(s)s}(f;\rho)$. Conversely suppose that $\hat{x} \notin \widehat{X}^{\mathrm{HM}-(s)s}(f;\rho)$, so there exists a 1-PS $\widehat{\lambda}:=(\lambda,r)$ such that $\lim_{t \rightarrow 0} \widehat{\lambda}(t) \cdot \hat{x}$ exists and $\langle \rho, \widehat{\lambda} \rangle = r < 0$. By \cref{lemma HN function in terms of affine cone} we know for some $r' \leq r$ that $\lim_{t \rightarrow 0} (\lambda,r')(t) \cdot \hat{x}$ exists and does not lie in the zero section, thus $\mu^{\cL}(x,\lambda) = r' < 0$ and $\lim_{t \rightarrow 0} \lambda(t) \cdot x$ exists, which proves that $x \notin X^{\mathrm{HM}-(s)s}(f;\cL)$. This completes the proof.
\end{proof}

\subsection{Equivariant actions}  \label{subsec: equivariant actions}

In this section we construct quotients for an equivariant action of $\varphi \colon G \to H$ on $f \colon X \to Z$ relative to a choice of good $H$-quotient of $Z$ (or possibly of an open subset $Z' \subseteq Z$). We construct this good $G$-quotient of $X$ as a relative GIT quotient for the fibrewise action of $G$ on the composition $q \circ f$, using the results of $\S$\ref{subsec: fibrewise actions}. As we will see, this construction generalises to the case where we start with a quotient of an open subset of $Z$. As in $\S$\ref{subsec:fibrewise actions affine}, we first treat the simpler case of affine morphisms.

\subsubsection{Twisted affine quotients}\label{subseq: equiv affine}

We will directly deal with twisted affine quotients in the equivariant setting, as untwisted affine quotients are the special case where the twisting character is trivial. 

Given an equivariant action of a homomorphism $\varphi\colon G \to H$ of reductive groups on an affine morphism $f\colon X \to Z$, suppose that we have a good $H$-quotient $q \colon Z \to W$ and choose a character $\rho \colon G \rightarrow \GG_m$.

  We define the \emph{relative $\rho$-twisted affine GIT quotient} for the equivariant action of $\varphi$ on $f$ with respect to the good quotient $q \colon Z \to W$ and the character $\rho$ by taking $(q,\cL) = (q, (\cO_X)_\rho)$ in \cref{definition reductive GIT quotients}, and we denote it by $X \gitq^{ f}_{\hspace{-2pt} q, \rho \hspace{2pt}} G$. If $\rho$ is the trivial character, then we call this the \emph{relative affine GIT quotient} and denote it by $X \gitq_{\hspace{-2pt} q \hspace{2pt}}^{f} G$.

\begin{proposition}[Equivariant relative twisted affine GIT]\label{prop equiv twisted affine}
   For an equivariant action of a homomorphism $\varphi \colon G \rightarrow H$ of reductive groups on an affine morphism $f \colon X = \rSpec_Z \cA \rightarrow Z$, the relative $\rho$-twisted affine quotient with respect to a good $H$-quotient $q \colon Z \rightarrow W$ and a character $\rho \colon G \rightarrow \GG_m$ coincides with the relative $\rho$-twisted affine GIT quotient for the fibrewise $G$-action on $ q \circ f$. In particular, this relative GIT quotient satisfies the following properties.
   \begin{enumerate}[(i)]
       \item\label{equiv tw aff i} $X \gitq^f_{\hspace{-2pt} q,\rho \hspace{2pt}} G  = X \gitq^{ q \circ f}_{\hspace{-2pt} \rho \hspace{2pt}} G :=\rProj_W (q_{\ast} \cA)^{G,\rho}$ and $X^{(s)s}(f;q,\rho)= X^{(s)s}(q \circ f ;\rho)$;
      
       \item\label{equiv tw aff ii}  $X^{ss}(f;q,\rho)$ is the domain of definition of the relative $\rho$-twisted affine GIT quotient;
       \item\label{equiv tw aff iii}  $X^{ss}(f;q,\rho) \rightarrow X \gitq^f_{\hspace{-2pt} q,\rho \hspace{2pt}} G$ is a good quotient and a geometric quotient restricted to $X^{s}(f;q,\rho)$;
       \item\label{equiv tw aff iv} There is a factorisation $X \gitq^f_{\hspace{-2pt} q,\rho \hspace{2pt}} G  \to X \gitq_{\hspace{-2pt} q \hspace{2pt}}^{f} G \rightarrow W$, where the first morphism is projective and the second morphism is affine;
       \item\label{equiv tw aff v} The closed points of $X \gitq^f_{\hspace{-2pt} q,\rho \hspace{2pt}} G  = X \gitq^{ q \circ f}_{\hspace{-2pt} \rho \hspace{2pt}} G$ are in bijection with the closed orbits in $X^{ss}(f;q,\rho)$, or equivalently with S-equivalence classes of orbits in $X^{ss}(f;q,\rho)$;
       \item\label{equiv tw aff vi} The (semi)stable loci admit explicit Hilbert--Mumford descriptions
       $$ X^{(s)s}(f; q,\rho) = \{ x \in X \ | \ \langle \rho, \lambda \rangle (\geq) 0 \text{ for all $\lambda\colon \GG_m \to G$ such that  $\lim_{t \rightarrow 0} \lambda(t) \cdot x$ exists} \}.$$     
   \end{enumerate}

\end{proposition}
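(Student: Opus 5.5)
The proof reduces to the fibrewise case treated in \cref{thm twisted affine} and \cref{HM crit for rel twisted affine GIT}. Because $q$ is a good $H$-quotient it is $H$-invariant and affine. Hence $q\circ f \colon X \to W$ is affine and $G$-invariant: for $g\in G$ we have $q(f(gx)) = q(\varphi(g) f(x)) = q(f(x))$. So $G$ acts fibrewise on $q\circ f$, and $(q\circ f)_*\cO_X = q_*\cA$.

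The first task is to identify the two constructions. By definition $X\gitq^f_{q,\rho}G = \rProj_W (q_*j^*\cA((\cO_X)_\rho))^G$, and with $Z'=Z$ this is $\rProj_W (q_*\cA)^{G,\rho}$. This is exactly the fibrewise twisted quotient for $q\circ f$, which gives \eqref{equiv tw aff i} at the level of quotients.

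For the equality of (semi)stable loci, I will compare the two definitions through the correspondence between open $H$-orbitwise closed subsets $U\subseteq Z$ and open subsets $V\subseteq W$. If $V\subseteq W$ is open, then $U=q^{-1}(V)$ is $H$-invariant and orbitwise closed, since orbit closures lie in fibres of $q$. Conversely, an orbitwise closed open $U$ is $q$-saturated, and $q(U)$ is open because a good quotient sends saturated open sets to open sets. In both cases $f^{-1}(U) = (q\circ f)^{-1}(q(U))$, so $\rho$-semi-invariants over $U$ are the same as sections of $(q_*\cA)_{\rho^n}^G$ over $q(U)$. The stability conditions on stabiliser dimension and closedness of the action on $f^{-1}(U)_\sigma$ are literally the same in both settings. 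This correspondence is the main point to verify carefully, in particular the openness of $q(U)$ and the saturation claim, and I expect it to be the only non-formal step.

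Once the identification is established, parts \eqref{equiv tw aff ii}, \eqref{equiv tw aff iii}, \eqref{equiv tw aff v} follow directly from \cref{thm twisted affine} (ii), (iii), (v) applied to the fibrewise action on $q\circ f$. Part \eqref{equiv tw aff iv} is \cref{thm twisted affine} (iv), using that $X\gitq^{q\circ f}G = \rSpec_W (q_*\cA)^G = X\gitq^f_q G$ by the same identification with $\rho$ trivial. Finally, \eqref{equiv tw aff vi} is \cref{HM crit for rel twisted affine GIT} applied to $q\circ f$. Its condition involves only 1-PSs of $G$ and the existence of limits in $X$, and so it does not depend on the base morphism.
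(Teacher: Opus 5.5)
Your proposal is correct and follows essentially the same route as the paper. You identify $(q_*\cA)^{G,\rho}$ as the sheaf defining both quotients, and you compare the (semi)stable loci via the correspondence $U=q^{-1}(V)$, $V=q(U)$ between open $H$-orbitwise closed subsets of $Z$ and open subsets of $W$. You then read off the remaining parts from \cref{thm twisted affine} and \cref{HM crit for rel twisted affine GIT} applied to $q\circ f$. Your explicit check that orbitwise closed opens are $q$-saturated with open image is a bit more careful than the paper's proof, which leaves this to the remark preceding \cref{definition reductive GIT quotients}.
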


\begin{proof}
Since the good quotient $q$ is affine, so is the composition $q \circ f : X = \rSpec_W q_* \cA \rightarrow W$, and we have $q_* \cA ((\cO_X)_\rho)^G = \oplus_{n} (q_* f_* (\cO_X)_\rho^n))^G = (q_*\cA)^{G,\rho}$, which proves that $X \gitq^f_{\hspace{-2pt} q,\rho \hspace{2pt}} G : = X \gitq^{ q \circ f}_{\hspace{-2pt} \rho \hspace{2pt}} G$ and the relative $\rho$-twisted affine GIT quotient for the equivariant $\varphi$-action on $f$ coincides with the relative $\rho$-twisted affine GIT quotient for the fibrewise $G$-action on $q \circ f$.

It suffices to establishing part \eqref{equiv tw aff i}, as then the remaining parts follow directly from \cref{thm twisted affine} and \cref{HM crit for rel twisted affine GIT}. For this, we will prove that $X^{ss}(f;q,\rho) = X^{ss}(q \circ f ;\rho)$, as the corresponding equality of stable loci follows analogously. First suppose that $x \in X^{ss}(q \circ f; \rho)$; that is, there exists an open subset $V \subseteq W$ and $n >0$ and $\tau \in (q_{\ast} \cA)^G_{\rho^n}(V) = \cO_X((q \circ f)^{-1}(V))_{\rho^n}^G$ such that $\tau(x) \neq 0$. The open subset $U:=q^{-1}(V) \subseteq Z$ is open and $H$-orbitwise closed (since it is saturated), thus $q^{\ast} \tau \in \cA(U)^G_{\rho^n} = \cO_X(f^{-1}(U))^G_{\rho^n}$ and $q^{\ast} \tau(x) \neq 0$, which proves that $x \in X^{ss}(f;q,\rho)$. 

Conversely suppose that $x \in X^{ss}(f;q,\rho)$; that is, there is an open $H$-orbitwise closed set $U \subseteq Z$ and $\sigma \in \cA(U)_{\rho^n}^G = \cO_X(f^{-1}(U))^G_{\rho^n}$ for some $n > 0$ such that $\sigma(x) \neq 0$.  Then $V:=q(U) \subseteq W$ is open and $q_*\sigma \in (q_*\cA)^G_{\rho^n}(V)$ satisfies $q_*\sigma(x) \neq 0$ which shows $x \in X^{ss}(q \circ f; \rho)$.
\end{proof}

In practice, we often do not have a good $H$-quotient of all of $Z$, but rather a quotient $q\colon Z'\rar W$ of an open subset $Z' \subseteq Z$. In this situation, we simply base change $f \colon X \rar Z$ along the inclusion $Z' \rar Z$, and can apply the above result to $f'\colon X' := f^{-1} (Z') \to Z'$ and the good quotient $q \colon Z' \to W'$ instead. By definition, we have $X \gitq_{\hspace{-2pt} q,\rho \hspace{2pt}}^{f} G = X' \gitq_{\hspace{-2pt} q,\rho \hspace{2pt}}^{f'} G $. One obtains variants of the above statements, including a variant of the Hilbert--Mumford criterion stated in Corollary \ref{HM crit for rel twisted affine GIT}, replacing $X$ by $X'$ throughout, which thus depends on $q$ (or equivalently $Z'$).

\begin{corollary}\label{HM crit red equiv tw affine wrt quotient of open in Z}
Given an equivariant action of a homomorphism $\varphi \colon G \rightarrow H$ of reductive groups on an affine morphism $f \colon X \rightarrow Z$ and a good $H$-quotient $q \colon Z' \rightarrow W$ of $Z' \subseteq Z$, we consider the restriction $f'\colon X' := f^{-1} (Z') \to Z'$. Then there is a relative $\rho$-twisted affine GIT quotient which is a good $G$-quotient
\[ X^{ss}(f; q,\rho) = (X')^{ss}(f';q,\rho) \rightarrow X \gitq^{ q \circ f}_{\hspace{-2pt} \rho \hspace{2pt}} G:= X' \gitq^{ q \circ f'}_{\hspace{-2pt} \rho \hspace{2pt}} G\]
and restricts to a geometric quotient on the stable locus $X^{s}(f; q,\rho) = (X')^{s}(f';q,\rho)$.  Moreover,  the (semi)stable loci admit the following explicit Hilbert--Mumford descriptions
$$ X^{(s)s}(f; q,\rho) = \{ x \in X' \ | \ \langle \rho, \lambda \rangle (\geq) 0 \text{ for all $\lambda\colon \GG_m \to G$ such that  $\lim_{t \rightarrow 0} \lambda(t) \cdot x$ exists in $X'$} \}.$$ 
\end{corollary}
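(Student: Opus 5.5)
The plan is to reduce to \cref{prop equiv twisted affine} applied to the base-changed morphism $f' \colon X' := f^{-1}(Z') \to Z'$. First we check that the hypotheses still hold. The $G$-action on $X$ preserves $X'$, since $Z'$ is $H$-invariant, being the source of a good $H$-quotient. The morphism $f'$ is affine because it is a base change of $f$. The map $q \colon Z' \to W$ is a good $H$-quotient of all of $Z'$. So \cref{prop equiv twisted affine} gives a good $G$-quotient $(X')^{ss}(f';q,\rho) \to X' \gitq^{q\circ f'}_{\rho} G$, which restricts to a geometric quotient on $(X')^{s}(f';q,\rho)$. It also gives the Hilbert--Mumford description of $(X')^{(s)s}(f';q,\rho)$ in terms of 1-PSs $\lambda$ of $G$ for which $\lim_{t\to 0}\lambda(t)\cdot x$ exists in $X'$. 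This is the ambient scheme on which \cref{HM crit for rel twisted affine GIT} is applied, via the fibrewise action on $q\circ f'$.

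The remaining task is to identify the loci, namely $X^{(s)s}(f;q,\rho) = (X')^{(s)s}(f';q,\rho)$, and to note that $X \gitq^{f}_{q,\rho} G = X'\gitq^{f'}_{q,\rho} G$. The second equality is immediate from \cref{definition reductive GIT quotients}, since the quotient is defined as $\rProj_W$ of $q_*j_Z^*\cA((\cO_X)_\rho)^G$, and $j_Z^*f_*(\cO_X)_\rho^n = f'_*(\cO_{X'})_\rho^n$ by flat (open) base change.

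For the first equality we unwind \cref{definition reductive GIT quotients} on both sides. Every point involved lies in $X'$. An $H$-orbitwise closed open $U \subseteq Z'$ is the same datum whether we view it as a subset of $Z$ or of $Z'$. Moreover $f^{-1}(U) = f'^{-1}(U)$, so the spaces of semi-invariant sections $\cO_X(f^{-1}(U))^G_{\rho^n}$ and $\cO_{X'}(f'^{-1}(U))^G_{\rho^n}$ coincide. The stability conditions (finite stabilisers, and closedness of the action on $f^{-1}(U)_\sigma$) are likewise intrinsic to $f^{-1}(U)$. Hence the two definitions agree verbatim.

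The only point needing care is the Hilbert--Mumford statement. Limits must be taken in $X'$ rather than $X$: a limit may exist in $X$ while leaving $X'$, and then the corresponding 1-PS must not count. This is exactly what \cref{prop equiv twisted affine}(vi) gives once it is applied to $f'$. I expect no real obstacle here beyond this bookkeeping.
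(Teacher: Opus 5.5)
Your proposal is correct and follows essentially the same route as the paper. The paper also base changes to $f' \colon X' = f^{-1}(Z') \to Z'$, notes that the quotient and the (semi)stable loci agree by definition, and applies \cref{prop equiv twisted affine}, including its Hilbert--Mumford part, to $f'$, which is why the limits are taken in $X'$.
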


In the case where we only have a good $H$-quotient of a subset $Z' \subsetneq Z$, the above Hilbert--Mumford description is only truly explicit if $Z'$ is explicit. In \cref{rmk Z' admits HM descr} below, we discuss the special case when $Z'$ also admits an explicit Hilbert--Mumford description.

\begin{remark} Suppose that the given good $H$-quotient $q \colon Z' \rightarrow W$ restricts to a geometric quotient on an open subset $Z^s$ (for example, in the case when $q$ is constructed via projective GIT and $Z' = Z^{ss}$, the open subset $Z^s$ could be the GIT stable locus). By construction of the relative GIT quotient, we have $f(X^{ss}(f;q,\rho)) \subseteq Z'$, but the relationship between the stable loci in $X$ and $Z$ is less clear. In general, we do not have $f(X^{s}(f;q,\rho)) \subseteq Z^s$ (for example, see \cref{rmk on NR HM semistability} \eqref{stable loci not preserved by f}).  We also do not have $f^{-1}(Z^s) \subseteq X^{s}(f;q,\rho)$; for example, if $H$ is trivial, so we have a fibrewise action and $q = \mathrm{Id}_Z$ and $Z^s = Z' = Z$, we have $X =f^{-1}(Z^s) \nsubseteq X^{s}(f;q,\rho)$ in general. 

\end{remark}

\subsubsection{Projective-over-affine quotients} \label{subsec: equiv poa}
 For an equivariant action on a projective-over-affine morphism $f \colon X \to Z$, we defined the \emph{relative projective-over-affine GIT quotient} with respect to an $f$-relatively ample $G$-equivariant line bundle $\cL$ and a good quotient $q \colon Z' \to W$ of an open subset $Z' \subseteq Z$ in Definition \ref{definition reductive GIT quotients}. 

 If $f' \colon X':= f^{-1}(Z') \rightarrow Z'$ denotes the restriction of $f$, then exactly as in $\S$\ref{subseq: equiv affine} above, this relative projective-over-affine GIT quotient for the equivariant action on $f$ coincides with the relative projective-over-affine quotient for the fibrewise $G$-action on $q \circ f'\colon X'\rar W$ studied in $\S$\ref{subsubsec:fibrewise poa}. Alternatively, as in the fibrewise case (see \cref{poaastwistedaff}), this quotient can be viewed as a twisted affine GIT quotient for an equivariant action on the associated affine cone.

\begin{proposition}\label{lem affine cone equivariant}
    Let $f\colon X \rar Z$ be a projective-over-affine morphism with an equivariant action of a homomorphism $\varphi \colon G\rar H$ of reductive groups and let $\cL$ be an $f$-relatively ample line bundle and $q: Z' \rightarrow W$ be a good $H$-quotient of an open set $Z' \subseteq Z$. Then there are isomorphisms 
\begin{equation}\label{square of isos for rel quotient}
\begin{tikzcd}
X \gitq^{f}_{\hspace{-2pt} q,\cL} G  \ar[d, "\wr"]  \ar[r, "\sim"] & \widehat{X_{\cL}} \gitq^{\widehat{f_{\cL}}}_{\hspace{-2pt} q, \rho} \widehat{G} \ar[d, "\wr"] \\
X' \gitq^{q \circ f'}_{\hspace{-2pt}\cL \hspace{2pt}} G  \ar[r, "\sim"] & \widehat{X'_{\cL}} \gitq^{\widehat{q \circ f'_{\cL}}}_{\hspace{-2pt} \rho} \widehat{G}.  \end{tikzcd}
\end{equation} 
where $f' \colon X':= f^{-1}(Z') \rightarrow Z'$ denotes the restriction of $f$ and $\widehat{f'_{\cL}} \colon \widehat{X'_{\cL}} \rar Z'$ denotes the restricted affine cone, $\widehat{G}:= G \times \GG_m$ and $\rho \colon \widehat{G} \rar \Gm$ is the character given by $(\text{id}_G,1)$. Moreover, we have equalities of (semi)stable loci
\[
X^{G-(s)s}(f;q,\cL) = (X')^{G-(s)s}(q \circ f'; \cL) = \pi_{\cL} (\widehat{X}_{\cL}^{\widehat{G}-(s)s}(\widehat{q \circ f_{\cL}};\rho)) =\pi_{\cL} (\widehat{X_{\cL}}^{\widehat{G}-(s)s}(\widehat{f_{\cL}};q,\rho)) 
\]
where $\pi_{\cL} \colon \widehat{X'_{\cL}}  \dashrightarrow X' $ denote the projection from the affine cone.
\end{proposition}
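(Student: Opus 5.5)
The plan is to derive every isomorphism in \eqref{square of isos for rel quotient} from the same underlying identity of graded sheaves of $\cO_W$-algebras, and then obtain the equalities of (semi)stable loci from the definitions together with \cref{poaastwistedaff} and \cref{prop equiv twisted affine}.

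\textbf{Left vertical map.} By \cref{definition reductive GIT quotients},
\[ X \gitq^{f}_{\hspace{-2pt} q,\cL \hspace{2pt}} G = \rProj_W (q_*j_Z^*\cA(\cL))^G . \]
Flat base change along the open immersion $j_Z$ gives $j_Z^* f_*\cL^n = f'_*(j_X^*\cL)^n$. Hence $q_*j_Z^*\cA(\cL) = \cA_{q\circ f'}(j_X^*\cL)$, which is the graded algebra attached to the fibrewise action on the projective-over-affine morphism $q\circ f'$ of \S\ref{subsubsec:fibrewise poa}. Taking $G$-invariants degreewise gives the left vertical isomorphism, by \cref{prop proj fibrewise qnts}\eqref{item fibrewise poa i}.

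\textbf{Right vertical map.} The affine cone $\widehat{X'_{\cL}} = \rSpec_{Z'} j_Z^*\cA(\cL)$ is the base change of $\widehat{X_{\cL}}$ to $Z'$. Its $\rho$-semi-invariants of weight $n$ under $\widehat G = G\times\GG_m$ are exactly the $G$-invariants of $q_*j_Z^*\cA(\cL)_n$. So both right-hand objects are $\rProj_W$ of the same algebra $(q_*j_Z^*\cA(\cL))^G$, where we use \cref{HM crit red equiv tw affine wrt quotient of open in Z} to identify the equivariant quotient over $q$ with the fibrewise one for $q\circ \widehat{f'_\cL}$.

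\textbf{Horizontal maps.} The bottom horizontal isomorphism is \cref{poaastwistedaff} applied to the fibrewise action on $q\circ f'$. The top one then follows from the same algebra identity, and all four maps are compatible because each is induced by the identity on this one graded $\cO_W$-algebra.

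\textbf{Semistable loci.} For the first equality $X^{(s)s}(f;q,\cL) = (X')^{(s)s}(q\circ f';\cL)$ I would copy the argument in the proof of \cref{prop equiv twisted affine}.
\begin{enumerate}[(i)]
\item Suppose $x\in (X')^{ss}(q\circ f';\cL)$, witnessed by a section over an open $V\subseteq W$. Then the saturated open $U = q^{-1}(V)$ is $H$-orbitwise closed, and the same section lies in $\cL^n(f^{-1}(U))^G$, so $x\in X^{ss}(f;q,\cL)$.
\item Conversely, an $H$-orbitwise closed open $U\subseteq Z'$ is saturated, so $q(U)$ is open and $U = q^{-1}(q(U))$. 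Therefore $\cL^n(f^{-1}(U))^G = (q_*j_Z^*\cA(\cL))^G_n(q(U))$, and the same witness works.
\end{enumerate}
For the stable loci there are two extra conditions: the stabiliser dimension, and closedness of the action on $f^{-1}(U)_\sigma$. Both refer only to the same open set and the same section, so they transfer verbatim. The second equality is the last claim of \cref{poaastwistedaff} for $q\circ f'$, and the third is \cref{prop equiv twisted affine}\eqref{equiv tw aff i} applied to the cone.

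\textbf{Main obstacle.} The step needing most care is the orbitwise-closed versus saturated translation. It relies on the fact that $U\subseteq Z'$ is $H$-orbitwise closed if and only if it is $q$-saturated, which holds because $q$ is a good quotient: it separates closed invariant subsets, and each fibre contains a unique closed orbit lying in every orbit closure. I would state this fact explicitly before the argument. A second point to handle is the use of $\pi_\cL$ applied to a subset rather than a preimage; this is harmless, since the (semi)stable loci in the cone are $\GG_m$-invariant, lie in the domain of $\pi_\cL$ (they avoid the zero section because $\langle\rho,\cdot\rangle$ must be nonnegative along the scaling $\GG_m$), and surject onto their images.
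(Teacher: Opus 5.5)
Your proposal is correct and follows essentially the same route as the paper. In both, the vertical isomorphisms hold by definition, since both sides are $\rProj_W$ of the same graded $\cO_W$-algebra $(q_*j_Z^*\cA(\cL))^G$; the bottom isomorphism is \cref{poaastwistedaff} for $q\circ f'$; and the equalities of (semi)stable loci come from the saturated versus orbitwise-closed translation in \cref{prop equiv twisted affine} together with $\pi_{\cL}^{-1}(X^{(s)s}) = \widehat{X}^{(s)s}$. You simply make explicit two details the paper leaves implicit: that $H$-orbitwise closed opens are exactly the $q$-saturated ones, and that $\pi_{\cL}$ is surjective off the zero section, which is what lets you pass from preimages to images.
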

\begin{proof}
In the above diagram \eqref{square of isos for rel quotient}, the bottom isomorphism only concerns quotients for fibrewise actions and is established in \cref{poaastwistedaff}, whereas the two vertical isomorphisms are between the quotients for equivariant and fibrewise actions follow by definition (as in \cref{prop equiv twisted affine}).

For the equality of (semi)stable loci, one proves the first and last equalities as in \cref{prop equiv twisted affine}, and the middle equality follows from the proof of \cref{prop proj fibrewise qnts}. 
\end{proof}

We will use the above result to prove the following expanded version of \cref{reductive main theorem}. 

\begin{theorem}\label{thm proj equivariant qnts}
  For an equivariant action of a homomorphism $\varphi \colon G\rar H$ of reductive groups on a projective-over-affine morphism $f\colon X \rar Z$, the relative projective-over-affine GIT quotient with respect to an $f$-relatively ample $G$-equivariant line bundle $\cL$ and a good quotient $q\colon Z'\rar W$ of an open subset $Z' \subseteq Z$ satisfies the following properties.
  \begin{enumerate}[(i)]
      \item\label{thm proj equivariant qnts part i} $X^{ss}(f;q,\cL)$ is the domain of definition of the relative projective-over-affine GIT quotient map;
      \item\label{thm proj equivariant qnts part ii}  $X^{ss}(f;q,\cL)\rar X \gitq^{f}_{\hspace{-2pt} q,\cL} G $ is a good quotient and restricts to a geometric quotient on $X^{s}(f;q,\cL)$;
      \item\label{thm proj equivariant qnts part iii}  The structure map $X\gitq_{\hspace{-2pt} q, \cL \hspace{2pt}}^f G \rar W$ is projective-over-affine and of finite type;
      \item\label{thm proj equivariant qnts part iv} The closed points of $X \gitq^{f}_{\hspace{-2pt} q,\cL} G $ are in bijection with the closed orbits in $X^{ss}(f;q,\cL)$, or equivalents with S-equivalence classes of orbits in $X^{ss}(f;q,\cL)$;
      \item\label{thm proj equivariant qnts part v}  The (semi)stable loci admit the following explicit Hilbert--Mumford descriptions  \begin{equation}\label{eq HM descr red}\tag{HM} X^{(s)s}(f;q,\cL) = \left\{ x \in X':=f^{-1}(Z') \left| \begin{array}{c} \text{$\mu^{\cL}(\lambda,x)(\geq) 0$ for all 1-PSs $\lambda  \colon \Gm \rar G$} \\ \text{such that the limit exists in $X'$} \end{array} \right. \right\}.\end{equation}
  \end{enumerate}
\end{theorem}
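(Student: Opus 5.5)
The plan is to reduce everything to the fibrewise projective-over-affine case, \cref{prop proj fibrewise qnts}, applied to the fibrewise $G$-action on the composite $q \circ f' \colon X' := f^{-1}(Z') \to W$. First I check that this composite is projective-over-affine and that $j_X^*\cL$ is $(q\circ f')$-ample. Since $q$ is a good quotient, it is affine, so $X' = \rProj_W q_* j_Z^*\cA(\cL)$, as noted before \cref{definition reductive GIT quotients}. The $G$-action on $X'$ is fibrewise over $W$: it covers the $H$-action on $Z'$ via $\varphi$, and $q$ is $H$-invariant, so $q\circ f'$ is $G$-invariant.

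Next, by \cref{lem affine cone equivariant}, I identify $X\gitq^f_{q,\cL}G$ with $X'\gitq^{q\circ f'}_{\cL}G = \rProj_W (q_*j_Z^*\cA(\cL))^G$. That proposition also gives $X^{(s)s}(f;q,\cL) = (X')^{(s)s}(q\circ f';\cL)$. This equality is the one substantive point, and I would verify it as in \cref{prop equiv twisted affine}. If $U \subseteq Z'$ is open and $H$-orbitwise closed, then it is $q$-saturated, so $V := q(U)$ is open and $(f_*\cL^n)(U)^G = ((q\circ f')_*\cL^n)(V)^G$. Conversely, any open $V\subseteq W$ has preimage $q^{-1}(V)$, which is open and $H$-orbitwise closed, because the fibres of $q$ contain the $H$-orbit closures. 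Hence the two semistability conditions, each requiring a non-vanishing invariant section, match exactly, and likewise the stability conditions, since the preimage $f^{-1}(U)_\sigma$ is the same set in both descriptions.

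With these identifications, parts \eqref{thm proj equivariant qnts part i}, \eqref{thm proj equivariant qnts part ii} and \eqref{thm proj equivariant qnts part iv} follow directly from \cref{prop proj fibrewise qnts}(ii), (iii) and (v) applied over the base $W$. Part \eqref{thm proj equivariant qnts part iii} follows from \cref{prop proj fibrewise qnts}(iv). The finite type claim needs the invariant sheaf to be finitely generated over $\cO_W$. This holds because locally over affine $W_\tau \subseteq W$ we have a reductive $G$ acting on the finite type algebra of the affine cone, so the invariants are finitely generated by reductivity of $G$. Here $W$ is of finite type, being a good quotient of a finite type scheme by a reductive group.

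Part \eqref{thm proj equivariant qnts part v} comes from \cref{prop proj fibrewise qnts}\eqref{item fibrewise poa HM} applied to $X' \to W$. The one point I would check carefully, and which I expect to be the main subtlety, is that the Hilbert--Mumford condition there quantifies over 1-PSs $\lambda$ for which the limit exists in $X'$ (the total space of the fibrewise problem), not in $X$. Limits may exist in $X$ but land outside $f^{-1}(Z')$, and those 1-PSs must be excluded. This is why the description is stated over $X'$, exactly as in \cref{HM crit red equiv tw affine wrt quotient of open in Z}. The Hilbert--Mumford weight $\mu^{\cL}(x,\lambda)$ is computed on the fibre of $\cL$ at the limit point, so it is unchanged by restricting to $X'$.
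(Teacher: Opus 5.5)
Your proposal is correct and follows essentially the same route as the paper. The paper's proof likewise uses \cref{lem affine cone equivariant} to identify the equivariant quotient and (semi)stable loci with those of the fibrewise $G$-action on $q\circ f'\colon X'\to W$, where the equality of loci is proved by your saturation argument from \cref{prop equiv twisted affine}, and then reads off all five parts from \cref{prop proj fibrewise qnts}, with the Hilbert--Mumford condition quantifying over limits in $X'=f^{-1}(Z')$ exactly as you note.
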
 
\begin{proof}
Parts \eqref{thm proj equivariant qnts part i} and \eqref{thm proj equivariant qnts part ii} follow by the isomorphisms of relative GIT quotients and equality of (semi)stable loci in \cref{lem affine cone equivariant} together with either \cref{prop proj fibrewise qnts} or \cref{prop equiv twisted affine}. Again using \cref{lem affine cone equivariant}, parts \eqref{thm proj equivariant qnts part iii} - \eqref{thm proj equivariant qnts part v} follow from \cref{prop proj fibrewise qnts}.
\end{proof}

All previous results in this section are special cases of this result: the case of an affine morphism $f \colon X = \rSpec_Z \cA \rightarrow Z$ is covered by taking $X = \rProj_Z \cA[w]$, a fibrewise action is the special case where $H$ is trivial, and we can incorporate twisting by a character $\rho$ by twisting the $G$-equivariant structure on $\cO_X$. However, our proof involved incrementally building on simpler cases.

\begin{remark}[The case when $Z'$ admits a Hilbert--Mumford description]\label{rmk Z' admits HM descr}
One special case is when an $H$-linearisation $\cM$ on $Z$ is used to construct the good quotient $q_{\cM} \colon Z^{ss}(\cM) \rar W = Z \gitq_{\hspace{-2pt}\cM} H$ and $Z'=Z^{ss}(\cM)$ admits a Hilbert--Mumford description (for example, $H$ is a reductive group acting on a projective-over-affine scheme $Z$ and the linearisation $\cM$ is ample). In this case, we have a more explicit (semi)stable locus for the above relative GIT quotient. Namely, a point $x \in X$ lies in $X^{ss}(f;q_{\cM},\cL)$ if and only if the following two statements hold:
\begin{enumerate}
    \item for $z = f(x)$, we have $\mu^{\cM}(\lambda,z)\geq 0$ for all 1-PSs $\lambda_H  \colon \Gm \rar H$ such that $\lim_{t \rightarrow 0} \lambda_H(t) \cdot z$ exists in $Z$ (that is $z \in Z^{ss}(\cM)$);
    \item $\mu^{\cL}(\lambda,x)\geq 0$ for all 1-PSs $\lambda_G  \colon \Gm \rar G$ such that both $x_0=\lim_{t \rightarrow 0} \lambda_G(t) \cdot z$ exists in $X$ and part (1) holds for $z_0:=f(x_0)$ (that is $z_0 \in Z^{ss}(\cM)$).
\end{enumerate}
In this setting, by construction $f$ restricts to a morphism $X^{ss}(f;q_{\cM},\cL) \rightarrow Z^{ss}(\cM)$ but the same is not true for the stable loci. If $Z^{s}(\cM)$ denotes the GIT stable locus, then in general $f$ does \emph{not} restrict to a morphism $X^{s}(f;q_{\cM},\cL) \rightarrow Z^{s}(\cM)$ and in particular, we do not get an induced morphism between the geometric quotients of the stable loci in $X$ and $Z$. Even when $f$ is affine and we take a trivial character $\rho$, this may not be the case.
\end{remark}

\begin{remark}[Variation of the quotient of the base]
For an equivariant action $\varphi \colon G \rightarrow H$ on a morphism $f \colon X = \rSpec_Z \cA \rightarrow Z $, suppose we have two good $H$-quotients $q_i : Z_i \rightarrow W_i$ where $j_i: Z_i \subseteq Z$ are open subsets for $ i= 1,2$. Suppose that we have $Z_1 \subseteq Z_2$, and so there is an induced morphism $\alpha \colon W_1 \rightarrow W_2$. Then there is an inclusion $X_1 := f^{-1}(Z_1) \subseteq X_2 :=f^{-1}(Z_2)$ which, via the universal property of good quotients, induces an inclusion of relative affine GIT quotients
\[ X \gitq^f_{\hspace{-2pt} q_1 \hspace{2pt}} G = \rSpec_{W_1} (q_{1 *} j_1^* \cA)^G \rightarrow X \gitq^f_{\hspace{-2pt} q_2 \hspace{2pt}} G = \rSpec_{W_2} (q_{2*} j_2^* \cA)^G. \]
In general, $X^{ss}(f;q_1,\rho)$ is not contained in $ X^{ss}(f;q_2,\rho)$ as there can be a 1-PS whose limit does not exist in $X_1$, but does exists in $X_2$, as per the following example.
\end{remark}

\begin{example}

Consider the action of $\pi_2 \colon G = \GG_m^2 \rightarrow H=\GG_m$ on $p_2 \colon X=\AA^2 \times \PP^1 \rightarrow Z=\PP^1$ by $(s,t) \cdot (a,[z_1:z_2]) = (sa,[tz_1:t^{-1}z_2])$. For the $H$-action on $Z = \PP^1$, we let $\cL_1  = \cO_{\PP^1}(1)$ be linearised via $\text{diag}(t,t^{-1})$, and $\cL_2$ denote the twisted linearisation given by $\text{diag}(t^2,1)$. The respective reductive GIT quotients give two maps $q_i \colon Z_i \rar \text{pt}$ where $Z_1=\AA^1\setminus \{0\} \subset Z_2= \AA^1$.  For the character $\rho = (1,-1)$ of $\Gm^2$, the semistable locus $X^{ss}(q_2,\rho) = \emptyset$, because taking the limit under $H$ as $t\rar0$ destabilises every point, as the limit always exists over $Z_2$ and we get negative pairing with $\rho$. On the other hand, $X^{ss}(q_1,\rho) =(\AA^2\setminus \{0\}) \times Z_1 \neq \emptyset $ because (for example) any cocharacter such that the limit exists in $Z_1$ must be trivial in the $H$-direction, and must be positive along $\rho$ if the limit is to exist in $X$. Thus the expected naive inclusion of semistable loci fails. 
\end{example}

\begin{remark}[Quotients with non-reductive base group $H$]\label{rmk base group can be nonred} Since by assumption we are given a good $H$-quotient $q \colon Z' \rightarrow W$ and did not concern ourselves with how it might be obtained, we can drop the requirement for $H$ to be reductive (except in \cref{rmk Z' admits HM descr} above, where we use a Hilbert--Mumford criterion for the $H$-action on $Z$). However, the assumption that $G$ is reductive is essential for the sheaves of algebras to be finitely generated. 
\end{remark}

\subsection{Comparison with classical GIT quotients} \label{subsec:comparison}
We now compare the above relative GIT quotients with absolute quotients that can be constructed using classical GIT. That is, given an equivariant action of $\varphi \colon G \to H$ with $G$ and $H$ reductive on a projective-over-affine morphism $f \colon X \to Z$, together with a good $G$-quotient $q \colon  Z \to W$, we are interested in comparing the relative GIT quotient $X \gitq^f_{\hspace{-2pt} q, \cO_X(1) \hspace{2pt}} G$ to the absolute GIT quotient $X \gitq_{\hspace{-2pt} \mathcal{O}_X(1) \hspace{2pt}} G$. Our first observation is that these two quotients need not coincide. The explanation for this is that Mumford's quotient is the relative fibrewise quotient with respect to the structure morphism $\pi\colon X \rar \Spec k$, but we take a relative quotient with respect to $f \colon X\rar Z$ and use a line bundle which is ample relative to $f$, but need not be relatively ample with respect to $\pi$ (i.e.\ it is not ample). The following simple example serves to illustrate this point.

\begin{example} Consider the projective morphism \[ \pi_2  \colon X = \PP^n\times \PP^m \rar \PP^m = Z\] given by projection onto the second factor. Then for a fibrewise action with respect to this morphism, the relative and absolute semistable loci (and hence also the quotients) can fail to coincide. Indeed, the line bundle $\cO_X(1,0) = \pi_1^*\cO_{\PP^n}(1)$ is relatively ample, but none of its global sections have affine nonvanishing loci, so for any group action the absolute semistable locus as defined by Mumford will always be empty. On the other hand, the relative semistable locus with respect to $f\colon X\rar Z$ can be non-empty. For example, if $n=1$ and $\Gm$ acts with weights $+1,-1$ on the sections $x,y$ of $\cO_X(1,0)$, we obtain relative semistable locus \[\AA^1\setminus \{0\}\times \PP^m \subseteq X\] with the good quotient being projection onto the second factor. \end{example}

Nevertheless, the absolute and relative quotients do sometimes coincide as we now explain. 

\begin{proposition}
For an equivariant action of a reductive group homomorphism $\varphi \colon G \to H$ on a morphism $f \colon X \to Z$ and a good $H$-quotient $q \colon Z \to W$, the following statements hold.
\begin{enumerate}
\item Suppose $f$ is affine and $Z$ is affine.
\begin{enumerate}
   \item\label{compare aff} $X\gitq^f_{\hspace{-2pt} q} G \cong X\gitq G := \Spec \cO_X(X)^G$ as $k$-schemes (the relative affine GIT quotient is the absolute affine GIT quotient of the affine scheme $X$);
\item\label{compare tw aff} $X\gitq^f_{\hspace{-2pt}  q,\rho} G \cong X\gitq_{\hspace{-2pt} \rho} G := \Proj \cO_X(X)^{G,\rho}$ as $k$-schemes (the relative $\rho$-twisted affine GIT quotient is the absolute $\rho$-twisted affine GIT quotient of $X$);
\end{enumerate}
\item\label{compare proj} Suppose $H$ is trivial, so we have a fibrewise $G$-action on $f$. If $f$ is projective and $Z$ is projective and $\cL$ is $f$-ample, then $X\gitq^f_{\hspace{-2pt} \cL} G \cong X/\!/_{\hspace{-2pt} \cL'}G$ where $\cL' := \cL \otimes f^*\cL_Z^N$ for any ample line bundle $\cL_Z$ on $Z$ and $N \gg 0$ so that $\cL'$ is ample.
\end{enumerate}
\end{proposition}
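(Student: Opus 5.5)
For (1), both comparisons come down to identifying the global sections of the invariant sheaves on $W$ with the absolute invariant rings. Since $Z$ is affine and $q$ is a good quotient, $W$ is affine; concretely $W = \Spec \cO_Z(Z)^H$. As $f$ and $q$ are affine, $q\circ f$ is an affine morphism between affine schemes. Hence $X\gitq^f_{\hspace{-2pt} q} G = \rSpec_W (q_*\cA)^G$ by \cref{prop equiv twisted affine}, with $\cA = f_*\cO_X$. Taking global sections over the affine base $W$ gives
\[ \Gamma(W,(q_*\cA)^G) = \Gamma(W,q_*\cA)^G = \cO_X(X)^G. \]
Here the $G$-action on $\Gamma(W,q_*\cA) = \cO_X((q\circ f)^{-1}(W)) = \cO_X(X)$ is the given one. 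Since the relative spectrum over an affine base of a quasi-coherent algebra is the spectrum of its global sections, (a) follows.

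For (b) the same argument applies degree by degree to the graded sheaf $(q_*\cA)^{G,\rho}$. We get $\Gamma(W,(q_*\cA)^{G,\rho}) = \bigoplus_n \cO_X(X)^G_{\rho^n} = \cO_X(X)^{G,\rho}$. We then use that $\rProj_W$ of a quasi-coherent graded algebra over an affine $W$ is $\Proj$ of its global sections. The small point to check is that invariants commute with taking global sections. This holds because the sheaf of invariants is defined sectionwise, and because $(q_*\cA)^G$ is quasi-coherent: on an affine $V\subseteq W$ it is the kernel of the co-action map minus the trivial map, and localisation at an element of $\cO_W(V)$, which is $G$-invariant, commutes with taking kernels.

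For (2) I would compare the two graded algebras and use the fact that $\Proj$ is unchanged by Veronese subalgebras and by twisting with pullbacks of ample bundles from the base, up to suitable truncation. Put $\cA = \cA(\cL)$, so $X = \rProj_Z \cA$ and $X\gitq^f_{\hspace{-2pt}\cL} G = \rProj_Z \cA^G$. Here $\cA^G$ is of finite type over $\cO_Z$ by \cref{prop proj fibrewise qnts}, and $\rProj_Z\cA^G$ is projective over $Z$, hence projective over $k$. The key step is to show that for $N$ large the following three $k$-algebras agree up to truncation:
\[ \bigoplus_n H^0(X,\cL'^{n})^G, \qquad \bigoplus_n H^0(Z, (f_*\cL^{n})^G\otimes \cL_Z^{Nn}), \qquad \bigoplus_n H^0\big(\rProj_Z\cA^G, \cO(n)\otimes \overline{f}^*\cL_Z^{Nn}\big). \]
The first two agree by the projection formula, since $f_*(\cL'^n) = f_*\cL^n\otimes \cL_Z^{Nn}$, and because $G$ acts fibrewise, so $G$-invariants commute with tensoring by a bundle from $Z$. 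The third agrees with the second after passing to a Veronese subalgebra $\cA^{G,(d)}$ generated in degree one. This uses the standard fact that for a finitely generated graded $\cO_Z$-algebra $\cB$ with $Y = \rProj_Z\cB$ projective over a projective $Z$, twisting by a sufficiently high power of an ample bundle from $Z$ gives $\bigoplus_n H^0(Z,\cB_n\otimes\cL_Z^{Nn})$, whose $\Proj$ is $Y$. This is the relative Serre vanishing and global generation argument, which also shows that $\cO_Y(1)\otimes\overline{f}^*\cL_Z^N$ is ample. Hence $X\gitq_{\hspace{-2pt}\cL'} G = \Proj \bigoplus_n H^0(X,\cL'^n)^G \cong \rProj_Z \cA^G$.

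The main obstacle is choosing a single $N$ that works uniformly in $n$. Global generation of $\cB_n\otimes\cL_Z^{Nn}$ is needed only for the finitely many generators' degrees, but one must check that multiplication surjects in high degrees. I would first replace $\cA^G$ by a Veronese subalgebra generated in degree $1$ with $\cB_1$ coherent. Then $\cB = $ a quotient of $\Sym\cB_1$, which embeds $Y$ into $\PP(\cB_1)$. Choose $N$ with $\cB_1\otimes\cL_Z^N$ globally generated, so that $\cO(1)\otimes\overline{f}^*\cL_Z^N$ is ample on $Y$, and the isomorphism of $\Proj$s follows from the usual section-ring description of a projective scheme with an ample bundle. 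Finally, independence of the choice of $N \gg 0$ follows from the same identification.
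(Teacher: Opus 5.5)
Your proposal is correct and follows the paper's proof in substance. Part (1) goes by identifying global sections of the invariant sheaves over the affine quotient $W$. Part (2) goes via the projection-formula identity $(f_*\cL'^{n})^G \cong (f_*\cL^n)^G\otimes\cL_Z^{nN}$, and your section-ring argument for the ample bundle $\cO_Y(1)\otimes\overline{f}^*\cL_Z^N$ on $Y=\rProj_Z\cA^G$ justifies the step $\rProj_Z\cA(\cL')^G=\Proj\,\cA(\cL')^G(Z)$, which the paper asserts without proof.

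One small slip: global generation of $\cB_1\otimes\cL_Z^N$ does not by itself make $\cO_Y(1)\otimes\overline{f}^*\cL_Z^N$ ample. For example, take $\cB_n=\cL_Z^{-Nn}$, so $Y=Z$ and this bundle is trivial. It does become ample after replacing $N$ by $N+1$, or directly for $N\gg0$ by the standard lemma that a relatively ample bundle twisted by a high power of the pullback of an ample bundle is ample.
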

\begin{proof}
For \eqref{compare aff}, we can write $X =\rSpec_Z \cA = \Spec A$ where $\cA = f_*\cO_X$ and $A = \cA(X) = \cO_X(X)$. By \cref{prop equiv twisted affine} and \cref{thm relative affine GIT}, we have 
\[ X\gitq^f_{\hspace{-2pt} q} G := \rProj_W q_*\cA^{G,\rho = 0} = \rSpec_W q_* \cA^G = \Spec A^G = X\gitq G. \]
The proof of \eqref{compare tw aff} follows exactly in the same way. 

For \eqref{compare proj}, it suffices to prove $X\gitq^f_{\hspace{-2pt} \cL} G \simeq X\gitq^f_{\hspace{-2pt} \cL'} G$ where $\cL' := \cL \otimes f^*\cL_Z^N$ for any ample line bundle $\cL_Z$ on $Z$ (that this GIT quotient is independent of perturbations using line bundles pulled back from the base $Z$), as then we have 
\[X\gitq^f_{\hspace{-2pt} \cL'} G = \rProj_Z \cA(\cL')^G = \Proj \cA(\cL')^G(Z) = \Proj \oplus_{n \geq 0} H^0(X,\cL')^G = X/\!/_{\hspace{-2pt} \cL'}G.\]
To prove the claimed independence, we note that $f_*((\cL')^{n}) = (f_*\cL^{n}) \otimes \cL_Z^{nN}$ and then $f_*((\cL')^{n})^G \cong f_*(\cL^n)^G \otimes \cL_Z^{nN}$ as the action on $\cL_Z^{nN}$ is trivial. By working over an open subset $U \subseteq Z$ over which $\cL_Z$ is trivial, we can locally construct the claimed isomorphism $X\gitq^f_{\hspace{-2pt} \cL} G \simeq X\gitq^f_{\hspace{-2pt} \cL'} G$.
\end{proof}

Note that in the third case, we restricted to fibrewise actions, as if there was a non-trivial  $H$-action on the projective scheme $Z$, then we would in general only be able to construct a good quotient of an open subset $Z' \subseteq Z$, and $X' :=f^{-1}(Z')$ would no longer be projective.

\section{Relative quotients for fibrewise unipotent actions} \label{sec:rel quotients for unipotent actions}

From now on, we assume $k$ is a field of characteristic zero. In this section, we consider fibrewise unipotent actions on affine morphisms $f \colon X \rightarrow Z$ and use local slices to construct relative quotients  over an open locus $Z^\circ \subset Z$ where the $U$-stabilisers are well-behaved, provided the action is graded by a copy of $\GG_m$; this condition is crucial to produce slices. We begin with $\GG_a$-actions in $\S$\ref{sec Ga slice} and $\S$\ref{subsec:quotientsbygradedga}, before proceeding to unipotent actions in $\S$\ref{subsubsec:graded unipotent quotients}.

\subsection{$\GG_a$-quotients via slices}\label{sec Ga slice}

In the absolute case, given a $\GG_a$-action on an affine scheme $X = \Spec A$ and its corresponding locally nilpotent derivation $D$, an important concept is that of a slice: a function $s \in A$ is a \emph{slice} for $D$ if $D(s)=1$. If there exists a slice $s$, then the invariants $A^{\GG_a}$ are finitely generated and the map $X \to \Spec A^{\GG_a}$ induced from the inclusion  $A^{\GG_a} \to A$ is a trivial $\GG_a$-bundle. In fact, the slice determines a Reynolds operator $R_s \colon A \rightarrow A^{\GG_a}$, but unlike in the reductive case this is not unique and any two slices differ by a $\GG_a$-invariant function. The zero locus $S \subseteq X$ of a slice $s$ is then a geometric slice for the action of $\GG_a$ on $X$, in the sense that $X \cong S \times \GG_a$.  

Most of this can be translated to the setting of a fibrewise $\Ga$-action on an affine morphism $f\colon X \rar Z$. Recall from Proposition \ref{corresp} that such an action is equivalent to the data of an $\cO_Z$-linear locally nilpotent derivation $D \colon \cA \rar \cA$ of the sheaf of algebras $\cA = f_*\cA$. However, rather than insisting we have a global slice, we may instead have local slices on different open subsets in $Z$. This results in a $\GG_a$-quotient which is locally trivial over $Z$.

\begin{definition}[Slice locus in $X$] \label{slicelocus} Let $f\colon X = \rSpec_Z \cA \rar Z$ be an affine morphism. Given a locally nilpotent derivation $D\colon \cA \rightarrow \cA$ relative to $Z$ and an open set $U \subseteq Z$, an element $s \in \cA(U)$ is a \emph{slice over $U$} if $D(s) = 1$. The \emph{slice locus} in $Z$ is \[Z_{\sli} := \bigcup_{\exists \text{ slice over }U}\hspace{-0.5cm} U  \hspace{ 0.5cm}\subseteq Z.\] The \emph{slice locus} in $X$ is $X_{\sli} := f^{-1}(Z_{\sli}).$
\end{definition} 

Note that $f^{-1}(Z_{\sli}) = \rSpec_{Z_{\sli}} \iota^{\ast} \mathcal{A}$ where $\iota\colon Z_{\sli} \to Z$ denotes the natural open immersion, by the base-change properties of the relative spectrum \cite[01LX]{stacks-project}.

\begin{remark}[Comparison with local slices in the classical setting] For a $\Ga$-action on an affine scheme $X = \Spec A$, with corresponding locally nilpotent derivation $D\colon A\rightarrow A$, a \emph{pre-slice} (also known as a local slice) is an element $s \in \operatorname{ker} D^2 \setminus \operatorname{ker} D$ (see \cite[$\S$1.1]{Freudenburg2017}). If $A$ is integral and $s$ is a pre-slice with $t:=D(s)$, then one can show that $st^{-1}$ is a slice on the open set $U = X_t$ where $t$ is invertible. 

\end{remark}

\begin{lemma}\label{lemma global slices for add gp give Reynolds operator}
If a fibrewise $\GG_a$-action on an affine morphism $f \colon X = \rSpec_Z \cA \rightarrow Z$ has a global slice $ s\in \cA(Z)$, then there is a relative Reynolds operator $R_s \colon \cA \rightarrow \cA^{\GG_a}$. In particular $\cA^{\GG_a}$ is of finite type. Moreover $\cA \cong \cA^{\GG_a}[s]$ and $X = \rSpec_Z \cA \rightarrow X/\GG_a:= \rSpec_Z \cA^{\GG_a}$ is a trivial $\GG_a$-bundle.
\end{lemma}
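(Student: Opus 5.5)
The plan is to transport the classical construction of the Reynolds operator from a slice (as in \cite[$\S$1.5]{Freudenburg2017}) to the sheaf setting. Since $s \in \cA(Z)$ is a global section, we can define the operator directly on every open set without any gluing ambiguity. For $U \subseteq Z$ open and $a \in \cA(U)$, set
\[ R_s(a) := \sum_{i \geq 0} \frac{(-1)^i}{i!} D^i(a)\, s^i , \]
where $s$ also denotes its restriction to $U$. This sum is finite because $D$ is locally nilpotent, and it makes sense because $k$ has characteristic zero. Equivalently, $R_s$ is given by composing the co-action $\sigma^*(a) = e^{wD}(a)$ of \cref{corresp} with the substitution $w \mapsto -s$. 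Geometrically, this evaluates a function along the $\GG_a$-orbit at the point where $s = 0$.

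The first step is to check that $R_s$ is a well-defined morphism of sheaves. This follows because $D$ commutes with restriction and $s$ is global. The operator is $\cO_Z$-linear because $D$ is $\cO_Z$-linear. Next we verify the key identities, each by a direct Leibniz-rule computation:
\begin{enumerate}[(a)]
\item $D(R_s(a)) = 0$. Differentiating termwise with $D(s)=1$ gives a telescoping sum.
\item $R_s(a) = a$ whenever $D(a) = 0$.
\item $R_s$ is an $\cO_Z$-algebra homomorphism. This holds because $e^{wD}$ is multiplicative and substitution $w \mapsto -s$ is a ring map.
\end{enumerate}
In particular, (c) gives $\cA^{\GG_a}$-linearity. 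Together, (a) and (b) show that $R_s$ is a projection onto $\cA^{\GG_a} = \ker D$, where the identification $\cA^{\GG_a} = \ker D$ is \cref{corresp}. Hence $R_s$ is a relative Reynolds operator.

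For the structural claims, we show that the natural map of sheaves of $\cA^{\GG_a}$-algebras $\cA^{\GG_a}[T] \to \cA$, $T \mapsto s$, is an isomorphism.
\begin{itemize}
\item \emph{Surjectivity.} We use the Taylor-type identity $a = \sum_i \frac{1}{i!} R_s(D^i a)\, s^i$. It can be proved by induction on the nilpotency degree of $a$, or by substituting $w = s$ into $e^{wD}$ applied to $R_s(a)$.
\item \emph{Injectivity.} If $\sum_{i \leq n} b_i s^i = 0$ with $D(b_i) = 0$, then applying $D^n$ gives $n!\, b_n = 0$, so $b_n = 0$. Induction then kills all the coefficients.
\end{itemize}
Both arguments are local computations and glue automatically. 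The conclusion $\cA \cong \cA^{\GG_a}[s]$ gives $X \cong \rSpec_Z \cA^{\GG_a} \times \GG_a$. The $\GG_a$-action corresponds to $s \mapsto s + w$, so it is translation on the second factor, and $X \to X/\GG_a$ is a trivial $\GG_a$-bundle.

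Finite type of $\cA^{\GG_a}$ follows in either of two ways. One is to invoke the earlier proposition that a relative Reynolds operator forces $\rSpec_Z \cA^{\GG_a} \to Z$ to be of finite type. The more direct route is that $\cA^{\GG_a} \cong \cA/(s)$ via $R_s$: $R_s$ is a surjective algebra map with kernel $s\cA$ by the polynomial description. This quotient is a quotient of a finite type algebra, hence of finite type.

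The main obstacle is the careful verification of the identity $a = \sum_i \frac{1}{i!} R_s(D^i a) s^i$ and of multiplicativity of $R_s$. I would handle both cleanly via the exponential formalism: work with $\sigma^*(a) = \sum_i \frac{w^i}{i!} D^i(a) \in \cA[w]$ and the algebra maps $w \mapsto \pm s$. This avoids messy combinatorics.
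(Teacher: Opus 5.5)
Your proposal is correct and is essentially the paper's proof: the same operator $R_s(a)=\sum_{i\geq 0}\frac{(-s)^i}{i!}D^i(a)$, multiplicativity via $\exp D$, finite type because $R_s$ is a surjective algebra homomorphism (your $\cA^{\GG_a}\cong\cA/s\cA$ is the same observation), and the Taylor identity $a=\sum_i\frac{s^i}{i!}R_s(D^i a)$ for $\cA\cong\cA^{\GG_a}[s]$, with your injectivity check supplying the step the paper defers to the absolute case. The only slip is your second suggested proof of that identity: $e^{wD}$ fixes the invariant $R_s(a)$, so substituting $w=s$ there gives nothing; expanding directly instead gives $\sum_i\frac{s^i}{i!}R_s(D^i a)=\sum_{n}\frac{D^n a}{n!}(s-s)^n=a$, and your induction on the nilpotency degree also works.
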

\begin{proof}
Let  $D \colon \cA \rightarrow \cA$ denote the corresponding locally nilpotent derivation relative to $Z$. We claim that there is a relative Reynolds operator $R_s \colon \cA \rightarrow \cA$ given by $a \mapsto \sum_{n \geq 0}  \frac{(-s_U)^n}{n!} D_U^n(a)$ for $a \in \cA(U)$. Indeed this restricts to the identity on $\cA^{\GG_a}$ and since $D \circ R_s = 0$, the image of $R_s$ is equal to $\ker(D) =\cA^{\GG_a}$. Furthermore, since $D$ is $\cA^{\GG_a}$-linear, we see that $R_s$ is $\cA^{\GG_a}$-linear and thus also $\cO_Z$-linear as for fibrewise actions the homomorphism $\cO_Z \rightarrow \cA$ has image in $\cA^{\GG_a}$. In fact, $R_s$ is an algebra homomorphism, as $\exp D \colon  \cA \rightarrow \cA$ is an algebra homomorphism. Since $R_s$ is an algebra homomorphism, we can take finitely many (local) generators for $\cA$ and their images under $R_s$ give finitely many (local) generators for $\cA^{\GG_a}$, which proves this $\cO_Z$-algebra is of finite type.

The final claim that $\cA \cong \cA^{\GG_a}[s]$ follows exactly as in the absolute case: we can write any $f \in \cA$ as a polynomial $\sum_{n\geq0} \frac{s^n}{n!}R_s(D^n(f))$ with coefficients in $\cA^{\GG_a}$. 
\end{proof}

\begin{remark}
There are two differences between relative Reynolds operators for fibrewise actions of linearly reductive groups and for fibrewise $\GG_a$-actions with a global slice:
\begin{enumerate}
    \item While Reynolds operators for actions of linearly reductive groups are unique, they are \emph{not unique} for additive group actions. In the setting of \cref{lemma global slices for add gp give Reynolds operator} if we have another global slice $t$, then $s-t = v \in \cA^{\GG_a}(Z)$ and the Reynolds operators $R_s$ and $R_t$ do not coincide, as $R_s(s) = 0$ and $R_t(s) = R_t(t+v) = v$. This non-uniqueness of Reynolds operators for additive group actions means that it is not possible to glue Reynolds operators when we have local slices $s_i$ over open sets $U_i$ in $Z$ which do not glue. Indeed on the intersections $U_i \cap U_j$ the slices differ by non-trivial invariant functions $b_{ij} \in \cA^{\GG_a}(U_i \cap U_j)$.
    \item Relative Reynolds operators $R_s$ associated to a global slice of a fibrewise $\GG_a$-action is an \emph{algebra} homomorphism. In particular, from knowing $\cA$ is sheaf of $\cO_Z$-algebras of a finite type, by applying $R_s$, we immediately deduce that $\cA^{\GG_a}$ is sheaf of $\cO_Z$-algebras of finite type, without needing to assume that $Z$ itself is of finite type.
\end{enumerate}
\end{remark}

Although one cannot glue Reynolds operators defined by local slices, one can still glue the corresponding quotients as follows.

\begin{proposition} \label{prop slice Ga quotient} 
    Suppose that $\GG_a$ acts fibrewise on an affine morphism $f\colon X \to Z$ and let $\cA_{\sli}$ denote the pullback to $Z_{\sli}$ of $\cA := f_{\ast} \cO_X$. Then $\cA_{\sli}^{\GG_a}$ is a sheaf of $\cO_{Z_{\sli}}$-algebras of finite type, and  the map of finite type affine $Z_{\sli}$-schemes $$ X_{\sli} = \rSpec_{Z_{\sli}} (\cA_{\sli}) \to \rSpec_{Z_{\sli}} ( \cA_{\sli}^{\GG_a}) =: X_{\sli} / \GG_a$$ induced from the inclusion $\cA^{\GG_a} \subseteq \cA$ is a Zariski-locally trivial $\GG_a$-bundle. 
\end{proposition}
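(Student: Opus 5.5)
The plan is to work locally over the open sets $U \subseteq Z_{\sli}$ on which slices exist, apply \cref{lemma global slices for add gp give Reynolds operator} there, and then glue. By definition $Z_{\sli}$ is covered by opens $U_i$ with slices $s_i \in \cA(U_i)$, $D(s_i)=1$. Since any open subset of $U_i$ also carries a slice (restrict $s_i$), we may refine to an affine cover. Restricting $f$ to $f^{-1}(U_i) = \rSpec_{U_i} \cA|_{U_i}$ gives a fibrewise $\GG_a$-action with a global slice $s_i$. By \cref{lemma global slices for add gp give Reynolds operator}, $(\cA|_{U_i})^{\GG_a}$ is then a finite type $\cO_{U_i}$-algebra, $\cA|_{U_i} \cong (\cA|_{U_i})^{\GG_a}[s_i]$, and $f^{-1}(U_i) \to \rSpec_{U_i} (\cA|_{U_i})^{\GG_a}$ is a trivial $\GG_a$-bundle.

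The gluing step uses that the invariant sheaf is defined intrinsically. By \cref{corresp} we have $\cA^{\GG_a} = \ker D$, and $\ker D$ is a subsheaf of $\cA$ whose formation commutes with restriction to opens, since $D$ is a map of sheaves. Hence $(\cA_{\sli})^{\GG_a}|_{U_i} = (\cA|_{U_i})^{\GG_a}$. Finite type is a local property on the base, so $\cA_{\sli}^{\GG_a}$ is a finite type $\cO_{Z_{\sli}}$-algebra. The morphism $X_{\sli} \to X_{\sli}/\GG_a$ is induced by the global inclusion of sheaves $\cA_{\sli}^{\GG_a} \subseteq \cA_{\sli}$, so no gluing of morphisms is needed. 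Its restriction over $U_i$ is the trivial bundle above, because $\rSpec$ commutes with restriction to opens.

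Local triviality must be stated over opens of $X_{\sli}/\GG_a$, not of $Z_{\sli}$. The preimages $V_i$ of $U_i$ in $X_{\sli}/\GG_a$ form an open cover. Over each $V_i$, the isomorphism $\cA|_{U_i} \cong \cA^{\GG_a}|_{U_i}[s_i]$ identifies $f^{-1}(U_i)$ with $V_i \times \GG_a$. This identification is $\GG_a$-equivariant, with translation on the second factor, because $D(s_i)=1$ means the coaction sends $s_i \mapsto s_i + w$ and fixes invariants. On overlaps the transition functions are translations by the invariant functions $s_i - s_j \in \cA^{\GG_a}(U_i\cap U_j)$, which is consistent with a $\GG_a$-torsor. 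Note that the Reynolds operators themselves do not glue, but this is never needed.

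The step most likely to need care is checking that forming $\ker D$ commutes with restriction, and that the local isomorphism $\cA \cong \cA^{\GG_a}[s]$ is an isomorphism of sheaves rather than only of sections over $U_i$. I would handle this by noting that \cref{lemma global slices for add gp give Reynolds operator} applies over every open subset of $U_i$, using the restricted slice.
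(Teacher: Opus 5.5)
Your proposal is correct and follows essentially the same route as the paper: choose an affine cover of $Z_{\sli}$ by opens carrying slices $s_i$, apply the global-slice lemma on each piece to get finite generation of the invariants and a trivial $\GG_a$-bundle locally, and note that on overlaps the trivialisations differ by translation by the invariant functions $s_i - s_j$. The only difference is presentational. The paper phrases the last step as gluing the local trivial bundles via the cocycle $b_{ij} = s_i - s_j$. You instead observe that the morphism is already globally defined by the sheaf inclusion $\cA_{\sli}^{\GG_a} \subseteq \cA_{\sli}$, so only local triviality needs checking and the cocycle condition is automatic.
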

\begin{proof} 
For ease of notation, we shall assume $Z_{\sli} = Z$, as if not we can first base change to $Z_{\sli}$. To show that $\mathcal{A}^{\GG_a}$ is of finite type, it suffices to show for an open affine cover $\{U_i = \Spec B_i\} $ of $Z$ that $B_i = \mathcal{O}_Z(U_i) \rightarrow \mathcal{A}^{\GG_a}(U_i)$ is of finite type. By our assumption that $Z = Z_{\sli}$, we can choose an open cover $\{U_i\}$ and slices $s_i \in \cA(U_i)$. We can further refine this open cover to assume that the open sets $U_i$ are affine and write $A_i = \mathcal{A}(U_i)$. Then $s_i$ is a global slice for the locally nilpotent derivation $D_i \colon A_i \rightarrow A_i$ and $s_i$ defines a Reynolds operator $R_{s_i} \colon A_i \rightarrow A_i^{\GG_a}$ and  the invariant algebra $A_i^{\GG_a}$ is finitely generated by Lemma \ref{lemma global slices for add gp give Reynolds operator}. This proves that $\cA^{\GG_a}$ is of finite type. 

On each open subset $V_i := \Spec A_i \subseteq X$, we can write $A_i \cong A_i^{\GG_a}[s_i]$, and $V_i = \Spec A_i \rightarrow W_i:= \Spec A_i^{\GG_a}$ is a trivial $\GG_a$-bundle by Lemma \ref{lemma global slices for add gp give Reynolds operator}. The affine schemes $W_i$ form an open cover of $Y = \rSpec_Z \cA^{\GG_a}$ and we claim that the trivial bundles $V_i \rightarrow W_i$ glue to a Zariski-locally trivial $\GG_a$-bundle $X \rightarrow Y$. Indeed writing $U_{ij}:=U_i \cap U_j = \Spec A_{ij}$, we have $s_i|_{U_{ij}} - s_j|_{U_{ij}} = b_{ij} \in A_{ij}^{\GG_a}$. On the triple intersections note that $b_{ij} + b_{jk} = b_{ik}$. Thus the functions $b_{ij}$ determine transition functions $W_{ij} = \Spec A_{ij}^{\GG_a} \rightarrow \AA^{1} \simeq \GG_a$ satisfying the necessary cocycle condition to glue these trivial $\GG_a$-bundles.
\end{proof} 

\cref{prop slice Ga quotient} shows that we can construct a quotient of a fibrewise $\GG_a$-action on an affine morphism $f \colon X \to Z$ provided we restrict to the slice locus in $X$. The difficulty lies in determining the slice locus, which is usually impossible in practice. In $\S$\ref{subsec:quotientsbygradedga} we will see that under two additional assumptions the slice locus can be described explicitly.

\subsection{$\GG_a$-quotients for graded $\GG_a$-actions}\label{subsec:quotientsbygradedga}

We can explicitly describe the slice locus when there is a graded fibrewise action of $\GG_a \rtimes \GG_m$ on an affine morphism $f \colon  X = \rSpec_Z \cA \rar Z$. Recall that in this situation the $\GG_m$-action induces a $\ZZ$-grading on $\cA$ that is concentrated in non-positive degrees $\cA = \oplus_{i \leq 0} \cA_i$ with fixed subscheme $X^{\GG_m} = \rSpec_Z \cA_0 = Z$. Moreover, the $\Gm$ acts by conjugation on $\Lie \Ga$ with a single positive weight, say $\ell >0$. The corresponding locally nilpotent derivation $D : \cA \rightarrow \cA$ shifts the $\GG_m$-graded pieces by $l$ so $D(\cA_i) \subseteq \cA_{i+ l}$.

\begin{definition}
    For a graded fibrewise action of  $\GG_{a} \rtimes_l \GG_{m}$ on a morphism $f \colon X \rightarrow Z$, we denote the locus where the $\GG_a$-stabilisers are zero-dimensional in $Z$ by 
    \[ Z_0 :=  \{ z \in Z : \dim \Stab_{\GG_a}(z) = 0 \}\]
    and let $X_0 := f^{-1}(Z_0)$. 
\end{definition}

If the $\GG_m$-action on $f \colon X \rightarrow Y$ is graded, then $ f^{-1}(Z_0) \subseteq \{ x \in X : \dim \Stab_{\GG_a}(x) = 0 \} $. Indeed, one can show that $\dim \Stab_{\GG_a}(x) \leq \dim \Stab_{\GG_a}(f(x))$ due to the fact that $f(x)$ lies in the $\GG_{a} \rtimes_l \GG_{m}$-orbit closure of $x$ and also use that $f(x)$ is $\GG_m$-fixed. The following result is a relative version of \cite[Proposition 7.3]{Berczi2016}.

\begin{proposition}[Quotients by graded $\GG_a$-actions on affine morphisms]\label{prop quot graded Ga}
Let $f \colon X \rightarrow Z$ be an affine morphism of schemes with a graded fibrewise action of $\GG_{a} \rtimes_l \GG_{m}$. Then 
\[ X_{\sli} = f^{-1}(Z_0).\]
If moreover all points in $X^{\GG_m} = Z$ have trivial $\GG_a$-stabilisers, then $X_{\sli} = X$ and the morphism of finite type affine $Z$-schemes $X \rightarrow X/\GG_{a} = \rSpec_{Z}(\cA^{\GG_a})$ is a Zariski-locally trivial $\GG_a$-bundle.
\end{proposition}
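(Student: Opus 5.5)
Since $X_{\sli} = f^{-1}(Z_{\sli})$ by definition, the first claim reduces to the equality $Z_{\sli} = Z_0$ of open subsets of $Z$. The question is local on $Z$, so we may fix an open affine $U = \Spec B \subseteq Z$ with $A := \cA(U) = \bigoplus_{i \leq 0} A_i$, where $A_0 = B$ because $X^{\GG_m} = Z$. The derivation $D$ satisfies $D(A_i) \subseteq A_{i+\ell}$ and is $B$-linear, and the zero section $Z \hookrightarrow X$ is cut out by the ideal $A_{<0} := \bigoplus_{i<0} A_i$. For $z \in U$, the $\GG_a$-action on the fixed point $z \in X^{\GG_m}$ is encoded by $D$ modulo $A_{<0}$. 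The only part of $D$ landing in degree $0$ is the map $D_{-\ell} := D|_{A_{-\ell}} \colon A_{-\ell} \to A_0 = B$. The $\GG_a$-orbit of $z$ is zero-dimensional exactly when $D(A)$ lies in the maximal ideal of $z$ after restricting to $Z$, that is, when the ideal $D(A_{-\ell}) \subseteq B$ is contained in $\fm_z$.

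With this set up, I would prove $Z_{\sli} \supseteq Z_0$ as follows. Take $z \in Z_0$. Then there is $a \in A_{-\ell}$ with $b := D(a) \in B$ and $b(z) \neq 0$. On $U_b = \Spec B_b$, the element $s := a/b \in \cA(U_b)$ satisfies $D(s) = D(a)/b = 1$, because $b \in \cA^{\GG_a}$ (indeed $D(b) \in A_\ell = 0$ since $\ell > 0$). So $s$ is a slice over the neighbourhood $U_b$ of $z$.

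For the reverse inclusion $Z_{\sli} \subseteq Z_0$, suppose $s \in \cA(V)$ is a slice over an affine $V \ni z$. Decompose $s = \sum_i s_i$ into homogeneous pieces. Since $D(s) = 1$ has degree $0$ and $D$ shifts degrees by $\ell$, comparing degree-$0$ components gives $D(s_{-\ell}) = 1$. Hence the ideal $D(A_{-\ell})$ contains $1$, so $D(A_{-\ell}) \not\subseteq \fm_z$, and $z \in Z_0$. The main obstacle is the step relating the orbit dimension at a $\GG_m$-fixed point $z$ precisely to the ideal $D(A_{-\ell})$. I would handle it by restricting the co-action $a \mapsto \sum_n w^n D^n(a)/n!$ to the fibre over $z$, which is the point $z \in Z \subseteq X$ where all of $A_{<0}$ vanishes. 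The only surviving term is $w \cdot D(a)(z)$ for $a \in A_{-\ell}$, and the pieces with $n \geq 2$ land in degree $(n-1)\ell > 0$ plus the degree of $a$; I would check carefully that these pieces vanish or lie in $A_{<0}$. From this, the orbit map $\GG_a \to X$ at $z$ is nonconstant if and only if some $D(a)(z) \neq 0$, and in that case the stabiliser is finite, hence trivial in characteristic zero.

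For the second statement, the hypothesis says exactly that $Z_0 = Z$. Then $X_{\sli} = X$ by the first part, and \cref{prop slice Ga quotient} gives that $\cA^{\GG_a}$ is of finite type and that $X \to \rSpec_Z \cA^{\GG_a}$ is a Zariski-locally trivial $\GG_a$-bundle.
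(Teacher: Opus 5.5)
Your overall structure is sound: the reduction to $Z_{\sli} = Z_0$, the local slices $a/D(a)$ on $U_{D(a)}$, the reverse inclusion via the degree-$0$ part of $D(s)=1$, and the final appeal to \cref{prop slice Ga quotient} are all fine. The gap is in the step you flag yourself, namely the forward direction of the criterion $z \in Z_0 \iff D(A_{-\ell}) \not\subseteq \fm_z$. This direction is exactly what you need for the substantive inclusion $Z_0 \subseteq Z_{\sli}$, and your proposed justification does not establish it.

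Restricting the co-action to the point $z$ of the zero section kills everything except degree-$0$ components. So a homogeneous $a \in A_{-m\ell}$ contributes the term $\frac{w^m}{m!}(D^m a)(z)$. Your claim that only $w\cdot D(a)(z)$ survives is true for $a \in A_{-\ell}$, but for $m \geq 2$ the contribution lies in $A_0 = B$ and need not vanish. For instance, take $X = \AA^2 = \Spec k[x,y] \to Z = \Spec k$ with $\deg x = -\ell$, $\deg y = -2\ell$, $D(y) = x$ and $D(x) = 1$. The orbit map at the origin is then $w \mapsto (w, w^2/2)$. So the check that the pieces with $n \geq 2$ ``vanish or lie in $A_{<0}$'' fails. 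What you have actually controlled is only the converse implication: $D(a)(z) \neq 0$ for some $a \in A_{-\ell}$ forces a non-constant orbit. That gives $Z_{\sli} \subseteq Z_0$, but not $Z_0 \subseteq Z_{\sli}$.

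The missing observation is one line. If $a \in A_{-m\ell}$ with $m \geq 1$, then $D^{m-1}(a) \in A_{-\ell}$, so $D^m(a) = D(D^{m-1}(a)) \in D(A_{-\ell})$. Hence if $D(A_{-\ell}) \subseteq \fm_z$, every non-constant term of the restricted co-action vanishes and $z$ is $\GG_a$-fixed. This is also what makes your set-up sentence correct: a point is fixed iff $D(A)$ vanishes there, since $D^n(A) \subseteq D(A)$ for $n \geq 1$, and at a point of the zero section only the degree-$0$ part $D(A_{-\ell})$ of $D(A)$ is seen.

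With this patch your argument is complete, and it differs from the paper's. The paper takes $Z_{\sli} \subseteq Z_0$ from the local triviality in \cref{prop slice Ga quotient}. For the converse, after base change to $Z_0$, it shows $D(\cA_{-\ell}) = \cA_0$ by an ideal-theoretic argument with the $\GG_a \rtimes \GG_m$-stable ideal $\cA_{<0} \oplus D(\cA_{-\ell})$, whose zero locus would consist of $\GG_a$-fixed points of the zero section. Your pointwise route is more explicit: it identifies $Z \setminus Z_0$ locally as $V(D(A_{-\ell}))$, which shows directly that $Z_0$ is open, and it produces the slices concretely. The paper's argument, by contrast, never has to compute orbit maps.
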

\begin{proof}
By \cref{prop slice Ga quotient}, we have $Z_{\sli} \subseteq Z_{0}$ and so $X_{\sli} \subseteq f^{-1}(Z_{0})$. To show the opposite containment, by replacing $X \rightarrow Z$ by $f^{-1}(Z_{0}) \rightarrow Z_{0}$, we may as well assume that $X = X_0 :=f^{-1}(Z_{0})$ and $Z = Z_0$ and in this case prove that $X = X_{\sli}$.

The idea is to show that $D : \cA_{-l} \rightarrow \cA_0$ is surjective, so that in particular $1 \in \cA_0(Z) = \cO_Z(Z)$ is jointly hit by local sections $s_i \in \cA_{-l}(U_i)$ with $D(s_i) = 1$ where $U_i$ is an open cover of $Z$; thus each $s_i$ gives a slice over $U_i$. To prove this, consider
\[ \cI:= \oplus_{i < 0} \cA_i \oplus D(\cA_{-l}), \]
which one can check is an ideal of $\cA$. Moreover, this ideal is preserved by the $\GG_m$-action as the action on $D(\cA_{-l}) \subseteq \cA_0$ is trivial and also preserved by the additive action as $D(\cA_i) \subseteq \cA_{i+l}$ and $\cA_i = 0$ for $ i > 0$. If $\cI \subseteq \cA$ is a proper ideal, then it must be contained in a maximal ideal. Any maximal ideal $\fm_x$ of $\cA$ containing $\cI$ is given by a closed point $x \colon \Spec k \rightarrow X$, where $\fm_x$ is the kernel of the composition of  $f^* \colon \cA \rightarrow \cO_Z$ with $\cO_Z \twoheadrightarrow \cO_Z/\fm_z$ where $z := f \circ x \colon \Spec k \rightarrow Z$ denotes the corresponding closed point in $Z$. Moreover, as $\cI$ is preserved by the action, the ideal $\fm_x$ must also be preserved by the action, which means that $x \in X^{\GG_m} = Z$ is $\GG_a$-fixed and contradicts the assumption that $Z= Z_0$.

Finally, if $X^{\GG_m} = Z$, then $Z = Z_0$ and $X_{\sli}=X$, and the remaining claims follow from \cref{prop slice Ga quotient}.
\end{proof}

\subsection{Graded unipotent quotients} \label{subsubsec:graded unipotent quotients}

In this section we first generalise the above result to higher-dimensional unipotent groups by quotienting in stages by additive group actions using the results above. We then explain how the stabiliser assumptions can be weakened to obtain a more general result following the ideas of Qiao \cite{Qiao2022, Qiao2025}. 

We first prove a relative version of \cite[Proposition 7.4]{Berczi2016}.

    \begin{proposition}[Quotients by graded unipotent group actions]\label{thm:quotientsbyunip}
Let $U$ be a unipotent group and consider a graded fibrewise action of $\hU = U \rtimes \GG_{m}$ on an affine morphism $f \colon X = \rSpec_Z \cA \rightarrow Z$. If all points in $X^{\GG_m} = Z$ have trivial $U$-stabilisers, then there is a morphism of finite type affine $Z$-schemes $X \rightarrow X/U = \rSpec_{Z}\cA^U$ which is a Zariski-locally trivial $U$-bundle.
    \end{proposition}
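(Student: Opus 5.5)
We argue by induction on $\dim U$, quotienting in stages by copies of $\GG_a$ and applying \cref{prop quot graded Ga} at each stage. Since $\GG_m$ acts on $\Lie U$ with strictly positive weights, there is a $\GG_m$-stable filtration by normal subgroups
\[ \{e\} = U_0 \subseteq U_1 \subseteq \cdots \subseteq U_m = U, \qquad U_{i}/U_{i-1} \cong \GG_a, \]
each normal in $\hU$, with $\GG_m$ acting on each quotient by a positive weight $\ell_i$. For example, take $U_1$ to be a one-dimensional $\GG_m$-stable subgroup of the centre $Z(U)$; it exists because $Z(U)$ is a $\GG_m$-stable vector group with positive weights, so one can pick a weight line. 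Then $U_1 \cong \GG_a$ is normal in $\hU$, and $\hU$ acts on $U_1$ by conjugation through the character of weight $\ell_1>0$. Thus $\GG_a \rtimes_{\ell_1}\GG_m = U_1 \rtimes \GG_m$ acts fibrewise and gradedly on $f$.

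\textbf{First step.} The stabiliser hypothesis for $U$ on $Z = X^{\GG_m}$ implies the one for $U_1$. So \cref{prop quot graded Ga} gives a Zariski-locally trivial $U_1$-bundle $X \to X_1 := X/U_1 = \rSpec_Z \cA^{U_1}$ of finite type affine $Z$-schemes.

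\textbf{Inductive step.} Since $U_1$ is normal in $\hU$, the sheaf $\cA^{U_1}$ inherits a co-action of $\hU/U_1 = (U/U_1)\rtimes \GG_m$ by \cref{equivalence}, and the grading restricts to it. I must check that this residual action on $f_1 \colon X_1 \to Z$ is still graded:
\begin{enumerate}[(a)]
\item $\cA^{U_1}$ is non-positively graded, since it is a graded subsheaf of $\cA$.
\item Its degree-zero part is $(\cA_0)^{U_1} = \cO_Z$. Here $U_1$ acts trivially on $\cA_0 = \cO_Z$ because $Z$ is pointwise fixed by the fibrewise action.
\end{enumerate}
Hence $X_1^{\GG_m} = Z$, and the limits $\lim_{t\to 0}$ exist and equal $f_1$.

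It remains to see that points of $Z \subseteq X_1$ have trivial $U/U_1$-stabilisers. Let $z\in Z$. The section $Z \hookrightarrow X$ (the fixed locus) composed with $X\to X_1$ is the fixed locus in $X_1$, so the fibre of $X\to X_1$ over $z$ is the $U_1$-orbit of $z \in X$. If $uU_1$ fixes $z$ in $X_1$, then $u z = u_1 z$ for some $u_1\in U_1$, so $u_1^{-1}u \in \Stab_U(z)=\{e\}$ and hence $u\in U_1$. By induction, $X_1 \to X_1/(U/U_1) = \rSpec_Z (\cA^{U_1})^{U/U_1} = \rSpec_Z \cA^U$ is a Zariski-locally trivial $U/U_1$-bundle of finite type affine $Z$-schemes.

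\textbf{Composing the stages.} The composite $X \to X/U$ is a tower of Zariski-locally trivial bundles. Local triviality as a $U$-bundle follows by pulling back along local sections: over an open $V\subseteq X/U$ where $X_1|_V \cong V\times U/U_1$, choose a section. Over its image $X|_{V}$ is a trivial $U_1$-bundle, because $X_1\to$ affine pieces can be shrunk and $U_1$-torsors over affines are trivial since $H^1(\text{affine},\GG_a)=0$. This gives a section of $X|_V \to V$, hence $X|_V \cong V\times U$.

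\textbf{Main obstacle.} I expect the main obstacle to be this final gluing, together with the verification that the trivial-stabiliser hypothesis descends to $X_1$. Triviality over affines via vanishing of $H^1(-,\GG_a)$ handles the gluing, provided the opens are taken affine.
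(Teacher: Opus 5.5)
Your proposal is correct and follows essentially the same route as the paper. Both arguments filter $U$ by $\GG_m$-stable subgroups with $\GG_a$ subquotients and apply \cref{prop quot graded Ga} one stage at a time. Both check that each intermediate invariant sheaf is still non-positively graded with degree-zero part $\cO_Z$, and both descend the trivial-stabiliser hypothesis by the same $u_1^{-1}u \in \Stab_U(z)$ argument. There are two minor differences. You fix the filtration explicitly by taking a central weight line first and then inducting on $U/U_1$. You also prove that the tower of bundles is a Zariski-locally trivial $U$-bundle by lifting sections using $H^1(\text{affine},\GG_a)=0$, whereas the paper simply cites \cite[Lemma 3.20]{Berczi2016} for this step.
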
 
    \begin{proof} 
We fix a chain of subgroups $\{ \mathrm{Id} \} =U^{(0)} \subseteq U^{(1)} \subseteq \cdots \subseteq U^{(r)} = U$ such that $\GG_m$ acts on $U_i:=U^{(i)}/U^{(i-1)} \cong \GG_a$ with weight $l_i >0$. We will inductively take quotients one $\GG_a$ at a time by applying \cref{prop quot graded Ga}. Let us write $\cA^{(i)}:= \cA^{U^{(i)}}$ and $X_i:= \rSpec_Z \cA^{(i)}$; then the inclusions  $\cA^{(i)} \subseteq \cA$ and $\cA^{(i)} = (\cA^{(i-1)})^{U_i} \subseteq \cA^{(i-1)}$ induce morphisms of affine $Z$-schemes (a priori not necessarily of finite type) $q_i \colon X \rightarrow X_i$ and $\pi_i \colon X_{i-1} \rightarrow X_i$ such that $q_i = \pi_i \circ q_{i-1}$. Since the action on $f$ is graded, the $\GG_m$-action induces a grading $\cA = \oplus_{j \leq 0} \cA_j$ supported in non-positive degrees with $\cA_0 = \cO_Z$. There is an induced $\GG_m$-action on each $X_i$ coming from the grading induced on $\cA^{(i)} \subseteq \cA$. Thus the action on each $f_i \colon X_i \rightarrow Z$ is also graded, as $\cA^{(i)} = \oplus_{j \leq 0} \cA^{(i)}_j$ and $\cA^{(i)}_0 = \cO_Z$. 

For the base case, we can directly apply \cref{prop quot graded Ga} to the first group $U^{(1)} \cong \GG_a$ to obtain that $\cA^{(1)}$ is of finite type and there is a Zariski-locally trivial $U^{(1)}$-bundle $q_1 \colon X \rightarrow X_1$. Now suppose by induction that we have constructed a Zariski-locally trivial $U^{(i)}$-bundle $q_{i} \colon X \rightarrow X_{i}$. As observed above, the action of $U_{i+1} \rtimes_{l_{i+1}} \GG_m$ on $f_i \colon  X_{i} \rightarrow Z$ is graded and we claim that all points in $X_{i}^{\GG_m} = Z$ have trivial $U_{i+1}$-stabilisers, so that we can apply \cref{prop quot graded Ga} to conclude that $X_{i+1}$ is of finite type over $Z$ and $\pi_{i+1} \colon X_i \rightarrow X_{i+1}$ is a Zariski-locally trivial $U_{i+1}$-bundle. Indeed if $z \in X$ has non-trivial stabiliser for $U_{i+1} = U^{(i+1)}/U^{(i)}$, then there exists $u_{i+1} \in U^{(i+1)} \setminus U^{(i)}$ such that $u_{i+1}U^{(i)}$ fixes $z$ in $X_i = X/U^{(i)}$. This means we have $u_{i+1}z = u_iz$ in $X$ for some $u_i \in U^{(i)}$, and so would give a non-trivial element $u_i^{-1}u_{i+1}$ in the $U$-stabiliser of $z$ in $X$, which contradicts the assumption that $U$-stabilisers in $X^{\GG_m}$ are trivial. Since $q_{i+1} = \pi_{i+1} \circ q_i$, the tower of the $U^{(i)}$-bundle $q_i$ and the $U_{i+1}$-bundle $\pi_{i+1}$ is a $U^{(i+1)}$-bundle (see \cite[Lemma 3.20]{Berczi2016}).
    \end{proof}

    One can alternatively prove the above theorem using \cite[Theorem 3.10]{Greuel1993}, which allows quotienting by abelian unipotent groups that are not necessarily one-dimensional, but on which the grading $\Gm$ acts with a single weight; this approach is employed in \cite{Qiao2022, Qiao2025}.

We can first base change to the locus of points $Z$ with trivial $U$-stabilisers, if it is non-empty.

\begin{corollary}
    For a unipotent group $U$ and a graded fibrewise action of  $U \rtimes \GG_{m}$ on an affine morphism $f \colon X = \rSpec_Z \cA \rightarrow Z$, provided the open subset  
    \[ Z_0 := Z_0(U)= \{ z \in Z : \dim \Stab_{U}(z) = 0 \} \stackrel{i}{\hookrightarrow} Z,\]
    is non-empty, there is a Zariski-locally trivial $U$-bundle $X_0 := f^{-1}(Z_0) \rightarrow X_0/U= \rSpec_{Z_0}i^*\cA^U$.
\end{corollary}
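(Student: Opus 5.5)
The plan is to reduce to \cref{thm:quotientsbyunip} by base change along the open immersion $i \colon Z_0 \hookrightarrow Z$. Set $X_0 := f^{-1}(Z_0) = \rSpec_{Z_0} i^*\cA$, using the base-change properties of the relative spectrum. Let $f_0 \colon X_0 \to Z_0$ be the restricted morphism.

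First I would check that $Z_0$ is open and $\hU$-invariant, so that the graded fibrewise action of $\hU = U \rtimes \GG_m$ restricts to $f_0$.

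\begin{itemize}
\item \emph{Openness.} This follows from upper semicontinuity of $z \mapsto \dim \Stab_U(z)$ for the action on $Z$.
\item \emph{$\GG_m$-invariance.} If $z \in Z_0$ and $t \in \GG_m$, then $\Stab_U(t\cdot z) = t\,\Stab_U(z)\,t^{-1}$, which has the same dimension.
\item \emph{Compatibility with the fibrewise action.} The fibrewise $U$-action on $f$ covers the trivial action on $Z$. So $f_0$ is $\hU$-equivariant for the restricted actions, and the grading data restrict: $i^*\cA = \oplus_{j\le 0} i^*\cA_j$ with $i^*\cA_0 = \cO_{Z_0}$. Hence $X_0^{\GG_m} = Z_0$, and the limits $\lim_{t\to 0} t\cdot x = f_0(x)$ still exist.
\end{itemize}

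Next I would verify the stabiliser hypothesis of \cref{thm:quotientsbyunip}, namely that points of $X_0^{\GG_m} = Z_0$ have \emph{trivial} $U$-stabilisers. By definition of $Z_0$ they only have zero-dimensional stabilisers. Since we are in characteristic zero, a unipotent group has no nontrivial finite subgroups: every nontrivial element of $U$ generates a copy of $\GG_a$ via the exponential. So a zero-dimensional closed subgroup of $U$ is trivial. This upgrades the hypothesis.

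Finally I would apply \cref{thm:quotientsbyunip} to $f_0$. It gives that $(i^*\cA)^U$ is of finite type and that $X_0 \to \rSpec_{Z_0}(i^*\cA)^U$ is a Zariski-locally trivial $U$-bundle.

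It remains to identify $(i^*\cA)^U$ with $i^*(\cA^U)$. Since $i$ is an open immersion, $i^*$ is restriction of sheaves. Invariants are defined sectionwise over opens of $Z_0$, which are opens of $Z$, so the two sheaves agree.

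The main obstacle is the stabiliser upgrade from zero-dimensional to trivial, together with the $\GG_m$-invariance of $Z_0$. Both are short, but they are where the characteristic-zero and grading assumptions genuinely enter. The rest is formal base change.
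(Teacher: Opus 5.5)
Your proposal is correct and takes the same route as the paper, which obtains the corollary by base changing to $Z_0$ and applying \cref{thm:quotientsbyunip}. Your characteristic-zero remark, that a zero-dimensional subgroup of a unipotent group is trivial, is exactly what connects the \emph{zero-dimensional} stabiliser condition defining $Z_0$ to the \emph{trivial} stabiliser hypothesis of that proposition, a step the paper leaves implicit.

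One small correction: because the action is fibrewise, $\hU$ acts trivially on $Z$. So $\Stab_U(z)$ must be read as the stabiliser of $z$ viewed as a point of $X^{\GG_m}\cong Z\subseteq X$. Openness of $Z_0$ then comes from upper semicontinuity of stabiliser dimension on $X$, restricted to this closed subscheme, and your $\GG_m$-invariance check is vacuous.
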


If the generic $U$-stabiliser is non-trivial (and thus positive dimensional), then it is still possible to construct quotients by restricting to certain open subsets of $Z$ with well-behaved $U$-stabilisers. However, when the generic $U$-stabiliser is positive dimensional, precisely how this open subset of $Z$ is defined depends on whether or not $Z$ is reduced. Indeed, if $Z$ is not reduced, then there can be unipotent actions which are set-theoretically trivial, but not scheme theoretically trivial and so the appropriate invariant algebras may not be finitely generated (see \cite[Example 2.29]{Hoskins2021}). The case when $Z$ is reduced is described by work of B\'{e}rczi and Kirwan (although it is phrased in an absolute setting) and the more general case is described by work of Qiao \cite{Qiao2022, Qiao2025}. 

Let us outline the approach of B\'{e}rczi and Kirwan when $X$ is reduced. First suppose $U$ is abelian; then to find complementary subgroups to the stabiliser subgroups in $U$, it suffices to find a complementary subspace in the Lie algebra $\mathfrak{u}$. By first base changing to an open subset, we can assume all $U$-stabilisers in $X$ have dimension $d$, then taking the stabiliser gives a morphism $X \rightarrow \mathrm{Gr}(d,\mathfrak{u})$ and being complementary to a given codimension $d$ subgroup $U' \subseteq  U$ is an open condition in this Grassmannian, so one can work over corresponding open subsets $X_{U'}$ and take the quotient by the free action of $U'$ on $X_{U'}$. By varying $U'$ one can obtain a quotient of the $U$-action on $X$. To generalise this idea to non-abelian unipotent groups, they fix a sequence of normal subgroups whose quotients are abelian and instead restrict to the locus in $Z$ where the stabilisers for each of these subgroups is minimal (see \cref{rmk: descr Zcirc for Z reduced} below), then iteratively take quotients by the abelian subquotients using the strategy sketched above.

If $X$ is non-reduced (and for simplicity let's assume that $U$ is abelian, as otherwise we can consider a filtration as above), then one may not necessarily get a morphism to the Grassmannian as above, as this requires the stabiliser subsheaf of $\mathfrak{u} \otimes \cO_X$ to be locally free of rank $d$ (on some open). Indeed knowing the $U$-stabilisers of closed points have dimension $d$ (on some open) is not sufficient to conclude it is locally free (on this open) if $X$ is non-reduced. Therefore, Qiao instead  restricts to the locus where the appropriate sheaves encoding stabilisers are locally free (see \cite[Prop.\ 2.4]{Qiao2025}). More precisely, he considers the infinitesimal (co)action
\[ \varphi \colon \Omega_X \rightarrow \mathfrak{u}^* \otimes \cO_X\]
and restricts to an open set where the cokernel $\cQ$ of $\varphi$ is locally free (see \cref{def Zcirc for Z arbitrary} below).

Let us now describe Qiao's approach in detail in our relative perspective. Let $U$ be a unipotent group and suppose we have a graded fibrewise action of $U \rtimes \GG_m$ on an affine morphism $f \colon X \rightarrow Z$. The $\GG_m$-conjugation action on $U$ induces a grading on its Lie algebra  $\mathfrak{u} = \bigoplus_{i=1}^r \mathfrak{u}_{w_i}$ with weights $w_1 >w_2>\dots > w_r >0$. Writing $U^{(j)} = \exp( \bigoplus_{w\geq w_j} \mathfrak{u}_w)$, we obtain a filtration of $U$ by normal subgroups \begin{equation} \{ \mathrm{Id} \} \subseteq U^{(1)} \subseteq U^{(2)} \subseteq \cdots \subseteq U^{(r)} = U
\label{filtration} \end{equation} such that the Lie algebra of each subquotient $U_i = U^{(i)}/U^{(i-1)}$ has a single $\GG_m$-weight. Let $\mathfrak{u}_{(i)}$ and $\mathfrak{u}_i$ denote the Lie algebras of $U^{(i)}$ and $U_i$ respectively. We consider the infinitesimal action \[\varphi \colon \Omega_X \rar \mathfrak{u}^*\otimes \cO_X\] where $\Omega_X$ denotes the sheaf of differentials on $X$ and compose it with the projections $\fu^* \twoheadrightarrow \fu_{(i)}^*$. If $\cK_{(i)}$ and $\cQ_{(i)}$ denote the kernel and cokernel of $\Omega_X \rar \mathfrak{u}^*\otimes \cO_X \twoheadrightarrow \fu_{(i)}^* \otimes \cO_X$, then as $\fu_i =\fu_{(i)}/\fu_{(i-1)}$ we obtain an induced morphism
\[\varphi_i : \cK_{i-1} \rar \mathfrak{u}_i^* \otimes \cO_X \]
and we let $\cQ_i := \operatorname{coker} \varphi_i$, so that $\cQ_{(i-1)} \cong \cQ_{(i)}/\cQ_i$.
 
\begin{definition}\label{def Zcirc for Z arbitrary}
 In the above situation, we define \[Z^{\circ} = Z^{\circ}(U) := \{z \in Z \mid \forall i=1,\dots r,\  \text{$\cQ_i$ is a free $\cO_X$-module in a neighbourhood of $z$ in $ X$}\} \] and $X^\circ: = f^{-1}(Z^\circ)$. 
\end{definition}

The complement of this open set can be given a scheme structure defined using appropriate Fitting ideals (see \cite[Lemma 05P8]{stacks-project} and \cite[Section 3.3]{Qiao2022}). In fact, asking that all the sheaves $\cQ_i$ are locally free is equivalent to asking that the sheaves $\cQ_{(i)}$ are all locally free (see \cite[Corollary 2.9]{Qiao2022}). In particular, we note that $\cQ_i$ is locally free over $X^\circ$.

\begin{remark}[Description of $Z^{\circ}$ when $Z$ is reduced]\label{rmk: descr Zcirc for Z reduced}
If $Z$ is reduced, there is a simpler description of $Z^{\circ}$ in terms of stabilisers at closed points given in \cite[Corollary 2.9]{Qiao2022}. We then have
\[Z^\circ = \{ z \in Z \mid \forall i=1,\dots r,\  \dim \Stab_{U^i}(z) = \min_{x\in X} \dim \Stab_{U^i}(x)\}. \] If $Z$ is only generically reduced, the above locus is contained in $Z^{\circ}$. This containment may be strict, but we can still obtain $U$-quotients of the above locus. 
   
\end{remark}

    We note that we may have $Z^\circ = \emptyset$; for example,  if the minimal $U$-stabiliser dimension on $Z$ is higher than that on $X$. In this case, one could perform something similar to the \lq Modification II\rq\ of \cite{Qiao}, but we will not pursue that direction in this work.    
    
    The following result is a relative framing of \cite[Theorem 2.22]{Qiao2022}; however, we provide a self-contained and streamlined proof.

    \begin{theorem}\label{theorem U quotient with varying stabilisers}
Let $f \colon X = \rSpec_Z \cA \rightarrow Z$ be an affine morphism of schemes with a graded fibrewise action of $\hU = U \rtimes \GG_{m}$ with $U$ unipotent. Then the $U$-invariants in $\cA^\circ:= \cA|_{Z^\circ}$ are finitely generated as a sheaf of $\cO_{Z^{\circ}}$-algebras. Moreover, there is a geometric $U$-quotient $X^\circ = f^{-1}(Z^\circ) \rightarrow X^\circ/U = \rSpec_{Z^\circ}(\cA^\circ)^U$ which is a Zariski locally trivial affine space fibration. There is a cover of $Z^\circ$ by open affines such that over each of these this fibration is trivial.
    \end{theorem}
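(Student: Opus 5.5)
The plan is to induct along the filtration \eqref{filtration}, quotienting by one single-weight abelian subquotient $U_i$ at a time. At each step I generalise the slice argument of \cref{prop quot graded Ga} from $\GG_a$ to a vector group on which $\GG_m$ acts with a single weight $w_i$, and allow non-trivial (but locally free) stabilisers. Everything is local over $Z^\circ$, so I may replace $Z$ by $Z^\circ$ and work over an affine open $V = \Spec B \subseteq Z^\circ$ with $X_V = \Spec A$, $A = \bigoplus_{j \le 0} A_j$, $A_0 = B$. At the end I glue the resulting invariant algebras, exactly as in \cref{prop slice Ga quotient}.

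For the first step, take $U^{(1)} = U_1$, which is abelian with Lie algebra $\fu_1$ of pure weight $w_1$. The infinitesimal action is a $B$-linear map $\fu_1 \to \operatorname{Der}_B(A)$ of degree $w_1$. Restricted to $A_{-w_1}$, it gives a pairing
\[ \fu_1 \otimes A_{-w_1} \to A_0 = B, \]
which is the restriction to $Z = X^{\GG_m}$ of the dual of $\varphi_1 \colon \Omega_X \to \fu_1^*\otimes\cO_X$. Since $\cQ_1$ is locally free over $X^\circ$ and the grading contracts $X$ onto $Z$, I will show two things. First, the image of $\varphi_1$ restricted to $Z$ is a locally free direct summand of $\fu_1^* \otimes \cO_Z$ of some constant rank $d_1$. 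Second, after shrinking $V$, there are a subspace $\fu_1' \subseteq \fu_1$ complementary to the generic stabiliser direction and elements $s_1,\dots,s_{d_1} \in A_{-w_1}$ with $D_a(s_b) = \delta_{ab}$ for a basis $D_a$ of $\fu_1'$.

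This is the generalisation of the ideal argument in \cref{prop quot graded Ga}. There, the $\GG_m$- and $U$-stable ideal $\bigoplus_{j<0}A_j \oplus D(A_{-w})$ had to be the whole ring. Here the local freeness of $\cQ_1$ plays that role: the fibres of the image have constant rank, so the relevant minors generate the unit ideal locally.

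Given these simultaneous slices, which pairwise commute since $U_1$ is abelian and $D_a(s_b)$ is constant, the exponential formula of \cref{lemma global slices for add gp give Reynolds operator} applied iteratively gives an algebra retraction $A \to A^{U_1'}$ and an isomorphism $A \cong A^{U_1'}[s_1,\dots,s_{d_1}]$. The complementary subgroup (the stabiliser part) acts trivially on $X$ over $V$. Again this uses local freeness, so that the stabiliser is a subgroup scheme acting trivially scheme-theoretically and not just on closed points. Hence $A^{U_1} = A^{U_1'}$ is finitely generated, and $X_V \to \Spec A^{U_1}$ is a trivial affine-space bundle. The quotient $X_1 = \rSpec_Z \cA^{U_1}$ inherits a graded $U/U_1 \rtimes \GG_m$-action with $\cA^{U_1}_0 = \cO_Z$.

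To run the induction, I need the locally-free hypothesis to pass to $X_1$ for the remaining subquotients $U_2,\dots,U_r$. I expect this to be the main obstacle. The cokernels $\cQ_i$ for $X_1$ must be compared with those for $X$ via the sequences $0 \to \cK_{(i-1)} \to \cdots$, using that $X \to X_1$ is (locally) a trivial affine bundle. Then $\Omega_X \cong \pi^*\Omega_{X_1} \oplus$ (free), and the kernel $\cK_{(1)}$ is identified with $\pi^*\Omega_{X_1}$ modulo the stabiliser part. The check I would try first is that $\varphi_i$ on $X$ and the corresponding map on $X_1$ have isomorphic cokernels after pullback, using the identification $\cQ_{(i-1)} \cong \cQ_{(i)}/\cQ_i$. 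Once this is in place, induction gives that $\cA^U$ is of finite type over $Z^\circ$. The composite $X^\circ \to X^\circ/U$ is then a tower of locally trivial affine-space fibrations that are trivial over a common affine cover of $Z^\circ$ (refining the covers at each stage), hence locally trivial over affines of $Z^\circ$ by the argument of \cite[Lemma 3.20]{Berczi2016}. Geometricity follows because each stage is a trivial bundle with fibres equal to orbits.
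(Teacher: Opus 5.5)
Apart from one step, your plan follows the paper's proof: induction along the weight filtration, a local complement $\fu_1'$ to the stabiliser, slices, a trivial $U_1'$-bundle, then gluing. The one difference in method is harmless: you build all slices at once from a local basis of $\operatorname{im}(\varphi_1|_Z)$, where the paper uses a Grassmannian cover plus \cref{thm:quotientsbyunip}.

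The step that fails is your justification of $A^{U_1}=A^{U_1'}$. In general no subgroup of $U_1$ complementary to $U_1'$ acts trivially on $X_V$. The stabiliser direction moves with the point even when $\cQ_1$ is free, so there is no ``generic stabiliser direction''. For example, take $Z=\Spec k[c]$ and $X=\Spec k[c,s]$ with $s$ in degree $-1$. Let $U=\GG_a^2$ have weight $1$ and act by $D_1(s)=1$, $D_2(s)=c$. Then $\varphi_1(ds)=(1,c)$ and $\varphi_1(dc)=0$, so $\cQ_1\cong\cO_X$ and $Z^\circ=Z$. But the stabiliser over $c$ is the line through $(-c,1)$, so no nontrivial subgroup of $U$ fixes $X_V$ for any nonempty open $V\subseteq Z$.

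What is true, and what the paper uses (``the induced action of $U/U'_{\fs_i}$ on $Y_i$ is trivial''), is that the complementary directions act trivially on the \emph{quotient} $\Spec A^{U_1'}$. The stabiliser is a sub-bundle of $\fu_1\otimes\cO_V$, not a constant subgroup.

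To prove this you need local freeness of $\cQ_1$ on all of $X_V$, not just its restriction to $Z$, which is all your slice construction used. On $X_V$ its rank is constant along the $\GG_m$-flow to $Z$, hence equal to $\dim\fu_1-d_1$. So $\operatorname{im}\varphi_1$ is locally free of rank $d_1$. It contains the rank-$d_1$ direct summand $\bigoplus_b A\,\varphi_1(ds_b)$, so the two coincide (the complement is projective of rank $0$). Reading off coefficients at the $D_b$ gives $\varphi_1(da)=\sum_b D_b(a)\,\varphi_1(ds_b)$. Evaluating at $E\in\fu_1$ then gives $E(a)=\sum_b E(s_b)\,D_b(a)$, with $E(s_b)\in A_0=B$. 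So every $E\in\fu_1$ acts on $A$ as a $B$-linear combination of the $D_b$ (in the example, $D_2=cD_1$) and kills $A^{U_1'}$, which gives $A^{U_1}=A^{U_1'}$.

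With this in place, your sketch of the inductive transfer of local freeness goes through; the paper also omits this step. Since $\Omega_A=(\Omega_{A^{U_1}}\otimes A)\oplus\bigoplus_b A\,ds_b$ and the $\varphi_{(1)}(ds_b)$ are independent, you get $\cK_{(1)}=\pi^*\Omega_{X_1}$ exactly. Hence $\cQ_2$ is the pullback of the cokernel for the $U_2$-action on $X_1$. Restricting along the zero section $s_b=0$ shows that cokernel is locally free.
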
 
\begin{proof}
We follow the ideas of \cite{Qiao2022, Qiao2022b, Qiao2025}. As $U$ is connected, it acts separately on each connected component of $X$, so we may assume that $Z$ is connected. Furthermore, for ease of notation, we will assume that $Z = Z^\circ$, as otherwise we first base change to $Z^\circ$.

Let us first suppose that $U$ is abelian and the $\GG_m$-action on $\fu$ has a single positive weight, as the more general case will follow from this by induction. In this case, we just write $\cQ$ for the cokernel of $\varphi$, and by our assumption that $Z^\circ = Z$ and that $Z$ is connected, we have that $\cQ$ is a locally free $\cO_X$-module of constant rank $d$. The surjection $\fu^* \otimes \cO_X \twoheadrightarrow \cQ$ is thus equivalent to a morphism $h \colon X \rightarrow \text{Gr}(\fu^*,d).$ For each $d$-dimensional quotient $\fu \twoheadrightarrow \fs$, we consider the open subset  $\text{Gr}(\fu^*,d)_{\fs}$ of quotients $\fu^* \twoheadrightarrow W$ such that $\fs^* \hookrightarrow \fu^* \twoheadrightarrow W$ is an isomorphism. Furthermore, as $U$ is abelian, the Lie subalgebra $\fu'_\fs:= \ker(\fu \twoheadrightarrow \fs) \subseteq \fu$ lifts to a subgroup $U'_\fs \subseteq U$, which we refer to as the complementary subgroup to the stabiliser $\fs$. By construction, the $U'_\fs$-action on $X_{\fs}:=h^{-1}(\text{Gr}(\fu^*,d)_{\fs})$ is free. 

In fact, we can cover the Grassmannian by only finitely many such subsets by picking a finite number of $d$-dimensional quotients $\fs_1, \dots, \fs_n$ of $\fu$ and we can assume we have chosen open affine subsets $Z_i$ covering $Z$ such that $X_i:=f^{-1}(Z_i) \subseteq X_{\fs_i}$ as in \cite[$\S$3.3.1]{Qiao2022b}. For each $1 \leq i \leq n$, we can now apply \cref{thm:quotientsbyunip} to the graded fibrewise action of $U'_{\fs_i} \rtimes \GG_m$ on $X_i \rightarrow Z_i$ as the $U'_{\fs_i}$-stabilisers are trivial, to obtain a trivial $U'_{\fs_i}$-bundle 
\[X_i = \Spec A_i \rightarrow Y_i:=\Spec A_i^{U'_{\fs_i}} = \Spec A_i^U,\] where the last equality holds as the induced action of $U / U'_{\fs_i}$ on $Y_i$ is trivial, as the Lie algebra of this quotient group is $\fs_i$ and we have $\fs_i^* \otimes \cO_{X_i} \cong \cQ|_{X_i}$, where $\cQ$ is the cokernel of the infinitesimal (co)action. In particular, the invariant ring $A_i^U$ is finitely generated. We can glue these finite type geometric quotients $X_i =\Spec A_i \rightarrow Y_i=\Spec A_i^U$ (which are affine and of finite type over $Z_i$) to obtain a geometric $U$-quotient $X = \rSpec_Z \cA \rightarrow \rSpec_Z \cA^U$ (also affine and of finite type over $Z$), which is Zariski locally trivial with fibre an affine space of dimension $\dim U -d$. This completes the proof when $U$ is abelian with a single $\GG_m$-weight. 

The more general case where $U$ is only filtered by subgroups whose subquotients are abelian with a single $\GG_m$-weight is then obtained by induction, with the above argument giving both the base case and inductive step. We omit the details as the inductive process is completely analogous to the one performed in \cref{thm:quotientsbyunip}.
\end{proof}

 \section{Relative GIT for graded actions of non-reductive groups} \label{sec:relative GIT non-reductive}

Given a graded equivariant action of a homomorphism $\varphi \colon G \to H$ on an affine morphism $f \colon X \to Z$ where $G$ is non-reductive and $H$ is reductive, the aim of this section is to construct a relative quotient using the constructions defined in $\S$\ref{subsec: def rel GIT reductive}. The grading by a copy of the multiplicative group $\GG_m$ is essential to quotient by the action of the unipotent radical $U$ of $G$. This section is split into two parts depending on whether the grading is internal or external.

\subsection{Internally graded actions}  \label{subsec:internallygradedactions}

In this section, we consider an internally graded equivariant action of $\varphi \colon G \to H$ on an affine morphism $f \colon X = \rSpec_Z \cA \to Z$ and assume that $H$ is reductive and we have a good $H$-quotient $q \colon Z' \rightarrow W$ of an open subset $Z' \subseteq Z$. By \cref{reductive H}, since $H$ is reductive we know that $H \cong G/U$ for $U$ the unipotent radical of $G$. Writing $G = U \rtimes R$ for $R$ a Levi subgroup of $G$, we may thus equivalently assume that $H = R$ and that $\varphi$ is the natural quotient map $G = U \rtimes R \to R$. Thus $U$ acts fibrewise on $f$ and there is an internal $\GG_m\subseteq G$ which grades this action, so we can apply the results of \cref{sec:rel quotients for unipotent actions}.

In order to take a quotient of the fibrewise action of the unipotent radical $U$, we will assume that the locus $Z^\circ \subseteq Z$ where the $U$-stabilisers are well-behaved contains $Z'$ and then base change to $Z'$ and construct a relative quotient by first taking a fibrewise $U$-quotient using the grading $\GG_m$ and then reducing to an equivariant action of $R$ and applying our results for relative reductive GIT quotients. 

For simplicity, we first assume that $Z = Z' = Z^\circ$ and prove a weaker version of \cref{main theorem}. In this case, following \cref{definition reductive GIT quotients}, we define the \emph{relative $\rho$-twisted affine NRGIT quotient} for the action of $\varphi$ on $f$ with respect to $q$ and a character $\rho \colon G \rightarrow \GG_m$ to be the rational map of $W$-schemes 
\begin{equation}\label{rational map for graded relative NRGIT quotient}
X \dashrightarrow X \gitq_{\hspace{-2pt} q,\rho \hspace{2pt}}^f G : = \rProj_W \bigoplus_{n \geq 0} (q_{\ast} \cA)^G_{\rho^n}
\end{equation}
induced by the inclusion of sheaves $(q_{\ast} \cA)^G_{\rho^n} \subseteq q_{\ast} \cA$. The \emph{$\rho$-twisted (semi)stable locus in $X$ relative to $f$ and $q$} is defined as in \cref{definition reductive GIT quotients} and denoted by $X^{(s)s}(f;q,\rho)$.

 \begin{theorem} \label{theorem nonred quotients graded equiv optimal assumptions on Z}
 Let $G = U \rtimes R$ be a linear algebraic group with unipotent radical $U$ and suppose we have an internally graded action of $G \twoheadrightarrow R$ on an affine morphism $f \colon X = \rSpec_Z \cA \to Z$. Additionally suppose that $Z^{\circ} = Z$ and $q \colon Z \to W$ is a good $R$-quotient. For any character $\rho \colon G \rightarrow \GG_m$, the following statements hold.
 \begin{enumerate}  [(i)]
     \item The sheaf $(q_{\ast} \cA)^{G,\rho}$ of $\rho$-twisted semi-invariants is a finitely generated sheaf of $\cO_{W}$-algebras;
     \item $X^{ss}(f;q, \rho)$ is the domain of definition of the relative $\rho$-twisted GIT quotient $X \dashrightarrow X \gitq^{f}_{\hspace{-2pt} q, \rho \hspace{2pt}} G$; 
    \item $X^{ss}(f;q,\rho) \to X \gitq^{f}_{\hspace{-2pt} q, \rho \hspace{2pt}} G $ is a good quotient and restricts to a geometric quotient on $X^{s}(f;q,\rho)$;
    \item we have $\rSpec_W (q_*\hspace{-2pt}\cA)^{G}=W$ and thus the structure morphism
    $X \gitq^{f}_{\hspace{-2pt} q, \rho  \hspace{2pt}} G \rightarrow W$ is projective. 
    \item there is an equality $$ X^{(s)s}(f;q,\rho) : = \{ x \in X \ | \  \text{$\langle \rho, \lambda \rangle (\geq) 0$ for all $\lambda\colon \GG_m \to G$ such that $\lim_{t \rightarrow 0} \lambda(t) \cdot x$ exists}\}.$$
 \end{enumerate}
 \end{theorem}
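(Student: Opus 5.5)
The plan is to quotient in two stages: first take the fibrewise $U$-quotient using the grading $\GG_m$, then apply relative reductive GIT to the induced $R$-action.

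\textbf{Stage 1: the $U$-quotient.} Since $\varphi$ is internally graded by a central $\GG_m \subseteq R$, the subgroup $\hU = U \rtimes \GG_m$ acts fibrewise on $f$. Indeed, $U = \ker \varphi$ acts fibrewise, and the restricted $\hU$-action is graded. Because $Z = Z^\circ(U)$, \cref{theorem U quotient with varying stabilisers} applies. It shows that $\cA^U$ is a finitely generated sheaf of $\cO_Z$-algebras, and that $X \to X/U := \rSpec_Z \cA^U$ is a geometric $U$-quotient which is a Zariski locally trivial affine space fibration.

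\textbf{Stage 2: reduction to relative reductive GIT.} Since $U$ is normal, $R = G/U$ acts on $\cA^U$, and $f$ descends to an $R$-equivariant affine morphism $\bar f \colon X/U \to Z$ with $\mathrm{id}_R$ acting. The semi-invariants for $\rho$ are $U$-invariant, because $\rho$ is trivial on the unipotent group $U$. Hence
\[ (q_*\cA)^G_{\rho^n} = ((q_*\cA^U)^{R})_{\rho^n}, \]
where $\rho$ is viewed as a character of $R$. So $X \gitq^f_{q,\rho} G = (X/U) \gitq^{\bar f}_{q,\rho} R$. Applying \cref{prop equiv twisted affine} to the reductive homomorphism $\mathrm{id}_R$ acting on $\bar f$ gives finite generation, (i). It also gives a good quotient of $(X/U)^{ss}(\bar f;q,\rho)$, geometric on the stable locus. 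Composing with the geometric $U$-quotient, which is affine and surjective and has $U$-orbits as fibres, gives (ii) and (iii). This uses $X^{(s)s}(f;q,\rho) = \pi_U^{-1}((X/U)^{(s)s}(\bar f;q,\rho))$. That equality holds because invariant sections over $q$-saturated opens correspond exactly, and $\dim\Stab_G(x) = \dim\Stab_R(\pi_U(x))$ on the locus where $U$-stabilisers are trivial. In general, stabiliser dimensions differ by the constant $\dim$ of generic $U$-stabilisers, so this needs care in matching the definitions of stability. Composing good quotients with the affine geometric $U$-quotient preserves goodness: taking invariants in stages, $(\pi_{U*}\cO)^U = \cO$, and closed $G$-invariant sets map to closed sets since $\pi_U$ is a locally trivial fibration.

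\textbf{Part (iv).} Apply \cref{properties of graded actions and linearisations} locally over affine opens of $W$, with the trivial line bundle. Since $q$ is affine, $q \circ f$ is affine, and $(q_*\cA)^G \cong (q_*\cO_Z)^R = \cO_W$, the latter because $q$ is a good quotient. The grading argument works verbatim over each $q^{-1}(V)$. So $\rSpec_W (q_*\cA)^G = W$, and by \cref{prop equiv twisted affine}\eqref{equiv tw aff iv} the quotient is projective over $W$.

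\textbf{Part (v), the Hilbert--Mumford description.} This is the main obstacle. The reductive criterion gives a description of $(X/U)^{ss}$ in terms of 1-PSs of $R$ and limits in $X/U$. We must convert this to 1-PSs of $G$ and limits in $X$. For one direction, any 1-PS $\lambda$ of $G$ is $G$-conjugate, by an element of $U$, to a 1-PS of $R$. The pairing $\langle\rho,\lambda\rangle$ is conjugation invariant, and the limit of $\lambda(t)x$ exists iff that of the conjugate at $ux$ does. Since both loci are $G$-invariant, we may therefore restrict to 1-PSs $\lambda$ of $R$. It remains to show that, for $\lambda$ in $R$, $\lim \lambda(t)\pi_U(x)$ exists in $X/U$ iff $\lim\lambda(t)u x$ exists in $X$ for some $u \in U$.

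The backward direction is clear. For the forward direction, I would use the local triviality over affine opens of $Z$: $X|_{Z_i} \cong Y_i \times \AA^m$. The plan is then to use the grading $\GG_m$, which contracts $U$, to choose a $\lambda$-compatible trivialisation. Concretely, replace $\lambda$ by $\lambda\cdot\GG_m^N$ for $N \gg 0$. This does not change the sign of the pairing when $\rho$ is trivial on the grading $\GG_m$; in general the pairing has to be tracked. The modified 1-PS contracts the fibre directions, so lifts of limits exist. This step, producing a limit upstairs from one downstairs via the grading, is where I expect the real work to lie. If it fails, the fallback is to prove semistability directly: construct semi-invariants non-vanishing at $x$ by pulling back from $X/U$, and show that instability upstairs is witnessed by some 1-PS of $G$ through the Kempf 1-PS of $R$ on $X/U$.
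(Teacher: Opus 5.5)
\textbf{Gap in part (v).} Your treatment of (i)--(iv) matches the paper. You take the fibrewise $U$-quotient via \cref{theorem U quotient with varying stabilisers} and identify $(q_*\cA)^{G,\rho}=(q_*\cA^U)^{R,\rho}$. You factor the rational map through $X/U$, which gives $X^{ss}(f;q,\rho)=\pi_U^{-1}((X/U)^{R,ss})$, and for (iv) you apply \cref{properties of graded actions and linearisations} locally over affines in $W$. Your reduction of (v) is also correct: it suffices to treat 1-PSs $\lambda$ of $R$ and to show that $\lim_{t\to0}\lambda(t)\cdot\pi_U(x)$ exists in $X/U$ only if $\lim_{t\to0}\lambda(t)\cdot ux$ exists in $X$ for some $u\in U$. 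This lifting direction is exactly what is needed for the Hilbert--Mumford semistable locus to lie inside $X^{ss}(f;q,\rho)$. However, you do not prove it, and the mechanism you propose cannot work.

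Replacing a destabilising $\lambda$ (with $\langle\rho,\lambda\rangle<0$) by $\lambda\lambda_g^N$ changes the pairing to $\langle\rho,\lambda\rangle+N\langle\rho,\lambda_g\rangle$. If $\langle\rho,\lambda_g\rangle<0$, every point is unstable. If $\langle\rho,\lambda_g\rangle=0$, the stable locus is empty. So the case that matters is $\langle\rho,\lambda_g\rangle>0$. There, for $N\gg0$, the modified 1-PS pairs positively with $\rho$ and no longer witnesses instability of $x$.

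Moreover, $\lambda\lambda_g^N$ contracting $U$ under conjugation only shows that existence of the limit is independent of the point chosen in a $U$-orbit. It does not show that a limit downstairs forces a limit upstairs. Your fallback, lifting a Kempf 1-PS of $R$ from $X/U$, needs exactly the same lift, so it does not get around the problem.

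The missing idea is \cref{lemma existence of limits under quotients by unipotents with slices}. Locally $X\cong U'\times Y$, with $U'\subseteq U$ acting freely and a slice $\iota\colon Y\hookrightarrow X$. The identity $\iota(\lambda(t)\cdot y)=u_t\cdot(\lambda(t)\cdot\iota(y))$ then defines a $U$-valued 1-cocycle $t\mapsto u_t$ on $\GG_m$. Since $U$ is unipotent, this cocycle is a coboundary: $u_t=u^{-1}\lambda(t)u\lambda(t)^{-1}$ for some $u\in U$. Hence $\lambda(t)\cdot(u\cdot\iota(y))=u\cdot\iota(\lambda(t)\cdot y)$. The right-hand side has a limit because $\iota$ is a morphism and $\lim_{t\to0}\lambda(t)\cdot y$ exists. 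So the limit lifts for the \emph{same} 1-PS $\lambda$, at the point $u\cdot\iota(y)$, and the pairing with $\rho$ is unchanged.
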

\begin{proof}
    For (i), as by assumption $Z = Z^\circ$, we know that $\cA^U$ is a finitely generated sheaf of $\cO_Z$-algebras by \cref{theorem U quotient with varying stabilisers}, and thus so is $q_{\ast} (\cA^U)$ as a sheaf of $\cO_W$-algebras. Since $R$ is reductive, the sheaf of semi-invariants $(q_{\ast} (\cA^U))^{R,\rho}$ is finitely generated. Moreover $(q_{\ast} \cA)^U = q_{\ast} (\cA^U)$ and thus $(q_{\ast} (\cA^U))^{R,\rho} =(  q_{\ast} \cA)^{G,\rho}$ and so the result follows. 

    Part (ii) is immediate, as $X^{ss}(f;q, \rho)$ is by definition the domain of definition of this rational map. For the third statement, we will factor this rational map as a good $U$-quotient followed by a good $R$-quotient (resp. a geometric $R$-quotient of the stable locus). As above, we can write the sheaf of $\rho$-twisted semi-invariants as follows
\[ \cA^{G,\rho}:= \bigoplus_{n \geq 0} (q_{\ast} \cA)^G_{\rho^n} = (q_{\ast} \cA^U)^{R,\rho} \]
and thus the rational map \eqref{rational map for graded relative NRGIT quotient} factors as
\[ X = \rSpec_Z \cA \stackrel{q_U}{\longrightarrow} Y:=X/U= \rSpec_Z \cA^U  = \rSpec_W q_*(\cA^U) \dashrightarrow  Y \gitq^h_{q,\rho} R = X \gitq^f_{q,\rho} G\]
where $h : Y = X/U \rightarrow Z$ is also affine. 
The domain of definition of the second rational map is the locus $Y^{R,ss}(h;q,\rho)= Y^{R,ss}(q \circ h; \rho)$ and thus the domain of definition of the rational map \eqref{rational map for graded relative NRGIT quotient} is 
\begin{equation}\label{eq first step to NR HM} 
X^{ss}(f;q,\rho) = q_U^{-1}(Y^{R,ss}(h;q,\rho))
\end{equation}
and we will use this to lift the Hilbert--Mumford description of $Y^{R,ss}(q \circ h;\rho)$ to $X^{ss}(f;q,\rho)$ in part (v). In particular the rational map factors as the composition of a geometric $U$-quotient by a good $R$-quotient and thus is a good $G$-quotient. Furthermore, by  \cref{thm twisted affine}, the structure map admits a factorisation \[X \gitq^f_{q,\rho} G =Y \gitq^{ h}_{q,\rho} R = Y \gitq^{q \circ h}_\rho R \longrightarrow  \rSpec_W (q_*\cA)^G = Y \gitq^{q \circ h} R \longrightarrow W\] where the first morphism is projective and the second morphism is affine. 

For part (iv), we will prove that the affine morphism $\rSpec_W (q_*\cA)^G = Y \gitq^{q \circ h} R \rightarrow W$ is the identity on $W$. Since we are working relative to $W$, we can cover $W$ by open affine subsets over which everything else is affine (as $q$ and $f$ are both affine). Thus we can reduce to the case where $W$ is affine, and thus so are $X$ and $Z$. In this case, we have the following diagram
\[ X = \Spec_k A \stackrel{f}{\longrightarrow} Z= \Spec_k B \stackrel{q}{\longrightarrow} W =\Spec_k C \]
where the graded $\GG_m$-action induces a decomposition $A = \oplus_{i \leq 0} A_i$ such that $B = A_0$, and as $q$ is a good $R$-quotient we have $C = B^R$. Since the action is graded, we have $A^G = B^R = C$ by \cref{properties of graded actions and linearisations}. This proves that $\rSpec_W (q_*\hspace{-2pt}\cA)^{G}=W$.

It remains to prove the final statement which is a non-reductive Hilbert--Mumford criterion. Our starting point is the equality \eqref{eq first step to NR HM} and it remains to prove that
\begin{equation} \label{Eq HM criterion reduction}
q_U^{-1} (Y^{R,(s)s}(h; q, \rho)) = \bigcap_{u \in U} uX^{R,(s)s}(\rho), 
\end{equation}
where by the reductive Hilbert--Mumford criterion, we have $x \in X^{R,(s)s}(\rho)$ if and only if $\langle \rho,\lambda \rangle  \: (\geq) \: 0  $ for all 1-PS $\lambda \colon \GG_m \rightarrow R$ such that $\lim_{t \rightarrow 0} \lambda(t) \cdot x$ exists in $X$. A self-contained proof of the non-reductive Hilbert--Mumford criterion will appear in \cite{Jackson2026}, but our relative approach allows us to give a short proof using an idea suggested by T. Vernet. Since we can check this locally and locally $q_U$ is constructed by taking slices over open affine subsets in $Z'$ (see \cref{theorem U quotient with varying stabilisers}), we may assume the $U$-quotient has the form $q_U \colon X \cong U' \times Y \rightarrow Y$, where $U' \subseteq  U$ is a subgroup that acts freely on $X$ (see the proof of \cref{theorem U quotient with varying stabilisers}) and so we can apply \cref{lemma existence of limits under quotients by unipotents with slices} below to deduce that \cref{Eq HM criterion reduction} holds, which completes our proof of the non-reductive Hilbert--Mumford criterion.
\end{proof}

Various non-reductive Hilbert--Mumford criteria have been stated and proved in the literature. In the projective setting, it was proved in the case \lq semistability equals stability' in \cite[Theorem 4.28]{Berczi2023} (see also \cite[Theorem 2.30]{Hoskins2021}). A proof without this condition in the projective setting was circulated to experts, and appears in \cite{Jackson2026}. A similar proof was subsequently found independently, and appears in \cite[Theorem 2.9.4]{Wiaterek}. To complete the proof of our theorem, we use the following lemma, for which we thank T. Vernet.

\begin{lemma}\label{lemma existence of limits under quotients by unipotents with slices}
For $G = U \rtimes R$ acting on a scheme $X$, suppose the $U$-action admits a slice. Let $q_U \colon X \rightarrow Y = X/ U$ denote the quotient and let $\iota \colon Y \hookrightarrow X$ denote the inclusion of the slice. Then for $y \in Y$ and a 1-PS $\lambda \colon \GG_m \rightarrow R$, the following are equivalent:
\begin{enumerate}
    \item $\lim_{t \rightarrow 0} \lambda(t) \cdot y$ exists in $Y=X/U$,
    \item there exists $x \in q_U^{-1}(y) = U \cdot \iota(y)$ such that $\lim_{t \rightarrow 0} \lambda(t) \cdot x$ exists in $X$.
\end{enumerate}
\end{lemma}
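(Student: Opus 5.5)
The argument rests on the structure of a slice. Saying the $U$-action admits a slice (with $q_U$ having a section $\iota$ whose image meets each $U$-orbit exactly once) means, as in the proof of \cref{theorem U quotient with varying stabilisers}, that $X \cong U' \times Y$ for a subgroup $U'\subseteq U$ acting freely, with $q_U$ the projection and $\iota(y) = (e,y)$. I will also use that $q_U$ is $G$-equivariant for the induced $R$-action on $Y$, i.e.\ $q_U(g\cdot x) = \bar g \cdot q_U(x)$ with $\bar g$ the image of $g$ in $R$. In particular, for $\lambda$ a 1-PS of $R\subseteq G$ we have $q_U(\lambda(t)\cdot x) = \lambda(t)\cdot q_U(x)$.

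(2)$\Rightarrow$(1) is the easy direction. Given $x \in q_U^{-1}(y)$, the orbit map $t\mapsto \lambda(t)\cdot x$ extends to a morphism $\AA^1 \to X$. Composing with $q_U$ gives a morphism $\AA^1\to Y$ extending $t\mapsto \lambda(t)\cdot y$, so $\lim_{t\to0}\lambda(t)\cdot y$ exists and equals $q_U(\lim_{t \to 0}\lambda(t)\cdot x)$.

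(1)$\Rightarrow$(2) is the substantive direction. Suppose $\gamma\colon \AA^1\to Y$ extends $t\mapsto\lambda(t)\cdot y$. The naive candidate $\iota\circ\gamma$ is a morphism $\AA^1\to X$, but it need not be the orbit of $\iota(y)$, because the slice need not be $\lambda$-stable. I will instead measure the failure by a $U$-valued function. For $t\in\GG_m$, both $\lambda(t)\cdot\iota(y)$ and $\iota(\lambda(t)\cdot y)$ lie in the fibre $q_U^{-1}(\lambda(t)\cdot y)$, so by the product structure there is a unique $u(t)\in U'$ with
\[ \lambda(t)\cdot \iota(y) = u(t)\cdot \iota(\lambda(t)\cdot y). \]
The map $t \mapsto u(t)$ is a morphism $\GG_m\to U'$, obtained by composing with the $U'$-coordinate of $X\cong U'\times Y$. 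The goal is to find $u_0\in U$ such that $t\mapsto \lambda(t) u_0 \lambda(t)^{-1} u(t)$ extends over $0$. Then
\[ \lambda(t)\cdot(u_0\cdot\iota(y)) = \bigl(\lambda(t)u_0\lambda(t)^{-1}u(t)\bigr)\cdot\iota(\gamma(t)) \]
extends to $\AA^1$, and $x = u_0\cdot\iota(y)$ works.

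Finding $u_0$ is the main obstacle. My plan is to use that $U$ is unipotent, so $\exp\colon \fu\to U$ is an isomorphism of varieties, and to decompose $\fu$ into $\lambda$-weight spaces under conjugation. First I will check that $u$ is a cocycle-type object: $u(st) = \lambda(s)u(t)\lambda(s)^{-1}\,u'$ with $u'$ governed by $u(s)$ at the point $\lambda(t)\cdot y$. This means $u$ behaves like a twisted coboundary for the $\GG_m$-action on $U$ by $\lambda$-conjugation. In the abelian case, $u(t) = \sum_w t^{w} v_w$-type Laurent terms should be expressible as $v - \mathrm{Ad}_{\lambda(t)} v$ plus a part regular at $0$. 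Taking $u_0 = \exp(v)^{-1}$ then cancels the negative-weight part. Concretely, I would expand $\log u(t)$ in weight components, and for each weight $w$ kill the terms with negative powers of $t$ using the $t^{w}v_w$ coming from conjugating $u_0$. I would handle the non-abelian case by induction along a central series of $U$ normalised by $\lambda$, as in the proof of \cref{thm:quotientsbyunip}, treating one abelian subquotient at a time. If the cocycle bookkeeping becomes unwieldy, a fallback is to reduce to $U'=\GG_a$ stepwise, where $u(t)$ is a single Laurent polynomial in $t$ and its polar part can be cancelled directly.
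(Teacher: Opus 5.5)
Your reduction is sound: (2)$\Rightarrow$(1) is fine, and for (1)$\Rightarrow$(2) it does suffice to find $u_0\in U$ such that $t\mapsto {}^{\lambda(t)}u_0\,u(t)$ extends over $t=0$, where ${}^{g}v:=gvg^{-1}$. The gap is that the existence of $u_0$, which is the whole content of the lemma, is only asserted (``should be expressible as $v-\mathrm{Ad}_{\lambda(t)}v$ plus a part regular at $0$''). Moreover, the cocycle-type identity you plan to check cannot establish it. In the free case write $\lambda(t)\cdot\iota(z)=c(t,z)\cdot\iota(\lambda(t)\cdot z)$ for $z\in Y$, with $c$ regular on $\GG_m\times Y$. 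Then $u(t)=c(t,y)$ and $c(tt',z)={}^{\lambda(t)}c(t',z)\,c(t,\lambda(t')\cdot z)$. Restricted to the orbit of $y$, this identity holds automatically once one sets $c(t,\lambda(s)\cdot y):=({}^{\lambda(t)}u(s))^{-1}u(ts)$, for any morphism $u\colon\GG_m\to U$ whatsoever. So it imposes no condition on the polar part of $u$. A condition is needed, though: if $U=\GG_a$ with $\lambda$ acting by weight $w$, conjugating $u_0$ contributes only the monomial $t^{w}u_0$. Your fallback (``its polar part can be cancelled directly'') therefore fails unless that polar part is a multiple of $t^{w}$.

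The missing idea is to use that the action is regular over the limit point $y_0=\gamma(0)$. The function $F(t,s):=c(t,\gamma(s))$ is regular on $\GG_m\times\AA^1$ and equals $({}^{\lambda(t)}u(s))^{-1}u(ts)$ for $s\neq 0$. For $U=\GG_a$ of weight $w$ and $u(s)=\sum_n a_ns^n$, this reads $F(t,s)=\sum_n a_n(t^n-t^{w})s^n$. Regularity at $s=0$ then forces $a_n=0$ for all $n<0$ with $n\neq w$, and $u_0=-a_w$ (or $u_0=e$ if $w\geq 0$) works. In general, look for $\sigma\colon\AA^1\to U$ with $\sigma(ts)={}^{\lambda(t)}\sigma(s)\,F(t,s)$, i.e.\ a $\GG_m$-equivariant section of $X\times_Y\AA^1\to\AA^1$, and take $x=\sigma(1)\cdot\iota(y)$. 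This amounts to the triviality of a $1$-cocycle of $\GG_m$ with values in $\mathrm{Mor}(\AA^1,U)$. That follows from $H^1(\GG_m,-)=0$ on rational modules by d\'evissage along a $\lambda$-stable central series of $U$, or from your weight-by-weight induction once this constraint is in hand.

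For comparison, the paper applies the vanishing of $H^1$ directly to $t\mapsto u_t=u(t)^{-1}$, asserting $u_{tt'}=u_t\,(\lambda(t)\cdot u_{t'})$. Your own bookkeeping shows the second factor really lives at $\lambda(t')\cdot y$, so this holds only when $y$ is $\lambda$-fixed. For instance, take $X=\AA^2$ with $u\cdot(a,z)=(a+u,z)$ and $\lambda(t)\cdot(a,z)=(t^2a-tz+t^2z,tz)$, so that $\lambda(t)u\lambda(t)^{-1}=t^2u$, with slice $\{a=0\}$ and $y=1$. One finds $u_t=t-t^2$, which is not a cocycle, and no $u$ satisfies $u\cdot\iota(\lambda(t)\cdot y)=\lambda(t)\cdot(u\cdot\iota(y))$, even though the lemma's conclusion holds. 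So the paper's argument lacks the same ingredient, and its cohomological idea goes through only when applied to the cocycle over the orbit closure rather than to $u_t$.
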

\begin{proof}
By definition of the induced $\lambda$-action on the quotient $Y = X/U$, we have $\lambda(t) \cdot y = q_U(\lambda(t) \cdot \iota(y))$ so that there is an element $u_t \in U$ such that
\[ \iota (\lambda(t) \cdot y)  = u_t \cdot (\lambda(t) \cdot \iota(y)).\]
This defines a twisted homomorphism (i.e.\ a $U$-valued 1-cocycle on $\GG_m$), as $ u_{tt'} = u_t  (\lambda(t) \cdot u_{t'})$.
Since $H^1(k,U) = 0$ (for example, this follows from \cite[Prop.\ 15.3]{Milne}, which is Prop.\ 16.3 in the online notes), this 1-cocycle is principal, so there is an element $u \in U$ such that $u_t = u^{-1} (\lambda(t) \cdot u)$. Consequently
\[ u \cdot \iota(\lambda(t) \cdot y) = \lambda(t) \cdot (u \cdot \iota(y))\]
and so $\lim_{t \rightarrow 0} \lambda(t) \cdot y$ exists if and only if $\lim_{t \rightarrow 0} \lambda(t) \cdot x$ exists, for $x = u \cdot \iota(y)$.
\end{proof}

To interpret the closed points of $X \gitq^{f}_{\hspace{-2pt} q, \rho \hspace{2pt}} G$, we recall that two orbits in $X^{ss}(f;q,\rho)$ are S-equivalent if their orbit closures meet in $X^{ss}(f;q,\rho)$.

\begin{proposition}\label{prop closed points of rel NRGIT quotient}
Under the assumptions of \cref{theorem nonred quotients graded equiv optimal assumptions on Z}, the following statements hold.
\begin{enumerate}
\item Every $G$-orbit in $X^{ss}(f;q,\rho)$ contains a unique closed orbit in its closure in $X^{ss}(f;q,\rho)$;
\item The closed points of $X \gitq^{f}_{\hspace{-2pt} q, \rho \hspace{2pt}} G$ are in bijection with the closed $G$-orbits in $X^{ss}(f;q,\rho)$ or equivalently S-equivalence classes of orbits in $X^{ss}(f;q,\rho)$;
\item A $G$-orbit is closed in $X^{ss}(f;q,\rho)$ if and only if  it is closed under all flows by 1-PSs of $G$;
\item Let $x \in X^{ss}(f;q,\rho)$ and $\lambda \colon \GG_m \rightarrow G$ be a 1-PS such that $\lim_{t \rightarrow 0} \lambda(t) \cdot x $ exists in $X$; then this limit exists in $X^{ss}(f;q,\rho)$ if and only if $\langle \rho,\lambda \rangle = 0$.
\end{enumerate}
\end{proposition}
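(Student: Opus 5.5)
The plan is to reduce every statement to the corresponding reductive statement for the $R$-action on $Y = X/U$, via the factorisation from the proof of \cref{theorem nonred quotients graded equiv optimal assumptions on Z}:
\[ X^{ss}(f;q,\rho) = q_U^{-1}(Y^{R,ss}(h;q,\rho)) \xrightarrow{\;q_U\;} Y^{R,ss}(h;q,\rho) \longrightarrow Y \gitq^{q\circ h}_{\rho} R . \]
Here $q_U$ is a geometric $U$-quotient and a Zariski locally trivial affine space fibration (\cref{theorem U quotient with varying stabilisers}). Since $q_U$ is a geometric quotient and $U$ is normal in $G$, $q_U$ sends $G$-invariant closed subsets of $X^{ss}$ to $R$-invariant closed subsets of $Y^{ss}$, and preimage gives the inverse. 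Indeed, $q_U$ is a universally open, surjective $U$-quotient, so $U$-saturated closed sets correspond to closed sets.

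Consequently $G$-orbits in $X^{ss}(f;q,\rho)$ correspond bijectively to $R$-orbits in $Y^{R,ss}$ via $G\cdot x \mapsto R\cdot q_U(x)$. Orbit closures correspond, and closed orbits correspond. Parts (1) and (2) then follow from the reductive statements for the twisted affine quotient of $Y$ over $W$: \cref{prop equiv twisted affine} \eqref{equiv tw aff v}, together with the standard fact that a good quotient has a unique closed orbit in each orbit closure. S-equivalence is also preserved, because closures meet in $X^{ss}$ iff their images meet in $Y^{ss}$, using that $q_U^{-1}$ of an intersection is the intersection of the preimages.

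For part (4), let $\lambda$ be a 1-PS of $G$ and $x_0=\lim_{t\to0}\lambda(t)\cdot x$. First conjugate $\lambda$ by an element of $U$ into $R$; every 1-PS of $G=U\rtimes R$ lies in a maximal torus, and these are $U$-conjugate into $R$. Replacing $x$ by $u\cdot x$ keeps us in $X^{ss}$ and does not change $\langle\rho,\lambda\rangle$, since $\rho$ is trivial on $U$. Then $q_U(x_0)=\lim\lambda(t)\cdot q_U(x)$ exists in $Y$, and $x_0\in X^{ss}$ iff $q_U(x_0)\in Y^{ss}$. The reductive statement for $Y$ gives that this holds iff $\langle\rho,\lambda\rangle=0$. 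That statement follows from the Hilbert--Mumford description: a $\rho$-semi-invariant $\sigma$ of weight $n$ satisfies $\sigma(\lambda(t)y)=t^{n\langle\rho,\lambda\rangle}\sigma(y)$. If $\sigma(y)\ne0$, the limit is nonzero only when $\langle\rho,\lambda\rangle=0$. Conversely, if $\langle\rho,\lambda\rangle=0$, the limit $y_0$ satisfies the HM inequalities, since any 1-PS destabilising $y_0$ combines with $\lambda$ (after passing to a common torus, via the Kempf-type argument used in the absolute case) to destabilise $y$. Alternatively, apply (v) of the theorem directly on $X$.

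Part (3) is the main obstacle. One direction is easy: a closed orbit in $X^{ss}$ contains all limits lying in $X^{ss}$. For the converse, suppose $G\cdot x$ is not closed in $X^{ss}$. Then $R\cdot q_U(x)$ is not closed in $Y^{ss}$, and the reductive statement for the twisted affine quotient (\cref{HM crit for rel twisted affine GIT}) gives a 1-PS $\lambda$ of $R$ with $y_0=\lim\lambda(t)q_U(x)\in Y^{ss}\setminus R\cdot q_U(x)$. Applying \cref{lemma existence of limits under quotients by unipotents with slices} locally over $W$, there is $x'\in U\cdot x$ such that $\lim\lambda(t)x'$ exists in $X$. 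This limit lies over $y_0$, so it is in $X^{ss}$ but not in $G\cdot x$. Hence $G\cdot x$ is not closed under 1-PS flows. The delicate point is justifying the local slice form of $q_U$ near the relevant fibre; I would handle this by restricting to an open affine in $Z$ over which $q_U$ is trivial, which contains $f(x)$ and hence, after shrinking to a $\lambda$-stable saturated neighbourhood, also the limit.
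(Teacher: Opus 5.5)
Your proposal is correct and is essentially the paper's argument: (1)--(2) come from the good quotient, and for (3) you reduce to the reductive $R$-action on $Y=X/U$, detect a non-closed orbit there by a 1-PS flow, and lift that flow to some $u\cdot x$ via \cref{lemma existence of limits under quotients by unipotents with slices}, exactly as the paper does. Two small corrections: in (4) the Kempf-type detour for the converse is unnecessary and only sketched, since your own formula already gives $\sigma(x_0)=\lim_{t\to 0}t^{n\langle\rho,\lambda\rangle}\sigma(x)=\sigma(x)\neq 0$ when $\langle\rho,\lambda\rangle=0$ (with $\sigma$ defined on the saturated set $(q\circ f)^{-1}(V)$, which contains $x_0$, so no conjugation into $R$ is needed); and for the slice lemma you cannot reach the limit by ``shrinking'' a trivialising affine open of $Z$, which need not be $\lambda$-stable, but should instead work over $q^{-1}(V)$ for an affine open $V\subseteq W$ containing $q(f(x))$, which is affine, $R$-stable and contains the whole flow and its limit (a point the paper itself passes over when invoking the lemma).
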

\begin{proof}
The first two statements hold as $X^{ss}(f;q,\rho) \rightarrow X \gitq^{f}_{\hspace{-2pt} q, \rho \hspace{2pt}} G$ is a good quotient. For Part (3), we claim that $G \cdot x$ is closed in $X^{ss}(f;q,\rho)$ if and only if the $R$-orbit of $ u\cdot x$ is closed in $X^{ss}(f;q,\rho)$ for all $u \in U$. The latter condition is equivalent to the orbit being closed under all flows along 1-PS of $G$, as the orbit closure for the reductive group $R$ can be tested using 1-PSs by the Fundamental Theorem of GIT \cite[Theorem 1.4]{Kempf1978}. To prove the claim, we note that $G \cdot x$ is closed in $X^{ss}(f;q,\rho)$ if and only if $R \cdot y$ is closed in $Y^{R,ss}(h;q,\rho)$, where $y:=q_U(x)$. Since 
$R \cdot y$ is closed in $Y^{R,ss}(h;q,\rho)$ if and only if this orbit is closed under flows along 1-PS of $R$, we see that $G \cdot x \subseteq X^{ss}(f;q,\rho)$ is not closed if and only if there is a 1-PS $\lambda \colon \GG_m \rightarrow R$ such that $y \neq \lim_{t \rightarrow 0} \lambda(t) \cdot y $ exists in $ Y^{R,ss}(h;q,\rho)$ and is not equal to $y$. By \cref{lemma existence of limits under quotients by unipotents with slices}, this is equivalent to the existence of a 1-PS $\lambda \colon \GG_m \rightarrow R$ and $u \in U$ such that for $x' = u \cdot x$ we have $ \lim_{t \rightarrow 0} \lambda(t) \cdot x'$ exists in $X^{ss}(f;q,\rho)$ and is not equal to $x'$. Hence in $X^{ss}(f;q,\rho)$, the $G$-orbit of $x$ is not closed if and only if the $R$-orbit of $u \cdot x$ is not closed for some $u \in U$. In particular, this proves (3). Part (4) holds as the limit point is fixed by $\lambda$ and so this limit is semistable if and only if $\langle \rho,\lambda \rangle =0$.
\end{proof}

If we only have a $H$-quotient $q \colon Z \rightarrow W$ of an open subset $Z' \subseteq Z$, then we can apply the above result to the base change $f' \colon X' \rightarrow Z'$ of $f$ over $Z'$. We can apply \cref{theorem nonred quotients graded equiv optimal assumptions on Z} provided we assume that $Z' \subseteq Z^\circ$. In this case, we obtain the following non-reductive Hilbert--Mumford description of the (semi)stable locus.
\begin{equation}\label{eq HM descr nonred}\tag{NR-HM} X^{(s)s}(f;q,\rho) = \left\{ x \in X':=f^{-1}(Z') \left| \begin{array}{c} \text{$\langle \rho, \lambda \rangle \: (\geq) \: 0$ for all 1-PSs $\lambda  \colon \Gm \rar G$} \\ \text{such that the limit exists in $X'$} \end{array} \right. \right\}.\end{equation}

\begin{proof}[Proof of \cref{main theorem}]
This follows by applying \cref{theorem nonred quotients graded equiv optimal assumptions on Z} and \cref{prop closed points of rel NRGIT quotient} to $X' \rightarrow Z'$. 
\end{proof} 

    We should note that to really work with this criterion in practice, one would need to have a Hilbert--Mumford description of $Z' \subseteq Z$. We refer to the analogous discussion in \cref{rmk Z' admits HM descr} for further details of what can be said in that case.

\begin{remark}\label{rmk on NR HM semistability}
    In the set up of \cref{main theorem}, we make some observations about the semistable locus following this Hilbert--Mumford description.
    \begin{enumerate}
    \item Since every 1-PS in $G$ can be conjugated by an element in $U$ to have image contained in $R$ (or we can additionally conjugate by an element of $R$ so it has image in a fixed maximal torus $T \subseteq G$), we see that the (semi)stable locus (for $G$) can be computed from the semistable locus for $R$ or $T$ 
    \[ \quad  X^{G,(s)s}(f; q, \rho) = \bigcap_{u \in U} u X^{R,(s)s}(f; q,\rho) = \bigcap_{g \in G} g X^{T,(s)s}(f; q,\rho), \]
    where for a subgroup $G' \subseteq G$, we write $X^{G',(s)s}(f; q,\rho)$ to mean the locus of points in $X'$ that satisfy the same inequality as in \eqref{eq HM descr nonred} for all 1-PS in $G'$ (rather than $G$). 
        \item  As the limit of all points in $X$ under the grading subgroup $\lambda_g \colon \GG_m \hookrightarrow G$ exists, we see that we must have $\langle \rho,\lambda_g \rangle \geq 0$ for the semistable set to be non-empty. Note that if $\langle \rho,\lambda_g \rangle = 0$, then the stable locus is empty and so we will in practice want this pairing to be strictly positive. In this case, as points in $Z$ are fixed by $\lambda_g$, we see that the semistable locus is contained in $X' \setminus UZ'$ (that is all orbits that meet $Z'$ are unstable). This is precisely what we want to avoid the good $G$-quotient of (an open set in) $X'$ collapsing onto the good $R$-quotient $W$ of $Z'$.
        \item\label{stable loci not preserved by f} In the case when the good $R$-quotient $q$ is a reductive GIT quotient and $Z' = Z^{ss}$, we have by construction that $f(X^{ss}(f;q,\rho)) \subseteq Z^{ss}$. However, it is not necessarily the case that $f(X^{s}(f;q,\rho)) \subseteq Z^{s}$, as there could well be stable points in $X$ whose image in $Z$ are strictly semistable (for example, this happens in an application to moduli of quivers with multiplicities \cite{HJV}).
    \end{enumerate}
   
\end{remark}

\subsection{Externally graded actions}\label{sec ext graded}
 We now suppose that $\varphi \colon G = U \rtimes R \twoheadrightarrow R$ acts equivariantly on an affine morphism $f \colon X \to Z$, and that this action is externally graded by a $\GG_m \subseteq \Aut(U)$. As above, we fix a good $R$-quotient  $q \colon Z' \to W$ of an open subset $Z'\subseteq Z$ and a character $\rho \colon G \rightarrow \GG_m$.
 
When $Z' \subseteq Z^\circ $, we can use the grading $\GG_m$ to take a quotient of the fibrewise $U$-action on $f' \colon X'=f^{-1}(Z')\rightarrow Z'$ using \cref{theorem U quotient with varying stabilisers}; this gives a geometric $U$-quotient
\[ q_U \colon X' \rightarrow X'/U :=\rSpec_{Z'} f'_*\cO_{X'}^U,
\]
which is affine and of finite type over $Z'$. We can then forget this $\GG_m$ and take a relative $\rho$-twisted affine GIT quotient of the equivariant $R$-action on $X'/U \rightarrow Z'$ with respect to $q$. This construction yields a quotient that is projective-over-affine over $W$ as follows. 

\begin{lemma}\label{lemma ext grading forget Gm}
Let $\varphi \colon G = U \rtimes R \twoheadrightarrow R$ act equivariantly on an affine morphism $f \colon X \to Z$ and suppose this is externally graded by a $\GG_m \subseteq \Aut(U)$. Given a character $\rho \colon G \rightarrow \GG_m$ and a good $R$-quotient  $q \colon Z' \to W$ of an open subset $j \colon Z' \hookrightarrow Z^\circ \hookrightarrow  Z$, the sheaf $q_*j^*f_*\cO_X^{G,\rho}$ of $\rho$-twisted semi-invariants is of finite type over $W$. The inclusion induces a rational map
\[ q_G \colon X' = f^{-1}(Z) \dashrightarrow  X \gitq^f_{\hspace{-2pt} q, \rho \hspace{2pt}} G = \rProj_W q_*j^*f_*\cO_X^{G,\rho} \]
that gives a good $G$-quotient of its domain of definition. Furthermore, we have a factorisation $X \gitq^f_{\hspace{-2pt} q, \rho \hspace{2pt}} G \rightarrow X \gitq^f_{\hspace{-2pt} q \hspace{2pt}} G \rightarrow W$ as a projective morphism followed by an affine morphism.
\end{lemma}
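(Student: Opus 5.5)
The plan mirrors the proof of \cref{theorem nonred quotients graded equiv optimal assumptions on Z}: first take a geometric $U$-quotient, then apply relative reductive twisted affine GIT to the residual $R$-action. First base change along $j$, writing $f' \colon X' = f^{-1}(Z') \to Z'$ with $\cA' := j^*f_*\cO_X = f'_*\cO_{X'}$. The external grading gives, by \cref{remark graded}~\eqref{fromexternaltointernal}, a graded fibrewise action of $\hU = U \rtimes \GG_m$ on $f'$. Here $U$ acts fibrewise because $\varphi$ kills $U$, and $\GG_m$ acts fibrewise by \cref{definition graded}~\eqref{def ext graded}. Since $Z' \subseteq Z^\circ$ and $Z^\circ$ is open, the locus $Z'^\circ$ for $f'$ is all of $Z'$: the sheaves $\cQ_i$ are defined on $X$ and their local freeness is local. \cref{theorem U quotient with varying stabilisers} therefore applies. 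It shows that $\cA'^U$ is a finitely generated $\cO_{Z'}$-algebra and that $q_U \colon X' \to Y := X'/U = \rSpec_{Z'} \cA'^U$ is a geometric $U$-quotient, with $h \colon Y \to Z'$ affine of finite type.

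Next I check that $R$ acts on $Y$ and that $h$ is $R$-equivariant. Since $U$ is normal in $G$, the $G$-action on $X'$ descends to an $R = G/U$-action on $Y$ via the universal property of the categorical quotient $q_U$, and $h$ is equivariant because $f'$ is. The character $\rho$ must be trivial on the unipotent group $U$, so it factors through $R$. Hence
\[
(q_*\cA')^G_{\rho^n} = \bigl((q_*\cA')^U\bigr)^R_{\rho^n} = \bigl(q_*(\cA'^U)\bigr)^R_{\rho^n},
\]
where the last equality holds because $U$ acts fibrewise over $Z'$ and thus commutes with $q_*$. Consequently $q_*j^*f_*\cO_X^{G,\rho} = (q_* h_*\cO_Y)^{R,\rho}$.

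Now apply \cref{prop equiv twisted affine} to the equivariant action of $R \to R$ on the affine morphism $h \colon Y \to Z'$, with the good quotient $q$ and character $\rho$. This shows that the semi-invariant sheaf is of finite type over $W$ and that $Y^{ss}(h;q,\rho) \to Y \gitq^h_{q,\rho} R$ is a good $R$-quotient. It also gives the factorisation $Y \gitq^h_{q,\rho} R \to Y \gitq^h_q R \to W$ into a projective morphism followed by an affine one. By the equality of invariants above, these are $X \gitq^f_{q,\rho} G$ and $X \gitq^f_q G = \rSpec_W (q_*\cA')^G$.

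It remains to show that the rational map $q_G$ factors as $q_U$ followed by the rational $R$-quotient map, so that its domain of definition is $q_U^{-1}(Y^{ss}(h;q,\rho))$. The composite of a geometric $U$-quotient, which is affine, with a good $R$-quotient is a good $G$-quotient. Being good is local over $W$, and locally $q_U$ is a trivial affine-space fibration, so $\cO$-invariance and the separation of closed invariant sets both pass through. This composite-of-quotients step is the one I expect to need the most care. It is handled exactly as in the proof of \cref{theorem nonred quotients graded equiv optimal assumptions on Z}~(iii), and no collapsing argument for $\rSpec_W(q_*\cA')^G = W$ is needed here.
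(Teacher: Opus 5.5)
Your proposal is correct and takes the same route as the paper. In the paragraph preceding the lemma, the paper likewise takes the geometric $U$-quotient $X' \to X'/U$ via \cref{theorem U quotient with varying stabilisers} using the grading $\GG_m$, then forgets that $\GG_m$ and applies relative $\rho$-twisted affine GIT (\cref{prop equiv twisted affine}) to the equivariant $R$-action on $X'/U \to Z'$ with respect to $q$. Your additional checks fill in details the paper leaves implicit: that $\rho$ is trivial on $U$, the identity $(q_*\cA')^G_{\rho^n} = (q_*\cA'^U)^R_{\rho^n}$, and that a geometric $U$-quotient followed by a good $R$-quotient is a good $G$-quotient.
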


This construction gives us some of the properties claimed in Theorem \ref{main theorem externally graded case}, but not the Hilbert-Mumford criterion or the projective completion. To attain these, we will show that the quotient above can also be constructed in a different manner. As in \cref{remark graded} \eqref{fromexternaltointernal}, we have an induced internally graded equivariant action of $\widetilde{\varphi} \colon \widetilde{G} : = G \rtimes \GG_m \twoheadrightarrow \widetilde{R} : = R \times \GG_m$ on $\widetilde{f} \colon \widetilde{X} := X \times \AA^1 \rightarrow Z$ to which we can thus apply \cref{main theorem}. For any character $\tilde{\rho}$ of $\widetilde{G}$, this gives a good quotient  $\widetilde{X}^{ss}(\widetilde{f};q,\widetilde{\rho}) \to \widetilde{X} \gitq^{\widetilde{f}}_{\hspace{-2pt} q, \widetilde{\rho} \hspace{2pt}} \widetilde{G}$, projective over $W$. We can use this to construct a good $G$-quotient of an open set in $X$ when $\widetilde{\rho} = (\rho, m)$ for $m >0$ as follows.

 \begin{proposition} \label{externally graded} 
     For an equivariant action of $\varphi \colon G = U \rtimes R \twoheadrightarrow R$ on an affine morphism $f \colon X \to Z$ that is externally graded by a $\GG_m$, suppose $q \colon Z' \rightarrow W$ is a good $G$-quotient of an open set with $Z' \subseteq Z^\circ \subseteq Z$ and $\rho \colon G \rightarrow \GG_m$ is a character of $G$. There is an internally graded equivariant action of $\widetilde{\varphi} \colon \widetilde{G} : = G \rtimes \GG_m \twoheadrightarrow \widetilde{R} : = R \times \GG_m$ on $\widetilde{f} : \widetilde{X} := X \times \AA^1 \rightarrow Z$ with relative NRGIT quotient $\widetilde{\pi} \colon \widetilde{X}^{(s)s}(\widetilde{f};q,\widetilde{\rho}) \rightarrow \widetilde{X} \gitq^{\widetilde{f}}_{\hspace{-2pt} q, \widetilde{\rho} \hspace{2pt}} \widetilde{G}$ where $\widetilde{\rho} = (\rho,m)$ for $m \in \ZZ_{>0}$ such that the following statements hold.
    \begin{enumerate}[(i)]
    \item Using the $G$-equivariant inclusion $\iota \colon X \hookrightarrow \widetilde{X}$ given by $x \mapsto (x,1)$ we obtain $G$-invariant open subsets $X^{(s)s}(\widetilde{f},\widetilde{\varphi};q,\rho): =  X \cap \widetilde{X}^{(s)s}(\widetilde{f};q,\widetilde{\rho})$ of $X$;
        \item there is a relative good $G$-quotient $X^{ss}(\widetilde{f},\widetilde{\varphi};q,{\rho}) \to X \gitq^{\widetilde{f},\widetilde{\varphi}}_{\hspace{-2pt} q, \rho \hspace{2pt}} {G}:= \widetilde{\pi}(X^{ss}(\widetilde{f},\widetilde{\varphi};q,{\rho}))$, which is quasi-projective over $W$, with a relative projective completion over $W$ given by $\widetilde{X} \gitq^{\widetilde{f}}_{\hspace{-2pt} q, \tilde{\rho} \hspace{2pt}} \widetilde{G}$, whose boundary component is isomorphic to $X \gitq^{f}_{q,\tilde{\rho}} \widetilde{G}$.
        \item This relative quotient  restricts to a geometric quotient on  $X^{s}(\widetilde{f},\widetilde{\varphi};q,\widetilde{\rho})$.
        \item The (semi)stable loci $X^{(s)s}(\widetilde{f},\widetilde{\varphi};q,\rho): =  X \cap \widetilde{X}^{(s)s}(\widetilde{f};q,\widetilde{\rho})$ have induced Hilbert--Mumford descriptions coming from those for $\widetilde{X}^{(s)s}(\widetilde{f};q,\widetilde{\rho})$ given in \eqref{eq HM descr nonred}.
    \end{enumerate}
 \end{proposition}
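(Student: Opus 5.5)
We first set up the internally graded action and check that \cref{main theorem} applies to it, and then read off (i)--(iv) from the structure of $\widetilde{X} = X \times \AA^1$. The action is the one in \cref{remark graded} \eqref{fromexternaltointernal}: $\widetilde{G} = G \rtimes \GG_m$ acts on $\widetilde{X}$ by $(g,s)\cdot(x,a) = (s\cdot (g\cdot x), s^{-1}a)$. The sign of the weight on $\AA^1$ is chosen so that the grading $\GG_m$ still contracts $\widetilde{X}$ onto $Z$ as $t\to 0$ (if $\AA^1$ has weight $-1$ in the non-positive grading convention of \cref{remark graded}, then $\widetilde{\cA} = \cA[w]$ stays non-positively graded with degree zero part $\cO_Z$). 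The map $\widetilde{\varphi}\colon \widetilde{G}\to \widetilde{R}=R\times\GG_m$ is graded by the same $\GG_m$ viewed inside $\widetilde{G}$. We also need $Z'\subseteq Z^\circ(U)$ for the enlarged action. Since $U$ acts trivially on the $\AA^1$ factor, $\Omega_{\widetilde{X}} = p^*\Omega_X\oplus \cO\,dw$ and the infinitesimal coaction kills $dw$. So the cokernels $\cQ_i$ for $\widetilde{X}$ are pullbacks of those for $X$, and $Z^\circ$ is unchanged. \cref{main theorem} then gives the good quotient $\widetilde{\pi}$, which is projective over $W$, together with the (NR-HM) description of $\widetilde{X}^{(s)s}(\widetilde{f};q,\widetilde{\rho})$.

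For (i)--(iii), the embedding $\iota(x)=(x,1)$ identifies $X$ with the open subset $\{w\neq0\}\subseteq\widetilde{X}$, and $\widetilde{G}\cdot\iota(X) = X\times\GG_m$. This subset is $\widetilde{G}$-invariant, open and saturated, and the extra $\GG_m$ acts simply transitively on the $\GG_m$ factor. Hence $\widetilde{G}$-orbits in $X\times\GG_m$ correspond bijectively to $G$-orbits in $X$, via $\widetilde{G}\times_{\GG_m}$-induction, i.e.\ $X\times\GG_m \cong \widetilde{G}\times^G X$. Restricting a good (respectively geometric) quotient to a saturated open subset again gives a good (respectively geometric) quotient. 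Since $\widetilde{X}^{ss}\cap (X\times\GG_m)$ is saturated for $\widetilde{\pi}$ (it is the preimage of $\widetilde{\pi}$ of the locus where $w$, a semi-invariant of the appropriate weight, does not vanish in a local Proj sense), $\widetilde{\pi}$ restricts to a good $\widetilde{G}$-quotient of $\widetilde{X}^{ss}\cap (X\times \GG_m)$ onto an open subset. Pulling back along $\iota$ and using the induction isomorphism, we obtain a good $G$-quotient of $X^{ss}(\widetilde{f},\widetilde{\varphi};q,\rho)$ onto $\widetilde{\pi}(X^{ss}(\cdots))$. This image is open in a scheme projective over $W$, hence quasi-projective over $W$, and the stable loci give the geometric quotient in (iii). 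The boundary is the closed complement $\{w=0\} = X\times\{0\}$. It is $\widetilde{G}$-invariant with the extra $\GG_m$ acting on $X$ only, so its image is the quotient $X\gitq^f_{q,\widetilde{\rho}}\widetilde{G}$. To identify this image we apply \cref{main theorem} again, now to the internally graded $\widetilde{G}$-action on $X$ itself, and we use that closed $\widetilde{G}$-invariant subschemes of a good quotient map onto closed subschemes of the image, so that the image is the quotient of $X\times\{0\}$.

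For (iv), we restrict \eqref{eq HM descr nonred} for $\widetilde{X}$ to points $(x,1)$. A 1-PS of $\widetilde{G}$ has the form $\widetilde{\lambda} = (\lambda, r)$, up to $U$-conjugation. Its limit on $(x,1)$ exists iff $r\geq 0$ (from the $\AA^1$ factor) and the limit of $x$ under the twisted flow exists, and the pairing is $\langle\widetilde{\rho},\widetilde{\lambda}\rangle = \langle\rho,\lambda\rangle + mr$. Writing these conditions out gives the induced description.

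The main obstacle is the saturation and boundary identification in (ii). Naively, orbits from $X\times\GG_m$ could have closures meeting $X\times\{0\}$ inside $\widetilde{X}^{ss}$, in which case $\widetilde{\pi}(X^{ss})$ would fail to be disjoint from the image of the boundary. Here the choice $m>0$ is what should make things work. It makes $w$ a nonvanishing semi-invariant whose weight is a positive multiple of $\widetilde{\rho}$ (after adjusting with invariants pulled back from the base, as in \cref{properties of graded actions and linearisations}). Then $\{w\neq0\}$ is the preimage of the affine open $D_+(w)$ of the Proj, and this gives saturation directly. I would verify this first, checking the sign conventions carefully.
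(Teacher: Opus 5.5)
Your route differs from the paper's. You restrict the full $\widetilde G$-quotient to $X\times\GG_m\cong\widetilde G\times^G X$ in one step. The paper instead first takes the $\hU=U\rtimes\GG_m$-quotient, which is geometric because $m>0$, so any $\hU$-invariant open of the $\hU$-semistable locus is saturated; only then does it take the $R$-quotient.

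There is a genuine gap, exactly at the saturation step you flagged. The coordinate $w$ is not a semi-invariant of weight a positive power of $\widetilde\rho$ unless $\rho$ is trivial. Indeed, $G$ acts trivially on the $\AA^1$-factor, so $w$ has $G$-weight zero, whereas $\widetilde\rho^{\,n}|_G=\rho^n$. Multiplying by invariants pulled back from the base cannot change the $G$-weight. Your $D_+(w)$ argument is valid for $\hU$, where $\widetilde\rho$ restricts to $(0,m)$ because characters are trivial on $U$, but not for $\widetilde G$.

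This is not merely a missing justification: for small $m$ the saturation is false. Take $U$ trivial, $G=R=\GG_m$, and $f\colon X=\AA^1\to Z=\Spec k$, with $R$ and the grading $\GG_m$ both acting on $X$ with weight one, $\rho=\mathrm{id}$ and $m=1$. On $\widetilde X=\AA^2$ with coordinates $(x,w)$, the $\widetilde\rho^{\,n}$-semi-invariants are spanned by $x^n$. So $\widetilde X^{ss}=\{x\neq0\}$ maps to a point, $\widetilde X^{ss}\cap\{w\neq0\}$ is not saturated, and the "open part" and the "boundary" have the same image. The same example shows that $R$-orbitwise closedness of $\GG_m\cdot\iota(X)$ in $\widetilde X$ does not by itself give $R$-saturation after passing to the $\hU$-quotient. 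So a largeness condition on $m$ is needed on either route.

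The repair is the device used in the proof of \cref{main theorem externally graded case}. Work over an affine open of $W$; finitely many cover $W$, so a single $m$ will work. Choose finitely many generators $\sigma$ of the $\rho$-semi-invariants over the invariants, homogeneous for the grading, and take $m>M:=\max_\sigma \mathrm{wt}_{\GG_m}(\sigma)/\mathrm{wt}_\rho(\sigma)$. Then $\tilde\sigma:=\sigma\,w^{m\,\mathrm{wt}_\rho(\sigma)-\mathrm{wt}_{\GG_m}(\sigma)}$ is a $\widetilde\rho$-semi-invariant involving a strictly positive power of $w$. As in that proof one gets $\widetilde X^{ss}\cap\{w\neq0\}=\bigcup_\sigma D(\tilde\sigma)=\widetilde\pi^{-1}\bigl(\bigcup_\sigma D_+(\tilde\sigma)\bigr)$. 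This set is therefore saturated, and its image misses the image of $X\times\{0\}$, on which every $\tilde\sigma$ vanishes. With this in place, your induction argument, the boundary identification and (iv) go through.

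Separately, your formula $(s\cdot(g\cdot x),s^{-1}a)$ has the wrong sign for $\widetilde f$ to be graded, since $s^{-1}a$ has no limit as $s\to0$ when $a\neq0$. Your condition $r\geq0$ in (iv) implicitly uses the scaling action $a\mapsto sa$.
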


\begin{proof}
Recall from \cref{remark graded} \eqref{fromexternaltointernal} that $\widetilde{G}$ acts on $\AA^1$ by scaling by $\GG_m$ and the trivial $G$-action, and yields an internally graded equivariant action of $\widetilde{\varphi}$ on $\widetilde{f}$. By applying \cref{main theorem} to this action, we obtain the quotient $\widetilde{\pi}$.

The first claim is immediate as $G$ acts trivially on the copy of $\AA^1_{Z}$ in $\widetilde{X}$. For the second claim, let us first suppose that $R$ is trivial and $Z^\circ = Z$ so we have a fibrewise action of $\hU:= U \rtimes \GG_m$ on $\tilde{f}$ which we quotient with respect to the $\GG_m$-character $\tilde{\rho}=m$. We claim that the good $\hU$-quotient $\widetilde{\pi}_{\hU} \colon \widetilde{X}^{\hU,ss}(\widetilde{f};m) \rightarrow \widetilde{X} \gitq^{\widetilde{f}}_{\hspace{-2pt} m \hspace{2pt}} \hU$ is geometric as $m>0$. Recall this quotient is constructed by first taking a geometric $U$-quotient $\widetilde{X} \rightarrow Y:=\widetilde{X}/U:= \rSpec_Z \widetilde{f}_*\cO_{\widetilde{X}}^U$ and then taking a $\widetilde{\rho}$-twisted affine GIT quotient of the fibrewise $\GG_m$-action on $Y \rightarrow Z$. Since $Z = Y^{\GG_m}$ and the $\GG_m$-action induces a grading on $\widetilde{f}_*\cO_{\widetilde{X}}^U$ supported in non-positive degrees, we have a geometric $\GG_m$-quotient $Y^{ss}(m) = Y \setminus Z \rightarrow Y\gitq_{\hspace{-2pt} m} \GG_m$ as in  \cref{example VGIT Gm} and the $\GG_m$-orbits in $Y^{ss}(m)$ all have dimension one. Hence $\widetilde{\pi}_{\hU}$ is a geometric $\hU$-quotient. In particular, any $\hU$-invariant open subset $X'$ of $\widetilde{X}^{\hU,ss}(\widetilde{f};m)$ is automatically orbitwise closed (as all orbits in the semistable locus have the same dimension) and so the restriction $X' \rightarrow \widetilde{\pi}_{\hU}(X')$ is a geometric $\hU$-quotient whose image is an open subset in $\widetilde{X} \gitq^{\widetilde{f}}_{\hspace{-2pt} m \hspace{2pt}} \hU$. We  apply this to $X' = \GG_m \cdot \iota(X)$.

The image of the $U$-equivariant inclusion $\iota \colon X \hookrightarrow \widetilde{X}$ is contained in $\widetilde{X}^{\hU,ss}(\widetilde{f},m)$, as the coordinate on $\AA^1$ gives a $\rho$-twisted (semi)invariant section which is non-zero on any point in the image of this inclusion. Hence $X^{U,ss}(\widetilde{f},\widetilde{\varphi}): =  X \cap \widetilde{X}^{\hU,ss}(\widetilde{f};m) = X $. 
Moreover $X':=\GG_m \cdot \iota(X) \simeq  X \times (\AA^1\setminus \{0\})$ is a $\hU$-invariant open subset of $\widetilde{X}^{\hU,ss}(\widetilde{f};m)$ and so we obtain a geometric $\hU$-quotient $X' \rightarrow \widetilde{\pi}_{\hU}(X')$ as above. We claim the composition 
\[ \pi_U \colon X \rightarrow X' \rightarrow \widetilde{\pi}_{\hU}(X')=\widetilde{\pi}_{\hU}(X)\]
is a geometric $U$-quotient, where $\widetilde{\pi}_{\hU}(X)$ is an open subset of the projective $Z$-scheme $\widetilde{X} \gitq^{\widetilde{f}}_{\hspace{-2pt} m \hspace{2pt}} \hU$. Indeed, as $\pi_U$ satisfies the topological properties of being a good quotient and its fibres are $U$-orbits, it suffices to show that $\cO_{\widetilde{\pi}_{\hU}(X')} \cong (\pi_U)_* \cO_X^U$. This is the case as locally $\pi_U$ corresponds to $\Spec A \rightarrow \Spec A[x,x^{-1}] \rightarrow \Spec A[x,x^{-1}]^{\hU} = A^U$, where the last equality holds as $U$ acts trivially on $x$ and $\GG_m$ scales $x$. This completes the proof of the second claim when $R$ is trivial.

If $R$ is non-trivial, we can assume by replacing $X$ by $f^{-1}(Z')$ that $Z' = Z^\circ = Z$. The relative NRGIT quotient  $\widetilde{X} \gitq^{\widetilde{f}}_{\hspace{-2pt} \rho \hspace{2pt}} \widetilde{G}$ is constructed as a quotient of the fibrewise $\hU$-action, followed by a quotient of the equivariant $R$-action as follows
\begin{center}
\begin{tikzcd}
\widetilde{X} \arrow[r, dashrightarrow, "{\widetilde{\pi}_{\hU}}"] \arrow[rd, "{\widetilde{f}}"] & \widetilde{X} \gitq^{\widetilde{f}}_{\hspace{-2pt} m \hspace{2pt}} \hU \arrow[r, dashrightarrow, "{\widetilde{\pi}_{R}}"] \arrow[d, "h"] & ( \widetilde{X} \gitq^{\widetilde{f}}_{\hspace{-2pt} m \hspace{2pt}} \hU)\gitq^{h}_{\hspace{-2pt} q,\rho \hspace{2pt}} R = \widetilde{X} \gitq^{\widetilde{f}}_{\hspace{-2pt} \rho \hspace{2pt}} \widetilde{G}\arrow[d] \\ & Z \arrow[r, "q"] & W.
\end{tikzcd} 
\end{center}
Hence it suffices to prove that $\widetilde{\pi}_{\hU}(X) \cap (\widetilde{X} \gitq^{\widetilde{f}}_{\hspace{-2pt} m \hspace{2pt}} \hU)^{R,ss}(h;q,\rho)$ is an $R$-invariant saturated open subset of $(\widetilde{X} \gitq^{\widetilde{f}}_{\hspace{-2pt} m \hspace{2pt}} \hU)^{R,ss}(h;q,\rho)$, so the restriction of the good $R$-quotient $\widetilde{\pi}_R$ to this open set is also a good $R$-quotient. This holds as $\widetilde{\pi}_{\hU}(X) = \widetilde{\pi}_{\hU}(\GG_m \cdot X)$ and $\GG_m \cdot X$ is an $\widetilde{G}$-saturated open subset of $\widetilde{X}^{\hU,ss}(\widetilde{f},m)$: we have already shown this subset is $\hU$-saturated. As the inclusion $\iota \colon X \hookrightarrow \widetilde{X}$ is closed and $R$-invariant, and $R$ acts trivially on $\AA^1$, the subset $\GG_m \cdot \iota(X) \subset \widetilde{X}$ is $R$-orbitwise closed. Hence, we obtain a good $G$-quotient of $X^{ss}(\widetilde{f},\widetilde{\varphi};q,{\rho})$ as the composition of a geometric $U$-quotient $\pi_U$ followed by the good $R$-quotient obtained by restricting $\widetilde{\pi}_R$. This completes the proof of (ii), as the boundary corresponds to the quotient of the $\widetilde{G}$-invariant closed subset $X \times \{ 0 \} \hookrightarrow \widetilde{X}$. The remaining two items then follow immediately.
\end{proof}

    It is important that we chose the extended character $\widetilde{\rho}$ to have $\GG_m$-weight $m>0$. For example, if we instead take $\widetilde{\rho} = (\rho,0)$ then the $\GG_m$-quotient is no longer geometric, thus $\GG_m \cdot X$ will not be orbitwise closed in the semistable locus, which means the restriction of $\widetilde{\pi}_{\hU}$ to this set will no longer yield a good quotient.

We now complete the proof of Theorem \ref{main theorem externally graded case}, by showing that the quotients of \cref{lemma ext grading forget Gm} and \cref{externally graded} coincide, provided one chooses $m\gg 0 $ in the latter construction.

\begin{proof}[Proof of \cref{main theorem externally graded case}]
It suffices to prove the quotients constructed in \cref{externally graded} (for $m\gg 0 $) and \cref{lemma ext grading forget Gm} agree, as they give quotients with the desired properties stated in Theorem \ref{main theorem externally graded case}. As usual the question is local on $Z$ and we can first base change to $X' = f^{-1}(Z')$ and then additionally assume that $X = \Spec A$ is affine, where by the graded property of $f$ we have $Z = \Spec A_0$, where $A_0$ is the $0$th graded piece of $A$ for the grading $\GG_m$. Then the quotient map for the extended group $\widetilde{G} = G \rtimes \GG_m$ is given by \[  \widetilde{X} = X \times \AA^1 =\Spec A[v] \dashrightarrow \Proj A[v]^{\widetilde{G},\widetilde{\rho}} =\widetilde{X}  \gitq^{\widetilde{f}}_{\hspace{-2pt} \widetilde{\rho} \hspace{2pt}} \widetilde{G} \] is obtained via the inclusion of semi-invariants for the character $\widetilde{\rho} = (\rho,m)$ for  $m > 0$.

The ring of semi-invariants $A^{G,\rho}$ is a finitely generated $A^G$-algebra by Lemma \ref{lemma ext grading forget Gm}, so we can choose a finite generating set $S \subset A^{G,\rho}$, consisting of semi-invariants with strictly positive $\rho$-weights. Moreover, if $a \in A$ is a $\rho$-semi-invariant, then so must be each of its (grading) $\Gm$-weighted homogeneous parts, and so without loss of generality we can take all elements of $S$ to be weight vectors for the grading $\Gm$. We then define 
\[M := \max\left\{\frac{\text{wt}_{\Gm}(a)}{\text{wt}_\rho(a)}  \mid a\in S \right\}.\]
We claim that, provided we take $m > M$, we get for $t \neq 0$ 
\begin{equation} \label{ext gr claim} (x,t) \in \widetilde{X}^{ss}(\widetilde{f};q,\widetilde{\rho}) \iff x \in X^{ss}(f;q,\rho).\end{equation}
Assuming that this holds, we see that $X\cap \widetilde{X}^{ss}(\widetilde{f};q,\widetilde{\rho}) = X^{ss}(f;q,\rho)$ has two good quotients constructed by  \cref{externally graded} and \cref{lemma ext grading forget Gm}, which must coincide by the universal property of categorical quotients. 
To prove \eqref{ext gr claim}, first suppose $x \in X^{ss}(f;q,\rho)$, so there exists some $\rho$-semi-invariant $s \in S$ such that $s(x) \neq 0$. Let $w= \text{wt}_{\GG_m}(s)$ and $n = \text{wt}_{\rho}(s)$; then by choice of $m$, we have $nm > w$, so $\tilde{s}:= sv^{nm-w} \in A[v]$ and moreover is a $\widetilde{\rho}$ semi-invariant such that $\tilde{s}(x,t) \neq 0$ for $t\neq 0$. 
Conversely, suppose for some $t\neq 0$ that $(x,t) \in  \widetilde{X}^{ss}(\widetilde{f};q,\widetilde{\rho}) $. Then there is some $h \in A[v]^{\widetilde{G}}_{\widetilde{\rho}^n}$  with $n > 0$ such that $h(x,t) \neq 0$. Write $h = \sum_{i=0}^{r} h_iv^i$, then each $h_i \in A^{G}_{\rho^n}$ as $\widetilde{\rho}(g)=\rho(g)$ for $g \in G$ and $G$ acts trivially on $\AA^1 = \Spec k[v]$. Since $h(x,t) \neq 0$, there exists some $i$ with $h_i(x) \neq 0$, and since $h_i$ is a $\rho$-semi-invariant of weight $n > 0$, this proves $x \in X^{ss}(f;q,\rho)$.
\end{proof}

We illustrate the two different ways to construct the externally graded quotient using a simple example, which was studied by Gabriel da Silva Martinho using our previous work \cite{HHJ25}.

\begin{example} 
Let $X= \text{Mat}_{2\times 2}$ and consider the action of 
\[ G = U \rtimes R =\Ga\rtimes \Gm = \left\{\begin{pmatrix} s & 0 \\ u & s\end{pmatrix} \mid s \in \GG_m, u\in \GG_a\right\}\]
by left multiplication. As $G$ is not internally graded, we use the 1-PS $\lambda_g(t)= \text{diag}(1,t)$ to externally grade $G$ and consider the action of $\widetilde{G} = G \rtimes_{\lambda_g} \Gm$. The $\lambda_g$-flow is the affine map $f \colon X \rar Z =\AA^2$ projecting onto the locus of matrices $A=(a_{ij})$ where $a_{21}=a_{22}=0$.

Consider the quotient of $R = \GG_m$ acting on the $Z$ with respect to the character $\theta = +1$ \[q \colon Z^{\theta-ss} = \AA^2\setminus \{0\} \rar Z \gitq_{\! \theta  } \GG_m = \Proj k[a_{11},a_{12}]= \PP^1 =: W.\] The semistable locus $Z^{\theta-ss}$ is the locus in $Z$ with trivial $U$-stabilisers. The quotient $W$ is covered by two affine opens $W_{1},W_{2}$ where $a_{11},a_{12}$ respectively do not vanish; we denote their pre-images in $X$ by $X_{1},X_{2}$ respectively. Our quotient is constructed by quotienting these two pieces separately, and then gluing.  By symmetry, it suffices to consider $X_{1}$, which has unipotent quotient \[X_1 = \Spec k[a_{11},a_{11}^{-1},a_{12},a_{21},a_{22}] \rar \Spec \cO(X_1)^{\GG_a}= \Spec k [a_{11},a_{11}^{-1},a_{21},\Delta] =: Y_1,\]
where $\Delta := a_{11}a_{22}-a_{12}a_{21}$. The (semi)invariant ring of the $R$-action on $Y_1$ (for $\rho = +1$) is
\[ \cO(Y_1)^R = k \left[\frac{a_{12}}{a_{11}}, \frac{\Delta}{a_{11}^2}\right] = \cO(W_1)\left[\frac{\Delta}{a_{11}^2}\right] \quad  \left(\ \text{resp.} \  \cO(Y_1)^{R,\rho} = \cO(Y_1)^R [a_{11}] \ \right), \] 
where the semi-invariant $a_{11}$ has degree $1$. Hence the twisted and untwisted quotients coincide 
\[ X_1 \gitq_{\! \rho} G = Y_1 \gitq_{\! \rho} R =\Proj \cO(Y_1)^{R,\rho} = \Spec \cO(Y_1)^{R} = Y_1 \gitq R = X_1 \gitq G,\] 
and $X_1 \gitq_{\! \rho \ } G \rightarrow W_1$ is a trivial $\AA^1$-bundle, and similarly for $X_2$. Thus $X \gitq_{\! q,\rho} G$ is a Zariski-locally trivial $\AA^1$-bundle over $W$.

 By the proof of \cref{main theorem externally graded case}, $X \gitq_{\! q,\rho } G$ coincides with an open subset inside a twisted quotient of the induced internally graded action of $\widetilde{G}:= G \rtimes \GG_m$ on $\widetilde{X} := X \times \AA^1$, where $G$ acts trivially on $\AA^1$ and the grading $\GG_m$ acts with weight $1$, and we take a character $\tilde{\rho} = (\rho,m)$ for $m>0$ sufficiently large. In this example, one can check that $m=1$ works (as the $\rho$-semi-invariants are generated as an algebra over the invariants by just $a_{11}$, which has $\rho$-weight $1$ but weight zero for the grading $\Gm$). The first patch $\widetilde{X}_1 = X_1 \times \AA^1$ has $\GG_a$-quotient $\widetilde{X}_1 = X_1 \times \AA^1 = \Spec k[a_{11}^{\pm 1},a_{21},\Delta,w]$ where $w$ is the coordinate on $\AA^1$. For $\tilde{\rho} = (1,1)$, we have
 \[  \cO(\widetilde{Y}_1)^{\widetilde{R},\tilde{\rho}} =  k \left[ \frac{a_{12}}{a_{11}} \right] \left[ \frac{\Delta}{a_{11}},wa_{11} \right] = \cO(\widetilde{Y}_1)^{\widetilde{R}}\left[ \frac{\Delta}{a_{11}},wa_{11} \right] = \cO(W_1)\left[ \frac{\Delta}{a_{11}},wa_{11} \right].\]
By symmetry we can describe the second patch, and we conclude that $\widetilde{X} \gitq_{ \! q,\tilde{\rho} \ }\widetilde{G}$ is a Zariski-locally trivial $\PP^1$-bundle over $W$, which contains the $\AA^1$-bundle $X \gitq_{ \! q,\rho \ } G$ as a dense open.

\end{example}

\begin{remark}\label{rmk why R reductive}  We close this section noting that, as the good $R$-quotient $q \colon Z' \rightarrow W$ is given to us, we do not actually use or need the assumption that $R$ is reductive (analogously to \cref{rmk base group can be nonred}). We could instead consider a \lq partially graded' $\GG_m$-action on $G$ that is only strictly positive on some part of the unipotent radical, and so flows $G$ to a non-reductive group $H$ (with smaller unipotent radical), and consider an equivariant action of $\varphi \colon G \rightarrow H$ on $f$. This leads to a \lq quotienting-in-stages\rq\ approach in the style of \cite{Hoskins2021}, which can be very helpful in situations where the usual unipotent stabiliser conditions fail (such as moduli of unstable sheaves, for example), but introduces significant extra complexity. We do not pursue this, as our intended applications do not require this approach.
\end{remark}

\section{Absolute quotients from relative quotients} \label{sec:quotients by non-reductive absolute}

There are two goals in this section. Our main aim is to use our relative approach to recover known results in projective NRGIT, and prove the projective Hilbert-Mumford criterion. The other is to more generally consider a graded non-reductive group $G$ acting on a scheme $X$, and use the grading $\GG_m$ to provide an open subset $X' \subseteq X$ and an affine morphism $X' \to Z'$ with $Z' \subseteq X^{\GG_m}$ to which we can apply the results of the previous section and construct a quotient for the $G$-action on $X$ as a relative quotient for $X' \to Z'$. 

\subsection{Multiplicative actions and Bia{\l}ynicki-Birula decompositions}

Decompositions for actions of the multiplicative group have been studied in various contexts \cite{Birula1991,BS,Jurkiewicz,Konarski,Sommese}. In this section, we will summarise some of the key results we will need about Bia{\l}ynicki-Birula decompositions.

For a $\GG_m$-action on a scheme $X$ (separated and of finite type over $k$), we consider the following functors 
\[ X^0:= \Hom^{\GG_m}(-,X) \quad \text{and} \quad X^+:= \Hom^{\GG_m}(\AA^1 \times -,X) \]
of $\GG_m$-equivariant morphisms to $X$, where $\AA^1$ is given the standard scaling $\GG_m$-action. By work of Drinfeld \cite{Drinfeld}, both functors are represented by schemes, denoted $X^0$ and $X^+$, called the fixed locus and attractor. Moreover, $X^0 \subseteq X$ is a closed subscheme and there is a monomorphism $\iota \colon X^+ \rightarrow X$ and an affine morphism $f \colon X^+ \rightarrow X^0$. More precisely $X^0$ is the $\GG_m$-fixed locus and $X^+$ is the locus of points $x$ such that $\lim_{t \rar 0 } t \cdot  x$ exists. If $Z_i \subseteq X^0$ denote the connected components of $X^0$, then $X_i:=f^{-1}(Z_i)$ denote the connected components of $X^+$. When the images of $X_i$ under $\iota$ are locally closed subschemes of $X$, we will refer to them as BB strata, and will often also denote these by $X_i$. 

\begin{definition}
For a $\Gm$-action on a separated scheme $X$, we say
\begin{enumerate}
    \item the action is graded if $\lim_{t \rar 0 } t \cdot  x$ exists for all $x \in X$ (or equivalently $\iota$ is a bijection);
    \item $X^+$ admits a \emph{Bia{\l}ynicki-Birula (BB) decomposition} if the restriction of the monomorphism $\iota \colon X^+\rightarrow X$ to each connected component $X_i$ is a locally closed embedding.
    \item $X^+$ admits a \emph{BB stratification} if it admits a BB decomposition with a partial ordering on the set $\cI$ of connected components of $X^0$ such that $\overline{p(X_i)} \cap \iota(X_j) = \emptyset$ for $j < i$ (i.e.\ the closure of a stratum contains only higher strata).
    \item $X^+$ admits an \emph{open BB stratum} if there is a connected component $X_i$ on which $p$ restricts to an open embedding. 
\end{enumerate}
\end{definition}

\begin{remark}\
\begin{enumerate}
    \item Any $\GG_m$-action on a proper scheme $X$ is graded. Hausel refers to a $\GG_m$-action on a quasi-projective scheme as \emph{semi-projective} if it is graded and $X^0$ is projective. Examples include the scaling action on moduli spaces of (semistable) quiver representations and Hitchin's $\GG_m$-action on the moduli space of Higgs bundles given by scaling the Higgs field.
    \item If $X$ is smooth, then Bia{\l}ynicki-Birula proved that $X^+$ admits a BB decomposition and moreover $f_i \colon X_i \rightarrow Z_i$ are affine space fibrations \cite{BB}.
    \item If $X$ admits a $\GG_m$-equivariant locally closed embedding into a projective space $\PP^n$ with linear $\GG_m$-action (for example, a $\GG_m$-equivariant ample line bundle on $X$ gives such an embedding), then the BB stratification on $\PP^n$ restricts to a BB stratification on $X^+$.
    \item If $X$ is normal and quasi-projective, then it admits a $\GG_m$-equivariant ample line bundle by a result of Sumihiro (as a power of any ample line bundle will admit a $\GG_m$-equivariant structure), and thus $X$ admits a $\GG_m$-equivariant locally closed embedding into a projective space $\PP^n$. In particular, $X^+$ admits a BB stratification. If these assumptions fail (or even if we work with algebraic spaces rather than schemes), BB stratifications do not always exist, as summarised in the discussion in \cite[Appendix B]{Drinfeld}.
    \item If the $\GG_m$-action on $X$ is graded and admits a BB stratification, then there is an open BB stratum corresponding to the minimal index in $\cI$.
\end{enumerate}
\end{remark}

\subsection{Quotients of BB strata}

We first prove that if $G$ is a connected graded linear algebraic group acting on $X$, and the grading $\GG_m \subseteq G$ induces a BB stratification of $X$, then the retraction maps on the BB strata admit equivariant actions.

\begin{lemma}\label{lemma BB stratum internally graded}
Let $G = U\rtimes R$ be a connected linear algebraic group graded by $\GG_m \subseteq  G$. Suppose $G$ acts on a separated scheme $X$ and the $\GG_m$-action is graded and induces a BB stratification $X = \cup X_i$. Then the BB strata $X_i$ are $G$-invariant, and there are induced equivariant graded actions of $G \twoheadrightarrow R$ on each affine map $f_i \colon X_i \rar Z_i$. \end{lemma}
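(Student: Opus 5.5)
The plan is to exploit that the grading subgroup $\lambda \colon \GG_m \hookrightarrow G$ is central in $R$ and acts on $\Lie U$ with strictly positive weights (\cref{remark graded} \eqref{equivalent definition of graded}). The basic identity, for $g = ur$ with $u \in U$, $r \in R$, is
\[ \lambda(t)\cdot (g\cdot x) = (\lambda(t) u \lambda(t)^{-1})\, r \cdot (\lambda(t)\cdot x), \]
and $\lambda(t)u\lambda(t)^{-1} \to e$ as $t \to 0$ by the positivity of the weights. It follows that if $x_0 := \lim_{t\to 0}\lambda(t)\cdot x$ exists, then so does $\lim_{t \to 0}\lambda(t)\cdot(g\cdot x)$, and it equals $r \cdot x_0$. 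Since the action is graded, all these limits exist, so the map $x \mapsto x_0$ is compatible with $G$ acting on the source and with $G \twoheadrightarrow R$ acting on $X^0$.

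I will carry this out scheme-theoretically using the functorial descriptions of $X^0$ and $X^+$ (Drinfeld). First, $R$ commutes with $\lambda$, so $R$ acts on $X^0$ and on $X^+ = \Hom^{\GG_m}(\AA^1\times -, X)$ by post-composition, and $f \colon X^+ \to X^0$ (evaluation at $0$) is $R$-equivariant. Next, for $U$ I will define the action on $\AA^1$-families by
\[ (u\cdot \gamma)(t) := (\lambda(t)u\lambda(t)^{-1})\cdot \gamma(t). \]
This is $\GG_m$-equivariant, and it extends over $t=0$ because $t \mapsto \lambda(t)u\lambda(t)^{-1}$ extends to a morphism $\AA^1 \to U$ sending $0 \mapsto e$, which again uses the positive weights. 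Evaluating at $t=1$ shows that $\iota \colon X^+\to X$ is $G$-equivariant, and evaluating at $t=0$ shows that $U$ acts trivially on the image in $X^0$. Together these give a $G$-action on $X^+$ making $f$ equivariant for $G \twoheadrightarrow R$.

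To get invariance of the strata, note that $G$ is connected, so it preserves each connected component $Z_i$ of $X^0$. Hence $G$ preserves each $X_i = f^{-1}(Z_i)$, and therefore each image $\iota(X_i)$; since $\iota$ is a bijection onto $X$ and $X_i$ is a locally closed subscheme, this image is $G$-invariant. For the equivariant action on $f_i \colon X_i \to Z_i$, I restrict the action above. For the grading, $\lambda$ acts fibrewise on $f_i$, every limit exists and equals $f_i(x)$, and $X_i^{\GG_m} = Z_i$. Combined with $\lambda$ grading $G \to R$, this gives an internally graded equivariant action in the sense of \cref{definition graded}. Affineness of $f_i$ comes from Drinfeld's result (or \cref{remark graded}(ii)).

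I expect the main obstacle to be the scheme-theoretic step. Concretely, I need to check that the twisted formula defines a genuine group action: the associativity check uses that $\lambda(t)(\cdot)\lambda(t)^{-1}$ is a homomorphism, and that $R$ commutes with $\lambda$ for the semidirect product relations. I also need that the action is compatible with the locally closed embedding $X_i \hookrightarrow X$, which holds because it is induced from the action on $X$ via the equivariant monomorphism $\iota$. The stratification hypothesis is only used to ensure that the $X_i$ are honest locally closed subschemes.
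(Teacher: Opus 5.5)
Your proposal is correct and follows the same argument as the paper. $R$ centralises the grading $\GG_m$, so it preserves $X^0$ and, being connected, each component $Z_i$; writing $\lambda(t)u = (\lambda(t)u\lambda(t)^{-1})\lambda(t)$ with $\lambda(t)u\lambda(t)^{-1}\to e$ shows that $U$ preserves each $X_i$ and acts trivially through $f_i$. Your functorial formula $(g\cdot\gamma)(t) = \lambda(t)g\lambda(t)^{-1}\cdot\gamma(t)$ on Drinfeld's attractor refines the paper's pointwise limit computation, giving the action on $X_i$ and the equivariance of $f_i$ scheme-theoretically rather than only on points.
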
 
\begin{proof}
By definition of $G = U \rtimes R$ being graded, $R$ is the centraliser of the grading $\GG_m$ and so the $R$-action preserves $X^0=X^{\GG_m}$, and as we additionally assume that $R$ is connected, the $R$-action fixes each connected component $Z_i$ of the fixed locus. 

For any $t$ in the grading $\GG_m$ and any $r \in R$ and $x \in X_i$, we have \[\lim_{t\rar 0} tr \cdot x = r\lim_{t\rar 0} t \cdot x = rf_i(x),\]
which proves that $r \cdot x \in X_i$ and $f_i$ is $R$-equivariant. Since $\GG_m$ grades $U$, for any $t \in \GG_m$ and $u \in U$, we have $tu = u(t)t$ for some $u(t)\in U$ such that $\lim_{t\rar 0} u(t) = e \in U$. It thus follows that for any $x \in X_i$ we have
\[\lim_{t\rar 0} tu \cdot x = \lim_{t\rar 0}u(t)t \cdot x = f_i(x)\] 
which proves that $u \cdot x \in X_i$ and $f_i$ is $U$-invariant.
\end{proof}

Given such a BB stratum $f_i \colon X_i \rightarrow Z_i$, we write $Z_i^{\circ} = Z_i^\circ(U)$ for the locus on which the cokernels of the infinitesimal maps are locally free (see \cref{def Zcirc for Z arbitrary}). 
We can then apply \cref{main theorem} to the equivariant action on $f_i$ to obtain a quotient as follows.

\begin{theorem}\label{thm quotients of BB strata}
    Let $G = U\rtimes R$ be a connected graded linear algebraic group acting on a scheme $X$ that admits a BB stratification $\{X = \bigsqcup_i X_i \rightarrow \bigsqcup_i Z_i = X^{\GG_m} \}$ for the grading $\Gm$. Suppose for some $i$ there exists a good $R$-quotient $q_i \colon Z_i'\rar W_i$ of an open subset $Z_i' \subseteq Z_i^{\circ} \subseteq Z_i$; then for any character $\rho \colon G \rightarrow \GG_m$, there is an open subset $X_i^{ss}(f_i;q_i,\rho)$ of $X_i$ and a good $G$-quotient $X_i^{ss}(f;q_i,\rho) \rightarrow X_i \gitq^{f_i}_{q_i,\rho} G$ which is projective over $W_i$. 
    
\end{theorem}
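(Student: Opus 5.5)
The plan is to reduce \cref{thm quotients of BB strata} directly to \cref{main theorem}, applied to the retraction $f_i \colon X_i \rightarrow Z_i$ of the chosen BB stratum. By \cref{lemma BB stratum internally graded}, $X_i$ is $G$-invariant and $\varphi \colon G \twoheadrightarrow R$ acts equivariantly on $f_i$: the map $f_i$ is $R$-equivariant and $U$-invariant. What remains is to confirm the three hypotheses of \cref{main theorem}. These are that $f_i$ is affine, that the equivariant action is internally graded, and that the given good $R$-quotient $q_i \colon Z_i' \rightarrow W_i$ lives on an open set contained in $Z_i^\circ(U)$. The last of these is an assumption of the theorem, so the work is in the first two.

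For affineness, I would use Drinfeld's result recalled above, namely that the attractor map $X^+ \rightarrow X^0$ is affine; restricting to the connected component $X_i = f^{-1}(Z_i)$ keeps it affine. Alternatively one can use \cref{remark graded} (ii), since the $\GG_m$-action on $X_i$ is graded over $Z_i$. There is one technical point here. The stratum $X_i$ should be treated as a scheme via the locally closed embedding $\iota|_{X_i}$ provided by the BB stratification, and we want it to be separated and of finite type. This holds because it is locally closed in $X$.

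For the internal grading, two things must be checked against \cref{definition graded}(2): the grading $\GG_m \subseteq G$ grades both $\varphi$ and $f_i$. For $\varphi$, this is exactly the hypothesis that $G$ is graded, as in \cref{definition graded}(4) and \cref{remark graded}(iii). For $f_i$, note that $\GG_m \subseteq R$ acts trivially on $Z_i$, so the $\GG_m$-action on $f_i$ is fibrewise. Moreover $X_i^{\GG_m} = Z_i$, and $\lim_{t\to 0} t\cdot x = f_i(x)$ holds by the very definition of the attractor.

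Once these are in place, \cref{main theorem} applied to $(f_i, q_i, \rho)$ gives the open set $X_i^{ss}(f_i;q_i,\rho)$ together with a good $G$-quotient onto $\rProj_{W_i}$ of the sheaf of $\rho$-semi-invariants. Part (iii) of \cref{main theorem} states that this quotient is projective over $W_i$, which is the claimed conclusion. I expect the main obstacle to be the scheme-theoretic bookkeeping. One must verify that the identification $X_i^{\GG_m}\cong Z_i$ holds as $Z_i$-schemes, not just on points, which needs Drinfeld's representability statements. One must also check that the connected component of the attractor and the stratum agree scheme-theoretically, so that $Z_i^\circ$, defined via cokernels of infinitesimal actions on $X_i$, is the intended locus. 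The rest is a formal application of results already proved.
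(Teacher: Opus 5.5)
Your proposal is correct and matches the paper's argument. The paper simply applies \cref{main theorem} to the retraction $f_i \colon X_i \to Z_i$: \cref{lemma BB stratum internally graded} supplies the internally graded equivariant action of $G \twoheadrightarrow R$ on the affine morphism $f_i$, and projectivity over $W_i$ comes from part (iii) of \cref{main theorem}. Your additional checks (affineness of the attractor map, $X_i^{\GG_m}=Z_i$, and $Z_i'\subseteq Z_i^\circ$) are exactly the hypotheses this one-line reduction uses implicitly.
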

\begin{proof}
    This directly follows from \cref{main theorem}. 
\end{proof}

In particular, by applying the above theorem to an open BB stratum $X_i \rightarrow Z_i$, we obtain a good quotient of an open subset of $X$. The following theorem describes the special case where the good quotient $q_i$ is constructed via reductive GIT.

\begin{theorem} \label{Main thm on quotienting BB strata} Let $G = U\rtimes R$ be a connected graded linear algebraic group acting on a scheme $X$ that admits an open BB stratum $X_i \subseteq X$ with retraction morphism $f_i \colon X_i \rightarrow Z_i$. Let $\cL$ be an $R$-equivariant line bundle on $Z_i$ with reductive GIT quotient $q \colon Z_i^{ss}(\cL_i) \rightarrow W:=Z_i \gitq_{\cL} R$ such that $Z_i^{ss}(\cL) \subseteq Z_i^\circ$. Then for any character $\rho \colon G \rar \Gm$, the relative quotient $X_i\gitq^{f_i}_{q,\rho} G$ is projective over $W$ and is a good $G$-quotient of an open set $X_i^{ss}(f_i;q,\rho)$ of $X$ admitting the following Hilbert--Mumford description
\[ X_i^{ss}(f_i;q,\rho)= \left\{ x \in X_i \: : \: \begin{array}{c} f_i(x) \in Z_i^{ss}(\cL) \: \text{and} \; \langle \rho,\lambda \rangle \geq 0 \text{ for all } 1\text{-PS } \lambda \colon \GG_m \rightarrow G \text{ such} \\ \text{that } x_0:=\lim_{t \rightarrow 0} \lambda(t) \cdot x \text{ exists in } X_i \text{ and } f_i(x_0) \in Z_i^{ss}(\cL)  \end{array} \right\}   \]
If $Z_i$ is projective-over-affine, then $Z_i^{ss}(\cL)$ admits the following Hilbert--Mumford description
\[ Z_i^{ss}(\cL) = \{ z \in Z_i : \mu^{\cL}(x,\lambda) \geq 0 \text{ for all } 1\text{-PS } \lambda \colon \GG_m \rightarrow R \text{ such that } \lim_{t \rightarrow 0} \lambda(t) \cdot x \text{ exists in } Z_i \}.\]
Finally, if $Z_i$ is projective, then $X_i\gitq^{f_i}_{q,\rho} G$ is projective over $k$, and if $X_i':=f_i^{-1}(Z_i^{ss}(\cL))$, the relative quotient is given by 
    \[ X_i\gitq^{f_i}_{q,\rho} G = \Proj_k \bigoplus_{n\geq 0} H^0(X'_i,f_i^*\cL^{\otimes mn})_{\rho^{ln}}^G \]
for some (sufficiently divisible) choice of $l,m \gg 0$.

\end{theorem}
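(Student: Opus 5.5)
The plan is to reduce everything to results already established in the paper, namely \cref{lemma BB stratum internally graded}, \cref{main theorem} and \cref{thm quotients of BB strata}, and then make the Hilbert--Mumford description and the projective description explicit.

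\textbf{Step 1: the quotient exists.} By \cref{lemma BB stratum internally graded}, the open stratum $X_i$ is $G$-invariant. Moreover $G\twoheadrightarrow R$ acts equivariantly on the affine retraction $f_i\colon X_i\to Z_i$, and this action is internally graded. Set $Z_i'=Z_i^{ss}(\cL)\subseteq Z_i^\circ$ and $q$ the reductive GIT quotient. Then \cref{thm quotients of BB strata} (equivalently \cref{main theorem}) gives a good $G$-quotient $X_i^{ss}(f_i;q,\rho)\to X_i\gitq^{f_i}_{q,\rho}G$, projective over $W$. Since $X_i$ is open in $X$, this semistable locus is open in $X$.

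\textbf{Step 2: the Hilbert--Mumford descriptions.} The description of $X_i^{ss}(f_i;q,\rho)$ is \eqref{eq HM descr nonred} with $X'=f_i^{-1}(Z_i^{ss}(\cL))$. Here ``the limit exists in $X'$'' unwinds to: $x_0$ exists in $X_i$ and $f_i(x_0)\in Z_i^{ss}(\cL)$. When $Z_i$ is projective-over-affine, the description of $Z_i^{ss}(\cL)$ is the fibrewise case of \cref{prop proj fibrewise qnts} over $\Spec k$ (classical reductive GIT for $R$).

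\textbf{Step 3: projectivity over $k$.} If $Z_i$ is projective, then $W$ is projective over $k$, and composing gives that $X_i\gitq^{f_i}_{q,\rho}G$ is projective over $k$.

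\textbf{Step 4: the explicit Proj.} Write $\cA=f_{i*}\cO_{X_i}$ and $\cB=q_*j^*\cA$. By \cref{main theorem}(iii), $(\cB)^G=\cO_W$, and the quotient is $\rProj_W\cB^{G,\rho}$. Let $\cM$ be the ample line bundle on $W$ descending from a power $\cL^{m}$ on $Z_i^{ss}(\cL)$ (using Kempf's descent, with $m$ sufficiently divisible). Then
\[
\rProj_W \cB^{G,\rho}\cong \rProj_W \bigoplus_n \cB^{G}_{\rho^{ln}}\otimes\cM^{n}
\]
for any $l$, because twisting a graded algebra by a line bundle does not change $\rProj$ and Veronese subrings do not change $\rProj$. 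For $l\gg0$, this ample twist makes the relative $\cO(1)$ ample over $k$, by the standard argument that $\cO(1)\otimes\pi^*\cM^N$ is ample for a projective morphism $\pi$ over a projective base. Hence the quotient is $\Proj$ of the global sections
\[
\bigoplus_n H^0\bigl(W,\cB^{G}_{\rho^{ln}}\otimes\cM^n\bigr).
\]
Using the projection formula, $q^*\cM=\cL^m$ and saturatedness, these global sections equal $H^0(X_i',f_i^*\cL^{mn})^G_{\rho^{ln}}$. Equivalently, one may take $\cL^{m}$ with $m$ large and $l$ fixed; the statement allows choosing both.

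The main obstacle is Step 4. One must (a) pass from invariant sections on $W$ to $G$-invariant sections on $X_i'$, which requires that pulling back along $q\circ f_i$ identifies $G$-invariant sections of $f_i^*\cL^{mn}$ twisted by $\rho^{ln}$ with sections of the descended sheaf; and (b) justify the choice of $l,m$ making the relevant ample twist produce global generation. I would handle (a) locally over affine opens of $W$, where it reduces to $\cO(X_i')^G_{\rho^{ln}}\otimes\cL^{mn}$, and handle (b) by Serre vanishing/ampleness on $W$.
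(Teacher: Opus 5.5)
Your Steps 1--3 match the paper. It treats everything except the final $\Proj$ description as immediate from \cref{main theorem}, via \cref{lemma BB stratum internally graded} and \cref{thm quotients of BB strata}, together with classical reductive GIT.

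For the final claim your route is genuinely different. The paper works through the quotient map $p\colon Y''=X_i^{ss}(f_i;q,\rho)\to V$:
\begin{enumerate}
\item it takes $\cM_W$ with $q^*\cM_W\cong\cL^N$ and a $\pi$-ample $\cN_V$ with $p^*\cN_V\cong(\cO_{Y''})_{\rho^M}$;
\item it forms the ample bundle $\cN_V\otimes\pi^*\cM_W^{K}$ and pulls it back to $Y''$;
\item it uses the descent isomorphism $\cF\cong(p_*p^*\cF)^G$ of \cref{lem good qnt sheaf isom}, proved separately for the unipotent and reductive stages, to identify sections with semi-invariant sections on $Y''$;
\item it uses \cref{lemma w discussion before using sections to trivialise locally}, trivialising $\cL^r$ over the $W_\tau$ by invariant sections $\tau$, to pass from $Y''$ to $X_i'$.
\end{enumerate}
You instead stay on $W$. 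Since $\cB^{G,\rho}$ is by definition built from semi-invariants on open subsets of $X_i'$, twisting by the descended ample $\cM$ and applying the projection formula lands you directly on $H^0(X_i',f_i^*\cL^{mn})^G_{\rho^{ln}}$. This uses only standard facts about $\rProj$: invariance under twisting and Veronese, and ampleness of $\cO(1)\otimes\pi^*\cM^K$. It therefore bypasses both \cref{lem good qnt sheaf isom} and the comparison between $Y''$ and $X_i'$. The price is comparing $\cB^G_{\rho^{ln}}$ with its saturation $\pi_*\cO_V(ln)$, which plays the role of the paper's extension step. The paper's route has the merit of describing the polarisation of $V$ intrinsically, through semi-invariant sections on the semistable locus, as in classical GIT.

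Two points to tighten in Step 4.

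First, ``for $l\gg0$ this ample twist makes the relative $\cO(1)$ ample'' is backwards. Write $\cO_V(l)$ for the $\cO(1)$ of the $l$-th Veronese. With $\cM$ fixed, $\cO_V(l)\otimes\pi^*\cM$ need not be ample for any $l$: for instance, if $\pi$ is an isomorphism, $\cO_V(1)$ is just some line bundle on $W$, possibly anti-ample. It is the power of $\cM$, i.e.\ $m$, that must dominate $l$. So your fallback ``$m$ large, $l$ fixed'' is the correct version, not an equivalent one.

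Second, make the order of choices and the saturation step explicit. First fix $l$ sufficiently divisible so that the $l$-th Veronese of $\cB^{G,\rho}$ is generated in degree one. Choose it also so that $\cB^G_{\rho^{ln}}\to\pi_*\cO_V(ln)$ is an isomorphism for all $n\geq1$; this is the relative Serre theorem over the quasi-compact $W$. Alternatively, use only the isomorphism for $n\gg0$, since $\Proj$ is unchanged by altering finitely many degrees. Only then take $m$ to be a large multiple of the descent exponent. Without this saturation statement you only get $V$ as $\Proj\bigoplus_n H^0(V,\cO_V(ln)\otimes\pi^*\cM^n)$, not as $\Proj$ of the global sections of $\cB^G_{\rho^{ln}}\otimes\cM^n$.
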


\begin{proof}
The only claim left to prove is the final one, describing the case when $Z_i$ itself is projective. To prove this, we adopt the following simplified notation. Let $Y:=X_i$ and $Z:=Z_i$, $f:=f_i$ and $f' \colon Y':=f^{-1}(Z') \rightarrow Z' := Z^{ss}(\cL)$. We let $Y'' := X_i^{ss}(f_i;q,\rho)$, and let $f'' \colon Y'' \rar Z'$ denote the restriction of $f'$ to $Y''$. 
We let $p \colon Y'' \rightarrow V:=Y\gitq^{f}_{q,\rho} G$ denote the relative quotient, where
    \[ V = \rProj_W \bigoplus_{n \geq 0} (q_*(f')_*\cO_{Y'})^G_{\rho^n} \quad \text{and} \quad W = \Proj \bigoplus_{r \geq 0} H^0(Z,\cL^{\otimes r})^R\]
    The relative quotient $V$ is projective over $W$, and $W$ itself is projective due to being a reductive GIT quotient of a projective scheme with respect to an ample linearisation $\cL$. Hence $V$ is projective over $k$. To describe $V$ as a projective scheme over $k$, we will produce an ample line bundle on $V$ using the ample line bundles arising in the GIT constructions of $V$ and $W$. Then we relate sections of this ample line bundle with semi-invariant sections of $f^*\cL$.

By construction of the reductive GIT quotient $q \colon Z' \rightarrow W:=Z\gitq_{\cL} R$, there is an ample line bundle $\cM_W$ on $W$ such that we have an isomorphism $q^*\cM_W \cong \cL^{\otimes N}$ as $R$-equivariant sheaves for some $N \gg 0$. For the relative quotient $p \colon Y'' \rightarrow V$, there is a line bundle $\cN_V$ on $V$ which is relatively ample with respect to the structure morphism $\pi \colon V \rightarrow W$ and such that we have an isomorphism $p^*\cN_V \cong (\cO_{Y'',\rho})^{\otimes M} \cong (\cO_{Y''})_{\rho^M}$ as $G$-equivariant sheaves for some $M \gg 0$.  Since $\cN_V$ is $\pi$-relatively ample and $\cM_W$ is ample, for some $K \gg 0$, the line bundle $
    \cN_V \otimes \pi^*\cM_W^{\otimes K}$ is ample (see \cite[Tag 01VG Lemma 29.37.4]{stacks-project}). We now pullback this line bundle to $Y''$:
    \begin{align*}
        p^*(\cN_V \otimes \pi^*\cM_W^{\otimes K}) & \simeq p^*(\cN_V) \otimes p^*\pi^*(\cM_W)^{\otimes K} \simeq (\cO_{Y''})_{\rho^M} \otimes (f'')^*q^* \cM_W^{\otimes K} \\
        & \simeq (\cO_{Y''})_{\rho^M} \otimes (f'')^*\cL^{\otimes NK}
    \end{align*}
    as $G$-equivariant line bundles. Since $p \colon Y'' \rightarrow V$ is a good quotient, we have $p_*\cO_{Y''}^G \cong \cO_V$ and moreover for a locally free sheaf $\cF$, the adjunction $\cF \rightarrow (p_*p^*\cF)^G$ is an isomorphism (see \cref{lem good qnt sheaf isom} below). Hence we have
     \[ \cN_V \otimes \pi^*\cM_W^{\otimes K} \simeq p_*p^*(\cN_V \otimes \pi^*\cM_W^{\otimes K})^G \simeq p_*((\cO_{Y''})_{\rho^M} \otimes (f'')^*\cL^{\otimes NK})^G  \simeq p_*((f'')^*\cL^{\otimes NK})^G_{\rho^M} \]
     and upon taking tensor powers and global sections, we obtain
     \[ H^0(V,(\cN_V \otimes \pi^*\cM_W^{\otimes K})^{\otimes n}) = H^0(Y'',(f'')^*\cL^{\otimes KNn})^G_{\rho^{Mn}}.  \]
   The proof is completed by \cref{lemma w discussion before using sections to trivialise locally}  below, which relates invariant sections over $Y''$ and $Y'$.
\end{proof}

To complete the proof, we prove the following two lemmas. 

\begin{lemma}\label{lem good qnt sheaf isom} 
In the setting of \cref{main theorem}, let $p \colon X^{ss}(f;q,\rho) \rightarrow V:=X \gitq^f_{\hspace{-2pt} q,\rho \hspace{2pt}} G$ denote the good $G$-quotient. Then for any line bundle $\cL$ on $V$, we have $ \cL\cong p_*p^*(\cL)^G$.
\end{lemma}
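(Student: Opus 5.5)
The plan is to reduce to the structure sheaf via the projection formula, using only that $p$ is a good quotient, i.e.\ affine, $G$-invariant, and with $\cO_V \cong (p_*\cO_{X^{ss}})^G$. Write $X^{ss} := X^{ss}(f;q,\rho)$. The adjunction unit $\cL \to p_*p^*\cL$ has image in the $G$-invariants, because $p^*\cL$ carries the pulled-back $G$-equivariant structure, on which $G$ acts trivially along the fibres of $p$. So we get a natural map $\eta\colon \cL \to (p_*p^*\cL)^G$, and it suffices to show $\eta$ is an isomorphism locally on $V$.

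First I would cover $V$ by open sets $V_\alpha$ on which $\cL$ is trivial, say $\cL|_{V_\alpha} \cong \cO_{V_\alpha}$ via a nowhere-vanishing section $e_\alpha$. Since $p$ is affine and formation of $p_*$ and of invariants is compatible with restriction to opens of $V$, we have $(p_*p^*\cL)^G|_{V_\alpha} = (p_*(p^*\cL|_{p^{-1}(V_\alpha)}))^G$. The trivialisation $p^*e_\alpha$ is a $G$-invariant nowhere-vanishing section of $p^*\cL$ on $p^{-1}(V_\alpha)$. Hence it gives a $G$-equivariant isomorphism $p^*\cL|_{p^{-1}(V_\alpha)} \cong \cO_{p^{-1}(V_\alpha)}$ with the trivial equivariant structure. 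Under this identification, $\eta|_{V_\alpha}$ becomes the canonical map $\cO_{V_\alpha} \to (p_*\cO_{p^{-1}(V_\alpha)})^G$.

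That canonical map is an isomorphism because $p$ is a good quotient, and good quotients restrict to good quotients over open subsets of the base. Compatibility of the local identifications on overlaps is automatic, since $\eta$ is defined globally. So $\eta$ is an isomorphism.

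The only point needing care, and the main one, is that the $G$-equivariant structure on $p^*\cL$ is the canonical pullback one, so that $p^*e_\alpha$ is genuinely invariant. This is exactly how $p^*$ is used in the proof of \cref{Main thm on quotienting BB strata}, where the equivariant isomorphisms are taken with respect to this structure. The argument never uses that $\cL$ has rank one, so it equally proves the statement for any locally free sheaf on $V$, which is the generality invoked in that proof.
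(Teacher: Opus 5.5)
Your argument is correct, but it is not the route the paper takes. The paper does not use the good-quotient property of $p$ directly. Instead it factors $p = p_R \circ p_U$ into the geometric $U$-quotient followed by the reductive quotient, and proves the isomorphism separately for each stage.
\begin{itemize}
\item For $p_R$ it reduces locally to showing that $M \to (M \otimes_{A^R} A)^R$ is an isomorphism for an $A^R$-module $M$, which it checks with a Reynolds operator (linear reductivity in characteristic zero).
\item For $p_U$, in the free case it uses the local slices and their Reynolds operators from the $\GG_a$-quotients. When stabilisers are positive-dimensional, it uses the complementary subgroups $U'$ together with the fact that unipotent groups have no characters, so that $U$-invariants agree with $U'$-invariants.
\item It then composes the two statements.
\end{itemize}

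You instead observe that for a locally free sheaf with the pulled-back equivariant structure, local trivialisations on $V$ pull back to $G$-invariant trivialisations. The claim therefore reduces to $\cO_V \cong (p_*\cO_{X^{ss}})^G$, which is part of the definition of a good quotient; this is just the projection formula with $G$ acting trivially on $\cL$.

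Your route is shorter and more general. It works for any $G$-invariant morphism with $\cO_V \cong (p_*\cO)^G$, any group $G$ and any locally free sheaf, and it uses none of the two-stage structure of the construction. What the paper's reductive step buys in addition is the isomorphism for arbitrary (not necessarily locally free) modules at the $R$-stage, where exactness of invariants is genuinely needed. That extra generality is not required for the lemma as stated, nor for its use in the proof of \cref{Main thm on quotienting BB strata}, which only involves a line bundle.
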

\begin{proof}
By construction, $p$ is the composition of a geometric $U$-quotient $p_U \colon X^{ss}(f;q,\rho) \rightarrow Y := X^{ss}(f;q,\rho)/U$ followed by a reductive GIT quotient $p_R \colon Y  \rightarrow V$. 

Since we are working in characteristic zero, $R$ is linearly reductive and so $ \cL\cong p_{R*}p^*_R(\cL)^R$ (a stacky version of this is \cite[Prop.\ 4.5]{Alper}, where taking invariants does not appear in the stacky version).  To give a direct and more elementary proof, we note that by working locally it suffices to check for an affine $G$-scheme $\Spec A$ and an $A^R$-module $M$ that the natural map $M \rightarrow  (M\otimes_{A^R}A)^R$ 
is an isomorphism. Since $R$ is linearly reductive, this can be checked using a Reynolds operator.

We claim that for any line bundle $\cM$ on $Y$, we have $\mathcal{M} \cong p_{U*}p_U^*(\mathcal{M})^U$.  First suppose that the $U$-action is free, so that the quotient $p_U$ is obtained by filtering $U$ by $\GG_a$'s and taking slices locally as in \cref{thm:quotientsbyunip}. Then each of these local $\GG_a$-quotients has a Reynold's operator, which as above we can use to prove this isomorphism. Otherwise we proceed as in \cref{theorem U quotient with varying stabilisers}. If $U$ is abelian, we locally take complements $U' \subset U$ to stabiliser subgroups $U''$ to obtain a free action. Then $p_U = p_{U'}$ and so it remains to check that the $U'' \cong U/U'$-equivariant structure on $p_{U*}p_U^*(\mathcal{M})^{U'}$ is trivial. Since the $U''$-action on the $U'$-quotient is trivial and unipotent groups have no characters, the only possible equivariant structure (i.e.\ linearisation) is the trivial one. Thus taking $U$-invariants is the same as taking $U'$ invariants. If $U$ is not abelian, we can filter $U$ by subgroups with abelian subquotients and proceed inductively as in the proof of \cref{theorem U quotient with varying stabilisers}.

To conclude, we combine the unipotent and reductive cases: setting $\cM = p^*_R\cL$, we have 
\[p_{R*}(p_{U*}p_U^*\cM))^U \cong p_{R*} ((p_{U*}p_U^*\cM)^U) \cong p_{R*}\cM\]
as $p_{R*}$ commutes with taking $U$-invariants, and then taking $R$-invariants gives $(p_*p^*\cL)^G \cong \cL$.
\end{proof}

The following lemma is the final result needed to complete the proof.

\begin{lemma} \label{lemma w discussion before using sections to trivialise locally} 
In the setting of \cref{Main thm on quotienting BB strata} suppose $Z:=Z_i$ is projective and write $Y=X_i$ and $f =f_i$, and let $f'\colon Y' \rightarrow Z'$ denote the base change of $f$ along $Z' :=Z^{ss}(\cL)\hookrightarrow Z$ and $f'' \colon Y'':= Y^{ss}(f;q,\rho) \hookrightarrow Y' \rightarrow Z'$ denote the restriction of $f'$. Then there is a positive integer $r$ and open affine sets $W_\tau$ covering $W :=Z\gitq_{\cL} R$ together with $G$-equivariant isomorphisms
\[ ((q\circ f')_*f'^{*}\cL^{\otimes r}|_{Z'})(W_\tau) \cong (q\circ f')_*\cO_{Y'}(W_\tau)  \cong ((q\circ f'')_*f''^{*}\cL^{\otimes r}|_{Z'})(W_\tau). \]
In particular, this implies that for any positive $m$ and $l$
\[ H^0(Y'',(f''^*\cL^{\otimes mr}))^G_{\rho^l} \cong H^0(Y',(f'^*\cL^{\otimes mr}))^G_{\rho^l}.\]
Moreover, we have isomorphisms
\[ Y \gitq_{q,\rho}^{f} G \simeq \rProj_W \bigoplus_{n \geq 0} (q_* (f')_*f'^*\cL^{\otimes rn}|_{Z'})^G_{\rho^{n}} \simeq \rProj_W \bigoplus_{n \geq 0} (q_* (f'')_*f''^*\cL^{\otimes rn}|_{Z'})^G_{\rho^n}.  \]

\end{lemma}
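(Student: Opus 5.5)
The plan is to trivialise $\cL^{\otimes r}$ on $G$-invariant opens of $Y'$ lying over affine patches of $W$, using $R$-invariant sections of $\cL^{\otimes r}$ on $Z$ itself.

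\textbf{Choice of cover.} Since $q \colon Z' = Z^{ss}(\cL) \to W = \Proj \bigoplus_r H^0(Z,\cL^{\otimes r})^R$ is a projective GIT quotient, we may choose $r>0$ and finitely many invariant sections $\sigma_\tau \in H^0(Z,\cL^{\otimes r})^R$. After replacing $\cL$ by a power, these can be taken of a common degree $r$, with $W$ generated in degree $r$. We want their non-vanishing loci $Z_{\sigma_\tau} = q^{-1}(W_\tau)$ to cover $Z'$, where $W_\tau := W_{\sigma_\tau}$ are the standard affine opens. Each $Z_{\sigma_\tau}$ is $R$-invariant, saturated and affine.

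\textbf{Local trivialisation.} On $Z_{\sigma_\tau}$, multiplication by $\sigma_\tau$ gives an isomorphism $\cO_{Z_{\sigma_\tau}} \to \cL^{\otimes r}|_{Z_{\sigma_\tau}}$. It is $R$-equivariant because $\sigma_\tau$ is $R$-invariant. Pull this back along $f'$. Since $f$ is $U$-invariant and $R$-equivariant, $f'^*\sigma_\tau$ is a $G$-invariant nowhere-vanishing section of $f'^*\cL^{\otimes r}$ over $(q\circ f')^{-1}(W_\tau)$. This gives a $G$-equivariant isomorphism
\[ (q\circ f')_*\cO_{Y'}(W_\tau) \cong (q\circ f')_* f'^*\cL^{\otimes r}(W_\tau). \]
The same argument on the $G$-invariant open $Y''\subseteq Y'$ gives the isomorphism for $f''$. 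The middle identification in the statement should be read as these two trivialisations over the respective preimages.

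\textbf{Passing to global invariant sections.} On overlaps $W_\tau\cap W_{\tau'}$ the two trivialisations differ by the invariant unit $\sigma_\tau/\sigma_{\tau'}$, which is pulled back from $\cO_W$. Hence taking $\rho^l$-semi-invariants is compatible with the trivialisations. We then compare $H^0(Y'',f''^*\cL^{\otimes mr})^G_{\rho^l}$ with $H^0(Y',f'^*\cL^{\otimes mr})^G_{\rho^l}$ patch by patch. Locally this reduces to comparing $\rho^l$-semi-invariant functions on $Y'_\tau := (q\circ f')^{-1}(W_\tau)$ and on $Y''_\tau$.

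\textbf{Main obstacle.} The hardest step is showing that restriction gives an isomorphism on semi-invariants, i.e.\ that every $\rho^l$-semi-invariant on $Y''_\tau$ extends to $Y'_\tau$.

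Injectivity is the easy direction: $Y''\subseteq Y'$ is open, and we need it dense in each component. Alternatively, a semi-invariant vanishing on $Y''$ has no effect on the Proj.

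For surjectivity I would argue as follows. Semi-invariants of positive weight vanish off the semistable locus, so the sections we need are exactly those of $\bigoplus_n (q_*f'_*\cO)^G_{\rho^n}$, which define $Y\gitq^f_{q,\rho}G$ by \cref{main theorem}. Restricting to the domain of definition $Y''$ does not change this graded algebra, because $\rProj$ of it recovers the same quotient by \cref{theorem nonred quotients graded equiv optimal assumptions on Z}(ii). If the literal isomorphism on semi-invariant sections fails in degree $0$, the statement should still hold after passing to a Veronese subring: replace $l$ by a multiple and use that $\rProj$ is insensitive to Veronese subrings and to finitely many graded pieces.

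\textbf{Gluing.} Finally, the two $\rProj_W$ descriptions follow by gluing the local isomorphisms of graded $\cO_W(W_\tau)$-algebras. On each patch, $\bigoplus_n(q_*f'_*f'^*\cL^{\otimes rn})^G_{\rho^n}(W_\tau) \cong \bigoplus_n (q_*f'_*\cO_{Y'})^G_{\rho^n}(W_\tau)$ via the trivialisation, and the $\rProj_W$ of the latter is $Y\gitq^f_{q,\rho}G$ by definition. The transition maps are twists by the line bundle $\cM_W$ pulled back from $W$, which do not change $\rProj_W$.
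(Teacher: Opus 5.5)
Your choice of cover and the $G$-equivariant trivialisation of $f'^*\cL^{\otimes r}$ by the invariant section over $Y'_\tau:=(q\circ f')^{-1}(W_\tau)$ are exactly the paper's first step. You are also right not to take the middle isomorphism literally.

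The genuine gap is the surjectivity step, which is the only substantive content of the lemma. Your argument there is circular. The fact that $Y''_\tau$ is the domain of definition of the rational map given by $S:=\bigoplus_{n\geq 0}\cO(Y'_\tau)^G_{\rho^n}$ says nothing about $\rho^l$-semi-invariant functions on $Y''_\tau$ that are not restrictions. An equality of $\rProj$'s does not force an equality of graded algebras. Your Veronese fallback then presupposes, without proof, that the two algebras differ in only finitely many degrees. Injectivity does not come from density either, which can fail. The correct reason is that a $\rho^l$-semi-invariant with $l>0$ has non-vanishing locus inside $Y''$, so if it vanishes on $Y''$ it is nilpotent.

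The paper's proof does not fill this in. It asserts that the restriction map $(q\circ f')_*f'^*\cL^{\otimes r}\to (q\circ f'')_*f''^*\cL^{\otimes r}$ is an isomorphism because both sides are locally trivialised, which does not follow. Take $G=R=\GG_m$ acting on the cuspidal curve $Y=\Spec k[x^2,x^3]$ with $x$ of weight one, $Z$ the cusp point, $\cL$ trivial and $\rho$ the identity character. Then $Y''=Y\setminus\{0\}$ and the restriction $k[x^2,x^3]\to k[x^{\pm 1}]$ is not surjective. Moreover $x\in\cO(Y'')^{\GG_m}_{\rho}$ whereas $\cO(Y)^{\GG_m}_{\rho}=0$. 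So the section comparison fails for $l=1$, and some weakening, as you suspected, is unavoidable.

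What is true, and is all that \cref{Main thm on quotienting BB strata} uses, is agreement for $l$ large and sufficiently divisible. The missing idea is to compute semi-invariants on $Y''_\tau$ through the good quotient $p\colon Y''_\tau\to V_\tau:=\Proj S$.

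1. Take $l$ divisible enough that $\cO_{V_\tau}(l)$ is invertible with $p^*\cO_{V_\tau}(l)\cong(\cO_{Y''_\tau})_{\rho^l}$. The descent isomorphism of \cref{lem good qnt sheaf isom} then gives $\cO(Y''_\tau)^G_{\rho^l}\cong H^0(V_\tau,\cO_{V_\tau}(l))$, compatibly with restriction from $S_l$.
2. $S$ is finitely generated over the noetherian ring $S_0$. Serre's theorem, applied to a Veronese subring generated in degree one, says $S_l\to H^0(V_\tau,\cO_{V_\tau}(l))$ is bijective for $l\gg 0$.
3. There are finitely many $W_\tau$, so this gives $(q_*f'_*\cO_{Y'})^G_{\rho^l}\cong(q_*f''_*\cO_{Y''})^G_{\rho^l}$ as sheaves on $W$ for all large divisible $l$.
4. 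Tensor with $\cO_W(m)$, which pulls back to $\cL^{\otimes rm}|_{Z'}$ as you arranged. This compares $H^0(Y'',f''^*\cL^{\otimes mr})^G_{\rho^l}$ with $H^0(Y',f'^*\cL^{\otimes mr})^G_{\rho^l}$ for every $m$ and every such $l$.
5. $\rProj_W$ is unchanged by passing to Veronese subalgebras or altering finitely many graded pieces. Your gluing step then yields both $\rProj_W$ descriptions.
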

\begin{proof}

To find this open cover of $W$, we recall how this relative quotient is defined. The semistable set $Y''$ is the union of the non-vanishing loci of semi-invariant sections $\s \in f'_*(\cO_Y)(Z_j)^G_{\rho^{n}}$ where $W_j \subseteq W$ is an open affine and $Z_j := q^{-1}(W_j) \subseteq Z'$ and $n$ is positive. By quasi-compactness, we may assume that this is achieved using a finite affine open cover $\{W_j\subseteq W\}_{j \in \cJ}$ (and the same is true if we further refine this cover), and for each $W_j$ in the cover, we only need finitely many sections $\sigma_{j,i} \in f'_*(\cO_{Y'})(Z_j)^G_{\rho^{n}}$ where $n$ can be chosen to be independent of $W_i$ and $\s_{ij}$. Moreover, since $q$ is a GIT quotient, $W$ has a basis of affine opens $W_\tau$ given by the image under $q$ of the non-vanishing loci of an $R$-invariant section $\tau$ of a positive power of $\cL$. Hence we may assume that each $W_j$ in the cover above is of the form $W_{\tau}$ for some $\tau \in H^0(Z,\cL^{\otimes r})^R$ where again $r$ can be chosen independent of $\tau$ as the ring of invariant sections is finitely generated. Over each $Y_\tau = (q\circ f')^{-1}(W_\tau) \subseteq Y'$, we can trivialise the line bundle $f'^*\cL^{\otimes r}|_{Z'}$ using the non-vanishing section $\tau$ over $Y_\tau$, which gives an isomorphism $f'^*\cL|_{Y_\tau} \cong \cO_{Y'}|_{Y_\tau}$, which is equivariant on sections over $Y_\tau$, since $\tau$ is $R$-invariant. This proves the left isomorphism in the first displayed equation of the lemma, and the right follows verbatim for $f''$, as $Y_\tau \cap Y'' \subseteq Y_\tau$ and so we can restrict the above local equivariant trivialisations.

For the remaining two claims in the lemma, we note that $f'' = i \circ f'$, where $i \colon Y'' \hookrightarrow Y'$ denotes the inclusion. Thus the adjunction $\mathrm{Id} \rightarrow i_*i^*$ induces a homomorphism of equivariant sheaves
\[ (q \circ f')_* f'^* \cL^{\otimes r}|_{Z'} \rightarrow (q \circ f'')_* f''^* \cL^{\otimes r}|_{Z'}, \]
which is an isomorphism, as above we saw it is locally an isomorphism over the open sets $W_\tau$. The remaining statements follow from this equivariant isomorphism (after twisting by $\rho$).
\end{proof}

This concludes the proof of Theorem \ref{Main thm on quotienting BB strata}. We note in the above proof, the argument cannot be applied to $f$, as the open affines $Y_\tau$ do not cover all of $Y:=X_i$, as $q$ is not defined on all of $Z:=Z_i$ only on $Z':=Z_i^{ss}(\cL)$.

\begin{remark} The reader may be tempted to argue in the above by appealing to some general principle that invariant sections over the semistable locus in GIT must always extend over the whole scheme, since by definition all invariant sections must vanish away from there. This is, however, false: consider $\Gm$ acting on $\PP^1$ weights $+1,-1$, linearised with respect to $\cO(1)$. Then $X^{ss} = \AA^1\setminus \{0\}$ and so $\cO(1)_{\mid X^{ss}} \cong \cO_{X^{ss}}$ has a nonvanishing invariant global section, namely the constant one. However, $H^0(\PP^1,\cO(1))^G = 0$, since we only have invariants in even degree. 
\end{remark}

\subsection{Recovering Projective NRGIT}

We will now show how to recover the projective NRGIT quotient of B\'{e}rczi and Kirwan \cite{Berczi2023}, and prove the non-reductive Hilbert--Mumford criterion from our relative point of view. 

Let $G = U \rtimes R$ be a connected graded linear algebraic group with unipotent radical $U$. Let $G$ act on a projective scheme $X$ with respect to an ample linearisation $\cL$. As in \cite{Berczi2023}, we consider the BB decomposition of $X$ associated to the cocharacter that grades $G$, and let $f \colon Y \rightarrow Z$ denote the retraction of the open BB stratum onto its fixed locus component, which in \cite{Berczi2023} is denoted $p: X^0_{\min}\rar Z_{\min}$, as the subscript \lq min\rq\ refers to the fact that $Z_{\min}$ is the fixed component over which $\cL$ has minimal weight for the grading.  We assume that the ample linearisation $\cL$ is \emph{borderline}; that is, the weight of the grading on $Z$ is zero; this can be achieved  by twisting $\cL$ by an appropriate character if necessary. Let $q \colon Z' = Z^{R-ss}(\cL) \rightarrow W$ denote the projective reductive GIT quotient of $Z$ by $\overline{R} = R/\lambda_{\mathrm{gr}}(\GG_m)$ with respect to $\cL|_Z$. As in \textit{loc.\ cit.}\ assume that $\Stab_U(z) = \{e\}$ for all $z\in Z'$.

The main results in \cite{Berczi2023} concern a well-adapted linearisation, which is a notion that is somewhat inconvenient to work with due to needing to twist the borderline linearisation by $\rho^{-\varepsilon}$ where $\varepsilon$ is sufficiently small, but not known precisely. One advantage of our relative twisted affine approach is that it allows us to dispense with this notion. In fact, we rephrase this notion of well-adaptedness as follows.

\begin{definition}
    Let $\rho : G \rar \Gm$ be a character that pairs positively with the grading $\Gm$. We will say that \emph{a property holds for a well-adapted $\rho$-twist of $\cL$} if for all sufficiently divisible $N$ it holds for the twist $\cL_{\varepsilon}$ of the borderline linearisation by the character $\rho^{-\varepsilon}$, where $0 < \varepsilon = \frac{1}{N} \ll 1$.  \end{definition}
    
The main theorem of projective NRGIT is as follows. 

\begin{theorem}[Projective NRGIT Theorem {\cite{Berczi2023,Jackson2026}}]\label{thm Uhat}
Let $G = U \rtimes R$ be a graded linear algebraic group acting on a projective scheme $X$ with respect to an ample borderline linearisation $\cL$. Assume that $\Stab_U(x) = \{e\}$ for all $x\in Z'=Z^{R-ss}(\cL)$. Then the following statements hold for a well-adapted twist $\cL_\epsilon$ of the linearisation $\cL$ on $X$
\begin{enumerate}
    \item\label{part 1 Uhat} The algebra of invariant sections $\bigoplus_{r\geq 0} H^0(X, \cL_\varepsilon^{\otimes r})^G$ is finitely generated, and  \[\bigoplus_{r\geq 0} H^0(X, \cL_\varepsilon^r)^G \hookrightarrow \bigoplus_{r\geq 0} H^0(X, \cL_\varepsilon^r)\] induces a rational map to a projective scheme \[X \dashrightarrow X\gitq G := \Proj\bigoplus_{r\geq 0} H^0(X, \cL_\varepsilon^r)^G\] which restricts to a good quotient on its domain of definition, denoted by $X^{ss}(\cL_\varepsilon)$, and further restricts to a geometric quotient on an open stable set $X^{s}(\cL_\varepsilon)$.
    \item\label{part 2 Uhat} The loci $X^{(s)s}(\cL_\varepsilon)$ are explicitly determined by a Hilbert-Mumford criterion: 
    \[X^{(s)s}(\cL_\varepsilon) = \{ x\in X \mid \mu^{\cL_{\varepsilon}}(x,\lambda) \: (\geq)  \:0 \text{ for every 1-PS $\lambda \colon \Gm \rar G$}\}.\]

\end{enumerate}

\end{theorem}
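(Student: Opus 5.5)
The plan is to realise the projective NRGIT quotient as the relative quotient of the open BB stratum constructed in \cref{Main thm on quotienting BB strata}, and then show that this relative quotient and its semistable locus coincide with the absolute ones for a well-adapted twist $\cL_\varepsilon$.

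\textbf{Setting up the relative quotient.} Let $f \colon Y = X^0_{\min} \to Z = Z_{\min}$ be the retraction of the open BB stratum. By \cref{lemma BB stratum internally graded} it carries an internally graded equivariant action of $G \twoheadrightarrow R$. Take $q \colon Z' = Z^{R-ss}(\cL) \to W$, the reductive GIT quotient. Since $U$-stabilisers are trivial on $Z'$, we have $Z' \subseteq Z^\circ$. Fix $\rho$ pairing positively with the grading $\GG_m$. Then \cref{Main thm on quotienting BB strata} gives a good quotient $Y^{ss}(f;q,\rho) \to V := Y \gitq^f_{q,\rho} G$ with $V$ projective over $k$. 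It also gives
\[ V = \Proj \bigoplus_{n} H^0(Y', f^*\cL^{\otimes mn})^G_{\rho^{ln}} \quad \text{for suitable } l,m \gg 0. \]

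\textbf{Comparison with the absolute invariant ring.} Put $\varepsilon = l/m$, made small by taking $m$ large. I would compare $H^0(X,\cL_\varepsilon^{\otimes mn})^G = H^0(X,\cL^{\otimes mn})^G_{\rho^{-ln}}$ (up to sign conventions) with $H^0(Y',f^*\cL^{\otimes mn})^G_{\rho^{ln}}$ in three steps.

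1. Restriction to the open stratum $Y$ is injective.
2. Decompose sections on $Y$ by grading weight. Because $\cL$ is borderline and $\varepsilon$ is small, a $G$-invariant section has a prescribed grading weight $ln\langle \rho,\lambda_{gr}\rangle$. On $Y = \rSpec_Z \cA$ with $\cA$ non-positively graded, this identifies it with a section of $f^*\cL^{\otimes mn}$ that is a $\rho^{ln}$-semi-invariant.
3. Show such sections on $Y'$ extend to all of $X$, at least after passing to powers.

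Step 3 is the main obstacle. The remark after \cref{lemma w discussion before using sections to trivialise locally} shows that naive extension of sections fails. I would first try to bound the pole orders along $X \setminus Y$ and $Y \setminus Y'$ in terms of grading weights. The weight of $\rho^{ln}$ is about $\varepsilon mn$, which is small relative to the weight gaps between $Z_{\min}$ and the other fixed components. For such weights, a semi-invariant is forced to be regular on the closure; this is exactly where well-adaptedness (smallness of $\varepsilon$) enters. It is also why the statement is only made for all sufficiently divisible $N$. Granting this, the two graded rings agree in high degree, up to a Veronese subring. This gives finite generation and $X \gitq G \cong V$, which proves part (\ref{part 1 Uhat}). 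It also shows that the domain of definition $X^{ss}(\cL_\varepsilon)$ equals $Y^{ss}(f;q,\rho)$, since sections vanish off $Y'$.

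\textbf{Hilbert--Mumford criterion.} For part (\ref{part 2 Uhat}) I would combine the description in \cref{Main thm on quotienting BB strata} with \cref{thm proj equivariant qnts} for $R$ on $Z$. For $x \notin Y$, the grading 1-PS $\lambda_{gr}$ has limit in a higher fixed component, where $\mu^{\cL_\varepsilon}<0$. For $x \in Y$ with $f(x)\notin Z'$, some 1-PS of $R$ destabilising $f(x)$, suitably combined with a power of $\lambda_{gr}$, gives negative weight. For $x\in Y'$, the conditions $\mu^{\cL_\varepsilon}(x,\lambda)\ge 0$ reduce to $\langle\rho,\lambda\rangle\geq 0$ once limits stay in $Y'$. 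Each of these uses that the borderline weights of $\cL$ vanish on $Z$. The stable case is identical with strict inequalities.
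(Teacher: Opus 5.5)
Your overall strategy, identifying the projective quotient with the relative quotient of the open BB stratum $f\colon Y\to Z$ over $q\colon Z'\to W$, is the paper's. The gap is that the step carrying part~(\ref{part 1 Uhat}) is granted rather than proved, and the mechanism you propose does not reach the hard part of it.

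Step~3 requires every $\rho^{ln}$-semi-invariant section of $f^*\cL^{mn}$ over $Y'=f^{-1}(Z')$ to extend to $X$. Grading-weight bounds might control poles along the higher BB strata $X\setminus Y$. They say nothing along $Y\setminus Y'=f^{-1}(Z\setminus Z^{R-ss}(\cL))$. By your own Step~2, such a section is a section over $Z'$ of a single graded piece of $f_*f^*\cL^{mn}$. Whether it extends across $Z\setminus Z'$ is governed by the $R$-action on $Z$ and the non-grading part of $\rho$, which a fixed grading weight does not see. Moreover $X$ is an arbitrary projective scheme, so pole orders and Hartogs-type extension are unavailable. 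Your identification of the domains of definition also silently uses $X^{ss}(\cL_\varepsilon)\subseteq X^{ss}(\cL)=Y'$, which is a genuine small-perturbation fact.

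The paper never compares global section rings:
\begin{itemize}
\item It covers $Y'=X^{ss}(\cL)$ by the affine opens $X_\tau=Y_\tau$, where $\tau\in H^0(X,\cL^r)^G\cong H^0(Z,\cL^r|_Z)^R$ (this uses the graded action and borderline $\cL$).
\item It writes a $\rho^n$-semi-invariant function on $X_\tau$ as $\sigma/\tau^k$, with $\sigma$ a \emph{global} $\rho^n$-semi-invariant section of $\cL^{rk}$.
\item It clears denominators. For $\varepsilon=1/N$ with $N$ sufficiently divisible and $Mr=nN$, the product $\sigma\tau^{M-k}$ is a $G$-invariant section of $\cL_\varepsilon^{Mr}$ on all of $X$, non-vanishing wherever $\sigma/\tau^k$ is.
\item Conversely, an invariant section of a power of $\cL_\varepsilon$, divided by a suitable power of $\tau$, is a $\rho$-semi-invariant on $X_\tau$.
\end{itemize}
Because all denominators are powers of global invariant sections, nothing ever has to be extended. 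This gives equality of the (semi)stable loci, and shows both quotient maps are locally $X_i\to\Spec\cO(X_i)^G$, hence equal (\cref{lemma inclusion of ss loci for twisted affine and proj qnts}).

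For part~(\ref{part 2 Uhat}), your handling of points outside $Y'$ is essentially right. For $f(x)\notin Z'$, the power of $\lambda_{gr}$ must be fixed uniformly in $x$ before shrinking $\varepsilon$, because the $\varepsilon$-term grows with that power. The paper instead simply uses $X^{\mathrm{HM}-ss}(\cL_\varepsilon)\subseteq X^{\mathrm{HM}-ss}(\cL)=Y'$.

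The gap is for $x\in Y'$. You only treat 1-PSs whose limit stays in $Y'$, where $\mu^{\cL}(x,\lambda)=0$ and the condition becomes $\langle\rho,\lambda\rangle\geq 0$. To show that a relatively semistable point is HM-semistable for $\cL_\varepsilon$, you must also control 1-PSs whose limit leaves $Y'$. There $\mu^{\cL}(x,\lambda)>0$, but it can be arbitrarily small compared with $|\langle\rho,\lambda\rangle|$ as $\lambda$ approaches a wall. So the $\varepsilon$-perturbation can flip the sign unless $\varepsilon$ is chosen uniformly in $x$, $\lambda$ and the torus.

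The paper supplies exactly this via weight polytopes. HM-(semi)stability for $\cL_\varepsilon$ becomes $\varepsilon\rho\in\cP^0_T(x)$ (resp.\ its interior) for every maximal torus $T$. Relative (semi)stability on $X_\tau$ is a condition on the cone $C_T(x,X_\tau)$, which equals $\Cone\cP^0_T(x)$ by \cref{lem cone-polytope comparison for HM}. Then \cite[Lemma 2.12]{HHJ25} matches the two conditions for small $\varepsilon$; uniformity is possible since only finitely many weight polytopes occur. Your reduction is the right tangent-cone heuristic, but this polytope comparison is what turns it into a proof.
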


The first part was proved in \cite{Berczi2023}, as was the second part under an additional \lq semistability coincides with stability\rq\ assumption in \emph{loc.\ cit}. We will provide an alternative proof of this theorem, and a full proof of the Hilbert-Mumford criterion by relating the projective NRGIT quotient described above with a relative twisted affine NRGIT quotient for the graded equivariant action on the open BB stratum
\[ (G \twoheadrightarrow R) \curvearrowright (f \colon Y \rightarrow Z).\] 
We first prove that the (semi)stable loci and quotient maps coincide.

\begin{proposition} \label{lemma inclusion of ss loci for twisted affine and proj qnts}
For $G = U \rtimes R$ acting on a projective scheme $X$ with respect to an ample borderline linearisation $\cL$, we let $f \colon Y \rightarrow Z$ denote the retraction of the open BB stratum. For a well-adapted linearisation $\cL_\varepsilon$, we have $Y^{(s)s}(f;q,\rho) = X^{(s)s}(\cL_\varepsilon)$. Moreover, the projective NRGIT quotient morphism $\varphi_X \colon X^{ss}(\cL_\varepsilon) \rightarrow X \gitq_{\cL_\varepsilon} G $ coincides with the relative twisted affine NRGIT quotient $\varphi_Y \colon Y^{ss}(f;q,\rho) \rightarrow Y\gitq_{q,\rho}^f G$ and in particular, Statement \eqref{part 1 Uhat} of \cref{thm Uhat} holds.
\end{proposition}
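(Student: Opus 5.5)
The strategy is to compare the two constructions section by section. We identify $\cL_\varepsilon$-semi-invariant sections on $X$ (restricted to $Y$) with $\rho$-twisted semi-invariants for the relative problem over $W$, using \cref{Main thm on quotienting BB strata} and \cref{lemma w discussion before using sections to trivialise locally}. Concretely, write $\varepsilon = 1/N$. A $G$-invariant section of $\cL_\varepsilon^{\otimes Nr} = \cL^{\otimes Nr}\otimes \rho^{-r}$ is the same as a section of $\cL^{\otimes Nr}$ that is $G$-semi-invariant of weight $\rho^{r}$. By \cref{Main thm on quotienting BB strata}, applied to the open stratum $Y$ with $Z$ projective, the relative quotient is
\[ Y\gitq^f_{q,\rho} G = \Proj_k \bigoplus_{n\geq 0} H^0(Y',f^*\cL^{\otimes mn})^G_{\rho^{ln}} \]
for sufficiently divisible $l,m$. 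So it suffices to show two things for $N$ sufficiently divisible: first, that restriction $H^0(X,\cL^{\otimes Nr})^G_{\rho^r}\to H^0(Y',f'^*\cL^{\otimes Nr})^G_{\rho^r}$ is an isomorphism in the relevant degrees (at least up to passing to a Veronese subring); second, that the non-vanishing loci match.

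The first step is to show $X^{ss}(\cL_\varepsilon)\subseteq Y'$. Let $\lambda_{\mathrm{gr}}$ be the grading 1-PS. Because $\cL$ is borderline and $\rho$ pairs positively with the grading, every point $x\notin Y$ flows under $\lambda_{\mathrm{gr}}^{-1}$ or $\lambda_{\mathrm{gr}}$ to a higher fixed component. On such a component the grading weight of $\cL$ is strictly larger than on $Z$, and for $\varepsilon$ small this forces a negative Hilbert--Mumford weight, i.e. an invariant section vanishes. Points of $Y\setminus Y'$ map to $R$-unstable points of $Z$; a destabilising 1-PS of $R$ lifts, and the small twist cannot compensate. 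This is a standard weight comparison. The second step is to show that every $\rho$-semi-invariant on $Y'$, after raising to a power, extends to a section of $\cL_\varepsilon^{\otimes k}$ on $X$. Here I would work on the affine cone over $X$. A semi-invariant on $Y'$ is locally of the form $\sigma\tau^{a}$, where $\sigma\in f'_*\cO(Z_\tau)^G_{\rho^n}$ and $\tau$ is an $R$-invariant section of $\cL^{\otimes r}$ on $Z$. Via the grading, $\sigma$ has bounded negative $\lambda_{\mathrm{gr}}$-weight, since it is polynomial in the fibre coordinates of the cone over $Z$. Multiplying by a large power of $f^*\tau$ and using that $\rho$ pairs positively with $\lambda_{\mathrm{gr}}$ kills the poles along $X\setminus Y$ exactly when $N\gg0$. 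This is where the well-adapted condition enters, so the choice of $N$ should be governed by finitely many generators, as in the choice of $M$ in the proof of \cref{main theorem externally graded case}. With restriction injective (because $Y$ is dense in the irreducible components it meets) and surjective up to Veronese, the $\Proj$'s agree and the quotient maps coincide.

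The third step is to match the loci. On $Y'$ the non-vanishing loci of the corresponding sections coincide, which gives $Y^{ss}(f;q,\rho)=X^{ss}(\cL_\varepsilon)$. Stability then follows from the geometric-quotient characterisation, since both quotient maps are the same good quotient. Statement \eqref{part 1 Uhat} then follows from \cref{main theorem}, which gives finite generation and the good quotient. The main obstacle is the extension step. It requires uniform control of the $\lambda_{\mathrm{gr}}$-weights of local semi-invariants across the gluing over $W$, and a check that a single sufficiently divisible $N$ works for all generators at once. I would handle this by fixing the finite generating sets from \cref{lemma w discussion before using sections to trivialise locally} and bounding the ratios of grading weight to $\rho$-weight, as in the proof of \cref{main theorem externally graded case}.
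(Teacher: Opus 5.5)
Your route depends on the claim that restriction $H^0(X,\cL^{nN})^G_{\rho^n}\to H^0(Y',\cL^{nN})^G_{\rho^n}$ is surjective up to a Veronese, i.e.\ that $\cL_\varepsilon$-invariant sections over $Y'$ extend to $X$. Everything downstream needs this: that the two $\Proj$'s agree, your third step on the loci, and finite generation of $\bigoplus_r H^0(X,\cL_\varepsilon^r)^G$. The last does \emph{not} follow from \cref{main theorem}, which only gives finite generation of the sheaf over $W$. The argument you give does not prove the extension claim, for three reasons.
\begin{enumerate}
\item For a global section $s$ of degree $nN$ you have $s|_{Y_\tau}=\sigma\tau^{a}$ with $a=nN/r$ \emph{forced} by the degree. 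The function $\sigma$ is polynomial in the fibre coordinates only with coefficients in $\cO(Z_\tau)$, whose $\tau$-denominators are unbounded. So the $\lambda_{\mathrm{gr}}$-weight of $\sigma$ does not bound the power of $\tau$ needed to clear poles.
\item You say nothing about poles along $Y\setminus Y'=f^{-1}(Z\setminus Z')$. These are governed by the weights of 1-PSs of $R$ other than $\lambda_{\mathrm{gr}}$, so positivity of $\rho$ on $\lambda_{\mathrm{gr}}$ is irrelevant there.
\item ``Multiplying by a large power of $\tau$'' produces $s\tau^b$. This is a section of a different degree, no longer $\cL_\varepsilon$-invariant. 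Such local modifications for different $\tau$ do not glue to an extension of $s$ or of any power of $s$.
\end{enumerate}
The finite-generator ratio bound you borrow from the proof of \cref{main theorem externally graded case} controls only pointwise non-vanishing, not extension of arbitrary global sections. The paper's remark after \cref{lemma w discussion before using sections to trivialise locally} warns against exactly this kind of extension reasoning.

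The paper never needs any extension, and the idea you are missing is the following. Because $\cL$ is borderline and the action is graded, $H^0(Z,\cL^r|_Z)^R\cong H^0(X,\cL^r)^G$. So each $\tau$ is a $G$-invariant section on all of $X$, with $X_\tau=Y_\tau$ and $X^{ss}(\cL)=Y'$. Hence $\cO(Y_\tau)=A_{(\tau)}$ for $A=\bigoplus_r H^0(X,\cL^r)$. Every $\rho^n$-semi-invariant function on $Y_\tau$ is therefore already $\sigma/\tau^j$ with $\sigma\in H^0(X,\cL^{rj})^G_{\rho^n}$ a global section, and no pole analysis is needed. Padding $\sigma$ with a power of $\tau$ so that its degree is $nN$ gives an $\cL_\varepsilon$-invariant section non-vanishing at the same point. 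A single $N$ suffices for finitely many generators over a finite cover $\{W_\tau\}$.

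Conversely, suppose an $\cL_\varepsilon$-invariant $\sigma$ has $\sigma(x)\neq 0$. One uses $X^{ss}(\cL_\varepsilon)\subseteq X^{ss}(\cL)=Y'$ (cited from \cite{Hoskins2021}) to find $\tau$ with $\tau(x)\neq 0$. A power of $\sigma$ divided by a power of $\tau$ of the same degree is then a non-vanishing $\rho$-semi-invariant on $X_\tau$. This gives pointwise equality of the (semi)stable loci, with a common finite affine cover by sets $X_i$. Both quotient maps are locally $X_i\to\Spec\cO(X_i)^G$, so they coincide.

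Separately, your Step 1 sketch for $Y\setminus Y'$ is off. A 1-PS $\lambda$ of $R$ destabilising $f(y)$ need not destabilise $y$, since $\mu^{\cL}(y,\lambda)\geq\mu^{\cL}(f(y),\lambda)$. You would need $\lambda\lambda_{\mathrm{gr}}^a$ for $a\gg 0$ with uniform bounds, or simply the identification $X^{ss}(\cL)=Y'$ above.
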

\begin{proof}
Recall from Lemma \ref{lemma w discussion before using sections to trivialise locally} that there is a cover of the reductive GIT quotient $W= Z\gitq_{\cL|Z} \overline{R}$ by open affines ${W_{\tau}}$ which are the image under $q \colon Z':=Z^{R-ss}(\cL) \rightarrow  W$ of the nonvanishing locus of sections $\tau \in H^0(Z,\cL^{r}|_Z)^R$, where $r$ can be chosen to be independent of $\tau$. As above, we will let $Y' := f^{-1}(Z')$ and let $Y'':= Y^{ss}(f;q,\rho)$ denote the relative twisted affine semistable locus. By construction, $Y''$ is the union of the (absolute) twisted affine $\rho$-semistable loci $ (Y_\tau)^{ss}(\rho) \subseteq Y_\tau := (q\circ f')^{-1}(W_\tau)$.  We will let $A:= \oplus_{r \geq 0} H^0(X,\cL^r)$, so that $X = \Proj A$. For a homogeneous element $s \in A$, we let $A_{(s)}$ denote the homogeneous localisation so that $X_s = \Spec A_{(s)}$. Since the linearisation $\cL$ is borderline and the action is graded, we have
\[ H^0(Z,\cL^{r}|_Z)^R \cong H^0(X,\cL^r)^G\]
(see \cref{properties of graded actions and linearisations} and \cite[Proposition 2.18]{Hoskins2021})
and, for the invariant section $\tau$, we have $X_\tau = Y_\tau$, as in fact $X^{ss}(\cL) = Y'$ by the above identification of invariant sections.

We begin by proving an equality of semistable loci, which involves relating invariant sections of $\cL_\varepsilon$ with $\rho$ semi-invariant functions. Suppose $x \in (X_\tau)^{ss}(\rho)$ for some $\tau$; then by definition there exists $n >0$ and $s \in \cO(X_
\tau)^G_{\rho^n} = (A_{(\tau)})^G_{\rho^n}$ which is non-vanishing at $x$. Then $s$ has the form $s = \frac{\sigma}{\tau^n}$ where $\sigma \in H^0(X, \cL^{rn})^G_{\rho^{n}}$ is a semi-invariant, as $\tau$ is invariant, and we have $\sigma(x) \neq 0$. We claim that from the semi-invariant section $\sigma$ of the borderline linearisation $\cL$, we can produce an invariant section of the twisted linearisation $\cL_\epsilon$ where $\epsilon = \frac{1}{N}$ for $N$ sufficiently divisible. In fact, we use that $N$ is divisible by $r$ and let $M := \frac{n N}{r}$ and consider $\tilde{\sigma}:=\sigma \tau^{M-n}$ which is also a $\rho^{n}$ semi-invariant of $\cL$, as $\tau$ is an invariant section. Since $\tilde{\sigma}$ is a section of $\cL^{Mr}$, we need to twist the action by $\rho^{-\frac{Mr}{N}} = \rho^{-n}$ and so, we conclude that $\tilde{\sigma}$ is an invariant section of the twisted linearisation $\cL_\epsilon^{Mr}$. Since this section is non-vanishing at $x$, we conclude that $x \in X^{ss}(\cL_\epsilon)$, which proves the forward inclusion.

Conversely, we let $x\in X^{ss}(\cL_\varepsilon)$ and take $\sigma \in H^0(X,\cL_{\varepsilon}^n)^G$ with $\sigma(x)\neq 0$. Since $\cL_\epsilon$ is a sufficiently small perturbation of $\cL$, we have $X^{ss}(\cL_\varepsilon) \subseteq Y' = X^{ss}(\cL)$ (see \cite[Theorem 2.30 ii)]{Hoskins2021}). Hence there is a section $\tau \in H^0(Z,i^*\cL^r)^R$ which is non-vanishing at $x$, as $Y'$ is covered by the open affine sets $Y_\tau$.  Since $N$ is sufficiently divisible, we let $m:=\frac{N}{r}$. Then $s = \frac{\sigma^{N} }{ \tau^{nm}} \in \cO(X_\tau) = A_{(\tau)}$ gives a non-vanishing semi-invariant of weight $\rho^n$.

This completes the proof that the semistable loci coincide. In fact, this moreover shows that both semistable loci can be covered by finitely many open affine sets $X_i \subseteq X_{\tau(i)}$, which can be described either as the non-vanishing of an invariant section $\sigma_i$ of a power of $\cL_\varepsilon$ or as the non-vanishing locus of a $\rho$ semi-invariant $s_i$. From this description and the definition of the stable loci, we conclude that the stable loci also coincide. 

Since the morphism $\varphi_X \colon X^{ss}(\cL_\varepsilon) \rightarrow X \gitq_{\cL_\varepsilon} G$ is induced by the inclusion of invariant rings, it is locally of the form $X_i \mapsto \Spec \cO(X_i)^G $, as both the target and the source are covered by the non-vanishing loci of the invariant sections $\sigma_i$. Since the relative NRGIT quotient map $\varphi_Y \colon Y^{ss}(f;q,\rho) \rightarrow Y\gitq_{q,\rho}^f G$, also admits the same local description on each $X_i$, we conclude that $\varphi_X = \varphi_Y$ and in particular $X \gitq_{\cL_\varepsilon} G$ is a projective scheme, and $\varphi_X$ is a good quotient and restricts to a geometric quotient on $X^{s}(\cL_\varepsilon)$.
\end{proof}

To complete our proof of the projective NRGIT Theorem, we just need to prove the Hilbert-Mumford description of the semistable locus given in Statement \eqref{part 2 Uhat} of \cref{thm Uhat}. In preparation for this, we relate the weight polytope for the projective GIT quotient, with the cone of weights of semi-invariants in affine GIT. We note that Halic related (absolute) twisted affine GIT quotients with projective GIT quotients for reductive group actions \cite{Halic2010}, and we used these ideas to construct affine non-reductive GIT quotients in \cite{HHJ25}.

\begin{definition}\label{def weight polytope nrgit}
Fix a maximal torus $T \subseteq G$ and let $X^*(T)$ denote the character lattice.
\begin{enumerate}
    \item For $x\in X$, the $T$-weight polytope of $x$ with respect to $\cL_\varepsilon$, assumed to be very ample, is
    \[ \cP_T^\varepsilon(x) = \conv \{ \chi \in X^*(T) \mid \exists \: \sigma \in H^0(X,\cL_\varepsilon)^T_\chi \text{ with } \sigma(x) \neq 0 \}.\]
    \item For $\tau \in H^0(X,L^r)^G$ and $x \in X_\tau$, the $T$-weight semi-invariant cone is
    \[ C_T(x,X_\tau):= \{w\in X^*(T) \mid \exists \: s\in \cO(X_\tau)^T_w \text{ with } s(x)\neq 0 \}.\]
\end{enumerate}
\end{definition}

Note that $\cP_T^\varepsilon(x)$ is the convex hull of \emph{minus} the weights lying over $x$ in the affine cone with respect to the equivariant projective embedding $X \hookrightarrow \PP(V)$ induced by $\cL_\varepsilon$, as the affine cone $V:= H^0(X,\cL_{\varepsilon})^*$ is the dual to the above representation. Hence for a 1-PS $\lambda$ contained in $T$, we have $\mu^{\cL_\varepsilon}(x,\lambda) = \max \{ \langle \chi,\lambda \rangle \mid  \chi \in \cP_T^\varepsilon(x) \}$. We also note that $\cP_T^0(x)$ denotes the weight polytope with respect to the borderline linearisation. 

The following lemma also appears in \cite{Jackson2026}, but we include a short self-contained proof.

\begin{lemma}\label{lem cone-polytope comparison for HM}
Let $T\subseteq G$ be a maximal torus and $\tau \in H^0(X,\cL^r)^G$, where $\cL^r$ is very ample. Then for $y\in X_\tau$, we have  
\[ C_T(y,X_\tau)= \Cone \cP^0_T(y).\] 
\end{lemma}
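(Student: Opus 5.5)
We prove the two inclusions separately, working with homogeneous coordinates on the cone $A=\bigoplus_{n\ge0}H^0(X,\cL^n)$ and decomposing everything into $T$-weight spaces. Since $\tau$ is $G$-invariant, it is $T$-invariant of weight zero for the borderline linearisation. Hence $\cO(X_\tau)=A_{(\tau)}$ inherits a $T$-grading in which the weight-$w$ part consists of the quotients $\sigma/\tau^n$ with $\sigma\in H^0(X,\cL^{rn})^T_w$.

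\emph{Inclusion $C_T(y,X_\tau)\subseteq \Cone\cP^0_T(y)$.} Let $s\in\cO(X_\tau)^T_w$ with $s(y)\ne0$. Write $s=\sigma/\tau^n$ with $\sigma\in H^0(X,\cL^{rn})^T_w$ and $\sigma(y)\neq 0$. Because $\cL^r$ is very ample and $X$ is projective, the multiplication map $\Sym^n H^0(X,\cL^r)\to H^0(X,\cL^{rn})$ is surjective, possibly after replacing $r$ by a multiple; I expect to need to replace $\tau$ by a power here, which does not change $X_\tau$. The surjection is $T$-equivariant, so $\sigma$ is a sum of products $\sigma_1\cdots\sigma_n$ of $T$-weight vectors $\sigma_j\in H^0(X,\cL^r)^T_{\chi_j}$ with $\sum_j\chi_j=w$. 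Since $\sigma(y)\ne0$, at least one such product is nonzero at $y$, so each of its factors satisfies $\sigma_j(y)\ne0$. Each $\chi_j$ therefore lies in the weight polytope of $y$ for $\cL^r$, which is $r\cP^0_T(y)$. Consequently $w\in nr\,\cP^0_T(y)\subseteq\Cone\cP^0_T(y)$, at least up to the lattice/saturation issue addressed below.

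\emph{Inclusion $\Cone\cP^0_T(y)\subseteq C_T(y,X_\tau)$.} Take a vertex, or any generator, $\chi$ of $\cP^0_T(y)$ witnessed by some $\sigma\in H^0(X,\cL)^T_\chi$ with $\sigma(y)\neq0$. Then $\sigma^r/\tau\in\cO(X_\tau)^T_{r\chi}$ is nonvanishing at $y$. Products of functions nonvanishing at $y$ are again nonvanishing at $y$, so $C_T(y,X_\tau)$ is closed under addition. It follows that $C_T(y,X_\tau)$ contains all nonnegative integer combinations of the $r\chi$. To reach every lattice point of the rational cone, and not just multiples of the $r\chi$, the plan is to combine this additivity with the fact that both sides are determined by the rays they span.

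\emph{Main obstacle: interpreting the equality as sets of characters.} The left side is a monoid in $X^*(T)$, while $\Cone\cP^0_T(y)$ is a rational polyhedral cone. I would read the equality up to saturation, that is, as an equality of the rational cones spanned, which is all the Hilbert--Mumford argument needs. To control denominators in the first inclusion, the key input is the zero-weight vector $\tau$. Since $\tau(y)\neq0$, the weight $0$ lies in $r\cP^0_T(y)$. This makes the rescaling from degree $rn$ to the polytope harmless and ensures the cone has its apex at the origin. The surjectivity-of-multiplication step is the other point that needs care, and I would handle it by choosing $r$ large enough that $\cL^r$ is projectively normal.
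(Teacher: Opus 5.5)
Your proof is correct and follows essentially the same route as the paper: write $s=\sigma/\tau^n$ and use that the weight of $\sigma$ lies in the dilated polytope; conversely, build $\sigma^r/\tau^{(\cdot)}$ from sections realising the generating weights. The projective-normality step is harmless but unnecessary, since you still need that the weight polytope of $y$ for $\cL^N$ is $N\cP^0_T(y)$, which the paper applies directly at $N=rn$. Reading the equality as one of rational cones is also right, and the paper's final step, which passes from $rn_1w\in C_T(y,X_\tau)$ to $w\in C_T(y,X_\tau)$, implicitly relies on that reading too.
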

\begin{proof}
As in the proof of \cref{lemma inclusion of ss loci for twisted affine and proj qnts}, we let $A:= \oplus_{r \geq 0} H^0(X,\cL^r)$. If $w \in C_T(y;X_\tau)$, then there is a semi-invariant $s\in \cO(X_\tau)^T_w$ which is non-vanishing at $y$. Since $\cO(X_\tau) \cong A_{(\tau)}$, we can write $s = \frac{\s}{\tau^m}$ where $\s \in H^0(X,\cL^{rm})$ is also a semi-invariant of weight $w$, as $\tau$ is an invariant section of $\cL^r$, and is also non-vanishing at $y$. Since $\cL_\varepsilon$ is very ample, the weight polytope of $y$ with respect to $\cL_\varepsilon^{rm}$ is just the $rm$th dilation of $\cP_T^0(y)$. Hence $w \in  rm \cdot\cP^0_T(y)$ and hence $w \in \Cone \cP^0_T(y)$. 
Conversely, let $w \in \Cone \cP_T^0(y).$ Then for some $n_1,n_2 \in \ZZ_>0$ we have $n_1w \in n_2\cdot \cP_T^0(y)$. Then there is $\sigma \in H^0(X,\cL^{n_2})^T_{n_1 w}$ which is non-vanishing at $y$. Hence, the semi-invariant $s =\frac{\sigma^r}{\tau^{n_2}} \in (A_{(\tau)})^T_{rn_1w}$ is non-vanishing at $y$, so $w \in C_T(y;X_\tau)$.\end{proof}

We can now finish our independent proof of the projective NRGIT theorem.

\begin{proof}[Proof of \cref{thm Uhat}]
By \cref{lemma inclusion of ss loci for twisted affine and proj qnts}, all that remains is to prove the Hilbert--Mumford description of the semistable locus in Statement \eqref{part 2 Uhat} and we know that $X^{(s)s}(\cL_\varepsilon) = Y^{(s)s}(f;q,\rho)$, where $f \colon Y \rightarrow Z$ is the retraction from the open BB stratum and $q \colon Z' \rightarrow Z \gitq_{\cL} R$ is the reductive GIT quotient. 
Our aim is to prove that 
\[ Y^{(s)s}(f;q,\rho) = X^{\mathrm{HM}-(s)s}(\cL_\varepsilon):= \{ x \in X \mid \mu^{\cL_{\varepsilon}}(x,\lambda) (\geq) 0 \text{ for every 1-PS } \lambda:\Gm \rar G\}. \]
Since the Hilbert--Mumford weight $\mu^{\cL_{\varepsilon}}(x,\lambda)$ can be computed from the weight polytope $\cP_T^{\varepsilon}(x)$ for a maximal torus $T$ containing $\lambda$, we have $ x \in X^{\mathrm{HM}-ss}(\cL_\varepsilon)$ if and only if $0 \in \cP_T^{\varepsilon}(x)$ for every maximal torus $T \subseteq  G$.

First, we claim that these (semi)stable loci are contained in $Y':= f^{-1}(Z')$. This is true by definition for the relative twisted affine semistable locus $Y^{(s)s}(f;q,\rho)$, and for the HM-semistable locus, this holds as $\cL_\varepsilon$ is a small perturbation of $\cL$, so we have $X^{\mathrm{HM}-ss}(\cL_\varepsilon) \subseteq X^{\mathrm{HM}-ss}(\cL) = Y'$. 

As in the proof of \cref{lemma w discussion before using sections to trivialise locally} and \cref{lemma inclusion of ss loci for twisted affine and proj qnts}, we use that $Y'$ is covered by the open affine sets $X_\tau$ for $\tau \in H ^0(Z,\cL^r|_Z)^R$. Thus, to compare the (semi)stable loci, it suffices to consider a point $x$ in some $X_\tau$. Then, $x$ is Hilbert--Mumford semistable (resp.\  stable) for $\cL_\varepsilon$ if and only if $\varepsilon \rho \in  \cP^0_T(y)$ (resp.\ the interior) for all maximal tori $T \subseteq  G$. By \cite[Lemma 2.12]{HHJ25} and \cref{lem cone-polytope comparison for HM},  we conclude that $x \in X^{\mathrm{HM}-(s)s}(\cL_\varepsilon)$ if and only if $x \in X_\tau^{(s)s}(\rho)$, where this latter (semi)stable locus is the absolute $\rho$-twisted semistable locus on the affine scheme $X_\tau$. Since $Y^{(s)s}(f;q,\rho)$ is the union of the absolute semistable loci $X_\tau^{(s)s}(\rho)$ (see \cref{rmk relative ss loci union of absolute ss loci}), this completes the proof.
\end{proof}

\section{Applications}\label{sec applications}

In this section we give various applications of our relative quotients.

\subsection{Relative constructions of moduli spaces of quivers with multiplicities}\label{sec quivers with mult}

The last two authors together with T. Vernet  \cite{HJV} use the methods in this paper to construct moduli spaces of representations of a quiver with multiplicities relative to King's moduli spaces of quiver representations (without multiplicities). In this section, we briefly outline how we apply the results of this paper to construct these moduli spaces and describe their key properties.

Let us first fix notation: by a quiver $Q=(Q_0;Q_1)$, we mean a directed graph with vertex set $Q_0$ and arrow set $Q_1$. For each $i \in Q_0$ we fix a multiplicity $m_i$ and let $\mathbf{m}$ denote the tuple of multiplicities. For each $m_i$, we associate a truncated polynomial ring $k_{m_i} = k[\epsilon]/\epsilon^{m_i}$. A (locally free) representation of $(Q, \mathbf{m})$ is given by free $k_{m_i}$-modules $M_i$ for each $i \in Q_0$ and appropriate linear maps $M_i \rightarrow M_j$ for each arrow $a\colon i \rightarrow j$ (see \cite[Def.\ 2.1.5]{HJV}). The discrete invariants of $M$ are given by the tuple $\mathbf{r}$ of ranks $r_i$ of each $M_i$. The stack of rank $\mathbf{r}$ representations of $(Q, \mathbf{m})$ is thus a quotient of an affine space $R(Q,\mathbf{m};\mathbf{r})$ (obtained by choosing bases of each $M_i$) by
\[ \GL_{\mathbf{m},\mathbf{r}}= \prod_{i \in Q_0} \GL_{r_i}(k_{m_i})\]
acting by conjugation. Furthermore, this action naturally fits into an equivariant set-up
\begin{equation}\label{equivariant quiver action}
(\GL_{\mathbf{m},\mathbf{r}} \longrightarrow \GL_{\mathbf{r}}) \curvearrowright (R(Q,\mathbf{m};\mathbf{r}) \stackrel{\tau}{\longrightarrow} R(Q,\mathbf{r}))  
\end{equation}
where $\GL_{\mathbf{r}} := \GL_{\mathbf{1},\mathbf{r}}$ acting on $R(Q,\mathbf{r}):=R(Q,\mathbf{1};\mathbf{r})$ describes the stack of representations of $Q$ (without multiplicities). The morphism $\tau$ is a truncation map which associates to a quiver with multiplicities an underlying quiver without multiplicities. In fact, the truncation $\GL_{\mathbf{m},\mathbf{r}} \rightarrow \GL_{\mathbf{r}}$ is the quotient of this non-reductive group by its unipotent radical. Our goal is to take a relative NRGIT quotient with respect to King's reductive GIT quotient
\[ q_\theta \colon R(Q,\mathbf{r})^{\theta-ss} \rightarrow R(Q,\mathbf{r})\gitq_{\hspace{-2pt}\theta} \GL_{\mathbf{r}} \]
for a stability parameter $\theta$ (a tuple of integers indexed by $Q_0$ such that $\theta \cdot \mathbf{r}=0$). One of the main results of \cite{HJV} is the following construction of moduli spaces of quivers with multiplicities.

\begin{theorem}[{\cite[Theorem A]{HJV}}]
The equivariant action \eqref{equivariant quiver action} can be externally graded, and a relative NRGIT quotient can be constructed with respect to $\theta$ and an additional parameter $\rho$, provided that we assume the unipotent stabiliser condition $R(Q,\mathbf{r})^{\theta-ss} \subseteq R(Q,\mathbf{r})^\circ$. This yields a good quotient
\[ R(Q,\mathbf{m};\mathbf{r})^{\theta,\rho-ss} \rightarrow R(Q,\mathbf{m};\mathbf{r}) \gitq_{\hspace{-2pt}\theta,\rho} \GL_{\mathbf{m},\mathbf{r}}\]
with the following properties:
\begin{enumerate}
    \item $R(Q,\mathbf{m};\mathbf{r}) \gitq_{\hspace{-2pt}\theta,\rho} \GL_{\mathbf{m},\mathbf{r}}$ is projective-over-affine over King's moduli space $R(Q,\mathbf{r})\gitq_{\hspace{-2pt}\theta} \GL_{\mathbf{r}}$ of $\theta$-semistable representations of $Q$ (without multiplicities);
    \item The GIT semistable locus admits a moduli-theoretic interpretation formulated for representations of $(Q,\mathbf{m})$;
    \item The points of $R(Q,\mathbf{m};\mathbf{r}) \gitq_{\hspace{-2pt}\theta,\rho} \GL_{\mathbf{m},\mathbf{r}}$ parametrise S-equivalence classes of $(\theta,
    \rho)$-semistable representations of $(Q,\mathbf{m})$.
    \item The condition $R(Q,\mathbf{r})^{\theta-ss} \subseteq R(Q,\mathbf{r})^\circ$ is satisfied if $\theta$ is generic with respect to $\mathbf{r}$.
\end{enumerate}
\end{theorem}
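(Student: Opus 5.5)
The plan is to reduce everything to \cref{main theorem externally graded case} applied to the equivariant action \eqref{equivariant quiver action}. First I construct the external grading. Each $\GL_{r_i}(k_{m_i})$ is $\GL_{r_i}(k[\epsilon]/\epsilon^{m_i})$. The automorphism $\epsilon\mapsto t\epsilon$ of $k_{m_i}$ induces a $\GG_m$-action on $\GL_{\mathbf{m},\mathbf{r}}$ and on $R(Q,\mathbf{m};\mathbf{r})$. The rescaling must be applied uniformly on all vertices, and the arrows require a compatible convention when $m_i\neq m_j$, which I would fix using the explicit matrix description of maps between free modules over different truncations. A matrix entry $\sum_k a_k\epsilon^k$ then carries weight $k$. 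It follows that the $\GG_m$-fixed loci are exactly $\GL_{\mathbf{r}}$ and $R(Q,\mathbf{r})$, and that the limit as $t\to 0$ is the truncation $\tau$. Moreover the Lie algebra of the unipotent radical, consisting of the $\epsilon^{\geq1}$ parts, has strictly positive weights. These facts are the conditions of \cref{definition graded} \eqref{def ext graded}. They are also compatible with the conjugation action, since the $\GG_m$ acts by ring automorphisms.

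Next I take $q=q_\theta$ to be King's good quotient of $Z'=R(Q,\mathbf{r})^{\theta-ss}$. Under the hypothesis $Z'\subseteq R(Q,\mathbf{r})^\circ$, \cref{main theorem externally graded case} produces the good quotient. It also gives the factorisation through $\rSpec$ of the invariants, which is projective followed by affine over $W=R(Q,\mathbf{r})\gitq_\theta\GL_{\mathbf{r}}$. This gives (1). Part (3) then follows from the general properties of good quotients: the closed points correspond to closed orbits in the semistable locus, equivalently to S-equivalence classes. For (2), I would use the Hilbert--Mumford description in \cref{main theorem externally graded case}(iv) together with \eqref{eq HM descr nonred} applied to the internally graded extension $\widetilde G$, as in \cref{externally graded} and the proof of \cref{main theorem externally graded case}. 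A 1-PS of $\GL_{\mathbf{m},\mathbf{r}}$ is conjugate under $U$ into $\GL_{\mathbf{r}}$, by \cref{rmk on NR HM semistability}(1). Such a 1-PS with an existing limit corresponds, as in King's argument, to a filtration of the $k_{\mathbf m}$-representation by subrepresentations. The pairing $\langle\rho,\lambda\rangle$ then becomes a slope-type inequality on subobjects. Combining this with the $\theta$-semistability of the truncation gives the moduli-theoretic description.

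The main obstacle is (4): showing that $\theta$ generic forces $Z'\subseteq Z^\circ$. For generic $\theta$ every $\theta$-semistable representation is $\theta$-stable, hence has stabiliser $\GG_m$ (scalars) in $\GL_{\mathbf{r}}$. Since $Z$ is a smooth affine space, \cref{rmk: descr Zcirc for Z reduced} applies, so I need the $U^{(i)}$-stabiliser dimension at each stable $z$ to be minimal. At a point $z\in Z$ the $U$-stabiliser consists of the unipotent elements $1+\epsilon A_1+\dots$ fixing $z$. Because $z$ has no $\epsilon$-terms, the conditions decouple, and each $A_k$ must lie in $\mathrm{End}_Q(z)$. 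For stable $z$ this endomorphism algebra is $k$, so the stabiliser of $U^{(i)}$ is the central unipotent subgroup of scalars $1+\sum c_k\epsilon^k$. This is a subgroup of $U^{(i)}$ that acts trivially on all of $R(Q,\mathbf{m};\mathbf{r})$, so its dimension is the global minimum. I expect the care to lie in checking this filtration-by-filtration and in handling unequal multiplicities, where the scalar subgroup must be interpreted compatibly across vertices.
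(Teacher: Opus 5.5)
Your reduction is the route the paper indicates for \cite{HJV}. You grade the action by rescaling $\epsilon$, take $q=q_\theta$ to be King's quotient, and apply \cref{main theorem externally graded case}. Items (1) and (3) then follow as you say, at least at the level of closed orbits and GIT S-equivalence.

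The gap is (4) once the multiplicities are not all equal, which is the setting of \cite{HJV}. Your decoupling step assumes every arrow has coefficients in a ring long enough to see $\epsilon^k$. Suppose instead, as the truncation map $\tau$ suggests, that an arrow $i\to j$ carries a $k_{\min(m_i,m_j)}$-linear map. Then the $\epsilon^k$-coefficient of $g_j z_a=z_a g_i$ imposes a condition only for arrows with $\min(m_i,m_j)>k$. So $(A_k^{(i)})_{m_i>k}$ need only be an endomorphism of the restriction of $z$ to the full subquiver on $\{i : m_i>k\}$. Stability of $z$ gives no control over that restriction, so this is not a matter of interpreting the scalar subgroup compatibly: the step fails.

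Here is an example. Take vertices $1,2,3$, arrows $a,b\colon 1\to 2$, $c\colon 1\to 3$, $d\colon 3\to 2$, with $\mathbf m=(2,2,1)$ and $\mathbf r=(1,1,1)$.
\begin{itemize}
\item The unipotent radical is $U\cong\GG_a^2$, with elements $(1+s_1\epsilon,1+s_2\epsilon,1)$. It acts trivially on $x_c,x_d\in k$.
\item It sends $x_a=\alpha_0+\alpha_1\epsilon$ to $\alpha_0+(\alpha_1+(s_2-s_1)\alpha_0)\epsilon$, and likewise for $x_b$.
\item Hence the generic $U$-stabiliser is one-dimensional, and $R(Q,\mathbf r)^\circ=\{(\alpha_0,\beta_0)\neq 0\}$.
\item Now let $z$ have $x_a=x_b=0$ and $x_c=x_d=1$. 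Its only proper nonzero subrepresentations have dimension vectors $(0,1,0)$ and $(0,1,1)$.
\item So $z$ is King-stable for $\theta=(-2,1,1)$. This $\theta$ is generic for $\mathbf r$, since no $0<\mathbf r'<\mathbf r$ has $\theta\cdot\mathbf r'=0$.
\item Yet $\Stab_U(z)=U$ is two-dimensional, so $z\in R(Q,\mathbf r)^{\theta-ss}\setminus R(Q,\mathbf r)^\circ$.
\end{itemize}
The $U$-action, and hence the conclusion, is unchanged if arrows are instead taken to be $k[\epsilon]$-module maps.

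Your argument via Schur's lemma does prove (4) for constant multiplicities, the setting of \cite{HHJ25}. For varying multiplicities, (4) cannot follow from genericity of $\theta$ and Schur's lemma alone. You would need the specific conventions or extra hypotheses of \cite{HJV}, which your proposal does not identify or use.

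A smaller point on (2): the $\widetilde G$-description also involves 1-PSs with a non-trivial component in the grading $\GG_m$, which you do not treat. It is cleaner to use the $G$-description obtained exactly as in the proof of \cref{theorem nonred quotients graded equiv optimal assumptions on Z}(v), via the $U$-quotient and \cref{lemma existence of limits under quotients by unipotents with slices}. There, the relevant filtrations are by free direct-summand subrepresentations whose associated graded has $\theta$-semistable truncation, not arbitrary subrepresentations.
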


More generally, we construct versions of Nakajima quiver varieties with multiplicties, and also study their cohomological purity (see \cite[Theorem B]{HJV}).

\subsection{Relative constructions of moduli of unstable objects}

For a reductive group $G$ acting on a projective-over-affine scheme $Y$ with respect to an ample linearisation $\cL$, the classical GIT semistable set $X^{ss}(\cL)$ is the open stratum in an instability stratification studied by Hesselink, Kempf, Kirwan and Ness \cite{Hesselink,Kempf1978,KirwanThesis,Ness}. The unstable strata are indexed by finitely many conjugacy classes of (rational) 1-parameter subgroups of $G$, which are \lq most responsible' for the instability of these points, where this notion (and thus the stratification) depends on a conjugation invariant norm on 1-PSs of $G$ which is used to normalise the Hilbert--Mumford weight. Let us give some basic definitions before stating the main properties of this stratification.

\begin{definition}\label{def:instab strat}
Let $G$ be a reductive group acting on a projective-over-affine scheme $Y$ with respect to an ample linearisation $\cL$ and fix a norm $|| -|| $ on conjugacy classes of 1-PSs in $G$.
    \begin{enumerate}
    \item For $x \in X$ and a 1-PS $\lambda$ of $G$ such that $x_0:=\lim_{t \rightarrow 0} \lambda(t) \cdot x$ exists, we define
    \[ \overline{\mu}(x,\lambda):= \frac{\mu^{\cL}(x,\lambda)}{||\lambda||}\]
    and $M(x) := \inf\{ \overline{\mu}(x,\lambda) \, : \: \lim_{t \rightarrow 0} \lambda(t) \cdot x 
 \text{ exists}\}$. 
 \item If $x$ is unstable, we say a primitive 1-PS $\lambda$ is \emph{adapted} to $x$ if it achieves the minimum $M(x)$ and $\lim_{t \rightarrow 0} \lambda(t) \cdot x$ exists. We let $\Lambda(x)$ denote the set of adapted 1-PSs to $x$.
\item For a 1-PS $\lambda$, we define a parabolic subgroup $P_\lambda \subseteq  G$ with Levi subgroup $L_\lambda$ by
\[ P_\lambda := \{ g \in G \: : \: \lim_{t \rightarrow 0} \lambda(t) g \lambda(t)^{-1} \text{ exists} \} \stackrel{r_{\lambda}}{\longrightarrow} L_\lambda := Z(\lambda(\GG_m)).\]
The kernel of $r_\lambda$ is the unipotent radical $U_\lambda$ of $P_\lambda$ and we have $P_\lambda = U_\lambda \rtimes L_\lambda$.
 \item For a pair $\beta = (\lambda,d)$ consisting of a 1-PS $\lambda$ and $d \in \RR_{<0}$, we will write $P_\beta:=P_\lambda$ and similarly write $U_\beta := \ker (r_\beta \colon P_\beta \rightarrow L_\beta)$. For such a pair, we define the associated blade $Y_{\beta}^{ss}$ and  limit set $Z_{\beta}^{ss}$ as the following subschemes\footnote{Here we just define these set theoretically, but they naturally admit schematic structures; see \cite{Hesselink}.} of $X$
 \[ Y_{\beta}^{ss} = \{x \in X:  M(x) = \overline{\mu}(x,\lambda)= d\} \stackrel{p_\beta}{\longrightarrow} Z_{\beta}^{ss} = \{x \in X^\lambda:  M(x) = \overline{\mu}(x,\lambda)= d\}  \]
 where $X^{\lambda}$ denotes the fixed locus.
 The limit set and blade have (relative) closures:
 \[ Z_{\beta} = \{x \in X^\lambda : \overline{\mu}(x,\lambda)= d \} \stackrel{p_\beta}{\longleftarrow
} Y_{\beta} = \{ x \in X : \overline{\mu}(x,\lambda)= d \}.  \]
In particular $Y_{\beta} =\{ x \in X : \lim_{t \rightarrow 0} \lambda(t) \cdot x \in Z_{\beta} \}$ and $p_\beta(x) = \lim_{t \rightarrow 0} \lambda(t) \cdot x$. 
 For $[\beta] := ([\lambda],d)$, we define the associated stratum
 \[ S_{[\beta]} = \{ x \in X : M(x) = d \text{ and } [\lambda] \cap \Lambda(x) \neq \emptyset \}.\]
Equivalently one can associate to $(\lambda,d)$ a rational 1-PS $\lambda_d$ such that $\mu(x,\lambda_d)=- || \lambda_d ||$ for all $x \in Y^{ss}_{\beta}$ and view $[\beta]$ as the conjugacy class of the rational 1-PS $[\lambda_d]$.
\item For a pair $\beta = (\lambda,d)$ as above, the canonical linearisation $\cL_\beta$ of the $L_\lambda$-action on $Z_{\beta}$ is obtained by twisting the linearisation $\cL$ by the rational character $\chi_\beta$ which corresponds to rational 1-PS $\lambda_d$ under the identification between the character and cocharacter lattice induced by the choice of norm $|| - ||$.
\end{enumerate}
\end{definition}

The next result collects the main properties of this instability stratification.

\begin{theorem}[Hesselink, Kempf, Kirwan, Ness]\label{thm instab strat properties}
Let $G$ be a reductive group acting on a projective-over-affine scheme $X$ with respect to an ample linearisation $\cL$. Then there is a finite stratification of $X$ into $G$-invariant locally closed subschemes 
\[ X = X^{ss}(\cL) \sqcup \bigsqcup_{[\beta] \in \cB} S_{[\beta]} \]
with the following properties:
\begin{enumerate}
    \item For a representative $\beta=(\lambda,d)$ of the conjugacy class, we have $S_{[\beta]} = G Y_\beta^{ss} \simeq G \times^{P_\lambda} Y_\beta^{ss}$.  
    \item We have $Z_\beta^{ss} = Z_\beta^{L_\beta-ss}(\cL_\beta)$ and $Y_\beta^{ss} = p_\beta^{-1}(Z_\beta^{ss})$.
    \item The index set $\cB$ can be computed from the weights of the action of a maximal torus.
\end{enumerate}
\end{theorem}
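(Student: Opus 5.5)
The statement is the classical Hesselink--Kempf--Kirwan--Ness stratification. The plan is to reduce to the projective case, where it is due to Kirwan and Ness, by working on the affine cone and over the affine base. Write $X = \rProj_Y \cA$ with $Y = \Spec \cA_0$ affine. After passing to a power of $\cL$, choose a $G$-equivariant closed embedding $X \hookrightarrow \PP(V) \times Y$, with $V$ a finite-dimensional representation and $Y \hookrightarrow \AA(V')$ equivariant. Fix a maximal torus $T$. For each $x$, the Hilbert--Mumford weight $\mu^{\cL}(x,\lambda)$ is computed from finitely many $T$-weights of the coordinates that are nonzero at $x$, as in \cref{lemma HN function in terms of affine cone}. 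On $V'$ the coordinates contribute the constraint that the limit exists, i.e.\ that the relevant weights pair nonnegatively with $\lambda$. Consequently only finitely many "weight supports" occur. For each support, $M(x)$ restricted to $T$ is a convex minimisation over a polyhedral set: we minimise the norm subject to a linear inequality. It therefore has a unique minimiser, a rational 1-PS $\lambda_d$ given by the closest point to the origin of a convex hull of finitely many weights. This produces the finite index set $\cB$ and proves (3).

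Next comes Kempf's uniqueness result. For unstable $x$, the set $\Lambda(x)$ is nonempty and forms a single $P_\lambda$-conjugacy class, and $P_\lambda$ is independent of the choice of $\lambda \in \Lambda(x)$. I would obtain this from Kempf's argument via the spherical building: the function $\overline{\mu}(x,-)$ is strictly convex along geodesics between 1-PSs in a common apartment, and any two 1-PSs lie in a common maximal torus after conjugation by $P$. The projective-over-affine setting requires no change, since only the existence of limits in $Y$ adds further linear constraints, and these are preserved under the convexity argument. Granting this, the stratum $S_{[\beta]}$ is $G$-invariant, and $x \in S_{[\beta]}$ has an adapted 1-PS conjugate to $\lambda$. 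This gives $S_{[\beta]} = G Y_\beta^{ss}$. Moreover, if $g x \in Y_\beta^{ss}$ for $x \in Y_\beta^{ss}$, then $\lambda$ and $g^{-1}\lambda g$ are both adapted to $x$, so $g \in P_\lambda$. Hence $G \times^{P_\lambda} Y_\beta^{ss} \to S_{[\beta]}$ is bijective. To upgrade this to an isomorphism I would follow Kirwan/Hesselink, checking that it is an isomorphism on tangent spaces, or alternatively deduce it from the schematic description in \cite{Hesselink}. This proves (1).

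For (2), take $z \in X^\lambda$ with $\overline{\mu}(z,\lambda) = d$. The point $z$ has $M(z) = d$ with $\lambda$ adapted if and only if no 1-PS of $L_\lambda$ gives a more negative normalised weight. Twisting by $\chi_\beta$ turns this condition into $\mu^{\cL_\beta}(z,\lambda') \geq 0$ for all $\lambda'$ in $L_\lambda$. This is a Pythagorean-type computation using the norm: $\lambda_d$ is orthogonal to the relevant face. By the reductive Hilbert--Mumford criterion (\cref{prop proj fibrewise qnts}), this gives $Z_\beta^{ss} = Z_\beta^{L_\beta-ss}(\cL_\beta)$. For $x \in Y_\beta$, the weights at $x$ with respect to $T$ contain those of $p_\beta(x)$ together with weights pairing positively with $\lambda$. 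A 1-PS in $P_\lambda$ can be conjugated into $L_\lambda$ by $U_\lambda$ without changing the relevant limits, so $M(x) = M(p_\beta(x))$, which gives $Y_\beta^{ss} = p_\beta^{-1}(Z_\beta^{ss})$.

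Finally, local closedness and the closure relations follow from the finiteness of $\cB$ together with upper semicontinuity of $M$. The closure of $S_{[\beta]}$ is contained in $\bigcup_{\|\beta'\| \geq \|\beta\|} S_{[\beta']}$, which is closed because $G Y_\beta$ is the image of the proper map $G \times^{P_\lambda} Y_\beta \to X$. This uses that $G/P$ is proper and that $Y_\beta$ is closed in the relevant sense, and it gives the stratification. I expect the main obstacle to be Kempf's uniqueness of the adapted parabolic in the non-proper projective-over-affine setting, together with the scheme-theoretic isomorphism in (1); for the latter I would lean on \cite{Kempf1978,Hesselink}.
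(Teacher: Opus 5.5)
The paper does not prove this statement; it quotes it from Hesselink, Kempf, Kirwan and Ness. Your outline reconstructs that classical argument. Parts (1), (3) and the identification $Z_\beta^{ss}=Z_\beta^{L_\beta-ss}(\cL_\beta)$ are fine modulo the results you cite. However, your proof of $Y_\beta^{ss}=p_\beta^{-1}(Z_\beta^{ss})$ rests on a false claim.

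You assert $M(x)=M(p_\beta(x))$ for all $x\in Y_\beta$, justified by conjugating 1-PSs of $P_\lambda$ into $L_\lambda$ via $U_\lambda$. This says nothing about 1-PSs outside $P_\lambda$. More importantly, it ignores that $x$ has strictly more weights than $p_\beta(x)$, so $\overline{\mu}(x,\lambda')\neq\overline{\mu}(p_\beta(x),\lambda')$ even for $\lambda'$ in $L_\lambda$. For example, let $T=\GG_m^2$ act on $\PP^1$ with weights $(2,0)$ and $(1,-10)$, use the Euclidean norm, and take $\lambda=(1,0)$ and $x=[1:1]$. Then $\overline{\mu}(x,\lambda)=-1$, so $x\in Y_\beta$ with $d=-1$. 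The limit $p_\beta(x)=[0:1]$ has $M(p_\beta(x))=-\sqrt{101}$, whereas $M(x)\geq-2$ because $(2,0)$ is a weight of $x$. Here $P_\lambda=L_\lambda=T$, so your $U_\lambda$-step is vacuous, and yet equality fails.

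What is true, and all you need, is the equivalence at level $d$. The inclusion $p_\beta^{-1}(Z_\beta^{ss})\subseteq Y_\beta^{ss}$ follows from $M(p_\beta(x))\leq M(x)\leq\overline{\mu}(x,\lambda)=d$. The first inequality holds because $\{M\leq c\}$ is closed and $G$-invariant; this is your properness argument, and it gives lower, not upper, semicontinuity of $M$.

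The reverse inclusion needs a perturbation argument that is missing from your proposal. Suppose $x\in Y_\beta$ but $p_\beta(x)\notin Z_\beta^{ss}$. The first half of (2), together with the Hilbert--Mumford criterion for $L_\lambda$, gives a 1-PS $\lambda'$ in a maximal torus $T'\subseteq L_\lambda$ (which contains $\lambda$) such that $\lim_{t\to 0}\lambda'(t)\cdot p_\beta(x)$ exists and $\mu^{\cL_\beta}(p_\beta(x),\lambda')<0$. For $N\gg 0$, the 1-PS $N\lambda+\lambda'$ then has a limit at $x$ and $\overline{\mu}(x,N\lambda+\lambda')<d$, for the following reasons.
\begin{itemize}
\item On the $T'$-weights of $x$ that survive in $p_\beta(x)$, the gain from $\lambda'$ strictly beats the first-order growth $\|N\lambda+\lambda'\|=N\|\lambda\|+(\lambda,\lambda')/\|\lambda\|+O(1/N)$, where $(\cdot,\cdot)$ is the inner product underlying the norm. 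This is precisely the inequality $\mu^{\cL_\beta}(p_\beta(x),\lambda')<0$.
\item The remaining weights of $x$ pair strictly more with $\lambda$, so they are irrelevant for large $N$.
\item The same dichotomy applied to the base coordinates shows that the limit exists.
\end{itemize}
Hence $M(x)<d$.

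There are two smaller gaps.
\begin{itemize}
\item The first half of (2) needs the fact that testing 1-PSs of $L_\lambda$ suffices. This follows from Kempf's result $\Stab_G(z)\subseteq P(z)$ for the optimal parabolic $P(z)$: a maximal torus of $P(z)$ containing $\lambda(\GG_m)$ contains an adapted 1-PS, which then commutes with $\lambda$.
\item In the projective-over-affine case, the optimisation in (3) is over the polyhedron $\conv S(x)+\Cone S'(y)$, not over a convex hull of weights. Here $S(x)$ and $S'(y)$ are the $T$-weights of the nonvanishing fibre and base coordinates.
\end{itemize}
Finally, the obstacle you anticipate disappears if you apply Kempf's affine theorem directly to the affine cone $\widehat{X}_{\cL}$, taking the zero section as the closed invariant subset.
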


In particular, a categorical quotient of the $G$-action on (an open subset of) $S_{[\beta]}$ is equivalent to a categorical quotient of the $P_\beta$-action on (an open subset of) $Y_\beta$ for some fixed representative $\beta$.

\begin{lemma}\label{lemma grading unstable strata}
Let $\beta = (\lambda,d)$ be a representative for an index of an unstable stratum as above. Then the equivariant action of $P_\beta \twoheadrightarrow L_\beta$ on $p_\beta \colon Y_{\beta} \rightarrow Z_{\beta}$ is internally graded by $\lambda$.
\end{lemma}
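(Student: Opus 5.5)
We need to verify the conditions of \cref{definition graded}(2). The grading subgroup is $\lambda(\GG_m) \subseteq L_\beta \subseteq P_\beta$, acting on $P_\beta$ by conjugation. So two things must be checked. First, this $\GG_m$ must grade the homomorphism $r_\beta \colon P_\beta \to L_\beta$. Second, it must grade the morphism $p_\beta \colon Y_\beta \to Z_\beta$. Before either, we need that the pair of actions really is an equivariant action of $r_\beta$ on $p_\beta$.

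\textbf{Equivariance.} Take $g \in P_\beta$ and $x \in Y_\beta$. Write $g_0 := r_\beta(g) = \lim_{t\to 0}\lambda(t) g \lambda(t)^{-1}$. Then
\[ \lambda(t)\cdot (g\cdot x) = \big(\lambda(t) g\lambda(t)^{-1}\big)\cdot\big(\lambda(t)\cdot x\big). \]
Letting $t\to 0$, the limit exists and equals $g_0 \cdot p_\beta(x)$. Since $g_0$ centralises $\lambda$, it preserves $X^\lambda$. It also preserves the normalised weight $\overline{\mu}(-,\lambda)$ on $X^\lambda$, because the $\lambda$-weight on the fibre of $\cL$ at a $\lambda$-fixed point is locally constant and $L_\beta$ commutes with $\lambda$. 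Hence $g_0 \cdot p_\beta(x) \in Z_\beta$. This shows that $Y_\beta$ is $P_\beta$-invariant, that $Z_\beta$ is $L_\beta$-invariant, and that $p_\beta(g\cdot x) = r_\beta(g)\cdot p_\beta(x)$. Using the description $Y_\beta=\{x : \lim_{t\to0}\lambda(t)\cdot x \in Z_\beta\}$ from \cref{def:instab strat}, this is exactly the commutative square required of an equivariant action.

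\textbf{Grading of the homomorphism.} Conjugation by $\lambda$ acts fibrewise on $r_\beta$, since $\lambda$ is central in $L_\beta$. For every $g\in P_\beta$, the limit of $\lambda(t)g\lambda(t)^{-1}$ as $t\to0$ exists by the definition of $P_\lambda$, and it equals $r_\beta(g)$. The fixed locus of this conjugation action on $P_\beta$ is the centraliser $L_\beta$, embedded as the Levi section. Equivalently, by \cref{remark graded}\eqref{equivalent definition of graded}, $\lambda$ is central in $L_\beta$ and acts on $\Lie U_\beta$ with strictly positive weights. Both hold by construction of $P_\lambda$, so $r_\beta$ is graded.

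\textbf{Grading of the morphism.} The $\GG_m$-action on $Y_\beta$ through $\lambda$ is fibrewise for $p_\beta$, since the limit of $\lambda(t)\cdot x$ is independent of rescaling $x$ along its own orbit. For every $x \in Y_\beta$ the limit exists and equals $p_\beta(x)$, by definition. It remains to show $Y_\beta^{\GG_m} = Z_\beta$ as $Z_\beta$-schemes. Set-theoretically this is clear: $Z_\beta\subseteq Y_\beta$ consists of $\lambda$-fixed points, and a fixed point $x$ satisfies $p_\beta(x)=x$.

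\textbf{Main obstacle.} The delicate step is the scheme-theoretic identification $Y_\beta^{\GG_m}\cong Z_\beta$. The plan is to use the schematic structures from Hesselink: $Z_\beta$ is a union of connected components of $X^\lambda$, and $Y_\beta$ is the corresponding Białynicki-Birula attractor. By Drinfeld's theory recalled in the BB subsection, $Y_\beta=(X^+)\times_{X^0}Z_\beta$. Its fixed locus is then $Z_\beta$, because $(X^+)^{\GG_m}=X^0$ via the retraction. By \cref{remark graded}, this also makes $p_\beta$ affine, which completes the proof that the action is internally graded.
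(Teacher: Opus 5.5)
Your proposal is correct and takes essentially the paper's route. The paper's proof simply refers back to the argument of \cref{lemma BB stratum internally graded}: commute $\lambda(t)$ past elements of $P_\beta$ and take limits to obtain $P_\beta$-invariance of $Y_\beta$ and $r_\beta$-equivariance of $p_\beta$. That is your equivariance step, except that you treat $U_\beta$ and $L_\beta$ together via $r_\beta(g)=\lim_{t\to0}\lambda(t)g\lambda(t)^{-1}$, and you use preservation of the $\lambda$-weight rather than connectedness to see that $L_\beta$ preserves $Z_\beta$. Your checks that $r_\beta$ is graded and that $Y_\beta^{\GG_m}\cong Z_\beta$ scheme-theoretically fill in details the paper leaves implicit.
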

\begin{proof}
    This follows by similar arguments to \cref{lemma BB stratum internally graded}.
\end{proof}

In characteristic zero, we can take a relative NRGIT quotient of the above internally graded equivariant action with respect to the good quotient $q_\beta \colon Z_\beta^{ss} \rightarrow Z_\beta \gitq_{\cL_\beta} L_\beta$ and a character $\rho \colon P_\beta \rightarrow \GG_m$ provided the locus $Z_\beta^\circ$ on which the unipotent stabilisers are well-behaved is contained in $Z_\beta^{ss}$.

\begin{theorem}\label{thm app unstable quotients}
    Let $\beta$ be a representative of an unstable stratum $S_{[\beta]}$ in an instability stratification of a projective-over-affine scheme associated to an amply linearised action of a reductive group $G$ and a choice of norm as above. Suppose that $ Z_\beta^{ss} \subseteq Z_\beta^\circ$. For any character $\rho \colon P_\beta \rightarrow \GG_m$, there are open (semi)stable subsets with explicit Hilbert--Mumford descriptions
    \begin{equation}\label{NRHM for unstable strat}
        (Y_\beta)^{ss}(p_\beta;q_\beta,\rho):= \left\{ x \in Y_\beta^{ss} \left| \begin{array}{c} \text{$\langle \rho, \lambda \rangle \: (\geq) \: 0$ for all one-parameter subgroups $\lambda  \colon \Gm \rar P_\beta$} \\ \text{such that the limit exists in $Y_\beta^{ss}$} \end{array} \right. \right\}
    \end{equation}
    and a good $P_\beta$-quotient
    \[ (Y_\beta)^{ss}(p_\beta;q_\beta,\rho) \rightarrow Y_\beta \gitq_{q_\beta,\rho}^{p_\beta} P_\beta \]
    where $Y_\beta \gitq_{q_\beta,\rho}^{p_\beta} P_\beta$ is projective over $Z_\beta \gitq_{\cL_\beta} L_\beta$. In particular, $Y_\beta \gitq_{q_\beta,\rho}^{p_\beta} P_\beta$ is a good $G$-quotient of the open subscheme
    \[ S_{[\beta]}^{ss}(p_\beta;q_\beta,\rho):= G \cdot (Y_\beta)^{ss}(p_\beta;q_\beta,\rho) \subseteq S_{[\beta]}. \]
\end{theorem}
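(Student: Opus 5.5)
The plan is to realise \cref{thm app unstable quotients} as a direct instance of \cref{main theorem}, and then transport the resulting $P_\beta$-quotient to a $G$-quotient using the fibre bundle structure $S_{[\beta]} \simeq G \times^{P_\beta} Y_\beta^{ss}$ from \cref{thm instab strat properties}.

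\textbf{Step 1: verify the hypotheses of \cref{main theorem}.}
\begin{itemize}
\item By \cref{lemma grading unstable strata}, the equivariant action of $P_\beta \twoheadrightarrow L_\beta$ on $p_\beta \colon Y_\beta \to Z_\beta$ is internally graded by $\lambda$. In particular, by \cref{remark graded}, $p_\beta$ is affine, being the retraction of a cone.
\item $P_\beta = U_\beta \rtimes L_\beta$, with $U_\beta$ the unipotent radical and $L_\beta$ reductive.
\item By \cref{thm instab strat properties}(2), $q_\beta \colon Z_\beta^{ss} = Z_\beta^{L_\beta-ss}(\cL_\beta) \to Z_\beta \gitq_{\cL_\beta} L_\beta$ is a good $L_\beta$-quotient of an open subset of $Z_\beta$.
\item By hypothesis $Z_\beta^{ss} \subseteq Z_\beta^\circ$.
\end{itemize}

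\textbf{Step 2: apply \cref{main theorem}.} With $Z' = Z_\beta^{ss}$ and the character $\rho$, \cref{main theorem} produces the open loci $Y_\beta^{(s)s}(p_\beta;q_\beta,\rho)$ and a good $P_\beta$-quotient onto $Y_\beta \gitq^{p_\beta}_{q_\beta,\rho} P_\beta$. Part (iii) of \cref{main theorem} gives that this quotient is projective over $W = Z_\beta \gitq_{\cL_\beta} L_\beta$. The Hilbert--Mumford description \eqref{NRHM for unstable strat} is exactly \eqref{eq HM descr nonred}, because $X' = p_\beta^{-1}(Z_\beta^{ss}) = Y_\beta^{ss}$ by \cref{thm instab strat properties}(2). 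The only check is notational: $Y_\beta$ here plays the role of $X$, and limits are taken in $X' = Y_\beta^{ss}$.

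\textbf{Step 3: pass from $P_\beta$ to $G$.} Write $V := Y_\beta^{ss}(p_\beta;q_\beta,\rho)$. Since $V$ is $P_\beta$-invariant and open in $Y_\beta^{ss}$, the isomorphism $S_{[\beta]} \simeq G \times^{P_\beta} Y_\beta^{ss}$ restricts to $G\cdot V \simeq G \times^{P_\beta} V$, which is open in $S_{[\beta]}$. To build the $G$-quotient:
\begin{enumerate}
\item Take the good $P_\beta$-quotient $\pi \colon V \to Y_\beta \gitq^{p_\beta}_{q_\beta,\rho} P_\beta$.
\item The map $G \times V \to V' := Y_\beta \gitq^{p_\beta}_{q_\beta,\rho} P_\beta$, $(g,v)\mapsto \pi(v)$, is invariant for the $P_\beta$-action $p\cdot(g,v) = (gp^{-1}, pv)$, so it descends to a $G$-invariant morphism $G \times^{P_\beta} V \to V'$.
\item To see this is a good $G$-quotient, argue locally: $G \to G/P_\beta$ is Zariski-locally trivial, because $P_\beta$ is parabolic and the big cell gives local sections. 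So locally $G \times^{P_\beta} V$ looks like $U_i \times V$ with $U_i \subseteq G/P_\beta$ open, and $G$-invariant functions correspond to $P_\beta$-invariant functions on $V$.
\item Equivalently, use the general principle that for $H \subseteq G$ closed, good $G$-quotients of $G\times^H V$ correspond to good $H$-quotients of $V$. Invariant functions match via restriction to $V \cong \{e\}\times V$, and closed $G$-invariant subsets of $G\times^{P_\beta} V$ correspond to closed $P_\beta$-invariant subsets of $V$, so separation of closed invariant sets transfers.
\end{enumerate}

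The main obstacle is Step 3: making the induction-from-$P_\beta$-to-$G$ argument for good quotients rigorous. The remaining steps are bookkeeping against earlier results.
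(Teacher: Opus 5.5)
Your Steps 1 and 2 are exactly the paper's proof, which is the single sentence that the statement follows from \cref{lemma grading unstable strata} and \cref{main theorem}. Your Step 3 is the passage from $P_\beta$ to $G$ that the paper treats as immediate from $S_{[\beta]}\simeq G\times^{P_\beta}Y_\beta^{ss}$. There is, however, a genuine gap in Step 3, and the paper's one-line argument has the same gap.

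Your transfer handles invariance, surjectivity, invariant functions and closed invariant subsets, so it gives a categorical quotient with those properties. It never addresses the axiom that a good quotient is an affine morphism. The paper relies on this axiom, for example when it says the given good quotient $q$ is affine so that $q\circ f'$ is projective-over-affine. Affineness does not pass from $P_\beta$ to $G$:
\begin{itemize}
\item If $v\in V:=(Y_\beta)^{ss}(p_\beta;q_\beta,\rho)$ has closed $P_\beta$-orbit, then $G\cdot[e,v]\cong G/\Stab_{P_\beta}(v)$ is closed in a fibre of $G\times^{P_\beta}V\to Y_\beta\gitq^{p_\beta}_{q_\beta,\rho}P_\beta$.
\item By Matsushima, this orbit is affine only if $\Stab_{P_\beta}(v)$ is reductive.
\item The hypothesis $Z_\beta^{ss}\subseteq Z_\beta^\circ$ allows positive-dimensional $U_\beta$-stabilisers. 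In that case $\Stab_{P_\beta}(v)$ contains the non-trivial normal unipotent subgroup $\Stab_{U_\beta}(v)$, so it is not reductive.
\end{itemize}

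Concretely, let $\mathrm{SL}_2$ act on $\PP^1$ with $\cO(1)$. There is a single stratum $S_{[\beta]}=\PP^1$, and $Y_\beta=Z_\beta$ is a point fixed by $P_\beta=B$. For trivial $\rho$ the $P_\beta$-quotient is a point, but $S^{ss}_{[\beta]}=\PP^1\to\mathrm{pt}$ is not affine.

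The same happens with $\langle\rho,\lambda_\beta\rangle>0$. Let $\mathrm{SL}_2\times\GG_m$ act on $\PP^1\times\PP^1$ via $\cO(1,1)$, with $\GG_m$ acting on the second factor with weights $c\pm1$ for some $c\geq 2$. For the stratum of $([1:0],[0:1])$, $U_\beta$ acts trivially on $Y_\beta\cong\AA^1$. Taking $\rho$ to be the $\GG_m$-character gives $S^{ss}_{[\beta]}\cong\PP^1\times\GG_m\to\mathrm{pt}$, again not affine.

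So the final clause holds as stated only if good quotients are not required to be affine, or under an extra hypothesis. One hypothesis that suffices is that $U_\beta$ acts freely on $V$; this follows, for instance, from trivial $U_\beta$-stabilisers on $Z_\beta^{ss}$. Then:
\begin{itemize}
\item Over an affine open $V'_0$ of the quotient, with preimage $V_0\subseteq V$, the $U_\beta$-torsor $V_0\to V_0/U_\beta$ has affine base, so it is trivial.
\item Hence $G\times^{U_\beta}V_0\cong G\times (V_0/U_\beta)$ is affine.
\item $G\times^{P_\beta}V_0$ is the base of a principal $L_\beta$-bundle with affine total space and reductive structure group, so it is affine.
\end{itemize}
With that hypothesis, your Step 3 goes through.
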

\begin{proof}
    This follows immediately from \cref{lemma grading unstable strata} and \cref{main theorem}.
\end{proof}

In examples of interest (such as moduli of objects of fixed Harder--Narasimhan type, see $\S$\ref{sec moduli fixed HN type}) this stabiliser condition often fails: $Z_\beta^{ss} \nsubseteq Z_\beta^\circ$. However, if $Z_\beta':=Z_\beta^{\circ} \cap Z_\beta^{ss}$ is a saturated $L_\beta$-invariant subset, then we can apply \cref{main theorem} to the restricted good quotient $Z_\beta' \rightarrow q_\beta(Z_\beta')$.

\subsubsection{Moduli of objects of fixed Harder--Narasimhan type}\label{sec moduli fixed HN type}

In this subsection, we will explain how the above result can be used to construct moduli space of objects of fixed Harder--Narasimhan type in an abelian moduli problem, subject to some notion of (semi)stability. We will focus on:
\begin{enumerate}
    \item moduli of sheaves on a (polarised) projective scheme $(X,\cO(1))$;
    \item moduli of representations of a quiver $Q$.
\end{enumerate}
These techniques should also extend to other similar moduli problems, such as moduli of quiver sheaves (including Higgs bundles, pairs, and chains) and parabolic bundles or decorated bundles. 

In these moduli problems, GIT (semi)stability is interpreted moduli-theoretically as requiring a certain inequality of \lq slopes' to hold for subobjects. For a fixed notion of stability, every object admits a unique \emph{Harder--Narasimhan filtration} (HN-filtration) such that the successive quotients are semistable with strictly decreasing slopes, which is inductively built by considering the largest subobject of maximal slope \cite{HN,Reineke}. In particular, semistable objects have trivial HN filtration. The \emph{HN-type} of an object encodes the discrete invariants of the successive quotients, and one can naturally stratify the stack of all objects by HN type. This can be pulled back to families: any scheme parametrising a family of objects also admits an associated HN-stratification into locally closed subschemes of objects of fixed HN-type (for example, see \cite{Nitsure_schHN}).

In the case of sheaves, the HN stratification was (asymptotically) related to instability strata arising in the construction of moduli spaces of (Gieseker) semistable sheaves as a quotient of a subscheme of a Quot scheme by the second author and Kirwan \cite{HoskinsKirwan}. The problem of constructing moduli spaces of sheaves (or vector bundles) of fixed HN type was considered in \cite{Hoskins2021,Jackson2021}, but the approach used projective NRGIT which required a complicated blow-up procedure to achieve certain unipotent stabiliser assumptions. By using the tools of relative NRGIT developed in this paper, we can now bypass this issue. As an added bonus, the moduli spaces of objects of fixed HN type we construct are projective over the product of moduli spaces for the semistable factors appearing in the HN filtration. In thecase of quiver representations, the notion of semistability depends on a choice of stability parameter $\theta$ introduced by King \cite{King1994}. For fixed $\theta$, the instability stratification and HN stratification coincide \cite{Hoskins2014}. Since this moduli construction uses twisted affine GIT, it was not natural to apply the results of projective NRGIT, but it is natural to apply our results on relative NRGIT.

For the rest of this subsection, we will fix a length $l$ HN type $\tau = (\tau_1,\dots,\tau_l)$ either for 
\begin{enumerate}
    \item moduli of sheaves on a projective scheme $(X,\cO(1))$ with respect to Gieseker semistability;
    \item moduli of representations of $Q$ with respect to $\theta$-semistability.
\end{enumerate}
Here $\tau_i$ denotes the discrete invariants (Hilbert polynomial or dimension vector) of the $i$-th successive quotient in the length $\ell$ HN filtration. We say $\tau$ is \emph{coprime} if semistability coincides with stability for objects with invariants $\tau_i$ for $1 \leq i \leq l$. For a HN type $\tau$, there is an associated (rational) 1-PS $\lambda_\tau$ and a retraction from the blade to the limit set
\[ Y_\tau^{ss} \stackrel{p_\tau}{\longrightarrow} Z_\tau^{ss}\]
which is equivariant with respect to the retraction $P_\tau \rightarrow L_\tau$ from the associated parabolic to its Levi factor. In particular, the stack of objects of HN type $\tau$ is isomorphic to $[Y_\tau^{ss}/P_\tau]$. Our goal is to describe the semistable set for the relative quotient of the graded equivariant action
\[ (P_\tau \rightarrow L_\tau ) \curvearrowright (Y_\tau \rightarrow Z_\tau) \]
with respect to a character $\rho \colon P_\tau \rightarrow \GG_m$ and the reductive GIT quotient $q_\tau$ of the limit set with respect to the canonical linearisation $L_\tau$
\[ q_\tau : Z_\tau^{ss} \longrightarrow Z_\tau \gitq_{\cL_\tau} L_\tau = M_{\tau_1}^{ss} \times \cdots \times M_{\tau_l}^{ss}\]
where $M_{\tau_i}^{ss}$ is the moduli space for semistable objects with discrete invariants $\tau_i$. Note that $\tau$ is coprime if and only if $Z_\tau^{ss}= Z_\tau^{s}$. To understand the Hilbert--Mumford description \eqref{NRHM for unstable strat} of the semistable locus, we first consider 1-PSs of $L_\tau$ with the property that the flow under such a 1-PS remains inside $Y_\tau^{ss}$.

\begin{proposition}\label{prop HN flow exists in Ytauss}
In the above two examples, let $x \in Y_\tau^{ss}$ and let $\lambda \colon \GG_m \rightarrow L_\tau$ be a 1-PS. Then $\lim_{t \rightarrow 0} \lambda(t) \cdot x$ exists in $Y_\tau^{ss}$ if and only if $\lambda$ induces a filtration of the object corresponding to $x$ that refines the HN filtration and such that the successive quotients are semistable.
\end{proposition}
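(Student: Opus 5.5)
The plan is to translate the limit condition into filtrations using the standard dictionary between 1-PSs and filtrations, and then read off membership in $Y_\tau^{ss}$ from the HN property of the limit object. Concretely, for $x\in Y_\tau^{ss}$ corresponding to an object $E$ (a quotient in the Quot scheme, or a quiver representation with chosen bases), $x$ already carries the HN filtration $0\subset E^{(1)}\subset\cdots\subset E^{(l)}=E$ induced by $\lambda_\tau$. A 1-PS $\lambda$ of $L_\tau$ commutes with $\lambda_\tau$, so it acts on each graded piece $V_i$ of the underlying vector space (the $\lambda_\tau$-weight spaces). The first step is the standard fact (as in King's Hilbert--Mumford analysis for quivers, and the Quot-scheme analysis in \cite{HoskinsKirwan}) that $\lim_{t\to 0}\lambda(t)\cdot x$ exists in the ambient parameter space if and only if the weight filtration of $\lambda$ is a filtration of $E$ by subobjects. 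In that case the limit is the associated graded object for this filtration. Since $\lambda$ lies in $L_\tau$, its weight filtration is compatible with the $\lambda_\tau$-grading, so the combined filtration refines the HN filtration.

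Next I will identify the limit. Write $x_0=\lim\lambda(t)\cdot x$. Because $\lambda$ commutes with $\lambda_\tau$, the point $x_0$ still lies in $Y_\tau$. Its image $p_\tau(x_0)=\lim\lambda(t)\cdot p_\tau(x)$ is the direct sum over $i$ of the associated gradeds of the $HN$-factors $E^{(i)}/E^{(i-1)}$ with respect to the induced filtrations. By \cref{thm instab strat properties}(2) we have $Y_\tau^{ss}=p_\tau^{-1}(Z_\tau^{ss})$, and $Z_\tau^{ss}$ is the $L_\tau$-semistable locus for $\cL_\tau$, which is the product of the semistable loci for the factors. Hence $x_0\in Y_\tau^{ss}$ if and only if, for each $i$, the associated graded of the induced filtration on the $i$th HN factor is semistable of the same slope as that factor.

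The final step turns this into the statement that all successive quotients of the refined filtration are semistable. Each HN factor $F_i$ is semistable with invariants $\tau_i$. A direct sum of objects of a fixed slope is semistable if and only if each summand is semistable of that slope. Conversely, any filtration of a semistable $F_i$ whose successive quotients are semistable must have all quotients of the same slope, since otherwise a sub- or quotient object would destabilise $F_i$. So the two conditions match. In the sheaf case I also need to handle the asymptotic nature of the identification: the Quot-scheme GIT condition agrees with Gieseker semistability only for large embedding parameters. I will rely on the choices of \cite{HoskinsKirwan} that make $Z_\tau^{ss}$ correspond exactly to tuples of semistable sheaves.

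I expect the main obstacle to be the first step in the sheaf case. There one must check that limits under $\lambda$ in the Quot scheme correspond to filtrations by subsheaves compatible with the quotient $\cO(-n)^{\oplus N}\to E$, and that the limit lands in the correct blade. For that I would follow the computation in \cite{HoskinsKirwan}, restricted to $L_\tau$. The quiver case is a straightforward adaptation of \cite{King1994}.
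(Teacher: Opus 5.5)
Up to the end of your second paragraph your argument is sound and is essentially the paper's route. The paper also passes to $p_\tau$ and uses that the limit must lie in $Z_\tau^{ss}$. It phrases the forward direction via S-equivalence ("$\lambda$ corresponds to a coarsening of the Jordan--H\"older filtration of $z_i$"). You instead identify the limit with the associated graded and use $Y_\tau^{ss}=p_\tau^{-1}(Z_\tau^{ss})$, which treats both directions at once. What that paragraph actually establishes is the following: the limit exists in $Y_\tau^{ss}$ if and only if the $\lambda$-weight filtration of $E$ is by subobjects and, for each $i$, the induced filtration of the HN factor $F_i$ has semistable subquotients \emph{all of slope $\mu(F_i)$}.

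\textbf{The gap is in your last paragraph}, where you try to drop the slope condition. You claim that any filtration of a semistable $F_i$ with semistable subquotients must have all subquotients of the same slope. This is false. Semistability of $F_i$ only forces the first piece to have slope $\le\mu(F_i)$ and the last subquotient to have slope $\ge\mu(F_i)$, which gives no contradiction.

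For example, take the quiver $1\to 2$. The representation $M=(k\xrightarrow{1}k)$ is stable whenever $\mu(S_2)<\mu(S_1)$. Yet the filtration $0\subset S_2\subset M$ has one-dimensional, hence stable, subquotients $S_2,S_1$ of different slopes, and $S_1\oplus S_2$ is destabilised by $S_1$. Similarly, $0\subset L\subset F$ works for a stable rank $2$ bundle $F$ of degree $1$ on a curve and a line subbundle $L$ of degree $\le 0$.

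Now take $x$ split with $M$ as an HN factor, e.g.\ $S_1\oplus M$ in dimension vector $(2,1)$, and take $\lambda\in L_\tau$ degenerating $M$ to $S_1\oplus S_2$. Then the limit exists in the ambient space, and the refined filtration has semistable successive quotients, but the limit is not in $Y_\tau^{ss}$. So under the literal reading ``successive quotients are semistable'' the `if' direction fails, and no argument can close it.

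The fix is to read the condition as the paper intends. This is visible from its forward direction and from its later reference to coarsenings of Jordan--H\"older--Harder--Narasimhan filtrations; its one-line converse only holds under this reading. Within each HN factor, the subquotients must be semistable of the same slope (reduced Hilbert polynomial in the sheaf case) as that factor. Equivalently, the induced filtration of each $F_i$ is a coarsening of a Jordan--H\"older filtration. With that reading your second paragraph already proves both directions, and the false lemma should simply be deleted.
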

\begin{proof}
If $\overline{x} =\lim_{t \rightarrow 0} \lambda(t) \cdot x$ exists in $Y_\tau^{ss}$, then $\lim_{t \rightarrow 0} \lambda(t) \cdot p_\tau(x) = p_\tau(\overline{x}) =: \overline{z}$ exists in $Z_\tau^{ss}$, which means $\overline{x}$ lies in the orbit closure of $p_\tau(x)=(z_1,\dots,z_l) \in \prod_{i=1}^l M_{\tau_i}^{ss}$. In particular, this means $z_i$ and $\overline{z_i}$ are $S$-equivalent, and so $\lambda$ corresponds to a coarsening of the Jordan--Holder filtration of $z_i$, which proves the forwards direction. Conversely if $\lambda$ refines the HN filtration such that the successive quotients are semistable, then  $\lim_{t \rightarrow 0} \lambda(t) \cdot x$ still has HN-type $\tau$ and, as $\lambda$ is a 1-PS of $L_\tau$, we see that $\overline{x} \in Y_\tau^{ss}$.
\end{proof}

The above can be used to derive intrinsic moduli-theoretic conditions for what we might call $\tau$-semistability of sheaves and quiver representations, generalising the $\tau$-stability of \cite{Hoskins2021,Jackson2021}. This will appear in future work. For now, we observe that if $Z_\tau^{ss} = Z_\tau^s$, such a limit can never exist, and so \cref{NRHM for unstable strat} yields the following corollary.

\begin{corollary}
    If $\tau$ is a coprime HN type and $\rho$ is any character for which $ \langle \rho,\lambda_\tau \rangle > 0$, then 
    \[ (Y_\tau)^{ss}(p_\tau;q_\tau,\rho) = (Y_\tau)^{s}(p_\tau;q_\tau,\rho) = Y_\tau^{ss} \setminus U_\tau Z_\tau^{ss}.\]
    In particular, if $Z_\tau^{ss} \subseteq Z_\tau^\circ$, then this open set admits a geometric quotient which is projective over the product of moduli spaces $M_{\tau_1}^{s} \times \cdots \times M_{\tau_l}^{s}$ for the stable factors in the HN filtration.
\end{corollary}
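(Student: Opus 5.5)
The plan is to read everything off the non-reductive Hilbert--Mumford description \eqref{NRHM for unstable strat} of \cref{thm app unstable quotients}, using the decomposition of \cref{rmk on NR HM semistability}(1). Every 1-PS of $P_\tau$ is $U_\tau$-conjugate to a 1-PS of $L_\tau$. Hence a point $x \in Y_\tau^{ss}$ fails (semi)stability exactly when there is some $u \in U_\tau$ and a 1-PS $\lambda$ of $L_\tau$ with two properties: $\lim_{t\to 0}\lambda(t)\cdot ux$ exists in $Y_\tau^{ss}$, and $\langle \rho,\lambda\rangle \,(<)\, 0$, that is, $<0$ for semistability and $\le 0$ for stability. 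Note that $Y_\tau^{ss}$ is $P_\tau$-invariant, so $ux \in Y_\tau^{ss}$. The work therefore reduces to classifying which 1-PSs of $L_\tau$ admit limits inside $Y_\tau^{ss}$, and \cref{prop HN flow exists in Ytauss} is the tool for that.

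First I use coprimality. Suppose $\lambda$ is a 1-PS of $L_\tau$ whose limit from $ux$ exists in $Y_\tau^{ss}$. Then, as in the proof of \cref{prop HN flow exists in Ytauss}, the limit of $\lambda$ applied to $p_\tau(ux)$ exists in $Z_\tau^{ss} = Z_\tau^{s}$. Because the stable $L_\tau$-orbits modulo the central torus are closed with finite stabilisers, this forces $\lambda$ to act on each stable factor through its scalar torus. In other words, $\lambda$ lies in the centre of $L_\tau$ up to the part acting trivially.

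The key step is to show that such a $\lambda$ must fix $p_\tau(ux)$. Then either $\lambda$ is a positive or negative multiple of the grading 1-PS $\lambda_\tau$ (rationally), or its flow degenerates $ux$ to a point outside $Y_\tau^{ss}$. Here I would argue with \cref{prop HN flow exists in Ytauss}. The filtration induced by a central $\lambda$ is a coarsening or reordering of the HN filtration, with semistable subquotients. It refines the HN filtration only if it equals it, which corresponds to positive multiples of $\lambda_\tau$, or if it is trivial. A negative multiple of $\lambda_\tau$ has a limit only if $ux$ is already fixed, i.e.\ $ux \in Z_\tau^{ss}$.

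This case analysis leaves two cases. If $x \notin U_\tau Z_\tau^{ss}$, the only destabilising candidates are positive multiples $c\lambda_\tau$, and these pair positively with $\rho$ by hypothesis. So $x$ is stable, and in particular semistable. If instead $ux = z \in Z_\tau^{ss}$, then $z$ is $\lambda_\tau$-fixed, so $\lambda_\tau^{-1}$ has a limit in $Y_\tau^{ss}$ and pairs negatively with $\rho$; hence $x$ is unstable, as in \cref{rmk on NR HM semistability}(2). Together these give $(Y_\tau)^{ss} = (Y_\tau)^{s} = Y_\tau^{ss}\setminus U_\tau Z_\tau^{ss}$.

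The final assertion follows from \cref{thm app unstable quotients} (with $Z_\tau^{ss}\subseteq Z_\tau^\circ$) and from \cref{main theorem}, which says the quotient is geometric on the stable locus and projective over $q_\tau$'s target $\prod M_{\tau_i}^{s}$. I expect the main obstacle to be the rigorous claim that any 1-PS of $L_\tau$ with a limit in $Y_\tau^{ss}$ is a rational multiple of $\lambda_\tau$ when restricted modulo the stabiliser of $ux$. The subtle part is handling 1-PSs lying in the centre of $L_\tau$ but not proportional to $\lambda_\tau$; my plan is to rule these out via the uniqueness of HN filtrations.
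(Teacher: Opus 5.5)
Your route is the paper's: read the (semi)stable locus off \eqref{NRHM for unstable strat}, then use \cref{prop HN flow exists in Ytauss} together with coprimality. Your first reduction is sound. If $\lim_{t\to0}\lambda(t)\cdot ux$ exists in $Y_\tau^{ss}$ for a 1-PS $\lambda$ of $L_\tau$, stability of the factors forces $\lambda$ into the torus spanned by the blockwise scalar 1-PSs $\delta_1,\dots,\delta_l$.

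The gap is your next step: that such a $\lambda$ is, modulo the trivially acting scalars $\delta_1+\cdots+\delta_l$, a rational multiple of $\pm\lambda_\tau$. This holds only for $l=2$, where that torus modulo scalars has rank one. For $l\ge3$ it fails, and uniqueness of HN filtrations cannot rescue it. The 1-PS $\lambda=\sum_i a_i\delta_i$ fixes the diagonal blocks, so whenever its limit exists it automatically lies in $Y_\tau^{ss}$; nothing ever ``degenerates out of $Y_\tau^{ss}$''. The limit exists precisely when $a_i\ge a_j$ for all $i<j$ with $(ux)_{ij}\neq0$. Hence every weakly decreasing $(a_1,\dots,a_l)$ has a limit at every point; for example $(1,1,0)$ induces the coarsening $E_2\subset E$. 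Moreover, all strictly decreasing weight vectors induce the \emph{same} HN filtration while filling an $(l-1)$-dimensional cone. Filtrations do not determine 1-PSs.

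In fact the asserted equalities are false for $l\ge3$, so this gap cannot be closed. Set $C_k:=\sum_{i\le k}\langle\rho,\delta_i\rangle$. Semistability of any point already requires $C_k\ge0$ for every $k$, whereas $\langle\rho,\lambda_\tau\rangle=\sum_{k<l}(a_k-a_{k+1})C_k>0$ controls only one positive combination.

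Worse, for $l=3$ let $x=x_{22}\oplus F$, where $F$ is a non-split extension of $x_{33}$ by $x_{11}$. In block form this means $x_{12}=x_{23}=0$ and $x_{13}$ gives a nonzero class in $\mathrm{Ext}^1(x_{33},x_{11})$. Then $x\in Y_\tau^{ss}\setminus U_\tau Z_\tau^{ss}$, since it is not a direct sum of the three factors. But $x$ is fixed by $\delta_2$, which does not act trivially. So $x\notin(Y_\tau)^{s}$ for every $\rho$, and $x\notin(Y_\tau)^{ss}$ unless $\langle\rho,\delta_2\rangle=0$.

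A concrete instance: take the quiver with arrows $3\to2$, $2\to1$, $3\to1$, dimension vector $(1,1,1)$, slopes $\theta_1>\theta_2>\theta_3$, and $x$ with only the arrow $3\to1$ nonzero. Here $U_\tau$ is trivial and $Z_\tau^{ss}=\{0\}$, so even $Z_\tau^{ss}\subseteq Z_\tau^\circ$ holds.

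Your argument is therefore complete only for HN types of length two, with $\rho$ trivial on the trivially acting scalars. The paper's one-line justification (``such a limit can never exist'') passes over exactly the same point. For $l\ge3$ the statement itself must be modified, for example by describing the (semi)stable locus through the full cone condition on each $ux$ rather than by removing $U_\tau Z_\tau^{ss}$.
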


However, when $Z_\tau^{ss}  \neq  Z_\tau^s$, the choice of $\rho$ gives different notions of (semi)stability which involve testing for subobjects which are coarsenings of a Jordan--Holder--Harder--Narasimhan filtration, as arising in \cref{prop HN flow exists in Ytauss}. In this situation, one can compute the $L_\tau$-semistable locus with respect to $\rho$ in terms of an inequality for such subobjects, and then sweep out by the $U_\tau$-action. This is exactly the strategy the last two authors used with T. Vernet to explicitly describe semistability for quivers with multiplicities \cite{HJV}.

\subsection{Relative constructions for moduli of jets}\label{sec jets}
For a smooth $d$-dimensional complex variety $Y$ and a positive integer $k$, we let $\pi \colon J_k(Y) \rightarrow Y$ denote the bundle of $k$-jets of germs of parametrised curves in $Y$. By choosing local coordinates at $y$, the fibre $\pi^{-1}(y)$ can be identified with the vector space $J_k(d)$ of $k$-jets of holomorphic maps $f \colon (\CC,0) \rightarrow (\CC^d,0)$ which is an affine space \[J_k(d) = \{(f'(0),\dots,f^{(k)}(0))\mid f^{(i)}(0) \in \CC^d,\forall i=1,\dots,k\}\cong \AA^{kd}. \] 
Since the transition functions are polynomial rather than linear, $\pi$ is not a vector bundle. We can reparametrise any jet by pre-composing with a $k$-jet $\varphi \in J_k(1)$ and this induces a fibrewise action of the group of \emph{regular} $k$-jets (i.e.\ with non-vanishing first derivative) 
\[ \mathrm{Diff}_k:=J^{\mathrm{reg}}_k(1) \curvearrowright (\pi \colon J_k(Y) \rightarrow Y).\]
This action is explicitly described in \cite[$\S$2.3]{Berczi2024}. The group $\mathrm{Diff}_k = U_k \rtimes \GG_m$ is a semi-direct product of a unipotent group $U_k$ of dimension $k-1$ with a multiplicative group, which acts on the Lie algebra of $U_k$ with positive weights $1,\dots, k-1$. Furthermore this multiplicative group acts on the fibres $\pi^{-1}(y) \cong J_k(d)$ with positive weights $1,\dots, k$, and thus grades the above fibrewise action. We cannot directly apply \cref{main theorem} as the unipotent stabiliser assumptions do not hold: points in the zero section have full stabiliser group. When $Y$ is projective, B\'erczi and Kirwan use projective non-reductive GIT to construct a quotient of this action by projectivising the jet bundle and blowing-up to obtain the appropriate unipotent stabiliser conditions. Their resulting NRGIT quotient yields spectacular progress on hyperbolicity of projective hypersurfaces \cite{Berczi2023,Berczi2024}. We can analogously solve this stabiliser problem by blowing up the zero section, which enables constructions of jet moduli for non-compact $Y$ and also simplifies the construction of B\'{e}rczi and Kirwan by eliminating the need to work with a projectivised jet bundle.

\begin{theorem}\label{thm jets}
Let $Y$ be a smooth complex variety of dimension $d$. Then relative NRGIT yields a fibrewise compactification $\cY^{\mathrm{GIT}}_k$ of $J^{\text{reg}}_k(Y)/\mathrm{Diff}_k$, which is projective over $Y$. The fibres of $\cY^{\mathrm{GIT}}_k \rar Y$ are themselves fibrations of weighted projective spaces over $\PP^{n-1}$. 
\end{theorem}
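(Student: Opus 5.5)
The plan is to blow up the zero section of the jet bundle and then apply \cref{main theorem}. Let $\pi \colon J_k(Y) \to Y$ be the jet bundle, with $Y \hookrightarrow J_k(Y)$ its zero section; this section is $\mathrm{Diff}_k$-invariant and is the fixed locus of the grading $\GG_m$. Let $\widetilde{J} := \Bl_Y J_k(Y)$. The $\mathrm{Diff}_k$-action lifts to $\widetilde{J}$, and the exceptional divisor is $E \cong \PP(J_k(Y)^{\mathrm{tan}})$, a bundle over $Y$ with fibres the weighted-type projectivisations of $J_k(d)$. Instead of this blow-up, I would work with the total space of the tautological line bundle $\cO_E(-1) \to E$. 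This is an affine morphism $g \colon \widehat{J} \to E$ that is graded by the $\GG_m \subseteq \mathrm{Diff}_k$: since all weights are positive, $\GG_m$ contracts each fibre to the zero section $E$.

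The group here is $G = \mathrm{Diff}_k = U_k \rtimes \GG_m$, acting equivariantly via $G \twoheadrightarrow R = \GG_m$ on $g$, and the action is internally graded. For the base quotient I take $q \colon E' \to W$ with $W = E/\GG_m$ the relative weighted projectivisation over $Y$ and $E' = E$. This is a geometric $\GG_m$-quotient, projective over $Y$. Concretely, on each fibre $W$ is a weighted projective space fibred over $\PP^{d-1}$ via $[f'(0):\dots:f^{(k)}(0)] \mapsto [f'(0)]$ on the open set where $f'(0)\neq 0$; I would take care in identifying this structure.

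The key step, and the main obstacle, is to verify $E' \subseteq E^\circ(U_k)$, that is, that $U_k$-stabilisers on the base are minimal and the relevant cokernels are locally free. Points with $f'(0)\neq 0$ (the regular jets) have trivial $U_k$-stabiliser. Non-regular points of $E$, however, have positive-dimensional stabilisers, so I expect that I must restrict to the open locus $E^{\mathrm{reg}}$ lying over the regular jets. The fibre over this locus is then $\PP^{n-1}$ with $n = d$. Alternatively, one could perform further iterated blow-ups of the non-regular loci, as B\'{e}rczi--Kirwan do. I would first try taking $Z' = E^{\mathrm{reg}}$, which is saturated and $\GG_m$-invariant. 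On $Z'$ the stabilisers are trivial, so \cref{thm:quotientsbyunip} applies, and reducedness, via \cref{rmk: descr Zcirc for Z reduced}, confirms that $Z' \subseteq Z^\circ$.

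With this in place, I apply \cref{main theorem} to $g' \colon g^{-1}(Z') \to Z'$ with a character $\rho$ pairing positively with the grading $\GG_m$. This gives a good quotient $\cY^{\mathrm{GIT}}_k := \widehat{J} \gitq^{g}_{q,\rho} G$ that is projective over $W$ by part (iii), and hence projective over $Y$. By \cref{rmk on NR HM semistability}(2), the semistable locus avoids $U_k E'$ and equals the complement of the zero section over $Z'$. That complement identifies with $J^{\mathrm{reg}}_k(Y)$, so the quotient contains $J^{\mathrm{reg}}_k(Y)/\mathrm{Diff}_k$ as an open dense subset. The fibre description then follows by restricting to a point $y$: the quotient fibre is projective over $W_y$, and I would compute it by showing that the $U_k$-quotient is locally trivial (\cref{theorem U quotient with varying stabilisers}) followed by a $\GG_m$-twisted quotient, which yields weighted projective space fibres over $\PP^{d-1}$.
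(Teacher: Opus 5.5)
There is a genuine gap: the morphism you feed into \cref{main theorem} does not carry an internally graded equivariant action of $\mathrm{Diff}_k \twoheadrightarrow \GG_m$.

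First, $\mathrm{Tot}(\cO_E(-1))$ is not an alternative to the blow-up. It \emph{is} $\widetilde{J}$, so what you have actually changed is the affine map, and the tautological projection $g \colon \widetilde{J} \to E$ is the wrong one, because both factors of $\mathrm{Diff}_k$ move points of $E$.
\begin{itemize}
\item The grading $\GG_m$ acts on each fibre $\PP^{kd-1}$ of $E$ with weights $1,\dots,k$. Its fixed locus is therefore not $E$ but $k$ copies of $\PP(T_Y)$, one for each weight space.
\item For $x=(\underline{v},[\underline{v}])$, the limit $\lim_{t\to 0} t\cdot x$ is $(\underline{0},[0:\cdots:\underline{v}_i:\cdots:0])$ with $i$ minimal such that $\underline{v}_i\neq 0$. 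This is not $g(x)$.
\item $U_k$ acts by $[\underline{w}_1:\underline{w}_2:\cdots]\mapsto[\underline{w}_1:\underline{w}_2+2a_2\underline{w}_1:\cdots]$, so $g$ is not $U_k$-invariant.
\end{itemize}
The trivial $U_k$-stabilisers you find on $E^{\mathrm{reg}}$ are a symptom of $U_k$ acting freely on the \emph{base}. That contradicts the requirement that $\ker(G\to R)$ act fibrewise, so neither \cref{thm:quotientsbyunip} nor \cref{main theorem} applies.

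The base quotient also fails. The relative weighted projectivisation is the quotient of $\widetilde{J}\setminus E$, not of $E$. The quotient $E/\GG_m$ is not geometric, since fixed points lie in the closures of other orbits. The same problem persists on $E^{\mathrm{reg}}$, whose copy of $\PP(T_Y)$ is fixed. Moreover, once you restrict to $E^{\mathrm{reg}}$ the base $W$ is no longer projective over $Y$, so the step ``projective over $W$, hence over $Y$'' breaks. Relatedly, your semistable locus is just $J^{\mathrm{reg}}_k(Y)$, whose geometric quotient is $J^{\mathrm{reg}}_k(Y)/\mathrm{Diff}_k$ itself. That is in general not proper over $Y$, so you obtain no compactification.

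The missing idea is to let the $\GG_m$-flow on $\widetilde{J}$ produce the affine morphism. Your instinct to restrict to $\underline{w}_1\neq 0$ is right; it should be used as follows.
\begin{itemize}
\item Take the open Bia{\l}ynicki-Birula stratum $X=\{\underline{w}_1\neq 0\}\subset\widetilde{J}$ and its retraction $f\colon X\to Z$, $(\underline{v},[\underline{w}])\mapsto[\underline{w}_1]$. Here $Z\cong\PP(T_Y)$ is the minimal-weight fixed component, with fibres $\PP^{d-1}$.
\item This $f$ is $U_k$-invariant, because reparametrisation fixes $f'(0)$, and the grading $\GG_m$ acts trivially on $Z$. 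So $\mathrm{Diff}_k$ acts fibrewise and gradedly on $f$.
\item $U_k$ acts freely on $X$, so $Z^\circ=Z$. Take $q=\mathrm{Id}_Z$ and $\rho>0$.
\end{itemize}
\cref{main theorem} then gives a geometric quotient of $X\setminus U_kZ$ that is projective over $Z$, hence over $Y$. Note that $U_kZ$ is much smaller than $E\cap X$. The quotient contains $(X\setminus E)/\mathrm{Diff}_k=J^{\mathrm{reg}}_k(Y)/\mathrm{Diff}_k$ as an open subset, and the boundary comes from $(E\cap X)\setminus U_kZ$. Its fibres over points of $Z_y\cong\PP^{d-1}$ are the weighted projective spaces.
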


\begin{proof}
In each fibre $\pi^{-1}(y) \cong J_k(d)$, we blow-up the origin and consider the open BB-stratum \[X_y:= \{((\underline{v}_1,\dots,\underline{v}_k),[\underline{w}_1:\dots:\underline{w}_k]) \in \Bl_0 \AA^{kd} \mid \underline{w}_1 \neq 0\} \subset \Bl_0 \AA^{kd} \subset\AA^{kd}\times \PP^{kd-1}. \] Indeed the $\GG_m$-weights on $\AA^{kd}$ are all strictly positive ($\underline{v}_i$ has weight $i$), and $\underline{w}_1$ has the minimal weight on the projective factor. Hence the minimal weight space in the blow-up is $Z_y:=\PP^{d-1}$. The flow as $t\rar 0$ corresponds induces an affine morphism \[ f_y \colon X_y \rar Z_y, \quad ((\underline{v}_1,\dots,\underline{v}_k),[\underline{w}_1:\dots:\underline{w}_k])\longmapsto [\underline{w}_1].\] 
In fact $X_y$ is isomorphic to the open BB stratum in the blow-up of $\PP(J_k(d) \oplus k)$ at $[\underline{0}:1]$ considered by B\'{e}rczi and Kirwan, and $U_k$ acts freely on $X_y$ (see \cite[$\S$5]{Berczi2024}). 

Returning to the global picture over $Y$, we let $\Phi \colon \widetilde{J} \rightarrow J_k(Y)$ be the blow-up of $J_k(Y)$ along the zero-section and consider the open BB stratum $X \subset \widetilde{J}$. Flowing under the $\GG_m$-action gives an affine morphism $ f \colon X \rightarrow Z$,
where $Z$ is projective over $Y$ with fibres $Z_y =\PP^{d-1}$. Applying \cref{main theorem} to the fibrewise $\mathrm{Diff}_k$-action on $f$ (and noting that the character $\rho$ corresponds to an integer which should be chosen positive to get a non-trivial quotient as in \cref{example VGIT Gm}), we obtain a geometric quotient $X^s =X \setminus U_k Z \rightarrow X \gitq_{\hspace{-0.6mm}\rho \hspace{0.2mm} } \mathrm{Diff}_k$ which is projective over $Z$, and thus projective over $Y$. If $E \subset \widetilde{J}$ denotes the exceptional divisor, then we obtain a geometric quotient $\Phi(X^s \setminus E) = J_k(Y) \setminus Y \rightarrow (J_k(Y) \setminus Y)/ \mathrm{Diff}_k$ which is quasi-projective over $Y$, together with a natural relative projective completion $\phi \colon \cY^{\text{GIT}}_k \rar Y$. For each fibre we have a map $\phi^{-1}(y) \rar Z_y \cong \PP^{d-1}$, and as in \cite[\S5.3]{Berczi2024}, all the fibres of the latter are weighted projective spaces. 
\end{proof}

\begin{remark}[Jet versions of reductive group actions]
We can extend the situation considered in $\S$\ref{sec quivers with mult} to more generally consider an action of a reductive group $G$ on a scheme $Z$ for which there is a good quotient $q \colon Z' \rightarrow W$ of an open $G$-invariant subset $Z'$. If we take the space of $k$-jets on $X$ and the group of $k$-jets on $G$, then we get an equivariant action of
\[ (G_k:=J_k^{\mathrm{reg}}(G) \rightarrow G ) \curvearrowright (J_k(Z) \rightarrow Z)\]
and one can naturally ask if it is possible to construct a relative quotient with respect to $q$. From the invariant theory perspective, in the case where $Z$ is affine, this problem was studied in \cite{LSS}, where the invariant ring $\cO(J_k(Z))^{G_k}$ is compared with the coordinate ring of the $k$-jet space of the affine GIT quotient $W=Z\gitq G$. From the relative viewpoint, one would need to find a grading of the above action and verify the unipotent stabiliser assumptions. If $G$ is a product of general linear groups, the external grading used in \cite{HHJ25} for quivers with constant multiplicities would grade $G$. Also by varying $\rho$, one may get different quotients from $J_k(W).$

More generally, if $G$ is a product of groups and $X$ is a product of schemes (but the action is not necessarily a product), one may take jets of different orders $k_i$ and ask if a relative quotient can be constructed. This is closer to the approach in \cite{HJV} (see $\S$\ref{sec quivers with mult}), where moduli of quiver representations with varying multiplicities are studied.
\end{remark}

\bibliographystyle{alpha}
 \bibliography{references}

 \end{document}